\numberwithin{figure}{section}
\numberwithin{table}{section}
\numberwithin{equation}{section}
\newtheorem{thm}{Theorem}[section]
\newtheorem{prop}[thm]{Proposition}
\newtheorem{lem}[thm]{Lemma}
\newtheorem{cor}[thm]{Corollary}
\newtheorem{remark}{Remark}[section]
\newcommand\C{\mathbb{C}}
\newcommand\N{\mathbb{N}}
\newcommand\R{\mathbb{R}}
\newcommand\Z{\mathbb{Z}}
\newcommand\A{\mathcal A}
\newcommand\B{\mathcal B}
\newcommand\D{\mathcal D}
\newcommand\w{\omega}
\renewcommand\Im{\operatorname{Im}}
\renewcommand\Re{\operatorname{Re}}
\newcommand\err{\operatorname{err}}
\newcommand\sgn{\operatorname{sgn}}
\newcommand\Lc{\mathcal L}
\newcommand\Lip{\operatorname{Lip}}
\newcommand\diam{\operatorname{diam}}
\begin{document}
 \title[Computation of Hausdorff Dimension]
{$C^m$ Eigenfunctions of Perron-Frobenius Operators and a 
New Approach to Numerical Computation of Hausdorff Dimension}
\author{Richard S. Falk}
\address{Department of Mathematics,
Rutgers University, Piscataway, NJ 08854}
\email{falk@math.rutgers.edu}
\urladdr{http://www.math.rutgers.edu/\char'176falk/}

\author{Roger D. Nussbaum}
\address{Department of Mathematics,
Rutgers University, Piscataway, NJ 08854}
\email{nussbaum@math.rutgers.edu}
\urladdr{http://www.math.rutgers.edu/\char'176nussbaum/}
\thanks{The work of the second author was supported by
NSF grant DMS-1201328.}
\subjclass[2000]{Primary 11K55, 37C30; Secondary: 65D05}
\keywords{Hausdorff dimension, positive transfer operators, continued fractions}
\date{December 1, 2015}

\begin{abstract}

  We develop a new approach to the computation of the Hausdorff
  dimension of the invariant set of an iterated function system or
  IFS.  In the one dimensional case, our methods require only $C^3$
  regularity of the maps in the IFS.  The key idea, which has been
  known in varying degrees of generality for many years, is to
  associate to the IFS a parametrized family of positive, linear,
  Perron-Frobenius operators $L_s$. The operators $L_s$ can typically
  be studied in many different Banach spaces. Here, unlike most of the
  literature, we study $L_s$ in a Banach space of real-valued,
  $C^k$ functions, $k \ge 2$; and we note that $L_s$ is not compact,
  but has a strictly positive eigenfunction $v_s$ with positive
  eigenvalue $\lambda_s$ equal to the spectral radius of $L_s$. Under
  appropriate assumptions on the IFS, the Hausdorff dimension of the
  invariant set of the IFS is the value $s=s_*$ for which $\lambda_s
  =1$.  This eigenvalue problem is then approximated by a collocation
  method using continuous piecewise linear functions (in one
  dimension) or bilinear functions (in two dimensions).  Using the
  theory of positive linear operators and explicit a priori bounds on
  the derivatives of the strictly positive eigenfunction $v_s$, we
  give rigorous upper and lower bounds for the Hausdorff
  dimension $s_*$, and these bounds converge to $s_*$ as the mesh size
  approaches zero.

\end{abstract}

\maketitle

\section{Introduction}
\label{sec:intro}

Our interest in this paper is in finding rigorous estimates for the
Hausdorff dimension of invariant sets for (possibly infinite) iterated
function systems or IFS's.  The case of graph directed IFS's (see
\cite{I} and \cite{H}) is also of
great interest and can be studied by our methods, but for simplicity we
shall restrict attention here to the IFS case.

Let $D \subset \R^n$ be a nonempty compact set, $\rho$ a metric on $D$ which
gives the topology on $D$, and $\theta_j: D \to D$, $1 \le j \le m$,
a contraction mapping, i.e., a Lipschitz mapping (with respect to $\rho$)
with Lipschitz constant $\Lip(\theta_j)$, satisfying $\Lip(\theta_j):= c_j <1$.
If $m < \infty$ and the above assumption holds, it is known that there
exists a unique, compact, nonempty set $C \subset D$ such that
$C = \cup_{j =1}^m \theta_j(C)$.  The set $C$ is called the invariant set for
the IFS $\{\theta_j \, | \, 1 \le j \le m\}$. If $m = \infty$ and
$\sup \{c_j \, | \, 1 \le j \le m\} = c <1$, there is a naturally defined
nonempty invariant set $C \subset D$ such that 
$C = \cup_{j =1}^\infty \theta_j(C)$,  but $C$ need not be compact.  It is
useful to note that the Lipschitz condition of the IFS can be weakened,
and we address this in a subsequent section 
(cf. (H6.1) in Section~\ref{sec:1d-deriv}).

Although we shall eventually specialize, it may be helpful to describe
initially some function analytic results in the generality of the previous
paragraph. Let $H$ be a bounded, open, mildly regular subset of $\R^n$ and let
$C^k(\bar H)$ denote the real Banach space of $C^k$ real-valued maps, all of
whose partial derivatives of order $\nu \le k$ extend continuously to $\bar
H$.  For a given positive integer $N$, assume that $b_j: \bar H \to (0,
\infty)$ are strictly positive $C^N$ functions for $1 \le j \le m < \infty$
and $\theta_j: \bar H \to \bar H$, $1 \le j \le m$, are $C^N$ maps and
contractions.  For $s >0$ and integers $k$, $0 \le k \le N$, one can define a
bounded linear map $L_{s,k}: C^k(\bar H) \to C^k(\bar H)$ by the formula
\begin{equation}
\label{intro1.2}
(L_{s,k} f)(x) = \sum_{j=1}^m [b_j(x)]^s f(\theta_j(x)).
\end{equation}
Linear maps like $L_{s,k}$ are sometimes called positive transfer
operators or Perron-Frobenius operators and arise in many contexts
other than computation of Hausdorff dimension: see, for example,
\cite{Baladi}. If $r(L_{s,k})$ denotes the spectral radius of
$L_{s,k}$, then $\lambda_s = r(L_{s,k})$ is positive and independent of
$k$ for $0 \le k \le N$; and $\lambda_s$ is an algebraically simple
eigenvalue of $L_{s,k}$ with a corresponding unique, normalized strictly
positive eigenfunction $v_s \in C^N(\bar H)$.  Furthermore, the map
$s \mapsto \lambda_s$ is continuous.   If $\sigma(L_{s,k}) \subset \C$ denotes
the spectrum of the complexification of $L_{s,k}$, $\sigma(L_{s,k})$ depends
on $k$, but for $1 \le k \le N$,
\begin{equation}
\label{intro1.3}
\sup\{|z|: z \in \sigma(L_{s,k})\setminus\{\lambda_s\}\} < \lambda_s.
\end{equation}
If $k=0$, the strict inequality in \eqref{intro1.3} may fail. A more precise
version of the above result in stated in Theorem~\ref{thm:1.1} of this paper
and Theorem~\ref{thm:1.1} is a special case of results in \cite{E}.  The
method of proof involves ideas from the theory of positive linear operators,
particularly generalizations of the Kre{\u\i}n-Rutman theorem to noncompact
linear operators; see \cite{Krein-Rutman}, \cite{Bonsall},
\cite{Schaefer-Wolff}, \cite{L}, and \cite{Mallet-Paret-Nussbaum}. We do not
use the thermodynamic formalism (see \cite{Ruelle}) and often our operators
cannot be studied in Banach spaces of analytic functions.

The linear operators which are relevant for the computation of Hausdorff
dimension comprise a small subset of the transfer operators described in
\eqref{intro1.2}, but the analysis problem which we shall consider here can be
described in the generality of \eqref{intro1.2} and is of interest in this
more general context. We want to find rigorous methods to estimate
$r(L_{s,k})$ accurately and then use these methods to estimate $s_*$, where,
in our applications, $s_*$ will be the unique number $s \ge 0$ such that
$r(L_{s,k})=1$.  Under further assumptions, we shall see that $s_*$ equals
$\dim_H(C)$, the Hausdorff dimension of the invariant set associated to the
IFS.  This observation about Hausdorff dimension has been made, in varying
degrees of generality by many authors. See, for example, \cite{Bumby1},
\cite{Bumby2}, \cite{Bowen}, \cite{Cusick1}, \cite{Cusick2}, \cite{Falconer},
\cite{Good}, \cite{Hensley1}, \cite{Hensley2}, \cite{Hensley3},
\cite{J}, \cite{Jenkinson}, \cite{Jenkinson-Pollicott},
\cite{MR1902887}, \cite{H}, \cite{Mauldin-Urbanski}, \cite{N-P-L},
\cite{Ruelle}, \cite{Ruelle2}, \cite{Rugh}, and \cite{Schief}.

In the applications in this paper, $H$ will always be a bounded open
subset of $\R^n$ for $n=1$ or $2$.  When $n=1$, we shall assume that
$H$ is a finite union of bounded open intervals, that $\theta_j: \bar
H \to \bar H$ is a $C^N$ contraction mapping, where $N \ge 3$, (or
more generally satisfies (H6.1)) and $\theta_j^{\prime}(x) \neq 0$ for
all $x \in \bar H$. In the notation of \eqref{intro1.2}, we define
$b_j(x) = |\theta_j^{\prime}(x)|$.  When $n=2$, we assume that $H$ is
a bounded, open mildly regular subset of $\R^2 = \C$ and that
$\theta_j$, $1 \le j \le m$ are analytic or conjugate analytic
contraction maps (or more generally satisfy (H6.1)), defined on an open
neighborhood of $\bar H$ and satisfying $\theta_j(H) \subset H$.
We define $D \theta_j(z) = \lim_{h \rightarrow 0} |[\theta_j(z+h)- \theta_j(z)]/h|$,
where $h \in \C$ in the limit, and we assume that $D \theta_j(z)\neq 0$ for
$z \in \bar H$. In this case, $L_{s,k}$ is defined by \eqref{intro1.2},
with $x$ replaced by $z$, and $b_j(z)= |D \theta_j(z)|^s$.

Given the existence of a strictly positive $C^N$ eigenfunction $v_s$
for \eqref{intro1.2} when $H \subset R$, we show in
Section~\ref{sec:1d-deriv} for $p=1$ and $p=2$, that one can obtain
explicit upper and lower bounds for the quantity $D^p v_s(x)/v_s(x)$
for $x \in \bar H$, where $D^p$ denotes the $p$-th derivative of
$v_s$. Such bounds can also be obtained for $p=3$ and $p=4$, but the
arguments and calculations are more complicated. When $H \subset
\R^2$, it is also possible to obtain explicit upper and lower bounds
for $D_1^p v_s(x_1,x_2))/v_s(x_1,x_2)$ and $D_2^p
v_s(x_1,x_2))/v_s(x_1,x_2)$, where $D_1 = \partial/\partial x_1$ and
$D_2 = \partial/\partial x_2$. However, for simplicity we restrict
ourselves to the choice $\theta_j(z) = (z + \beta_j)^{-1}$, where
$\beta_j \in \C$ and $\Re(\beta_j) > 0$. In this case we obtain in
Section~\ref{sec:mobius} explicit upper and lower bounds for $D_k^p
v_s(x_1,x_2))/v_s(x_1,x_2)$ for $1 \le p \le 4$, $1 \le k \le 2$, and
$x_1 >0$.  In both the one and two dimensional cases, these estimates
play a crucial role in allowing us to obtain rigorous upper and lower
bounds for the Hausdorff dimension.

The basic idea of our numerical scheme is to cover $\bar H$ by nonoverlapping
intervals of length $h$ if $H \subset R$ or by nonoverlapping squares of side
$h$ if $H \subset R^2$.  We then approximate the strictly positive, $C^2$
eigenfunction $v_s$ by a continuous piecewise linear function (if $H \subset
\R$) or a continuous piecewise bilinear function (if $H \subset \R^2$).  Using
the explicit bounds on the unmixed derivatives of $v_s$ of order $2$, we are
then able to associate to the operator $L_{s,k}$, square matrices $A_s$ and
$B_s$, which have nonnegative entries and also have the property that $r(A_s)
\le \lambda_s \le r(B_s)$. A key role here is played by an elementary fact
which is not as well known as it should be.  If $M$ is a nonnegative matrix
and $v$ is a strictly positive vector and $M v \le \lambda v$,
(coordinate-wise), then $r(M) \le \lambda$.  An analogous statement is true if
$M v \ge \lambda v$. We emphasize that our approach is robust and allows us to
study the case $H \subset \R$ when $\theta_j(\cdot)$, $1 \le j \le m$, is only
$C^3$.

If $s_*$ denotes the unique value of $s$ such that $r(L_{s_*}) =
\lambda_{s_*} = 1$, so that $s_*$ is the Hausdorff dimension of the
invariant set for the IFS under study, we proceed as follows.  If we
can find a number $s_1$ such that $r(B_{s_1}) \le 1$, then, since the
map $s \mapsto \lambda_s$ is decreasing, $\lambda_{s_1} \le r(B_{s_1})
\le 1$, and we can conclude that $s_* \le s_1$.  Analogously, if we
can find a number $s_2$ such that $r(A_{s_2}) \ge 1$, then
$\lambda_{s_2} \ge r(A_{s_2}) \ge 1$, and we can conclude that $s_*
\ge s_2$.  By choosing the mesh size for our approximating piecewise
polynomials to be sufficiently small, we can make $s_1-s_2$ small,
providing a good estimate for $s_*$.  For a given $s$, $r(A_s)$ and
$r(B_s)$ are easily found by variants of the power method for
eigenvalues, since (see Section~\ref{sec:compute-sr}) the largest eigenvalue has
multiplicity one and is the only eigenvalue of its modulus.  When the
IFS is infinite, the procedure is somewhat more complicated, and we
include the necessary theory to deal with this case.

If the coefficients $b_j(\cdot)$ and the maps $\theta_j(\cdot)$ in
\eqref{intro1.2} are $C^N$ with $N >2$, it is natural to approximate
$v_s(\cdot)$ by piecewise polynomials of degree $N-1$ when $H \subset \R$
and by corresponding higher order approximations when $H \subset \R^2$.
The corresponding matrices $A_s$ and $B_s$ may no longer have all
nonnegative entries and the arguments of this paper are no longer
directly applicable.  However, we hope to prove in a future paper
that the inequality $r(A_s) \le \lambda_s \le r(B_s)$ remains true and
leads to much improved upper and lower bounds for $r(L_s)$.  Heuristic
evidence for this assertion is given in Table~\ref{tb:t2} of 
Section~\ref{subsec:cantor-num}.

We illustrate our new approach by first considering in 
Section~\ref{sec:1dexps} the computation of the Hausdorff dimension
of invariant sets in $[0,1]$ arising from classical continued fraction
expansions.  In this much studied case, one defines $\theta_m =
1/(x+m)$, for $m$ a positive integer and $x \in [0,1]$; and for a
subset $\B \subset \N$, one considers the IFS $\{\theta_m \, | \, m
\in \B\}$ and seeks estimates on the Hausdorff dimension of the
invariant set $C =C(\B)$ for this IFS.  This problem has previously
been considered by many authors. See \cite{Bourgain-Kontorovich},
\cite{Bumby1}, \cite{Bumby2}, \cite{Good}, \cite{Hensley1},
\cite{Hensley2}, \cite{Hensley3}, \cite{Jenkinson},
\cite{Jenkinson-Pollicott}, and \cite{Heinemann-Urbanski}.  In this case,
\eqref{intro1.2} becomes
\begin{equation*}
(L_{s,k}v)(x) = \sum_{m \in \B} \Big(\frac{1}{x+m}\Big)^{2s} 
v\Big(\frac{1}{x+m}\Big), \qquad 0 \le x \le 1,
\end{equation*}
and one seeks a value $s \ge 0$ for which $\lambda_s:= r(L_{s,k}) =1$.
Table~\ref{tb:t1} in Section~\ref{subsec:cantor-num} gives upper and lower
bounds for the value $s$ such that $\lambda_s =1$ for various sets
$\B$. Jenkinson and Pollicott \cite{Jenkinson-Pollicott} use a completely
different method and obtain, when $|\B|$ is small, high accuracy estimates for
$\dim_H(C(\B))$, in which successive approximations converge at a
super-exponential rate.  It is less clear (see \cite{Jenkinson})
how well the approximation scheme in \cite{Jenkinson-Pollicott} or
\cite{Jenkinson} works when $|\B|$ is moderately large or when different real
analytic functions $\hat \theta_j: [0,1] \to [0,1]$ are used.  Here, in the
one dimensional case, we present an alternative approach with much wider
applicability that only requires the maps in the IFS to be $C^3$.  As an
illustration, we consider in Section~\ref{subsec:lowreg} perturbations of the
IFS for the middle thirds Cantor set for which the corresponding contraction
maps are $C^3$, but not $C^4$.

In Section~\ref{sec:2dexp}, we consider the computation of the Hausdorff
dimension of some invariant sets arising for complex continued fractions.
Suppose that $\B$ is a subset of $I_1 = \{m+ni \, | \, m \in \N, n\in \Z\}$,
and for each $b \in \B$, define $\theta_b(z) = (z+b)^{-1}$. Note
that $\theta_b$ maps $\bar G = \{z \in \C \, | \, |z-1/2| \le 1/2\}$
into itself.  We are interested in the Hausdorff dimension of the invariant
set $C = C(\B)$ for the IFS $\{\theta_b \, |\, b \in \B\}$.
This is a two dimensional problem and we allow the possibility that
$\B$ is infinite. In general (contrast work in \cite{Jenkinson-Pollicott} 
and \cite{Jenkinson}), it does not seem possible in this case to replace
$L_{s,k}$, $k \ge 2$, by an operator $\Lambda_s$ acting on a Banach space
of analytic functions of one complex variable and satisfying
$r(\Lambda_s) = r(L_{s,k})$.  Instead, we work in $C^2(\bar G)$ and apply
our methods to obtain rigorous upper and lower bounds for the Hausdorff
dimension $\dim_H(C(\B))$ for several examples. The case $\B = I_1$ has
been of particular interest and is one motivation for this paper.
In \cite{Gardner-Mauldin}, Gardner and Mauldin proved that 
$d:= \dim_H(C(I_1)) <2$, in \cite{Mauldin-Urbanski}, Mauldin and Urbanski proved
that $d \le 1.885$, and in \cite{Priyadarshi}, Priyadarshi proved that
$d \ge 1.78$.  In Section~\ref{subsec:method}, we prove that
$1.85550 \le d \le 1.85589$.

The square matrices $A_s$ and $B_s$ mentioned above and described in
more detail in Section~\ref{sec:1dexps} have nonnegative entries and
satisfy $r(A_s) \le \lambda_s \le r(B_s)$.  To apply standard
numerical methods, it is useful to know that all eigenvalues $\mu \neq
r(A_s)$ of $A_s$ satisfy $|\mu| < r(A_s)$ and that $r(A_s)$ has
algebraic multiplicity one and that corresponding results hold for
$r(B_s)$.  Such results are proved in Section~\ref{sec:compute-sr} in
the one dimensional case when the mesh size, $h$, is sufficiently
small, and a similar argument can be used in the two dimensional case.
Note that this result does not follow from the standard theory of
nonnegative matrices, since $A_s$ and $B_s$ typically have zero
columns and are not primitive. We also prove that $r(A_s) \le r(B_s)
\le (1 + C_1 h^2) r(A_s)$, where the constant $C_1$ can be explicitly
estimated.  In Section~\ref{sec:logconvex}, we prove that the map $s
\mapsto \lambda_s$ is log convex and strictly decreasing; and the same
result is proved for $s \mapsto r(M_s)$, where $M_s$ is a naturally
defined matrix such that $A_s \le M_s \le B_s$.

Although many of the key results in the paper are described above, the paper
is long and an outline summarizing the sections may be helpful. In
Section~\ref{sec:prelim}, we recall the definition of Hausdorff dimension and
present some mathematical preliminaries. In Sections~\ref{sec:1dexps} and
\ref{sec:2dexp}, we present the details of our approximation scheme for
Hausdorff dimension, explain the crucial role played by estimates on
derivatives of order $\le 2$ of $v_s$, and give the aforementioned estimates
for Hausdorff dimension.  We emphasize that this is a feasibility study. We
have limited the accuracy of our approximations to what is easily found using
the standard precision of {\it Matlab} and have run only a limited number of
examples, using mesh sizes that allow the programs to run fairly quickly. In
addition, we have not attempted to exploit the special features of our
problems, such as the fact that our matrices are sparse.  Thus, it is clear
that one could write a more efficient code that would also speed up the
computations.  However, the {\it Matlab} programs we have developed are
available on the web at 
{\tt www.math.rutgers.edu/\char'176falk/hausdorff/codes.html},
and we hope other researchers will run other examples of interest to them.

The theory underlying the work in Sections~\ref{sec:1dexps} and
\ref{sec:2dexp} is deferred to
Sections~\ref{sec:exist}--\ref{sec:logconvex}.  In
Section~\ref{sec:exist} we describe some results concerning existence
of $C^m$ positive eigenfunctions for a class of positive (in the sense
of order-preserving) linear operators.  We remark that
Theorem~\ref{thm:1.1} in Section~\ref{sec:exist} was only proved in
\cite{E} for finite IFS's.  As a result, some care is needed in
dealing with infinite IFS's: see Theorem~\ref{thm:1.9} and
Corollary~\ref{cor:1.4}.  In Section~\ref{sec:1d-deriv}, we derive
explicit bounds on the derivatives of the eigenfunction $v_s$ of $L_s$
in the one-dimensional case and in Section~\ref{sec:mobius}, we derive
explicit bounds on the derivatives of eigenfunctions of operators in
which the mappings $\theta_{\beta}$ are given by M\"obius
transformations which map a given bounded open subset $H$ of $\C:=
\R^2$ into $H$.  In Section~\ref{sec:compute-sr}, we verify some
spectral properties of the approximating matrices which justify
standard numerical algorithms for computing their spectral
radii. Finally, in Section~\ref{sec:logconvex}, we show the log
convexity of the spectral radius $r(L_s)$, which we exploit in our
numerical approximation scheme.

\section{Preliminaries}
\label{sec:prelim}
We recall the definition of the Hausdorff dimension, $\dim_H(K)$, of
a subset $K \subset \R^N$.  To do so, we first define for a given $s \ge0$
and each set $K \subset \R^N$, 
\begin{equation*}
H_{\delta}^s(K) = \inf\{\sum_i |U_i|^s: \{U_i\} \text{ is a } \delta \text{ cover
of } K\},
\end{equation*}
where $|U|$ denotes the diameter of $U$ and a countable collection $\{U_i\}$
of subsets of $\R^N$ is a $\delta$-cover of $K \subset \R^N$ if
$K \subset \cup_i U_i$ and $0 < |U_i| < \delta$.  We then define
the $s$-dimensional Hausdorff measure
\begin{equation*}
H^s(K) = \lim_{\delta \rightarrow 0+} H_{\delta}^s(K).
\end{equation*}
Finally, we define the Hausdorff dimension of $K$, $\dim_H(K)$, as
\begin{equation*}
\dim_H(K) = \inf\{s: H^s(K) =0\}.
\end{equation*}

We now state the main result connecting Hausdorff dimension to 
the spectral radius of the map defined by \eqref{intro1.2}.  To do so,
we first define the concept of  an {\it infinitesimal similitude}.
Let $(S,d)$ be a compact, perfect metric space. If $\theta:S \to S$, then
$\theta$ is an infinitesimal similitude at $t \in S$ if for any sequences
$(s_k)_k$ and $(t_k)_k$ with $s_k \neq t_k$ for $k \ge 1$ and $s_k \rightarrow
t$, $t_k \rightarrow t$, the limit
\begin{equation*}
\lim_{k \rightarrow \infty} \frac{d(\theta(s_k), \theta(t_k)}{d(s_k,t_k)}
=: (D \theta)(t)
\end{equation*}
exists and is independent of the particular sequences 
$(s_k)_k$ and $(t_k)_k$.  Furthermore, $\theta$ is an infinitesimal similitude
on $S$ if $\theta$ is an infinitesimal similitude at $t$ for all $t \in S$.

This concept generalizes the concept of affine linear similitudes, which
are affine linear contraction maps $\theta$ satisfying
for all $x,y \in \R^n$
\begin{equation*}
d(\theta(x), \theta(y)) = c d(x,y), \quad c < 1.
\end{equation*}
In particular, the examples discussed in this paper, such as maps
of the form $\theta(x) = 1/(x+m)$, with $m$ a positive integer,
are infinitesimal similitudes. More generally, if $S$ is a compact subset
of $\R^1$ and $\theta:S \to S$ extends to a $C^1$ map defined on an
open neighborhood of $S$ in $\R^1$, then $\theta$ is an infinitesimal
similitude. If $S$ is a compact subset of $\R^2:=\C$ and  $\theta:S \to S$
extends to an analytic or conjugate analytic map defined on an open
neighborhood of $S$ in $\C$, $\theta$ is an infinitesimal similitude.

\begin{thm} (Theorem 1.2 of \cite{N-P-L}.)
Let $\theta_i:S \to S$ for $1 \le i \le N$ be infinitesimal similitudes
and assume that the map $t \mapsto (D\theta_i)(t)$ is a strictly positive
H\"older continuous function on $S$.  Assume that $\theta_i$ is a Lipschitz
map with Lipschitz constant $c_i \le c <1$ and let
$C$ denote the unique, compact, nonempty invariant set such that
\begin{equation*}
C = \cup_{i=1}^N \theta_i(C).
\end{equation*}
Further, assume that $\theta_i$ satisfy
\begin{equation*}
\theta_i(C) \cap \theta_j(C) = \emptyset, \text{ for } 1 \le i,j \le N.
\ i \neq j
\end{equation*}
and are one-to-one on $C$.  Then the Hausdorff dimension of $C$ is
given by the unique $\sigma_0$ such that $r(L_{\sigma_0}) = 1$.
\end{thm}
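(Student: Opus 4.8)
The plan is to establish the two inequalities $\dim_H(C) \le \sigma_0$ and $\dim_H(C) \ge \sigma_0$ separately, each by exhibiting a suitable estimate on $s$-dimensional Hausdorff sums in terms of the spectral radius $r(L_s)$ together with the strictly positive eigenfunction $v_s$ guaranteed by Theorem~\ref{thm:1.1}. The key mechanism is that iterating the operator $L_s$ records exactly the $s$-th powers of the metric derivatives $(D\theta_i)$ along compositions $\theta_{\mathbf{i}} := \theta_{i_1}\circ\cdots\circ\theta_{i_k}$, and since each $\theta_i$ is an infinitesimal similitude with H\"older continuous strictly positive derivative, a chain-rule / bounded-distortion argument shows that for a word $\mathbf{i}$ of length $k$ the diameter $|\theta_{\mathbf{i}}(C)|$ is comparable, up to multiplicative constants independent of $k$, to $b_{\mathbf{i}} := \prod_{\ell=1}^k b_{i_\ell}(\theta_{i_{\ell+1}}\circ\cdots\circ\theta_{i_k}(t_0))$ for any fixed base point $t_0$, where $b_i = (D\theta_i)$. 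The bounded-distortion constant comes from H\"older continuity of $\log(D\theta_i)$ and the uniform contraction $c<1$, so the oscillation of $\log b_{\mathbf{i}}$ over $C$ is summably controlled in $k$.

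For the upper bound, I would first note that the open-set-type separation hypothesis ($\theta_i(C)\cap\theta_j(C)=\emptyset$, together with one-to-one-ness on $C$) makes $\{\theta_{\mathbf{i}}(C) : |\mathbf{i}|=k\}$ a genuine cover of $C$ into pieces whose diameters tend to $0$ uniformly as $k\to\infty$ (since $|\theta_{\mathbf{i}}(C)| \le c^k |C| \to 0$). Applying the definition of $H^s_\delta$ to this cover gives $H^s_\delta(C) \le \sum_{|\mathbf{i}|=k} |\theta_{\mathbf{i}}(C)|^s \le K \sum_{|\mathbf{i}|=k} b_{\mathbf{i}}^s$ for the distortion constant $K$. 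Now the crucial step: evaluating $(L_s^k \mathbf{1})(t_0)$, or better $(L_s^k v_s)(t_0) = r(L_s)^k v_s(t_0)$, shows $\sum_{|\mathbf{i}|=k} b_{\mathbf{i}}^s$ is comparable to $r(L_s)^k$ up to constants controlled by $\min v_s$, $\max v_s$, and the distortion. Hence for $s$ with $r(L_s)<1$ we get $H^s_\delta(C)\to 0$ as $k\to\infty$ (so $H^s(C)=0$), which forces $\dim_H(C)\le s$ for every such $s$; letting $s\downarrow\sigma_0$ and using continuity/monotonicity of $s\mapsto r(L_s)$ yields $\dim_H(C)\le\sigma_0$.

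For the lower bound, I would fix $s<\sigma_0$, so $r(L_s)>1$, and build a probability measure $\mu$ on $C$ for which a Frostman-type mass-distribution estimate $\mu(B(x,r))\le C r^s$ holds, giving $\dim_H(C)\ge s$. The natural candidate is the eigenmeasure: by the dual Kre\u\i{}n--Rutman statement there is a positive functional (measure) $\nu_s$ on $C^0(\bar H)$ with $L_s^* \nu_s = r(L_s)\,\nu_s$, and then $d\mu := v_s\, d\nu_s$ (suitably normalized) is $L_s$-conformal in the sense that $\mu(\theta_{\mathbf{i}}(A)) = r(L_s)^{-k}\int_A b_{\mathbf{i}}^s v_s \, d\nu_s / v_s(\theta_{\mathbf{i}}(\cdot))$ — more cleanly, the standard identity $\mu(\theta_{\mathbf{i}}(C)) \asymp r(L_s)^{-k} b_{\mathbf{i}}^s \asymp r(L_s)^{-k} |\theta_{\mathbf{i}}(C)|^s$. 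Since $r(L_s)>1$ this beats the trivial bound and, because the separated cylinder sets $\theta_{\mathbf{i}}(C)$ shrink geometrically and a ball $B(x,r)$ meets boundedly many cylinders of the appropriate generation (here the separation $\theta_i(C)\cap\theta_j(C)=\emptyset$ gives a positive gap, allowing the usual comparison of balls to cylinders), one obtains $\mu(B(x,r)) \le C r^s$. The mass distribution principle then gives $H^s(C)>0$, hence $\dim_H(C)\ge s$; letting $s\uparrow\sigma_0$ finishes the argument. Since this is Theorem~1.2 of \cite{N-P-L}, I would at this point simply cite that paper for the detailed verification.

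The main obstacle is the bounded-distortion step and, relatedly, the passage from "ball of radius $r$" to "cylinder of the right generation." Unlike the self-similar (affine) case where distortion is absent, here one must show that the product of derivatives $b_{\mathbf{i}}$ evaluated at different points of $C$ varies by at most a $k$-independent factor; this requires the H\"older modulus of $\log(D\theta_i)$ to sum against $c^{k\alpha}$ along the iteration — a geometric series, so it works, but it is the technical heart. Everything downstream (the two-sided comparison $|\theta_{\mathbf{i}}(C)|^s \asymp r(L_s)^{-k}\mu(\theta_{\mathbf{i}}(C))$ and the Frostman estimate) rests on having that uniform distortion control and on the positivity and boundedness of $v_s$ and $\nu_s$ supplied by Theorem~\ref{thm:1.1} and its dual form.
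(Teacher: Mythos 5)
You should note at the outset that this paper does not prove the statement at all: it is quoted as Theorem 1.2 of \cite{N-P-L} and used as a black box, so there is no internal proof to compare against; the relevant comparison is with the cited source. Your outline is essentially the standard transfer-operator argument used there and in conformal IFS theory: bounded distortion for compositions of infinitesimal similitudes, the identity $(L_s^k v_s)(t_0)=r(L_s)^k v_s(t_0)$ to compare $\sum_{|\mathbf{i}|=k}b_{\mathbf{i}}^s$ with $r(L_s)^k$, cylinder covers for the upper bound, and a conformal (dual) eigenmeasure plus a Frostman estimate for the lower bound. Two small remarks: the separation hypothesis is irrelevant to the upper bound (the cylinders cover $C$ simply because $C=\cup_i\theta_i(C)$), and the eigenmeasure $\nu_s$ with $L_s^*\nu_s=r(L_s)\nu_s$ is not supplied by Theorem~\ref{thm:1.1}; since $L_s$ is not compact it needs its own (standard) argument, e.g.\ Schauder--Tychonoff applied to $\nu\mapsto L_s^*\nu/(L_s^*\nu)(1)$ on probability measures.

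The one step whose stated justification would not survive scrutiny is the ball-to-cylinder comparison. You bound $\mu(B(x,r))$ by claiming a ball meets boundedly many cylinders of the generation whose diameters are comparable to $r$, because distinct cylinders have gaps of order $r$. In $\R^n$ that count comes from a volume/doubling argument, but the theorem lives on an arbitrary compact perfect metric space $(S,d)$: a $cr$-separated subset of a ball of radius $r$ need not have bounded cardinality, and nothing in the hypotheses makes $C$ doubling. The counting can be avoided by using strong separation directly: disjointness of the $\theta_i(C)$ plus injectivity on $C$ gives unique codings, and if $y\in C$ lies outside the generation-$j$ cylinder $Q$ containing $x$, the codes split at some level $l\le j$, so $d(x,y)\ge c\,\delta_0\,b_{i_1\cdots i_{l-1}}\ge c'\diam(Q)$, where $\delta_0=\min_{i\ne j}\operatorname{dist}(\theta_i(C),\theta_j(C))>0$ and the constants come from bounded distortion. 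Hence $B(x,c'\diam(Q))\cap C\subset Q$; choosing for each small $\rho$ the cylinder through $x$ of largest generation with $c'\diam(Q)\ge\rho$ (its child has comparable diameter $<\rho/c'$) yields $\mu(B(x,\rho))\le\mu(Q)\le \mathrm{const}\cdot\diam(Q)^s\le \mathrm{const}\cdot\rho^s$, using $r(L_s)\ge 1$ for $s\le\sigma_0$, with no cardinality bound needed. With that repair, and with the bounded-distortion lemma for infinitesimal similitudes that you rightly call the technical heart (this is precisely what \cite{N-P-L} establishes), your outline is the correct proof.
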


\section{Examples in one dimension}
\label{sec:1dexps}
\subsection{Continued fraction Cantor sets}
\label{subsec:cantor}
We first consider the problem of computing the Hausdorff dimension of some
Cantor sets arising from continued fraction expansions.  More precisely, given
any number $0<x<1$, we can consider its continued fraction expansion
\begin{equation*}
x = [a_1,a_2,a_3, \ldots] = 
\cfrac{1}{a_1 + \cfrac{1}{a_2 + \cfrac{1}{a_3 + \cdots}}},
\end{equation*}
where $a_1,a_2,a_3, \ldots \in \N$.  We then consider the Cantor set $E_{[m_1,
  \ldots, m_p]}$, of all points in $[0,1]$ where we restrict the coefficients
$a_i$ to the values $m_1, \ldots, m_p$. A number of papers (e.g.,
\cite{Bumby1}, \cite{Bumby2}, \cite{Good}, \cite{Hensley1}, \cite{Hensley2},
\cite{Jenkinson-Pollicott}) have considered this problem in the case of the
set $E_{1,2}$, consisting of all points in $[0,1]$ for which each $a_i$ has
the value $1$ or $2$.  In \cite{Jenkinson-Pollicott}, a method is presented
that computes this dimension to 25 decimal places.  Computations are also
presented in that paper and in \cite{Jenkinson} for other choices of the
values $m_1, \ldots, m_p$. In \cite{Bourgain-Kontorovich}, the Hausdorff
dimension of the Cantor set $E_{2,4,6,8,10}$ is computed to three decimal
places (0.517).

Corresponding to the choices of $m_i$, we associate contraction maps
$\theta_m(x) = 1/(x+m)$.  A key fact is that the Cantor sets we consider
can be generated as limit points of sequences of these contraction maps.  For
example, the set $E_{1.2}$ can be generated using the maps
$\theta_1(x) = 1/(x+1)$ and $\theta_2(x) = 1/(x+2)$ as the set of limit points
of sequences $\theta_{m_1} \ldots \theta_{m_n}(0)$, for $m_1, m_2, \ldots
\in \{1,2\}$.

For $v \in C[0,1]$, we define
\begin{equation}
\label{Lsdef}
(L_sv)(x) = \sum_{j=1}^p \Big|\theta^{\prime}_{m_j}(x)\Big|^s v(\theta_{m_j}(x)).
\end{equation}
Our computations are based on the following result, which we shall prove
in subsequent sections.
\begin{thm}
\label{thm:posev1d}
For all $s >0$, $L_s$ has a unique strictly positive eigenvector $v_s$ with
$L_s v_s = \lambda_s v_s$, where $\lambda_s >0$ and $\lambda_s = r(L_s)$, the
spectral radius of $L_s$.  Furthermore, the map $s \mapsto \lambda_s$ is
strictly decreasing and continuous, and for all $p >0$, $(-1)^p D^{(p)} v_s(x)
>0$ for all $x \in [0,1]$ and
\begin{equation}
\label{prop1}
|D^{(p)} v_s(x)| \le (2s)(2s+1) \cdots (2s+p-1)(\gamma^{-p}) v_s(x),
\\
\end{equation}
where $\gamma = \min_j m_j$.
Finally, the Hausdorff dimension of the Cantor set generated from the maps
$\theta_{m_1}$, $\ldots$, $\theta_{m_p}$ is the unique value of $s$ with
$\lambda_s=1$.
\end{thm}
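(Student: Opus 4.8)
The plan is to establish Theorem~\ref{thm:posev1d} by reducing it to the general existence theory (Theorem~\ref{thm:1.1}, a special case of results in \cite{E}) applied to the operators $L_{s,k}$ acting on $C^k[0,1]$ with $k \ge 2$, together with the identification of the dimension via Theorem~1.1 of \cite{N-P-L} as quoted above. First I would observe that for the continued fraction maps $\theta_{m_j}(x) = 1/(x+m_j)$ we have $b_j(x) = |\theta_{m_j}'(x)| = (x+m_j)^{-2}$, which is a strictly positive $C^\infty$ function on $[0,1]$, and $\theta_{m_j}$ is a $C^\infty$ contraction mapping $[0,1]$ into itself with $\text{Lip}(\theta_{m_j}) = m_j^{-2} \le 1/4 < 1$. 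Hence the hypotheses of Theorem~\ref{thm:1.1} are met for any $N$, so $\lambda_s = r(L_{s,k})$ is positive, independent of $k$ for $0 \le k \le N$, algebraically simple, and possesses a unique normalized strictly positive eigenfunction $v_s \in C^N[0,1]$; taking $N$ arbitrary gives $v_s \in C^\infty[0,1]$. Continuity of $s \mapsto \lambda_s$ is part of that theorem, and strict monotonicity I would either cite from Section~\ref{sec:logconvex} (where log convexity and strict decrease of $s \mapsto \lambda_s$ are proved) or derive directly from the strict pointwise inequality $[b_j(x)]^{s'} < [b_j(x)]^s$ for $s' > s$ (valid since $0 < b_j(x) < 1$ on $[0,1]$) combined with the characterization of $r(L_s)$ via positive eigenvectors. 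The final sentence, that $\dim_H$ of the Cantor set equals the unique $s$ with $\lambda_s = 1$, follows from the quoted Theorem~1.1 of \cite{N-P-L}: the maps are infinitesimal similitudes with $(D\theta_{m_j})(x) = (x+m_j)^{-2}$ strictly positive and real-analytic, hence Hölder, the open set / disjointness condition $\theta_i(C) \cap \theta_j(C) = \emptyset$ for $i \neq j$ holds because distinct digits produce disjoint cylinder intervals, and each $\theta_{m_j}$ is injective; one also needs $\lambda_s$ to cross $1$ exactly once, which follows from strict monotonicity together with $\lambda_s \to \infty$ as $s \to 0^+$ (since $L_0$ has spectral radius $p \ge 1$, or more precisely $\lambda_0 = r(L_0) \ge 1$) and $\lambda_s \to 0$ as $s \to \infty$ (because $\sup_{x} \sum_j b_j(x)^s \to 0$).

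The substantive part of the theorem is the derivative bound \eqref{prop1}, namely $(-1)^p D^{(p)} v_s(x) > 0$ and $|D^{(p)} v_s(x)| \le (2s)(2s+1)\cdots(2s+p-1)\gamma^{-p} v_s(x)$ with $\gamma = \min_j m_j$. My plan is an induction on $p$ exploiting the functional equation $L_s v_s = \lambda_s v_s$, i.e. $\lambda_s v_s(x) = \sum_j (x+m_j)^{-2s} v_s(1/(x+m_j))$. Differentiating the $j$-th term: writing $u_j(x) = (x+m_j)^{-2s}$ and $w_j(x) = v_s(1/(x+m_j))$, one has $u_j'(x) = -2s(x+m_j)^{-1} u_j(x)$ and $w_j'(x) = -(x+m_j)^{-2} (D v_s)(1/(x+m_j))$. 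By Leibniz, $D^{(p)}[u_j w_j] = \sum_{i=0}^p \binom{p}{i} u_j^{(i)} w_j^{(p-i)}$, and using the sign-alternation hypothesis $(-1)^{p-i}(D^{(p-i)} v_s) \ge 0$ inductively one checks that every term in $\lambda_s D^{(p)} v_s(x)$ has sign $(-1)^p$, giving the sign claim. For the magnitude bound I would set $M_p = \sup_{x \in [0,1]} |D^{(p)} v_s(x)| / v_s(x)$ (finite and attained since $v_s > 0$ is continuous on a compact set) and show $M_p \le (2s)(2s+1)\cdots(2s+p-1)\gamma^{-p}$. The key structural point is that $|u_j^{(i)}(x)| \le (2s)(2s+1)\cdots(2s+i-1)(x+m_j)^{-i} u_j(x) \le (2s)\cdots(2s+i-1)\gamma^{-i} u_j(x)$ since $x + m_j \ge m_j \ge \gamma$, and the Faà di Bruno / chain-rule expansion of $w_j^{(p-i)}(x)$ in terms of derivatives of $v_s$ at $1/(x+m_j)$ — where the innermost map has derivative bounded by $(x+m_j)^{-2} \le \gamma^{-2}$ — feeds in the inductive bounds $M_0, \ldots, M_{p-1}$; assembling these, dividing through by $\lambda_s v_s(x) = \sum_j u_j(x) w_j(x)$ and using $w_j(x) = v_s(1/(x+m_j)) \le (\text{const}) v_s(x)$ via the same functional equation to control the ratio, one obtains a recursion that collapses to the stated product $\prod_{\ell=0}^{p-1}(2s+\ell)\,\gamma^{-p}$.

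The main obstacle I anticipate is making the induction on $p$ genuinely self-contained rather than circular: the bound on $D^{(p)} v_s$ in terms of $M_0,\dots,M_{p-1}$ must close up to exactly the claimed product, and controlling $w_j(x)$ relative to $v_s(x)$ — equivalently $v_s(1/(x+m_j))$ relative to $v_s(x)$ — cannot simply be absorbed into a crude Harnack-type constant without spoiling the clean form $(2s)(2s+1)\cdots(2s+p-1)\gamma^{-p}$. The cleanest route is probably to prove the sharper \emph{pointwise} companion inequality $(-1)^p D^{(p)} v_s(x) \le (2s)(2s+1)\cdots(2s+p-1)(x+\gamma)^{-p}\cdots$ or rather to carry an $x$-dependent bound through the induction and specialize at the end, exploiting that $x \mapsto 1/(x+m_j)$ lands in $[1/(1+m_j), 1/m_j] \subset (0,1]$ where the same type of bound applies with $x$ replaced by a smaller argument — so that the supremum is genuinely controlled. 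Once the $p \le 2$ cases are in hand explicitly (which, as the excerpt notes, is all that the numerical scheme actually requires, and which is where Section~\ref{sec:1d-deriv} provides hands-on bounds), the general $p$ follows by the same mechanism; I would present $p=1,2$ in detail and indicate the inductive step, noting that the coefficient $(2s)(2s+1)\cdots(2s+p-1)$ is exactly the $p$-th derivative pattern of the scalar map $x \mapsto x^{-2s}$ evaluated with the chain rule through contractions of rate $\le \gamma^{-1}$ in the relevant normalization.
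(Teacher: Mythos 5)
Your treatment of the framing claims coincides with the paper's (existence, simplicity and positivity of $v_s$ via Theorem~\ref{thm:1.1}; strict decrease and continuity of $s\mapsto\lambda_s$ via the log-convexity section; the dimension identification via the quoted theorem of \cite{N-P-L}), and those parts are fine. The gap is in the heart of the theorem, the sign statement and the bound \eqref{prop1}, where your proposed mechanism does not work. First, the termwise sign claim after differentiating the eigen-equation is false: the top-order term is $u_j(x)\,\theta_{m_j}'(x)^p\,(D^{(p)}v_s)(\theta_{m_j}(x))$, whose sign under the hypothesis $(-1)^pD^{(p)}v_s\ge 0$ is $(-1)^p\cdot(-1)^p=+1$, i.e.\ opposite to the target sign $(-1)^p$ whenever $p$ is odd (already at $p=1$ the term $u_j\theta_{m_j}'v_s'(\theta_{m_j})$ is nonnegative while you want $v_s'\le 0$), and at $p=2$ the cross terms $2u_j'\theta_{m_j}'v_s'(\theta_{m_j})$ and $u_j\theta_{m_j}''v_s'(\theta_{m_j})$ are negative while the target sign is positive. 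Moreover $D^{(p)}v_s$ appears on both sides at the same order, so this is not an induction from lower orders but a self-referential identity; the sign cannot be read off term by term. Second, the sup-recursion does not close to the stated constant: at $p=1$, bounding $(x+m_j)^{-1}\le\gamma^{-1}$ and $(x+m_j)^{-2}\le\gamma^{-2}$ gives only $M_1\le 2s\gamma^{-1}+\gamma^{-2}M_1$, hence $M_1\le 2s\gamma/(\gamma^2-1)$, which is strictly worse than the claimed $2s/\gamma$ and is vacuous when $\gamma=1$ --- precisely the case of $E_{1,2}$, $E_{1,3}$, etc.; no Harnack-type control of $v_s(\theta_{m_j}(x))$ against $v_s(x)$ repairs this, as you yourself suspected.

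The paper proves \eqref{prop1} by a different mechanism that bypasses the eigen-equation entirely. By Theorem~\ref{thm:1.1} and Remark~\ref{rem:1.2}, $\lambda_s^{-k}L_s^k(1)\to c\,v_s$ in the $C^m$ topology, which yields the limit identity \eqref{1.14}: $D^{(p)}v_s/v_s$ is the uniform limit of $D^{(p)}\bigl(\sum_{\omega}b_{\omega}^s\bigr)/\sum_{\omega}b_{\omega}^s$. Because the $\theta_{m_j}$ are M\"obius maps, every composition satisfies $|\theta_{\tilde\omega}'(x)|^s=|B_{n-1}|^{-2s}(x+\gamma_{\omega})^{-2s}$ with $\gamma_{\omega}=B_n/B_{n-1}\ge\gamma$ by the continued-fraction recursion of Lemma~\ref{lem:4.3} (see \eqref{4.15}); for such an exact power one computes directly that $(-1)^pD^{(p)}\bigl[(x+\gamma_{\omega})^{-2s}\bigr](x+\gamma_{\omega})^{2s}=(2s)(2s+1)\cdots(2s+p-1)(x+\gamma_{\omega})^{-p}$, which lies in $\bigl(0,(2s)\cdots(2s+p-1)\gamma^{-p}\bigr]$ --- this is \eqref{4.28}. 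Summing over $\omega$ preserves both the sign and the ratio bound, and passing to the limit via \eqref{1.14} gives Theorem~\ref{thm:4.12}, i.e.\ \eqref{prop1} for every $p$ at once, uniformly in $x$ and valid for $\gamma=1$. If you insist on an eigen-equation argument you would at minimum have to work with an iterate $L_s^{\nu}$ so that the effective contraction factor is strictly less than one, and even then you would only recover cruder constants of the type obtained in Section~\ref{sec:1d-deriv}; the exact product $(2s)(2s+1)\cdots(2s+p-1)\gamma^{-p}$ comes from the M\"obius structure plus the limit formula \eqref{1.14}, which is the ingredient missing from your proposal.
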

Note that it follows easily from \eqref{prop1} when $p=1$ and $x_1, x_2 \in
[0,1]$ , that
\begin{equation}
\label{prop2}
v_s(x_2) \le v_s(x_1) \exp(2s|x_2-x_1|/\gamma).
\end{equation}
To see this, write
\begin{equation*}
\log\frac{v_s(x_2)}{v_s(x_1)} = \log v_s(x_2) - \log v_s(x_1) =
\int_{x_1}^{x_2} \frac{d}{dx} \log v_s(x) \, dx = \int_{x_1}^{x_2}
\frac{v_s^{\prime}(x)}{v_s(x)} \, dx,
\end{equation*}
apply the bound in \eqref{prop1}, and exponentiate the result.

To obtain approximations of the dimension of the Cantor sets described
in this section, we first approximate a function $f \in C^2[0,1]$ by
a continuous, piecewise linear function defined on a mesh of interval size
$h$ on $[0,1]$. More specifically, we approximate
$f(x)$, $x_k \le x \le x_{k+1}$ by its piecewise linear interpolant $f^I(x)$
given by
\begin{equation*}
f^I(x) = \frac{x_{k+1}-x}{h} f(x_k) + \frac{x-x_k}{h} f(x_{k+1}),
\quad x_k \le x \le x_{k+1},
\end{equation*}
where the mesh points $x_k$ satisfy $0 = x_0 < x_1 , \dots < x_N =1$,
with $x_{k+1} - x_k = h = 1/N$. 
The goal is to reduce the infinite dimensional eigenvalue
problem to a finite dimensional one.  Standard results for the error
in linear interpolation on an interval $[a,b]$ assert that
\begin{equation*}
f^I(x) - f(x) = \frac{1}{2}(b - x)(x-a) f^{\prime\prime}(\xi)
\end{equation*}
for some $\xi \in [a,b]$.
If $x_{r_j} \le \theta_{m_j}(x) \le x_{r_j+1}$, we get
\begin{equation*}
v_s^I(\theta_{m_j}(x)) =
\frac{[x_{r_j+1} - \theta_{m_j}(x)]}{h} v_s(x_{r_j})
+ \frac{[\theta_{m_j}(x)-x_{r_j}]}{h} v_s(x_{r_j +1}).
\end{equation*}
We can also use the properties in Theorem~\ref{thm:posev1d} to bound the
interpolation error.
Letting $f(x) = v_s(x)$, we obtain from Theorem~\ref{thm:posev1d} that
\begin{equation*}
0 < v_s^{\prime\prime}(\theta_{m_j}(x)) \le 2s (2s+1)\gamma^{-2} v_s(\theta_{m_j}(x)).
\end{equation*}
Using the interpolation error estimate and \eqref{prop2}, we get
for $x_{r_j} \le \theta_{m_j}(x) \le x_{r_j+1}$,
\begin{multline*}
0 < v_s^I(\theta_{m_j}(x)) - v_s(\theta_{m_j}(x)) 
\\
\le  [x_{r_j+1} - \theta_{m_j}(x)][\theta_{m_j}(x) -x_{r_j}]
s (2s+1) \gamma^{-2} \max_{[x_{r_j}, x_{r_j+1}]} \hskip -8.2pt v_s(\xi).
\\
\le 
[x_{r_j+1} - \theta_{m_j}(x)][\theta_{m_j}(x) -x_{r_j}]
s (2s+1) \gamma^{-2} \exp(2sh/\gamma) \, v_s^I(\theta_{m_j}(x)),
\end{multline*}
since the point at which the maximum occurs is within $h$
of either of the two endpoints of the subinterval.

Using this estimate, we have precise upper and lower bounds on the error
in the interval $[x_{r_j},x_{r_j+1}]$ that only depend on the function values of
$v_s$ at $x_{r_j}$ and $x_{r_j+1}$.  Letting
\begin{equation*}
\err_j(x) = [x_{r_j+1} - \theta_{m_j}(x)][\theta_{m_j}(x) -x_{r_j}]
s (2s+1) \gamma^{-2} \exp(2sh/\gamma),
\end{equation*}
we have for each mesh point $x_k$, with
 $x_{r_j} \le \theta_{m_j}(x_k) \le x_{r_j+1}$,
\begin{equation*}
[1 - \err_j(x_k)] v_s^I(\theta_{m_j}(x_k)) \le v_s(\theta_{m_j}(x_k)) 
\le v_s^I(\theta_{m_j}(x_k)).
\end{equation*}

Since for each mesh point $x_k$, $r(L_s) v_s(x_k) = (L_s v_s)(x_k)$, 
we can use \eqref{Lsdef} and the above result to to see that
\begin{multline*}
r(L_s) v_s(x_k) = L_s v_s(x_k) 
= \sum_{j=1}^p \Big|\theta^{\prime}_{m_j}(x_k)\Big|^s v_s(\theta_{m_j}(x_k))
\\
\le \sum_{j=1}^p \Big|\theta^{\prime}_{m_j}(x_k)\Big|^s v_s^I(\theta_{m_j}(x_k))
\end{multline*}
and
\begin{equation*}
r(L_s) v_s(x_k) \ge \sum_{j=1}^p \Big|\theta^{\prime}_{m_j}(x_k)\Big|^s 
[1 - \err_j(x_k)] v_s^I(\theta_{m_j}(x_k)).
\end{equation*}
Let $w_s$ be a vector with $(w_s)_k = v_s(x_k)$, $k=0, \ldots N$.
Define $(N+1) \times (N+1)$ matrices $B_s$ and $A_s$ by
\begin{align*}
(B_s w_s)_k &= \sum_{j=1}^p \Big|\theta^{\prime}_{m_j}(x_k)\Big|^s 
w_s^I(\theta_{m_j}(x_k)),
\\
(A_s w_s)_k &= \sum_{j=1}^p \Big|\theta^{\prime}_{m_j}(x_k)\Big|^s 
[1 - \err_j(x_k)] w_s^I(\theta_{m_j}(x_k)),
\end{align*}
where, if $x_{r_j} \le \theta_{m_j}(x) \le x_{r_j+1}$, we define
\begin{equation*}
w_s^I(\theta_{m_j}(x)) =
\frac{[x_{r_j+1} - \theta_{m_j}(x)]}{h} (w_s)_{r_j}
+ \frac{[\theta_{m_j}(x)-x_{r_j}]}{h} (w_s)_{r_j +1}.
\end{equation*}
Note that all of the entries of
$B_s$ will be nonnegative and since $\err_j(x) = O(h^2)$, this is true for $A_s$
as well, provided $h$ is sufficiently small.

Since $v_s(x_k) >0$ for $k=0, \ldots, N$, we can apply the following result
about nonnegative matrices to see that \begin{equation*}
r(A_s) \le r(L_s) \le r(B_s).
\end{equation*}

\begin{lem}
\label{lem:nonneg}
Let $M$ be an $(N+1) \times (N+1)$ matrix with non-negative
entries and $w$ an $N+1$ vector with strictly positive components.
\begin{align*}
\text{If  } (Mw)_k &\ge \lambda w_k, \quad k =0, \ldots N, \qquad
\text{then }  r(M) \ge \lambda,
\\
\text{If  } (Mw)_k &\le \lambda w_k, \quad k =0, \ldots N, \qquad
\text{then }   r(M) \le \lambda.
\end{align*}
\end{lem}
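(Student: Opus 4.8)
The plan is to establish the second implication (the first follows by an entirely symmetric argument, or by passing to a transpose-type statement once the first is proved). So assume $M$ has nonnegative entries, $w$ has strictly positive components, and $Mw \le \lambda w$ coordinatewise. The key fact I would invoke is the Collatz--Wielandt characterization of the spectral radius of a nonnegative matrix, or, to keep the argument self-contained, the following elementary observation: since $w > 0$, the matrix $M$ is similar to the nonnegative matrix $\tilde M := W^{-1} M W$ where $W = \operatorname{diag}(w_0, \dots, w_N)$, and $r(M) = r(\tilde M)$. The hypothesis $Mw \le \lambda w$ translates into $\tilde M \mathbf{1} \le \lambda \mathbf{1}$, where $\mathbf{1}$ is the all-ones vector; that is, every row sum of $\tilde M$ is at most $\lambda$. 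Hence $\|\tilde M\|_\infty \le \lambda$ (the induced $\ell^\infty$ operator norm equals the maximum absolute row sum, and all entries are nonnegative). Since the spectral radius of any matrix is bounded above by any induced operator norm, $r(M) = r(\tilde M) \le \|\tilde M\|_\infty \le \lambda$, which is the desired conclusion.

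For the first implication, assume instead $Mw \ge \lambda w$. Here a norm bound does not immediately apply, so I would instead use the power-iteration/Perron characterization. With the same diagonal conjugation, $\tilde M \mathbf{1} \ge \lambda \mathbf{1}$ with $\tilde M \ge 0$; iterating and using nonnegativity gives $\tilde M^n \mathbf{1} \ge \lambda^n \mathbf{1}$ for all $n \ge 1$, so the maximum row sum of $\tilde M^n$ is at least $\lambda^n$, whence $\|\tilde M^n\|_\infty \ge \lambda^n$. By Gelfand's formula, $r(M) = r(\tilde M) = \lim_{n\to\infty} \|\tilde M^n\|_\infty^{1/n} \ge \lambda$. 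Alternatively, one can argue directly: the quantity $\min_k (Mw)_k / w_k$ is a lower bound for $r(M)$ by the Collatz--Wielandt formula $r(M) = \sup_{x > 0} \min_k (Mx)_k / x_k$ valid for nonnegative $M$, and our hypothesis says this minimum is $\ge \lambda$.

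I do not expect a genuine obstacle here; the statement is classical and the only care needed is to handle the fact that $M$ need not be irreducible (it may have zero rows or columns, as noted in the paper), which is exactly why I route the argument through operator norms and Gelfand's formula rather than through the Perron--Frobenius theorem for primitive matrices. If one prefers to avoid citing Gelfand's formula for the first implication, the Collatz--Wielandt inequality $r(M) \ge \min_k (Mw)_k/w_k$ for nonnegative $M$ and positive $w$ can itself be proved in a line by noting that if $r(M) < \min_k (Mw)_k/w_k =: \mu$, then $(\mu I - M)$ would be an $M$-matrix with a positive inverse, yet $(\mu I - M)w \le 0$ forces $w \le 0$, a contradiction.
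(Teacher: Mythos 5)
Your proof is correct, and it differs from the paper's argument in a useful way, chiefly in the second implication. The paper works directly with $w$: it iterates to get $(M^nw)_k \le \lambda^n w_k$, picks the row of $M^n$ realizing $\|M^n\|_\infty$, and runs a contradiction argument with the quotient $[\lambda/r(M)]^n$ and the positivity of $\min_j w_j$. You instead conjugate by $W=\operatorname{diag}(w_0,\dots,w_N)$, observe that $\tilde M = W^{-1}MW$ has row sums at most $\lambda$, and conclude $r(M)=r(\tilde M)\le \|\tilde M\|_\infty \le \lambda$ in one line, with no iteration or limiting argument; this is essentially the alternative the paper itself gestures at (conjugating by $D$ and citing a standard result), except that you make it self-contained via the $\ell^\infty$ operator norm. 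For the first implication your route (powers of $\tilde M$ applied to $\mathbf{1}$, then Gelfand) is the same idea as the paper's, which applies powers of $M$ to $w$ and compares with $M^n e$; the conjugation just normalizes $w$ to $\mathbf{1}$. Both treatments correctly avoid any irreducibility or primitivity hypothesis, which is the point emphasized in the paper. Two small remarks: both your argument and the paper's implicitly use $\lambda\ge 0$ when iterating the inequality (harmless, since for $\lambda<0$ the claims are trivial because $r(M)\ge 0$, and in the application $\lambda>0$); and in your optional $M$-matrix aside, $(\mu I - M)^{-1}=\sum_{n\ge 0}\mu^{-n-1}M^n$ is guaranteed only to be entrywise nonnegative, not strictly positive, but nonnegativity already yields $w\le 0$ from $(\mu I-M)w\le 0$, so the contradiction stands.
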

Since this result is crucial to our approximation scheme, we supply the proof
below to keep our presentation self-contained.  Note, however, that
Lemma~\ref{lem:nonneg} is actually a special case of much more general results
concerning order-preserving, homogeneous cone mappings: see Lemma 2.2 in
\cite{C} and Theorem 2.2 in \cite{B}.
If we let $D$ denote the positive diagonal $(N+1) \times (N+1)$ matrix
with diagonal entries $w_j$, $1 \le j \le N+1$, $r(M) = r(D^{-1}MD)$; and
Lemma~\ref{lem:nonneg} can also be obtained by applying Theorem 1.1 on page 24
of \cite{D} to $D^{-1}MD$.
\begin{proof}
If $(Mw)_k \ge \lambda w_k$, $k =0, \ldots N$, it easily follows that
 $(M^nw)_k \ge \lambda^n w_k$ and so $\|M^n w\|_{\infty} \ge \lambda^n
\|w\|_{\infty}$.  Let $e$ be vector with all $e_i=1$. Then
\begin{equation*}
\|M^n\|_{\infty} = \|M^n e\|_{\infty} \ge \|M^n w\|_{\infty}/\|w\|_{\infty}
\ge \lambda^n.
\end{equation*}
Hence,
\begin{equation*}
r(M) = \lim_{n \rightarrow \infty} \|M^n\|_{\infty}^{1/n} \ge \lambda.
\end{equation*}

If $(Mw)_k \le \lambda w_k$, $k =0, \ldots N$, it easily follows that
 $(M^nw)_k \le \lambda^n w_k$.  Let $k$ be chosen so that
$\|M^n\|_{\infty} = \sum_j (M^n)_{k,j}$. 
Since $[r(M)]^n = r(M^n) \le \|M^n\|_{\infty}$,
\begin{equation*}
\min_j w_j [r(M)]^n \le \min_j w_j \sum_j (M^n)_{k,j}
\le \sum_j (M^n)_{k,j} w_j = (M^nw)_k \le \lambda^n w_k.
\end{equation*}
So,
\begin{equation*}
\min_j w_j \le [\lambda/r(M)]^n w_k.
\end{equation*}
If $r(M) > \lambda$, then letting $n \rightarrow \infty$, we get that
$\min_j w_j \le 0$, which contradicts the fact that all $w_j > 0$.
Hence, $r(M) \le \lambda$.
\end{proof}

As described in Section~\ref{sec:intro}, if $s_*$ denotes the unique value of
$s$ such that $r(L_{s_*}) = \lambda_{s_*} = 1$, then $s_*$ is the Hausdorff
dimension of the set $E_{[m_1, \ldots, m_p]}$.  If we can find a number $s_1$
such that $r(B_{s_1}) \le 1$, then $r(L_{s_1}) \le r(B_{s_1}) \le 1$, and we can
conclude that $s_* \le s_1$.  Analogously, if we can find a number $s_2$ such
that $r(A_{s_2}) \ge 1$, then $r(L_{s_2}) \ge r(A_{s_2}) \ge 1$, and we can
conclude that $s_* \ge s_2$.  By choosing the mesh sufficiently fine, we can
make $s_1-s_2$ small, providing a good estimate for $s_*$.

We can also reduce the number of computations by first iterating the maps
$\theta_{m_i}$ to produce a smaller initial domain that we need to
approximate.  For example, if we seek the Hausdorff dimension of the set
$E_{1,2}$, since $\theta_1([0,1]) = [1/2,1]$ and $\theta_2([0,1]) =
[1/3,1/2]$, the maps $\theta_1$ and $\theta_2$ map $[1/3,1] \mapsto [1/3,1]$,
so we can restrict the problem to this subinterval.  Further iterating, we see
that $\theta_1([1/3,1]) = [1/2,3/4]$ and $\theta_2([1/3,1]) = [1/3,3/7]$.
Hence the maps $\theta_1$ and $\theta_2$ map $[1/3,3/7] \cup [1/2,3/4]$ to
itself and we can further restrict the problem to this domain.

\subsection{Continued fraction Cantor sets -- numerical results}
\label{subsec:cantor-num}
In this section, we report in Table~\ref{tb:t1} the results of the application
of the algorithm described above to the computation of the Hausdorff dimension
of a sample of continued fraction Cantor sets.  Where the true value was known
to sufficient accuracy, it is not hard to check that the rate of convergence
as $h$ is refined is $O(h^2)$. Although the theory developed above does not
apply to higher order piecewise polynomial approximation, since one cannot
guarantee that the approximate matrices have nonnegative entries, we also
report in Table~\ref{tb:t2} and Table~\ref{tb:t22} the results of higher order
piecewise polynomial approximation to demonstrate the promise of this
approach.  In this case, we only provide the results for $B_s$, which does not
contain any corrections for the interpolation error.  In a future paper we
hope to prove that rigorous upper and lower bounds for the Hausdorff dimension
can also be obtained when higher order piecewise polynomial approximations are
used.

\begin{table}[ht]
\footnotesize
\caption{Computation of Hausdorff dimension $s$ of some continued fraction 
Cantor sets.}
\label{tb:t1}
\begin{center}
\begin{tabular}{|c|c|c|c|c|c|}
\hline
Set    &   h  &  lower $s$ & upper $s$ & lower err & upper err  \\
\hline \hline 
E[1,2] &  .0001 & 0.531280505099895  & 0.531280506539767
& 1.18e-09  & 2.63e-10  \\
       &   .00005 & 0.531280505981423 & 0.531280506343388
& 2.96e-10 &  6.62e-11  \\
\hline\hline  
E[1,3] & .0001 & 0.454489076859422 & 0.454489077843624 & 8.02e-10 & 1.82e-10
\\
&   .00005 & 0.454489077459035 & 0.454489077707546 & 2.03e-10 & 4.57e-11 
 \\
\hline \hline 
E[1,4]  & .0001 & 0.411182724095752 & 0.411182724934834 & 6.79e-10 & 1.60e-10 
 \\
&   .00005 & 0.411182724603313 & 0.411182724815117 & 1.71e-10  & 4.03e-11 
\\
\hline\hline 
E[2,3] & .0001 & 0.337436780744847 & 0.337436780851139 & 6.12e-11 & 4.51e-11
\\
&   .00005 & 0.337436780790228 & 0.337436780817793 & 1.58e-11& 1.17e-11 
\\
\hline\hline 
E[2,4] & .0001 & 0.306312767993699 & 0.306312768092506 & 5.91e-11 & 3.97e-11
\\
&   .00005 & 0.306312768039239 &  0.306312768061760 & 1.35e-11 & 8.98e-12 
\\
\hline \hline 
E[3,4] & .0001 & 0.263737482885901 & 0.263737482913807 & 1.15e-11 & 1.64e-11
\\
&   .00005 & 0.263737482894486 & 0.263737482901574 & 2.94e-12& 4.15e-12
\\
\hline \hline 
E[10,11] & .0002 & 0.146921235390446 & 0.146921235393309 & 3.37e-13& 2.53e-12
\\
&   .00005 & 0.146921235390764 & 0.146921235390925 & 1.95e-14& 1.42e-13 
\\
\hline \hline 
E[100,10000] & .0004 & 0.052246592638657 & 0.052246592638662 & 1.88e-15 & 
3.12e-15 \\
&   .0001 & 0.052246592638659 & 0.052246592638659 & 1.25e-16 & 1.25e-16 
\\
\hline \hline 
E[2,4,6,8,10] & .0001 & 0.517357030830725 & 0.517357030987649 & & \\
&   .00005 & 0.517357030911231 & 0.517357030949266 & &  \\
\hline \hline 
E[1,\ldots,10] &  .0001 & 0.925737589218857 & 0.925737591547918 & &  \\
&   .00005 & 0.925737590664670 & 0.925737591246997 & &  \\
\hline\hline 
E[1,3, 5, \ldots, 33]& .0001 & 0.770516007582087 & 0.770516008987138 & & \\
&   .00005 & 0.770516008433225 & 0.770516008784885 & & \\
\hline\hline
E[2, 4, 6, \ldots, 34] & .0001 & 0.633471970121772 & 0.633471970288076 & & \\
&   .00005 & 0.633471970211609 & 0.633471970252711 & & \\
\hline\hline
E[1, \ldots,34] & .0001 & 0.980419623378987 & 0.980419625624112 & & \\
&   .00005 & 0.980419624765058 & 0.980419625326256 & &  \\
\hline 
\end{tabular}
\end{center}
\end{table}

The errors are computed based on the results reported in 
\cite{Jenkinson-Pollicott}. For the last five entries,
we do not have independent results for the true solution correct to
a sufficient number of decimal places to compute the error.

\begin{table}[!ht]
\footnotesize
\caption{Computation of Hausdorff dimension $s$ of E[1,2]
using higher order piecewise polynomials.}
\label{tb:t2}
\begin{center}
\begin{tabular}{|c|c|c|c|c|c|c|}
\hline
degree & h  &  $s$ & error \\
\hline \hline 
1 & .01 & 0.531282991861209 & 2.49 e-06 \\
2 & .02 & 0.531280509905739 & 3.63 e-09  \\
4 & .04 & 0.531280506277708 & 5.03 e-13  \\
5 & .05 & 0.531280506277197 & 7.99 e-15 \\
\hline 
\end{tabular}
\end{center}
\end{table}
In the computations shown using higher order piecewise polynomials, since the
number of unknowns for a continuous, piecewise polynomial of degree $k$ on $N$
uniformly spaced subintervals of width $h$ is given by $k N + 1$, to get a
fair comparison, we have adjusted the mesh sizes so that each computation
involves the same number of unknowns. For this problem, the eigenfunction
$v_s$ is smooth and the computations show a dramatic increase in the accuracy
of the approximation as the degree of the approximating piecewise polynomial
is increased.

\begin{table}[!ht]
\footnotesize
\caption{Computation of Hausdorff dimension $s$ of E[2,4,6,8,10]
using piecewise cubic polynomials.}
\label{tb:t22}
\begin{center}
\begin{tabular}{|c|c|c|c|c|c|c|}
\hline
h  &  $s$ \\
\hline \hline 
0.1    &    0.517357031893604       \\
.05    &    0.517357031040156       \\  
.02    &    0.517357030941730       \\
.01    &    0.517357030937108       \\
.005   &    0.517357030937029       \\
.002   &    0.517357030937018       \\
.001   &    0.517357030937018       \\
\hline 
\end{tabular}
\end{center}
\end{table}


\subsection{An example with less regularity}
\label{subsec:lowreg}

For $0 \le \lambda \le 1$, we consider the maps
\begin{equation}
\label{lrmaps}
\theta_1(x) = \frac{1}{3 + 2 \lambda}(x + \lambda x^{7/2}), \qquad
\theta_2(x) = \frac{1}{3 + 2 \lambda}(x + \lambda x^{7/2}) +
\frac{2 + \lambda}{3 + 2 \lambda},
\end{equation}
which map the unit interval to itself. Both these maps $\in C^3([0,1]$, but
$\notin C^4([0,1]$. We note that because of the lack of regularity, the
methods of \cite{Jenkinson-Pollicott} and \cite{Jenkinson} cannot be applied.
When $\lambda=0$, these maps become
\begin{equation*}
\theta_1(x) = \frac{x}{3}, \qquad
\theta_2(x) = \frac{x}{3} + \frac{2}{3},
\end{equation*}
and the corresponding Cantor set has Hausdorff dimension $\ln 2/\ln 3
\hfill\break \approx 0.630929753571458$.  

Our computations, shown in Table~\ref{tb:t3}, are based on the following
result, which we shall prove in subsequent sections.
\begin{thm}
\label{thm:posev1dreg}
Let
\begin{equation*}
(L_s v)(x) = \sum_{j=1}^2 |\theta_j^{\prime}(x)|^s v(\theta_j(x)),
\end{equation*}
where $\theta_1$ and $\theta_2$ are given by \eqref{lrmaps}.
For all $s >0$, $L_s$ has a unique (up to normalization) strictly positive
$C^2$ eigenvector $v_s$ with $L_s v_s = \lambda_s v_s$, where $\lambda_s >0$
and $\lambda_s = r(L_s)$, the spectral radius of $L_s$.  Furthermore, the map
$s \mapsto \lambda_s$ is strictly decreasing and continuous, and for all $x_1,
x_2 \in [0,1]$, we have the estimate
\begin{multline*}
0 < \frac{D^2 v_s(x)}{v_s(x)} \le s^2 [C_1(\lambda)]^2
\Big(\frac{6+4 \lambda}{4 - 3 \lambda}\Big)
\\
+ s \frac{(6+4 \lambda)^2}{(4 - 3 \lambda)(8 + 11 \lambda)}
\left[C_2(\lambda) + C_1(\lambda) M_0(\lambda) 
\frac{(6+4 \lambda)}{(4 - 3 \lambda)}\right],
\end{multline*}
where $C_1$, $C_2$, and $M_0$ are defined by \eqref{3.6}, \eqref{3.23}, and
\eqref{3.14}, respectively.
Finally, the Hausdorff dimension of the Cantor set generated from the maps
$\theta_1$ and $\theta_2$ is the unique value of $s$ with $\lambda_s= r(L_s)
=1$.
\end{thm}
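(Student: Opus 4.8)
The plan is to establish the four assertions in the order stated, reducing the first, second, and fourth to results already available in the paper and in \cite{N-P-L}, and concentrating the real work on the pointwise bound for $D^{2}v_{s}/v_{s}$.

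\emph{Step 1: existence, uniqueness, and $\lambda_{s}=r(L_{s})$.} First I would record the elementary facts about the maps \eqref{lrmaps}. A direct computation gives
\[
\begin{aligned}
\theta_{j}'(x)&=\frac{1+\tfrac{7}{2}\lambda x^{5/2}}{3+2\lambda},\qquad
\theta_{j}''(x)=\frac{35\lambda}{4(3+2\lambda)}\,x^{3/2},\\
\theta_{j}'''(x)&=\frac{105\lambda}{8(3+2\lambda)}\,x^{1/2},
\end{aligned}
\]
so $\theta_{j}',\theta_{j}'',\theta_{j}'''$ are continuous on $[0,1]$ while $\theta_{j}^{(4)}$ blows up like $x^{-1/2}$ at $0$; also $0<\theta_{j}'(x)\le c:=\frac{2+7\lambda}{6+4\lambda}<1$ on $[0,1]$, each $\theta_{j}$ is strictly increasing (hence one-to-one), and $\theta_{1}([0,1])=[0,\tfrac{1+\lambda}{3+2\lambda}]$, $\theta_{2}([0,1])=[\tfrac{2+\lambda}{3+2\lambda},1]$ are disjoint closed subintervals of $[0,1]$. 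Since $b_{j}:=|\theta_{j}'|^{s}=(\theta_{j}')^{s}$ is then a strictly positive $C^{2}$ function on $[0,1]$ (not $C^{3}$, precisely because $\theta_{j}\notin C^{4}$) and $\theta_{j}$ is a $C^{2}$ contraction of $[0,1]$ into itself, the Kre\u{\i}n--Rutman-type result recalled in the introduction (Theorem~\ref{thm:1.1}, a special case of \cite{E}) applies with $N=2$: it yields that $\lambda_{s}=r(L_{s})>0$ is an algebraically simple eigenvalue with a unique normalized strictly positive eigenfunction $v_{s}\in C^{2}[0,1]$ satisfying $L_{s}v_{s}=\lambda_{s}v_{s}$, and that $s\mapsto\lambda_{s}$ is continuous.

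\emph{Step 2: monotonicity and the Hausdorff dimension.} Strict monotonicity of $s\mapsto\lambda_{s}$ follows from the strict-decrease (and log-convexity) result of Section~\ref{sec:logconvex}, whose hypothesis holds since $0<b_{j}(x)\le c^{s}<1$; equivalently it follows from a positive-operator comparison together with the fact that $s\mapsto b_{j}(x)$ is strictly decreasing for each $x$. Since $\lambda_{s}\to2$ as $s\to0^{+}$ (squeeze $\lambda_{s}$ between $2(3+2\lambda)^{-s}$ and $2c^{s}$) and $\lambda_{s}\le 2c^{s}\to0$ as $s\to\infty$, there is a unique $s_{*}>0$ with $\lambda_{s_{*}}=1$. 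To identify $s_{*}$ with $\dim_{H}(C)$ I would invoke Theorem~1.2 of \cite{N-P-L} (quoted above): each $\theta_{j}$ extends to a $C^{3}$, hence $C^{1}$, map on a neighborhood of $[0,1]$ in $\R$ --- replace $x^{7/2}$ by $\sgn(x)|x|^{7/2}$, which is $C^{3}$ on $\R$ --- with everywhere positive derivative, so $\theta_{j}$ is an infinitesimal similitude with $(D\theta_{j})(t)=\theta_{j}'(t)$ a strictly positive, H\"older continuous function; combined with $c_{j}\le c<1$, the injectivity of $\theta_{j}$ on $C$, and $\theta_{1}(C)\cap\theta_{2}(C)=\emptyset$ (these sets lie in the disjoint intervals of Step~1), this gives $\dim_{H}(C)=\sigma_{0}$, where $\sigma_{0}$ is the unique value with $r(L_{\sigma_{0}})=1$; hence $\sigma_{0}=s_{*}$.

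\emph{Step 3: the second-derivative estimate, and the main obstacle.} This is where the actual work lies, and it follows the scheme of Section~\ref{sec:1d-deriv}. Differentiating $\lambda_{s}v_{s}(x)=\sum_{j}b_{j}(x)v_{s}(\theta_{j}(x))$ once, introducing the weights $p_{j}(x):=b_{j}(x)v_{s}(\theta_{j}(x))/(\lambda_{s}v_{s}(x))\ge0$ (which satisfy $\sum_{j}p_{j}(x)=1$) and $g:=v_{s}'/v_{s}$, and using $b_{j}'/b_{j}=s\,\theta_{j}''/\theta_{j}'$, one obtains
\[
g(x)=\sum_{j}p_{j}(x)\Bigl[\,s\,\frac{\theta_{j}''(x)}{\theta_{j}'(x)}+g(\theta_{j}(x))\,\theta_{j}'(x)\,\Bigr].
\]
Since $\theta_{j}''\ge0$ and $0<\theta_{j}'\le c<1$, a minimum-principle argument gives $g\ge0$, and solving this affine fixed-point relation (linear part of operator norm $\le c$) yields $\|g\|_{\infty}\le s\,C_{1}(\lambda)\,\frac{6+4\lambda}{4-3\lambda}$, where $\frac{6+4\lambda}{4-3\lambda}=(1-c)^{-1}$ and $C_{1}(\lambda)$ bounds $\|\theta_{j}''/\theta_{j}'\|_{\infty}$. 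Differentiating once more and using $b_{j}''/b_{j}=s\,\theta_{j}'''/\theta_{j}'+(s^{2}-s)(\theta_{j}''/\theta_{j}')^{2}$, the function $h:=v_{s}''/v_{s}$ satisfies
\[
\begin{aligned}
h(x)=\sum_{j}p_{j}(x)\Bigl[\,&s\,\frac{\theta_{j}'''(x)}{\theta_{j}'(x)}+(s^{2}-s)\Bigl(\frac{\theta_{j}''(x)}{\theta_{j}'(x)}\Bigr)^{2}\\
&+(2s+1)\,g(\theta_{j}(x))\,\theta_{j}''(x)+h(\theta_{j}(x))\,\theta_{j}'(x)^{2}\,\Bigr],
\end{aligned}
\]
which exhibits $h$ as the unique fixed point of an affine map whose linear part has operator norm $\le c^{2}<1$. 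Summing the resulting geometric series, inserting the bound on $\|g\|_{\infty}$, and using $(1-c)^{-1}=\frac{6+4\lambda}{4-3\lambda}$ and $(1-c^{2})^{-1}=\frac{(6+4\lambda)^{2}}{(4-3\lambda)(8+11\lambda)}$, together with suitable explicit constants $C_{1}(\lambda),C_{2}(\lambda),M_{0}(\lambda)$ (bounding $\|\theta_{j}''/\theta_{j}'\|_{\infty}$, $\|\theta_{j}'''/\theta_{j}'\|_{\infty}$, $\|\theta_{j}''\|_{\infty}$), yields the upper bound of the statement. For the positivity $h>0$ I would show that $L_{s}$ maps the cone of nonnegative, nondecreasing, convex functions into itself --- the analogue here of the alternating-derivative property of Theorem~\ref{thm:posev1d}, adapted to the fact that $\theta_{j}'>0$ and $\theta_{j}'',\theta_{j}'''\ge0$ --- and that $v_{s}$ lies in this (closed) cone, being the $C^{2}$-limit of the normalized iterates $\lambda_{s}^{-n}L_{s}^{n}\mathbf 1$; strict positivity then holds whenever $\lambda>0$ because the forcing term above is not identically zero. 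The delicate point, and the main obstacle, is the sign of $(s^{2}-s)(\theta_{j}''/\theta_{j}')^{2}$ when $0<s<1$: cone preservation then reduces to verifying the pointwise inequality $\theta_{j}'''(x)\,\theta_{j}'(x)\ge(1-s)\,\theta_{j}''(x)^{2}$ on $[0,1]$, which must be extracted from the formulas of Step~1 (here $\theta_{j}'''\theta_{j}'=\bigl(\tfrac{6}{35\lambda x^{5/2}}+\tfrac{3}{5}\bigr)(\theta_{j}'')^{2}$), and one must also keep the bookkeeping tight enough that $C_{1},C_{2},M_{0}$ and the denominators $4-3\lambda$, $8+11\lambda$ come out exactly as claimed. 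The low regularity is precisely what forces the argument to stop at the second derivative --- one further differentiation would bring in the divergent $\theta_{j}^{(4)}$ --- but by Lemma~\ref{lem:nonneg} and the discussion of Section~\ref{subsec:cantor}, $C^{2}$ control of $v_{s}$ is exactly what the collocation scheme needs to turn these estimates into rigorous two-sided bounds for $s_{*}=\dim_{H}(C)$.
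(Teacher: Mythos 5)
Your Steps 1 and 2 run along the same lines as the paper: existence, uniqueness, simplicity and $C^2$ regularity of $v_s$ come from Theorem~\ref{thm:1.1} applied with $m=2$, the identification of $s_*$ with $\dim_H(J_\lambda)$ from the infinitesimal-similitude theorem of \cite{N-P-L}, and strict monotonicity and continuity of $s\mapsto\lambda_s$ from Section~\ref{sec:logconvex}. Your Step 3, by contrast, is a genuinely different mechanism from the paper's: the paper never differentiates the eigenvalue equation. Instead it bounds $D(b_\w(x)^s)/b_\w(x)^s$ and $D^2(b_\w(x)^s)/b_\w(x)^s$ uniformly over words $\w\in\B_\nu$ (Lemmas~\ref{lem:3.2}--\ref{lem:3.3}, Theorems~\ref{thm:3.1} and \ref{thm:3.4}) and passes to the limit with \eqref{1.14}, while strict positivity of $D^2v_s$ comes from Theorem~\ref{thm:3.5}, whose hypothesis \eqref{3.37} is exactly the inequality $\theta_j'''\theta_j'\ge(1-s)(\theta_j'')^2$ you isolate; your verification of that inequality and your formulas for $C_1,C_2,M_0$ agree with \eqref{3.44}, \eqref{3.46}, \eqref{3.47} of Example 3.6, and your cone-of-nonnegative-nondecreasing-convex-functions argument, combined with the $C^2$ convergence in \eqref{1.10}, is a legitimate substitute for Theorem~\ref{thm:3.5}.

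The genuine gap is the quantitative step you yourself flag as ``the main obstacle'' and do not resolve: your Neumann-series bookkeeping does not produce the stated constants. Your (correct) recursion for $h=v_s''/v_s$ carries the forcing term $s\theta_j'''/\theta_j'+(s^2-s)(\theta_j''/\theta_j')^2+(2s+1)g(\theta_j)\theta_j''$, so summing the series and inserting $\|g\|_\infty\le sC_1/(1-\kappa)$ gives an upper bound of the form $\frac{1}{1-\kappa^2}\bigl[s^2C_1^2+sC_2+(2s+1)\,sC_1M_0\frac{1}{1-\kappa}\bigr]$. The factor $(2s+1)$ on the $C_1M_0$ term is absent from the statement, and the compensating improvement on the $s^2C_1^2$ term does not restore the claimed inequality (for instance at $\lambda=1$, where $M_0=7/4$, $\kappa=9/10$, your bound is strictly larger than the one asserted); so as written your argument proves a different inequality, not the one in the theorem. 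To land on the constants of the statement you must estimate $D^2(b_\w^s)/b_\w^s$ word by word as in Lemma~\ref{lem:3.3}, where the square $s^2(Db_\w/b_\w)^2$ is kept aggregated and the cross terms are not rerouted through the bound on $g$, and then pass to the limit via \eqref{1.14}; this is exactly how \eqref{3.48}, which is the bound in the statement, is obtained in Example 3.6. Note also that the positivity part is only established (by either route) for $s\ge\log 2/\log 5$, since \eqref{3.37} can fail near $x=1$ for small $s$; the paper restricts to this range because it is the only one relevant for the dimension, and your reading of ``for all $s>0$'' glosses over this.
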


\begin{table}[!ht]
\caption{Computation of Hausdorff dimension $s$ of less regular examples.} 
\label{tb:t3}
\begin{center}
\begin{tabular}{|c|c|c|c|c|}
\hline
$\lambda$    &   $h$  &  lower $s$ & upper $s$ & upper $s$ - lower $s$ \\
\hline 
$0.0$  &  .0001 &  $0.630929753571458$ & $0.630929753571458$ & $0$ \\
$0.25$ &  .0001 &  $0.691029102085966$ & $0.691029110502743$ & $8.4168e-09$ \\
$0.5$  &  .0001 &  $0.733474587362570$ & $0.733474622222681$ & $3.4860e-08$ \\
$0.75$ &  .0001 &  $0.767207161950980$ & $0.767207292955634$ & $1.3100e-07$  \\
$1.0$  &  .0001 &  $0.796727161816835$ & $0.796727861914653$ & $7.0010e-07$ \\
\hline
\end{tabular}
\end{center}
\end{table}

\section{Examples in two dimensions}
\label{sec:2dexp}

\subsection{The problems}
\label{subsec:problem}

Let $H= \{(x,y) \in \R^2: (x-1/2)^2 + y^2 \le 1/4, y \ge 0\}$.
Writing $z = x+ iy$, we can consider $H$ as a subset of the complex plane.

Let $C_{\R}(H)$ denote the Banach space of real-valued, continuous functions
$f:H \to \R$ in the sup norm.  Let $I_1 = \{b = m + ni: m \in \N,
n \in \Z\}$ and for $b \in I_1$ and $z \in \C$, let $\theta_b(z) = 1/(z+b)$.
If $D = \{z \in \C: |z-1/2| \le 1/2\}$, it is known that for $b \in I_1$,
$\theta_b(D) \subset D$ and $\theta_{b_1}(D\setminus \{0\}) \cap
\theta_{b_2}(D\setminus \{0\}) = \emptyset$ for $b_1,b_2 \in I_1$, $b_1 \neq
b_2$.  Clearly, $\theta_b(D) \subset D\setminus \{0\}$ for $b \in I_1$. If
we identify $H$ with $\{z \in D: \Im(z) \ge 0\}$, and if $b \in I_1$ and
$\Im(b) \ge 0$, $\theta_b(H) \subset \{z \in D: \Im(z) \le 0\}$. Hence
$\overline{1/(z+b)} \in H$ if $z \in H$, $b \in I_1$, and $\Im(b) \ge 0$.
If $z \in H$, $b \in I_1$, and $\Im(b) <0$, one can show that
$\theta_b(z) \in \{z \in D: \Im(z) >0\}$.

Let $C_{\R}(D)$ denote the Banach space of real-valued, continuous functions
$v:D \to \R$ and let $\B$ denote a subset of $I_1$. If $\B$ is a finite set
and $s \ge 0$, one can define a bounded linear map
$L_s: C_{\R}(D) \to C_{\R}(D)$ by
\begin{equation}
\label{Lsdefz}
(L_sv)(z) = \sum_{b \in \B} \Big|\frac{d}{dz} \theta_b(z)\Big|^s
v(\theta_b(z)) = \sum_{b \in \B} \frac{v(\theta_b(z))}{|z+b|^{2s}}.
\end{equation}
If $\B$ is infinite, one can prove (see Section 5 of \cite{A}) that if,
for some $s >0$, the infinite series $\sum_{b \in \B} [1/|z+b|^{2s}]$ converges
for some $z \in D$, then it converges for all $z \in D$ and
$z \mapsto [1/|z+b|^{2s}]$ is a continuous function on $D$.  It then follows
with the aid of Dini's theorem that $L_s$ given by \eqref{Lsdefz} defines
a bounded linear map of $C_{\R}(D) \to C_{\R}(D)$.

If we define $\sigma = \sigma(\B):= \inf\{s >0 \, | \, \exists z \in D
\text{ such that } \sum_{b \in \B} [1/|z+b|^{2s}] < \infty\}$, it follows
from the above remarks that for all $s > \sigma(\B)$, $L_s$ defined
by \eqref{Lsdefz} gives a bounded linear map of $C_{\R}(D) \to C_{\R}(D)$.
If $s= \sigma$, it may or may not happen that
$\sum_{b \in \B} [1/|z+b|^{2s}] < \infty$ for some $z \in D$.  In any event,
it is not hard to prove that if $s > 1$, 
$\sum_{b \in \B} [1/|z+b|^{2s}] < \infty$ for all $z \in D$.

Our computational results are based on the following
theorems, which are special cases of results which we shall prove in 
subsequent sections of the paper.
\begin{thm}
\label{thm:evthm}
Let $\B$ be a subset of $I_1$, and for $s > \sigma(\B) = \sigma$, let
$L_s: C_{\R}(D) \to C_{\R}(D)$ be defined by \eqref{Lsdefz}.  For each
$s>\sigma(\B)$, there exists a unique (to within scalar multiples)
strictly positive Lipschitz eigenvector $v_s$ of $L_s$, i.e., $L_s v_s =
\lambda_s v_s$, where $\lambda_s >0$ and $\lambda_s= r(L_s)$, the
spectral radius of $L_s$ defined by $r(L_s):= \lim_{k \rightarrow
  \infty} \|L_s^k\|^{1/k}$.  If $\bar \B:= \{\bar b \, | \, b \in \B\}$,
then $v_s(\bar z) = v_s(z)$ for all $z \in D$.
If $\B$ is finite, $v_s(x,y)$ is $C^{\infty}$ on $D$ and $x \mapsto v_x(x,y)$
is decreasing for $(x,y) \in D$.
\end{thm}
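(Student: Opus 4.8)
The plan is to bootstrap the infinite case from the finite one. Suppose first that $\B$ is finite. Then $L_s$ of \eqref{Lsdefz} is of the form \eqref{intro1.2} with weights $b_j(z)=1/|z+b|^2$ and maps $\theta_b(z)=1/(z+b)$; these are analytic on a neighborhood of $\overline D$ (the only pole, $z=-b$, has $\Re(-b)\le-1$), map $\overline D$ into itself, and have $D\theta_b=1/|z+b|^2\neq0$ there, so after passing (as is standard, cf.\ (H6.1)) to a forward-invariant compact subdomain on which every $\theta_b$ is a uniform contraction, Theorem~\ref{thm:1.1} applies in every $C^k(\overline H)$. It yields that $\lambda_s:=r(L_s)>0$ is independent of $k$, algebraically simple, and has a unique normalized strictly positive eigenfunction $v_s$ lying in $C^N$ for all $N$; since the $\theta_b$ and the weights are real-analytic in $(x,y)$, $v_s\in C^\infty(\overline D)$ (in particular Lipschitz). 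Strict decrease and continuity of $s\mapsto\lambda_s$ are likewise part of Theorem~\ref{thm:1.1} (see also Section~\ref{sec:logconvex}), and $L_sv_s=\lambda_sv_s$ with $\lambda_s=r(L_s)$ is then immediate.

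Now suppose $\B$ is infinite and $s>\sigma(\B)$, so $L_s:C_{\R}(D)\to C_{\R}(D)$ is a bounded order-preserving operator. Choose finite sets $\B_1\subset\B_2\subset\cdots$ with $\bigcup_n\B_n=\B$, let $\lambda_n=r(L_{s,\B_n})$, and let $v_n$ be the corresponding strictly positive eigenfunction normalized by $\|v_n\|_\infty=1$ (extended to all of $D$ via $v_n=\lambda_n^{-1}L_{s,\B_n}v_n$). Since $\lambda_nv_n=L_{s,\B_n}v_n\le L_{s,\B_{n+1}}v_n\le L_sv_n$ pointwise on $D$, the argument of Lemma~\ref{lem:nonneg} — which applies verbatim to order-preserving operators on $C_{\R}(D)$ — gives $\lambda_n\le\lambda_{n+1}\le r(L_s)$, so $\lambda_n\uparrow$ some $\lambda_s\le r(L_s)$. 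The crucial a priori input, established in Section~\ref{sec:mobius}, is that the bounds on $|D_kv_s|/v_s$ are uniform in $|\B|$; hence the $v_n$ are uniformly bounded and equi-Lipschitz and obey a Harnack-type inequality $v_n(z')\le v_n(z)\exp(C|z-z'|)$ as in \eqref{prop2}. By Arzel\`a--Ascoli a subsequence converges uniformly to $v_s\ge0$ with $\|v_s\|_\infty=1$; splitting $L_{s,\B_n}v_n$ into its partial sum over $\B_m$ (which, for fixed $m$, converges to the partial sum of $L_sv_s$) and a tail dominated by $\sum_{b\notin\B_m}|z+b|^{-2s}$, which is uniformly small by Dini's theorem as used for the boundedness of $L_s$, gives $L_sv_s=\lambda_sv_s$. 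The Harnack inequality survives in the limit, so $\|v_s\|_\infty=1$ forces $v_s>0$ on $D$, and $v_s$ is Lipschitz. That in fact $\lambda_s=r(L_s)$ and that $v_s$ is the \emph{unique} strictly positive eigenvector is precisely the substance of Theorem~\ref{thm:1.9} and Corollary~\ref{cor:1.4}, which supply the irreducibility needed: if $w$ is another positive eigenvector, its eigenvalue must equal $r(L_s)=\lambda_s$ by the argument of Lemma~\ref{lem:nonneg}, and then $w-tv_s\ge0$ with $t=\inf_z w(z)/v_s(z)>0$ attained is a nonnegative eigenfunction for $\lambda_s$ vanishing somewhere, hence $\equiv0$ by the Harnack inequality.

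For the conjugation symmetry, set $\tilde v(z)=v_s(\bar z)$; using $|z+\bar b|=|\bar z+b|$ and $\overline{1/(z+\bar b)}=1/(\bar z+b)$ one checks in one line that $\tilde v$ is a strictly positive eigenfunction, with eigenvalue $\lambda_s$, of the operator obtained from \eqref{Lsdefz} on replacing $\B$ by $\bar\B$. In particular, when $\bar\B=\B$ the uniqueness just established (with the conjugation-invariant normalization, legitimate since $D$ is symmetric under $z\mapsto\bar z$) forces $\tilde v=v_s$, i.e.\ $v_s(\bar z)=v_s(z)$. Finally, for finite $\B$ the claimed monotonicity of $x\mapsto v_s(x,y)$ is the statement $\partial_xv_s\le0$, the two-dimensional counterpart of the sign relations $(-1)^pD^{(p)}v_s(x)>0$ in Theorem~\ref{thm:posev1d}: differentiating $L_sv_s=\lambda_sv_s$ in $x$ and using $\partial_x\theta_b(z)=-\theta_b(z)^2$ together with the explicit sign of $\partial_x|z+b|^{-2s}$ on $D$, one shows that the cone $\{w\in C^1(\overline D):\partial_xw\le0\}$ is carried into itself by $\lambda_s^{-1}L_s$ — this is exactly the computation carried out in Section~\ref{sec:mobius} — whence $\partial_xv_s\le0$, with strict inequality by strict positivity of $v_s$ and the eigenvalue relation.

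The main obstacle is the infinite-$\B$ case: because $L_s$ is not compact and the exhausting subproblems naturally live on slightly different subdomains, neither the convergence $\lambda_n\to r(L_s)$ nor the uniqueness and strict positivity of the limiting eigenfunction follows from the classical compact Kre{\u\i}n--Rutman theory; they must instead be extracted from the noncompact positive-operator results of Theorem~\ref{thm:1.9} and Corollary~\ref{cor:1.4}, with the uniform-in-$|\B|$ derivative bounds of Section~\ref{sec:mobius} providing exactly the compactness that makes the limiting argument go through.
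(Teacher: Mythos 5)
Your skeleton for existence, uniqueness, and symmetry is essentially the paper's: finite $\B$ is handled by Theorem~\ref{thm:1.1} in every $C^m(\bar H)$, with $C^\infty$ smoothness from uniqueness of the normalized eigenvector; the infinite case rests on Theorem~\ref{thm:1.9} (the paper invokes it directly through Remark~\ref{rem:8.4} and Corollary~\ref{cor:1.4}, so your exhaustion by finite $\B_n$ plus Arzel\`a--Ascoli is a legitimate but redundant detour, since you still cite Theorem~\ref{thm:1.9} for the two hard conclusions $\lambda_s=r(L_s)$ and uniqueness); and the relation $v_s(\bar z)=v_s(z)$ is obtained exactly as in Corollary~\ref{cor:1.6}. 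Two cautions on details: the individual maps $\theta_b$ are \emph{not} contractions on $D$ (e.g.\ $|\theta_1'(0)|=1$), so one verifies (H5.3) with $\mu=2$ via Lemma~\ref{lem:4.1} rather than passing to a subdomain where each $\theta_b$ contracts; and your stand-alone uniqueness sketch is shaky, because $w-tv_s$ is a difference of elements of the cone $K(M)$ and need not itself satisfy the Harnack-type inequality, so "vanishes somewhere hence $\equiv 0$ by Harnack" does not follow --- the uniqueness genuinely comes from Theorem~\ref{thm:1.9} (i.e.\ Theorem 5.3 of \cite{A}).

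The genuine gap is your argument for $x\mapsto v_s(x,y)$ decreasing. The cone $\{w\ge 0:\ \partial_x w\le 0\}$ is \emph{not} carried into itself by $\lambda_s^{-1}L_s$: differentiating a term $|z+b|^{-2s}\,w(\theta_b(z))$ in $x$ gives, besides the favorable term $\partial_x\bigl(|z+b|^{-2s}\bigr)w(\theta_b(z))\le 0$, the chain-rule term $|z+b|^{-2s}\bigl[w_x(\theta_b(z))\,\Re\theta_b'(z)+w_y(\theta_b(z))\,\Im\theta_b'(z)\bigr]$, and since $\theta_b$ is holomorphic with $\Im\theta_b'$ of no fixed sign (and $\Re\theta_b'$ likewise), the $w_y$-contribution cannot be controlled inside your cone. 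This is also not "exactly the computation of Section~\ref{sec:mobius}": the paper never differentiates the eigenvalue equation. It instead uses Lemma~\ref{lem:4.3} to write $\bigl|\tfrac{d}{dz}\theta_{\tilde\w}(z)\bigr|^s=|B_{n-1}|^{-2s}\bigl((x+\gamma_\w)^2+(y+\delta_\w)^2\bigr)^{-s}$ with $\gamma_\w=\Re(B_n/B_{n-1})\ge\gamma$, notes via Lemma~\ref{lem:4.7} that $D_1$ of each such term is nonpositive (with quantitative bounds), and then passes to the limit using the ratio formula \eqref{4.11} --- a consequence of \eqref{1.14}, i.e.\ of the $C^m$ convergence \eqref{1.10} in Theorem~\ref{thm:1.1} applied to $u\equiv 1$ --- to get $-2s/\gamma\le \partial_x v_s/v_s\le 0$ as in \eqref{4.16} of Theorem~\ref{thm:4.10}. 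To repair your outline, replace the cone-invariance step by this term-by-term estimate plus limit argument; note also that the paper asserts only the nonstrict inequality, and your claim of strict monotonicity would require a further argument not supplied here.
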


If $\B \subset I_1$, let $\B_{\infty} = \{\w = (b_1, \ldots, b_k, \ldots)
\, | \, b_j \in \B \ \forall j \ge 1\}$.  Given $z \in D$ and
$\w  (b_1, \ldots, b_k, \ldots) \in \B_{\infty}$, one can prove that
$\lim_{k \rightarrow \infty}(\theta_{b_1} \circ \theta_{b_2} \circ \cdots \circ 
\theta_{b_k})(z) := \pi(\w) \in D$ exists and is independent of $z$.
Define $K = \{\pi(\w) \, | \, \w \in \B_{\infty}\}$.  It is not hard to
prove that $K = \cup_{b \in \B} \theta_b(K)$.  In general $K$ is
not compact, but if $\B$ is finite, $K$ is compact and is the unique
compact, nonempty set $K$ such that $K = \cup_{b \in \B} \theta_b(K)$.
We shall call $K$ the invariant set associated to $\B$.

\begin{thm}
\label{thm:hdim}
Let $\B$ be a subset of $I_1$ and let $K$ be the invariant set
associated to $\B$. The Hausdorff dimension $s_*$ of $K$ is given by
$s_* = \inf\{s >0 \, | \, r(L_s) = \lambda_s <1\}$ and $r(L_{s_*}) = 1$
if $\B$ is finite or $L_{s_*}$ is defined.  The map $s
\mapsto \lambda_s$, $s >1$, is a continuous, strictly decreasing
function for $s > \sigma(\B)$.
\end{thm}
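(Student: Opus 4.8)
The plan is to deduce the Hausdorff dimension formula from the abstract Hausdorff dimension result (the cited Theorem~1.2 of \cite{N-P-L}, reproduced as Theorem~1.1 in the excerpt) together with the spectral theory of $L_s$ recorded in Theorem~\ref{thm:evthm}. First I would treat the finite case. When $\B$ is finite, the maps $\theta_b(z) = 1/(z+b)$, $b \in \B$, are conjugate-analytic contractions of $D$ into $D$, hence infinitesimal similitudes with $(D\theta_b)(z) = 1/|z+b|^2$, a strictly positive real-analytic (in particular H\"older) function on $D$; the separation condition $\theta_{b_1}(K) \cap \theta_{b_2}(K) = \emptyset$ for $b_1 \neq b_2$ follows from the disjointness $\theta_{b_1}(D\setminus\{0\}) \cap \theta_{b_2}(D\setminus\{0\}) = \emptyset$ noted before \eqref{Lsdefz} together with $\theta_b(K) \subset D \setminus \{0\}$, and each $\theta_b$ is one-to-one on $K \subset D$ since it is a M\"obius map. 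Thus Theorem~1.1 applies and gives $\dim_H(K) = \sigma_0$, the unique $s$ with $r(L_{\sigma_0}) = 1$. Combined with Theorem~\ref{thm:evthm} (which gives $\lambda_s = r(L_s) > 0$) and the strict monotonicity and continuity of $s \mapsto \lambda_s$ established below, the set $\{s > 0 : r(L_s) < 1\}$ is a half-line $(s_*, \infty)$ with $\inf = s_* = \sigma_0 = \dim_H(K)$, and $r(L_{s_*}) = 1$.

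Next I would establish the monotonicity and continuity of $s \mapsto \lambda_s$ on $(\sigma(\B), \infty)$ in a way that works for infinite $\B$ as well. Continuity: for $s, s'$ in a compact subinterval of $(\sigma(\B), \infty)$, the kernels $|z+b|^{-2s}$ depend continuously on $s$ uniformly in $z \in D$ and $b \in \B$ (using that $|z+b| \ge \mathrm{dist}(0, D) $-type bounds are not available, but $|z+b|$ is bounded below away from a neighborhood of $b = 0$, which is excluded since $\B \subset I_1$), so $s \mapsto L_s$ is continuous into the operator norm on $C_{\R}(D)$, and $\lambda_s = r(L_s)$ depends continuously on $s$ because $\lambda_s$ is, by Theorem~\ref{thm:evthm}, an isolated eigenvalue equal to the spectral radius with a one-dimensional eigenspace (perturbation theory for the peripheral spectrum). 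Strict monotonicity: since $|z+b| \ge |{\rm Re}(z+b)| \ge 1$ is not quite right either—rather, for $z \in D$ and $b = m+ni$ with $m \ge 1$ one has ${\rm Re}(z) \ge 0$ so ${\rm Re}(z+b) \ge 1$, whence $|z+b| \ge 1$ with equality impossible for $z$ in the invariant set—one gets $|z+b|^{-2s'} \le |z+b|^{-2s}$ for $s' > s$, with strict inequality on a set carrying the eigenvector's mass; feeding the positive eigenvector $v_{s'}$ into $L_s$ gives $L_s v_{s'} \ge L_{s'} v_{s'} = \lambda_{s'} v_{s'}$ with strict inequality somewhere, and then the Collatz--Wielandt characterization (or Lemma~\ref{lem:nonneg}'s operator analogue, e.g. the cited results of \cite{B},\cite{C}) yields $\lambda_s > \lambda_{s'}$.

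Finally, for infinite $\B$ one cannot invoke Theorem~1.1 directly; here I would exhaust $\B$ by an increasing sequence of finite sets $\B_n \uparrow \B$ with invariant sets $K_n \uparrow$, operators $L_s^{(n)} \uparrow L_s$ and spectral radii $\lambda_s^{(n)} \uparrow \lambda_s$ (monotone convergence of the spectral radii follows from monotonicity of the operators together with the eigenvector inequalities and Lemma~\ref{lem:nonneg}-type bounds, or from the results of Section~5 of \cite{A} cited in the excerpt). By the finite case, $\dim_H(K_n)$ is the unique $s$ with $\lambda_s^{(n)} = 1$; letting $n \to \infty$ and using $\dim_H(K) = \sup_n \dim_H(K_n)$ (from $K \supset K_n$ and a standard approximation argument for invariant sets of infinite IFS's, cf.\ \cite{Mauldin-Urbanski}) one identifies $\dim_H(K) = \inf\{s > \sigma(\B) : \lambda_s < 1\}$, and $r(L_{s_*}) = \lambda_{s_*} = 1$ precisely when $s_* > \sigma(\B)$, i.e.\ when $L_{s_*}$ is defined. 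The main obstacle is this last passage to the infinite case: one must control the convergence $\lambda_s^{(n)} \to \lambda_s$ near the critical exponent $\sigma(\B)$ and ensure $\dim_H(K) = \sup_n \dim_H(K_n)$, which requires care because $K$ need not be compact; this is exactly the point the introduction flags as needing Theorem~\ref{thm:1.9} and Corollary~\ref{cor:1.4}, and I would lean on those.
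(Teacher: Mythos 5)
Your finite-case reduction to Theorem~\ref{thm:1.1} of Section~\ref{sec:prelim} (the infinitesimal-similitude theorem of \cite{N-P-L}) is exactly the route the paper takes, and deferring the infinite case to Theorem~\ref{thm:1.9} and Corollary~\ref{cor:1.4} is also in its spirit. The genuine gap is in your strict-monotonicity step. From $L_s v_{s'} \ge \lambda_{s'} v_{s'}$ pointwise with strict inequality at some points you cannot conclude $r(L_s) > \lambda_{s'}$: for a positive operator and a strictly positive $w$, $Mw \ge \lambda w$ with strict inequality somewhere only yields $r(M) \ge \lambda$ (already for the nonnegative matrix $\left(\begin{smallmatrix}1 & 0\\ 1 & 1\end{smallmatrix}\right)$ and $w=(1,1)^{T}$ the inequality is strict in one coordinate while $r=\lambda=1$), and Lemma~\ref{lem:nonneg} gives no more than that. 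Your parenthetical repair --- that $|z+b|=1$ cannot happen ``for $z$ in the invariant set'' --- does not help, because the comparison must hold on all of $D$, where the operator lives, and $|z+b|=1$ does occur (at $z=0$, $b=1$); so a single application of $L_s$ cannot even produce a uniform ratio exceeding $1$. The paper's remedy is Theorem~\ref{thm:2.2} under hypothesis (H9.1): pass to an iterate. For $\mu=2$ the weights $b_{\w}$, $\w\in\B_\mu$, are uniformly below $1$ on $\bar D$ (Lemma~\ref{lem:4.1} gives the factor $(\gamma^2+1)^{-2}$), so for $s<t$ there is $c=c(s,t)<1$ with $b_{\w}^{t}\le c\, b_{\w}^{s}$, whence $L_t^{\mu}v_s \le c\, r(L_s)^{\mu} v_s$ and $r(L_t)^{\mu}\le c\, r(L_s)^{\mu}$, which is what actually forces strict decrease. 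If you prefer your direct comparison, you must supply either such a uniform factor or a strictly positive invariant functional charging the strictness set; neither is in your argument as written.

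Two further points, less central but worth fixing. For infinite $\B$ your continuity argument invokes perturbation of an isolated simple eigenvalue, but on $C_{\R}(D)$ the result that produces the positive eigenvector (Theorem~\ref{thm:1.9}) does not isolate $\lambda_s$ in the spectrum --- its proof explicitly warns that $\sigma(L_s)$ may contain other points of modulus $r(L_s)$ --- so that perturbation argument is not available there; one should either work in a Lipschitz space (Remark~\ref{rem:1.5}) or extract continuity from log convexity of $s\mapsto r(L_s)$ as in Theorem~\ref{thm:2.1}, a finite convex function on an open interval being automatically continuous. Finally, the exhaustion step ($\dim_H(K)=\sup_n \dim_H(K_n)$ and $\lambda_s^{(n)}\to\lambda_s$ up to the critical exponent) is asserted rather than proved; the paper itself leans on \cite{Mauldin-Urbanski} and \cite{N-P-L} here, so an outline is defensible, but as it stands this part of your proposal is a sketch, not a proof.
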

In all examples which we shall consider, $L_s$ is a bounded linear map of
$C_{\R}(D) \to C_{\R}(D)$ for $s = s_*$ and $r(L_{s_*}) = 1$.

Theorems~\ref{thm:evthm} and \ref{thm:hdim} essentially reduce the
problem of estimating the Hausdorff dimension of the invariant set $K$ for
$\B \in I_1$ to the problem of
estimating the value of $s$ for which $r(L_s) =1$.  If $\bar \B = \B$ and
if we use the fact
that $v_s(\bar z) = v_s(z)$ for $z \in H$, we find that
\begin{multline}
\label{breakup}
\lambda_s v_s(z) =  \sum_{\substack{b \in \B, |b| \le R \\ \Im(b) \ge 0}} 
\frac{1}{|z+b|^{2s}} v_s(1/(\bar z + \bar
b))
\\
+ \sum_{\substack{b \in \B, |b| \le R \\ \Im(b) < 0}} \frac{1}{|z+b|^{2s}} v_s(1/(z + b))
+ \sum_{b \in \B, |b| > R} \frac{1}{|z+b|^{2s}} v_s(1/(z + b)).
\end{multline}

If $\B = I_1$, it was stated in \cite{Mauldin-Urbanski} that the
Hausdorff dimension of the invariant set $K$ is $\le 1.885$ and in \cite
{Priyadarshi}, it was shown that the Hausdorff dimension of the set
$K$ is $\ge 1.78$. We shall give much sharper estimates below.
We shall also give estimates for the Hausdorff dimension of the invariant
set of $\B \subset I_1$, for some other choices of $\B$, e.g.,
$\B = I_2:= \{b = m+ ni: m \in \N, n \in \N \cup {0}\}$
and $\B = I_3:= \{b = m+ ni: m \in \{1,2\}\}, n \in \{0, \pm 1, \pm 2\}\}$.

\subsection{Numerical Method}
\label{subsec:method}
For an integer $N>0$, we define a mesh domain $D_h \supset D$, consisting of
squares of sides $h = 1/N$. $D_h$ is chosen to have the property that if
$(x,y) \in D$, then the four corners of the mesh square of side $h$ containing
$(x,y)$ are mesh points in $D_h$.  Although we could simply choose $D_h$ to be
the rectangle $[0,1] \times [0,1/2]$, that choice would add unknowns we do not
use.  We also note that in the case $\B = I_3$, there is a smaller domain $E
\subset D$ such that $\theta_b(E) \subset E \setminus \{0\}$ and although we
have not done so, we could have reduced the size of the approximate problem by
using a mesh domain $E_h \supset E$.

\begin{figure}[htb]
\centerline{\includegraphics[width=14cm]{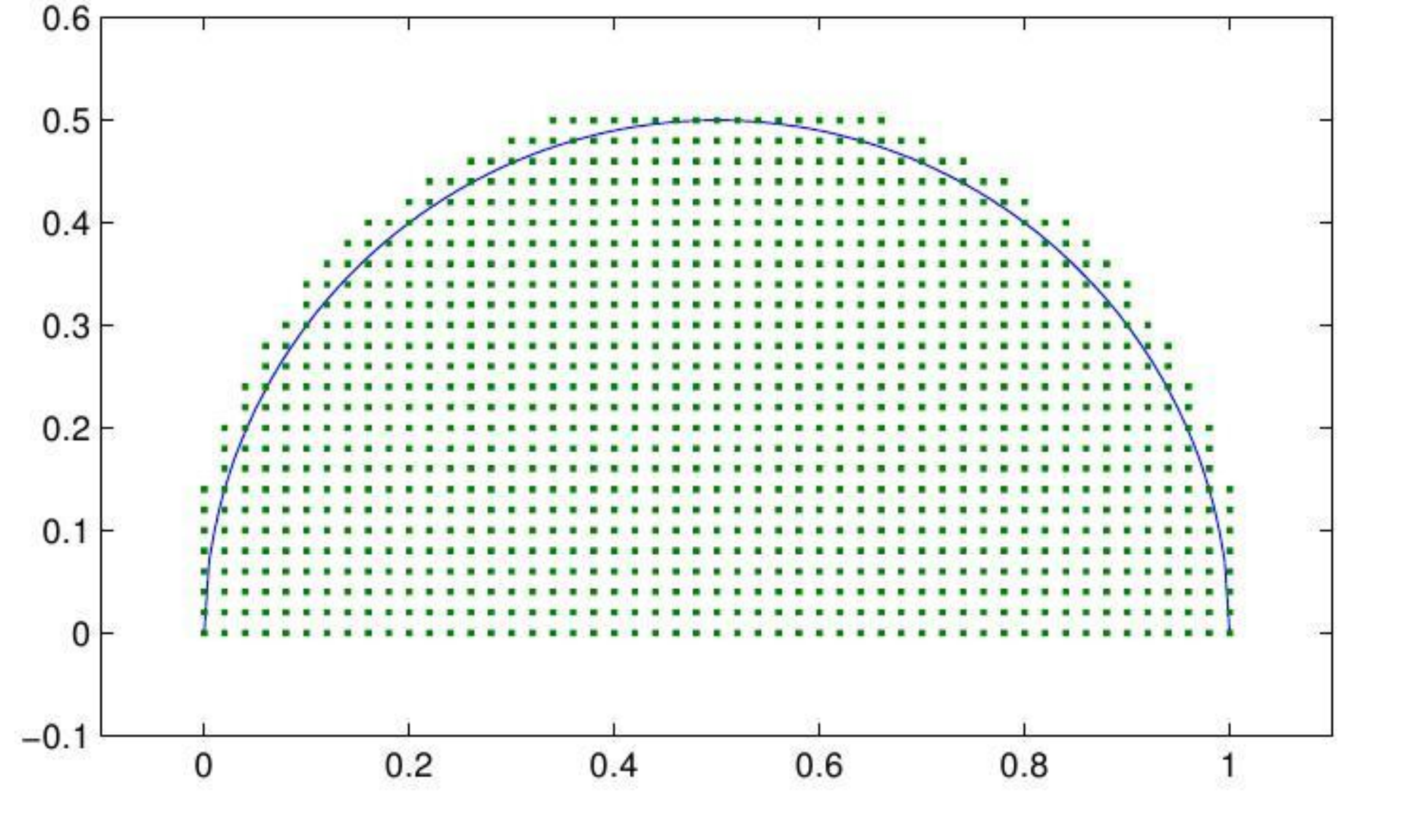}}
\caption[]{Domain $D$ and mesh domain $D_h$}.
\label{fg:f1}
\end{figure}

We then approximate the function $v_s$ by a piecewise bilinear function
defined on the mesh $D_h$ so that we can approximate the infinite
dimensional eigenvalue problem by a finite dimensional one.  In order to
obtain a finite dimensional problem, we also need to restrict the range of
$b$ to the set $|b| \le R$ for a suitably chosen value of $R$ for which
the error in restricting the sum can be given a precise bound which
is sufficiently small.

More precisely, our goal is to again define matrices $A_s$ and $B_s$ such
that
\begin{equation*}
r(A_s) \le r(L_s) \le r(B_s), \quad s >1.
\end{equation*}
We then use the same procedure as for the one-dimensional problems.
If $s_*$ denotes the unique value of $s$ such that 
$r(L_{s_*}) = \lambda_{s_*} = 1$, then $s_*$ is the Hausdorff dimension of
the set $K$.  If we can find a number $s_1$ such that $r(B_{s_1}) \le 1$, then
$r(L_{s_1}) \le r(B_{s_1}) \le 1$, and we can conclude that $s_* \le s_1$.
Analogously, if we can find a number $s_2$ such that $r(A_{s_2}) \ge 1$, then
$r(L_{s_2}) \ge r(A_{s_2}) \ge 1$, and we can conclude that $s_* \ge s_2$.
By choosing the mesh sufficiently fine, we can make $s_1-s_2$ small,
providing a good estimate for $s_*$.

We next describe how to construct the matrices $A_s$ and $B_s$, once
we have defined the mesh $D_h$. To do this, we use the following
results (proved in Section~\ref{sec:mobius}).
\begin{align}
\label{vrelation}
v_s(z_1) &\le v_s(z_2) \exp(\sqrt{5}s|z_1-z_2|), \quad z_1,z_2 \in D,
\\
\label{Dxxbound}
-\frac{s}{4\gamma^2(s+1)} v_s(x,y) &\le  D_{xx} v_s(x,y) \le 
\frac{2s(2s+1)}{\gamma^2} v_s(x,y),
\\
\label{Dyybound}
-\frac{2s}{\gamma^2} v_s(x,y) &\le  D_{yy} v_s(x,y) 
\le \frac{2s(2s+1)}{4\gamma^2} v_s(x,y).
\end{align}
Here we suppose that $v_s(z)$ is as in \eqref{breakup} and that $\Re(b) \ge
\gamma >0$ for all $v \in \B$.

We also use some standard results about bilinear interpolation.
On the mesh square
\begin{equation*}
R_{k,l} = \{(x,y): x_k \le x \le x_{k+1}, y_l \le y \le y_{l+1}\},
\end{equation*}
where $x_{k+1} - x_k = y_{l+1} - y_l =h$, the bilinear interpolant
$f^I(x,y)$ of a function $f(x,y)$ is given by:
\begin{multline*}
f^I(x,y) = \Big[\frac{x_{k+1} -x}{h}\Big]
 \Big[\frac{y_{l+1} -y}{h}\Big]f(x_k,y_l)
+  \Big[\frac{x- x_{k}}{h}\Big]
\Big[\frac{y_{l+1} -y}{h}\Big] f(x_{k+1},y_l)
\\
+  \Big[\frac{x_{k+1} -x}{h}\Big]
\Big[ \frac{y - y_{l}}{h}\Big] f(x_k,y_{l+1})
+  \Big[\frac{x- x_{k}}{h}\Big]
 \Big[\frac{y - y_{l}}{h} \Big] f(x_{k+1},y_{l+1}).
\end{multline*}
The error in bilinear interpolation satisfies for all $(x,y) \in R_{k,l}$
and some points $(a_k,b_l)$ and $(c_k,d_l) \in  R_{k,l}$,
\begin{multline*}
f^I(x,y) - f(x,y)
= 1/2)\Big[ (x_{k+1} -x)(x- x_{k}) (D_{xx}f)(a_k,b_l)
\\
+ (y_{l+1} -y)(y - y_{l}) (D_{yy} f)(c_k,d_l)\Big].
\end{multline*}

For $z = x+iy$, let $f(x,y) = v_s(\theta_{b}(z))$. Further let $z_{k,l} = x_k
+ i y_l$.  If $(\tilde x, \tilde y) = (\Re \theta_{b}(z), \Im  \theta_{b}(z))
\in R_{k,l}$, (which we will sometimes abbreviate by $\theta_{b}(z) \in
R_{k,l}$), we get
\begin{multline*}
v_s^I(\theta_{b}(z)) = 
\Big[\frac{x_{k+1} -\tilde x}{h}\Big]
 \Big[\frac{y_{l+1} -\tilde y}{h}\Big]v_s(z_{k,l})
+  \Big[\frac{\tilde x- x_{k}}{h}\Big]
\Big[\frac{y_{l+1} -\tilde y}{h}\Big] v_s(z_{k+1,l})
\\
+  \Big[\frac{x_{k+1} -\tilde x}{h}\Big]
\Big[ \frac{\tilde y - y_{l}}{h}\Big] v_s(z_{k,l+1})
+  \Big[\frac{\tilde x- x_{k}}{h}\Big]
 \Big[\frac{\tilde y - y_{l}}{h} \Big] v_s(z_{k+1,l+1}).
\end{multline*}
Defining
\begin{equation*}
\Psi_b(z) = 1/(\bar z + \bar b),
\end{equation*}
we have an analogous formula for $v_s^I(\Psi_{b}(z))$, with
$(\tilde x, \tilde y) = (\Re \Psi_{b}(z), \Im  \Psi_{b}(z))$.

We next use inequalities \eqref{vrelation}, \eqref{Dxxbound}, and
\eqref{Dyybound} to obtain bounds on the interpolation error. 
By \eqref{Dxxbound} and \eqref{Dyybound}, we get
for $\theta_{b}(z) = \tilde x + i \tilde y$ where $(\tilde x, \tilde
y) \in R_{k,l}$,
\begin{multline*}
-\Big[\frac{s}{8\gamma^2(s+1)} + \frac{s}{\gamma^2}\Big] 
\left([x_{k+1} - \tilde x][\tilde x -x_{k}]v_s(a_k,b_l) 
+ [y_{l+1} - \tilde y][\tilde y -y_{l}]v_s(c_k,d_l) \right) 
\\
\le  v_s^I(\theta_{b}(z)) - v_s(\theta_{b}(z)) 
\\
\le \frac{s(2s+1)}{\gamma^2}
\left([x_{k+1} - \tilde x][\tilde x -x_{k}]v_s(a_k,b_l)
+ [y_{l+1} - \tilde y][\tilde y -y_{l}]v_s(c_k,d_l)\right).
\end{multline*}
Applying \eqref{vrelation}, we then obtain
\begin{multline*}
-\Big[\frac{s}{8\gamma^2(s+1)} + \frac{s}{\gamma^2}\Big] 
\left([x_{k+1} - \tilde x][\tilde x -x_{k}] 
+ [y_{l+1} - \tilde y][\tilde y -y_{l}] \right) 
\exp(\sqrt{10} sh) v_s^I(\theta_{b}(z))
\\
\le  v_s^I(\theta_{b}(z)) - v_s(\theta_{b}(z)) 
\\
\le \frac{s(2s+1)}{\gamma^2}
\left([x_{k+1} - \tilde x][\tilde x -x_{k}]
+ [y_{l+1} - \tilde y][\tilde y -y_{l}]\right) \exp(\sqrt{10} sh)
v_s^I(\theta_{b}(z)).
\end{multline*}
since any point in $R_{k,l}$ is within $\sqrt{2}h$
of each of the four corners of the square $R_{k,l}$.  An analogous result
holds for $v_s(\Psi_b(z))$.

Using this estimate, we have precise upper and lower bounds on the error
in the mesh square $R_{k,l}$ that only depend on the function values of
$v_s$ at the four corners of the square and the value of $b$.  Letting
\begin{align*}
\err_b^1(\theta_b(z)) &=\Big([x_{k+1} - \tilde x][\tilde x -x_{k}]
+ [y_{l+1} - \tilde y][\tilde y -y_{l}]\Big)
\frac{s (2s+1)}{\gamma^2} \exp(\sqrt{10}sh),
\\
\err_b^2(\theta_b(z)) &=\Big([x_{k+1} - \tilde x][\tilde x -x_{k}]
+ [y_{l+1} - \tilde y][\tilde y -y_{l}]\Big)
\frac{s}{\gamma^2}\Big[\frac{9+8s}{8+8s}\Big] 
\exp(\sqrt{10}sh),
\end{align*}
(where again $\theta_{b}(z) = \tilde x + i \tilde y$),
we have for each mesh point $z_{i,j} = x_i + i y_j$, with
$\theta_{b}(z_{i,j}) \in R_{k,l}$,
\begin{equation*}
[1 - \err_b^1(z_{i,j})] v_s^I(\theta_{b}(z_{i,j})) \le v_s(\theta_{b}(z_{i,j})) 
\le [1 + \err_b^2(z_{i,j})]v_s^I(\theta_{b}(z_{i,j})).
\end{equation*}
Again, the analogous result holds for $v_s(\Psi_b(z))$.
Before using this result as in the one dimensional examples to find upper and
lower matrices that can be used to find upper and lower bounds
on the Hausdorff dimension of the set $K$, we must first deal with the
final expression in \eqref{breakup} where the sum is taken over $|b| > R$.

\begin{lem}
\label{lem:largeb}
For $s >1$, we have
\begin{multline*}
\sum_{b \in I_1, |b| > R} \frac{1}{|z+b|^{2s}} v_s(\theta_b(z))
\le \exp\Big(\frac{2s}{\sqrt{R^2-R}}\Big)\Big(\frac{R}{R-1}\Big)^s
\\
\cdot \left[\Big(\frac{1}{2s-1}\Big)\Big(\frac{1}{R-1}\Big)^{2s-1}
+ \Big(\frac{\pi}{2}\Big)\Big(\frac{1}{s-1}\Big)
\Big(\frac{1}{R-\sqrt{2}}\Big)^{2s-2}
\right]v_s(0).
\end{multline*}
\begin{multline*}
\sum_{b \in I_2, |b| > R} \frac{1}{|z+b|^{2s}} v_s(\theta_b(z))
\le \exp\Big(\frac{2s}{\sqrt{R^2-R}}\Big)\Big(\frac{R}{R-1}\Big)^s
\\
\cdot \left[\Big(\frac{1}{2s-1}\Big)\Big(\frac{1}{R-1}\Big)^{2s-1}
+ \Big(\frac{\pi}{4}\Big)\Big(\frac{1}{s-1}\Big)
\Big(\frac{1}{R-\sqrt{2}}\Big)^{2s-2}
\right]v_s(0).
\end{multline*}
\end{lem}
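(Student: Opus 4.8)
The plan is to bound the tail sum $\sum_{b \in I_1, |b| > R} |z+b|^{-2s} v_s(\theta_b(z))$ by first pulling the factor $v_s(\theta_b(z))$ down to $v_s(0)$ and then estimating the resulting numerical series by comparison with an integral. For the first step, note that $\theta_b(z) \in D$ for all $b \in I_1$ and $z \in D$, and that $0 \in \bar D$; since $|\theta_b(z)| = 1/|z+b| \le 1/(|b| - |z|) \le 1/(R - 1)$ for $|b| > R$ and $|z| \le 1$, the point $\theta_b(z)$ is at distance at most $1/(R-1)$ from $0$. Actually to get the cleaner constant $2s/\sqrt{R^2-R}$ appearing in the statement, I would instead use that any $z \in D$ satisfies $|z| \le 1$ together with a sharper bound $|\theta_b(z)| \le 1/\sqrt{R^2 - R}$ valid when $z$ ranges over $D$ and $|b|>R$ (using $\Re b \ge 1$ and the geometry of $D$), and then invoke \eqref{vrelation}: $v_s(\theta_b(z)) \le v_s(0) \exp(\sqrt 5 s |\theta_b(z)|)$. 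Hmm — the exponent in the claim is $2s/\sqrt{R^2-R}$, not $\sqrt5\, s/\sqrt{R^2-R}$, so the right tool is a variant of \eqref{vrelation} with constant $2$ rather than $\sqrt5$; such a bound holds along the relevant segment from $\theta_b(z)$ to $0$ because that segment stays in a region where the derivative bound on $\log v_s$ is $\le 2s$ (the $\sqrt5$ is the worst case over all of $D$, while near $0$ one does better). I would state and use that refined inequality, citing the derivative estimates of Section~\ref{sec:mobius}.

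For the second step, after factoring out $v_s(0)\exp(2s/\sqrt{R^2-R})$, it remains to bound $\sum_{b \in I_1, |b|>R} |z+b|^{-2s}$. Write $b = m + ni$ with $m \ge 1$, $n \in \Z$. Split the sum according to whether $n = 0$ or $n \ne 0$. For $n = 0$, $b = m$ with $m > R$, and $|z+m| \ge m - 1 \ge R - 1$ decreasing; here $\sum_{m > R} |z+m|^{-2s} \le \sum_{m > R}(m-1)^{-2s} \le \int_{R-1}^\infty t^{-2s}\,dt = \frac{1}{2s-1}(R-1)^{-(2s-1)}$, and including the prefactor $(R/(R-1))^s$ (which absorbs the crude replacement of $|z+m|$ by $m-1$ uniformly, since $|z+m| \ge |m| - |z|$ and one can bound $|m|/(|m|-1) \le R/(R-1)$) gives the first bracketed term. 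For $n \ne 0$, I would use $|z + m + ni|^2 \ge (m - 1)^2 + n^2 \ge$ (something comparable to) $m^2 + n^2$ after the $R/(R-1)$ adjustment, and then compare the double sum $\sum_{m \ge 1, n \ne 0, m^2+n^2 > R^2} (m^2+n^2)^{-s}$ with the integral $\iint_{x^2+y^2 > (R-\sqrt2)^2} (x^2+y^2)^{-s}\,dx\,dy$ over the appropriate quarter/half region. Passing to polar coordinates, $\int_{R - \sqrt2}^\infty \rho^{-2s} \rho \, d\rho = \frac{1}{2s-2}(R-\sqrt2)^{-(2s-2)}$; the angular measure contributes the factor $\pi/2$ for $I_1$ (where $n$ ranges over all of $\Z$, i.e. a half-plane worth of lattice points, $m \ge 1$) and $\pi/4$ for $I_2$ (where $n \ge 0$, a quarter-plane), which is exactly the difference between the two displayed inequalities. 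The $\sqrt2$ shift $R \mapsto R - \sqrt2$ is the standard integral-test correction: each lattice point $(m,n)$ with $m^2+n^2>R^2$ owns a unit square whose points lie within distance $\sqrt2/2 < \sqrt2$ of it, so $(m^2+n^2)^{-s}$ is bounded by the integral of $(x^2+y^2)^{-s}$ over a region containing that square, provided we start the radial integral at $R - \sqrt2$. The factor $1/(s-1) = 2/(2s-2)$ then appears with the $\pi/2$ (resp.\ $\pi/4$) folded in.

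The main obstacle I anticipate is bookkeeping the constants so that the crude uniform replacements $|z+b| \rightsquigarrow (\text{radial distance})$ are all absorbed cleanly into the single prefactor $(R/(R-1))^s$ and the single shift $R \mapsto R-\sqrt2$, rather than producing a messier bound; in particular one must check that $(R/(R-1))^s$ is large enough to cover both the $n=0$ and $n\ne0$ contributions simultaneously and that the integral-comparison on the two-dimensional lattice really does lose only a $\sqrt2$ in the radius. A secondary technical point is justifying the refined version of \eqref{vrelation} with constant $2$ along the segment to $0$; if that turns out to be awkward, an alternative is to accept the constant $\sqrt5$ and note $\sqrt5 < 2\sqrt{5}/2$... — no, that does not give $2$. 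So I would either prove the sharper segment estimate directly from the Section~\ref{sec:mobius} derivative bounds (where near $x$ large the bound $|D_x \log v_s| \le 2s$ is available), or, if the paper's convention there already yields exactly $2$ on the relevant subregion, simply cite it. Everything else is routine estimation of convergent series against improper integrals.
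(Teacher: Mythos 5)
Your strategy is the same as the paper's: pull $v_s(\theta_b(z))$ down to $v_s(0)$ via $v_s(\theta_b(z))\le \exp(2s|\theta_b(z)|)\,v_s(0)$ with $|\theta_b(z)|\le (R^2-R)^{-1/2}$, replace $|z+b|^{-2s}$ by $(R/(R-1))^s(m^2+n^2)^{-s}$, handle $n=0$ by a one-dimensional integral test, and handle $n\neq 0$ by comparing the lattice sum with $\iint (u^2+v^2)^{-s}\,du\,dv$ over one quadrant (for $I_2$) or two quadrants (for $I_1$), losing only $\sqrt{2}$ in the radius; the $\pi/2$ versus $\pi/4$ dichotomy arises exactly as you describe. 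Two details are worth noting. First, the paper obtains both the exponential factor and the $(R/(R-1))^s$ factor from the single inequality $|z+b|^2\ge m^2+n^2-|n|$ (minimizing $(x+m)^2+(y+n)^2$ over $D$), and at the step you flagged it simply invokes \eqref{vrelation} with the constant $2s$; your instinct that this needs a refinement of the $\sqrt{5}s$ Lipschitz bound is correct, and the clean way to justify it is the one you hint at: since $D_1 v_s\le 0$ and $|D_2 v_s|/v_s\le s$, follow the L-shaped path $(0,0)\to(x_1,0)\to(x_1,y_1)$ inside the convex disk, giving even $\exp(s|\Im\theta_b(z)|)$. Second, your $n=0$ treatment as written would not deliver the stated constant: bounding $|z+m|\ge m-1$ and summing $(m-1)^{-2s}$ over $m>R$ only yields $\int_{R-2}^{\infty}t^{-2s}\,dt$ (the unit intervals attached to those summands protrude below $R-1$), and the $(R/(R-1))^s$ slack does not in general absorb the discrepancy; instead use $|z+m|\ge m$ (valid since $\Re z\ge 0$ on $D$), as the paper does, so that $\sum_{m\ge R}m^{-2s}\le \tfrac{1}{2s-1}(R-1)^{1-2s}$ and the stated bracket comes out exactly.
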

\begin{proof}
Using \eqref{vrelation}, we have
\begin{equation*}
v_s(\theta_b(z)) \le \exp(2s |\theta_b(z)|) v_s(0).
\end{equation*}
Now for $ z = x + iy \in D$ and $b = m+ in \in I_1$, we have
\begin{multline*}
\min_{(x-1/2)^2 + y^2 \le 1/4} (x+m)^2 + (y+n)^2
\ge \min_{0 \le x \le 1}(x+m)^2 + \min_{|y| \le 1/2} (y+n)^2
\\
\ge m^2 + (|n|-1/2)^2 \ge m^2 + n^2 - |n|.
\end{multline*}
Hence, for $z \in D$,
\begin {equation*}
\frac{1}{|z+b|^2} = \frac{1}{(x+m)^2 + (y+n)^2}
\le \frac{1}{m^2 + n^2 - |n|}.
\end{equation*}
Also, it is easy to check that if $m^2 + n^2 \ge R^2 >1$,
\begin{equation*}
\frac{1}{m^2 + n^2 - |n|} \le \frac{R}{R-1} \frac{1}{m^2 + n^2}
\le \frac{1}{R^2-R}.
\end{equation*}
Hence, for $m^2 + n^2 \ge R^2 >1$ and $z \in D$,
\begin{equation*}
\exp(2s |\theta_b(z)|) \le \exp\Big(\frac{2s}{\sqrt{m^2 + n^2 - |n|}}\Big)
\le \exp\Big(\frac{2s}{\sqrt{R^2-R}}\Big).
\end{equation*}
It follows that
\begin{multline*}
\sum_{b \in I_1, |b| > R} \frac{1}{|z+b|^{2s}} \exp(2s \theta_b(z))
\\
\le \exp\Big(\frac{2s}{\sqrt{R^2-R}}\Big) \Big(\frac{R}{R-1}\Big)^s
\sum_{b \in I_1, |b| > R} \Big(\frac{1}{m^2 + n^2}\Big)^s.
\end{multline*}
Now for $n=0$ and $m \ge R$,
\begin{equation*}
\sum_{m \ge R} \frac{1}{m^{2s}} \le \int_{R-1}^{\infty} \frac{1}{r^{2s}} \, ds
= \frac{1}{2s-1} \Big(\frac{1}{R-1}\Big)^{2s-1}.
\end{equation*}
For $b = m+ in \in I_1$ with $m \ge 1$, $n\ge 1$, and $|b| \ge R$, let
\begin{equation*}
B(m,n) = \{(\xi,\eta): m \le \xi \le m +1, n \le \eta \le n+1\}.
\end{equation*}
Then for $(u,v) \in B(m,n)$,
\begin{equation*}
\frac{1}{(u-1)^2 + (v-1)^2} \ge \frac{1}{m^2 + n^2}.
\end{equation*}
Also,
\begin{multline*}
(u-1)^2 + (v-1)^2 \ge (m-1)^2 + (n-1)^2 = m^2 + n^2 -2 (m+n) + 2
\\
\ge m^2 + n^2 -2 \sqrt{2} \sqrt{m^2 + n^2} + 2 = (\sqrt{m^2 + n^2} -
\sqrt{2})^2 \ge (R- \sqrt{2})^2 \equiv R_1^2.
\end{multline*}
Hence,
\begin{multline*}
\sum_{\substack{m \ge 1, n \ge 1 \\ m^2 + n^2 > R^2}}\ 
\Big(\frac{1}{m^2 + n^2}\Big)^s 
\le \sum_{\substack{m \ge 1, n \ge 1 \\ m^2 + n^2 > R^2}}\ 
\iint\limits_{B(m,n)} \Big(\frac{1}{(u-1)^2 + (v-1)^2}\Big)^s \, du \, dv
\\
\le \iint\limits_{\substack{u \ge 0, v \ge0 \\ u^2 + v^2 \ge R_1^2} }
\Big(\frac{1}{u^2 + v^2}\Big)^s \, du \, dv
= \frac{\pi}{2} \int_{R_1}^{\infty} \frac{1}{r^{2s}} r \, dr
= \frac{\pi}{2} \frac{r^{2-2s}}{2-2s}\Big|_{R_1}^{\infty}
\\
=\frac{\pi}{2} \frac{1}{2s-2}\frac{1}{R_1^{2s-2}}
= \frac{\pi}{4}\frac{1}{s-1}\left(\frac{1}{R- \sqrt{2}}\right)^{2s-2}.
\end{multline*}
A similar argument shows that
\begin{equation}
\label{negn}
\sum_{\substack{m \ge 1, n \le -1 \\ m^2 + n^2 > R^2}}\ 
\Big(\frac{1}{m^2 + n^2}\Big)^s 
\le \frac{\pi}{4}\frac{1}{s-1}\Big(\frac{1}{R- \sqrt{2}}\Big)^{2s-2}.
\end{equation}
Combining these estimates, we obtain
\begin{multline*}
\sum_{b \in I_1, |b| > R} \frac{1}{|z+b|^{2s}} \exp(2s \theta_b(z))
\le \exp\Big(\frac{2s}{\sqrt{R^2-R}}\Big) \Big(\frac{R}{R-1}\Big)^s
\\
\cdot \left[ \frac{1}{2s-1} \Big(\frac{1}{R-1}\Big)^{2s-1}
\hskip-5pt
+ \frac{\pi}{2}\frac{1}{s-1}\Big(\frac{1}{R- \sqrt{2}}\Big)^{2s-2}
\right] =: c_{R,s}.
\end{multline*}
and a similar estimate for the sum over $I_2$, where the factor
$\pi/2$ is replaced by $\pi/4$, since we no longer include the bound
in \eqref{negn}. The lemma follows immediately.
\end{proof}
For $s = 1.85$, evaluating the above expression gives
$0.000796$ for $R=100$, $0.000236$ for $R=200$, and
$0.000117$ for $R=300$.  For $s = 1.60$, the corresponding expression for
the set $I_2$ gives $0.005582$ for $R=100$, $0.002347$ for $R=200$, and
$0.001427$ for $R=300$.

To use these results, we proceed for the finite sum analogously
to Section~\ref{sec:1dexps} to get matrices $A_s$ and $B_s$.  To account for
the terms where $|b| >R$, we note that for the lower matrix $A_s$, we can
simply drop all terms where $|b| >R$, while for the upper matrix $B_s$,
we add to the operator a term of the form
$c_{R,s} v(0)$ so that we are now approximating the operator
\begin{equation*}
(L_s^0v)(z) = \sum_{b \in \B, |b|\le R} \Big|\frac{d}{dz} \theta_b(z)\Big|^s
v(\theta_b(z)) + c_{R,s} v(0) 
= \sum_{b \in \B, |b|\le R} \frac{v(\theta_b(z))}{|z+b|^{2s}}
+ c_{R,s} v(0),
\end{equation*}
where $c_{R,s}$ is one of the constants in Lemma~\ref{lem:largeb}, depending
on whether we are interested in $I_1$ or $I_2$.

\begin{table}[!ht]
\caption{Computation of Hausdorff dimension $s$  
for several values of $h$ and $R$ (rounded to 5 decimal places).}
\label{tb:t4}
\begin{center}
\begin{tabular}{|c|c|c|c|c|}
\hline
Set & $h$ & $R$    &   lower $s$  & upper $s$ \\
\hline \hline
$I_1$ &  $.02$ & $100$ &  1.85459   & 1.85609\\
$I_1$ &  $.01$ & $100$ &  1.85507   & 1.85595 \\
$I_1$ &  $.005$ & $100$ & 1.85518    & 1.85591 \\
\hline
$I_1$ &  $.02$ & $200$ &  1.85503   & 1.85604 \\
$I_1$ &  $.01$ & $200$ &  1.85550   & 1.85589 \\
\hline
$I_1$ &  $.02$ & $300$ & 1.85513   & 1.85603 \\
\hline \hline
$I_2$ &  $.02$ & $100$ & 1.60240    & 1.60677 \\
$I_2$ &  $.01$ & $100$ &  1.60270   & 1.60668\\
$I_2$ &  $.005$ & $100$ & 1.60277   & 1.60666 \\
\hline
$I_2$ &  $.02$ & $200$ &  1.60444   & 1.60654  \\
$I_2$ &  $.01$ & $200$ &  1.60474   & 1.60644 \\
\hline
$I_2$ &  $.02$ & $300$ &  1.60504   & 1.60650 \\
\hline \hline
$I_3$ &  $.02$  &  & 1.53705 &  1.53790  \\
$I_3$ &  $.01$  &  & 1.53754 &  1.53774  \\
$I_3$ &  $.005$ &  & 1.53765 &  1.53770  \\
\hline
\end{tabular}
\end{center}
\end{table}


\section{Existence of $C^m$ positive eigenvectors}
\label{sec:exist}
In this section we shall describe some results concerning existence of
$C^m$ positive eigenvectors for a class of positive (in the sense of
order-preserving) linear operators.  We shall later indicate how one
can often obtain explicit bounds on partial derivatives of the
positive eigenvectors.  As noted above, such estimates play a crucial
role in our numerical method and therefore in obtaining rigorous
estimates of Hausdorff dimension for invariant sets associated with
iterated function systems.

The methods we shall describe can also be applied to the important case of
graph directed iterated function systems, but for simplicity we shall restrict
our attention in this paper to a class of linear operators arising in the
iterated function system case.

The starting point of our analysis is Theorem 5.5 in \cite{E}, which we
now describe for a simple case. If $H$ is a bounded open subset of
$\R^n$ and $m$ is a positive integer, $C^m(\bar H)$ will denote the
set of real-valued $C^m$ maps $f:H \to \R$ such that all partial
derivatives $D^{\alpha} f$ with $|\alpha| \le m$ extend continuously
to $\bar H$. (Here $\alpha = (\alpha_1, \ldots, \alpha_n)$ is a
multi-index with $\alpha_j \ge 0$ for all $j$, $D_j
= \partial/\partial x_j$ for $1 \le j \le n$ and $D^{\alpha} =
D_1^{\alpha_1} \cdots D_n^{\alpha_n}$), $C^m(\bar H)$ is a real Banach
space with $\|f\| = \sup\{|D^{\alpha} f(x)|: x \in H, |\alpha| \le
m\}$.

We say that $H$ is {\it mildly regular} if there exist $\eta >0$ and
$M \ge 1$ such that whenever $x,y \in H$ and $\|x-y\| < \eta$, there exists
a Lipschitz map $\psi:[0,1] \to H$ with $\psi(0) = x$, $\psi(1) = y$
and
\begin{equation}
\label{1.1}
\int_0^1 \|\psi^{\prime}(t)\| \, dt \le M \|x-y\|.
\end{equation}
(Here $\|\cdot\|$ denotes any fixed norm on $\R^n$. If the norm is changed,
\eqref{1.1} remains valid, but with a different constant $M$.)

Let $\B$ denote a finite index set with $|\B| = p$.  For $\beta \in \B$, we
assume
\begin{align*}
&\text{(H5.1)} \ \,
b_{\beta} \in C^m(\bar H) \text{ for all } \beta \in \B \text{ and }
b_{\beta} > 0 \text{ for all } x \in \bar H \text{ and all } \beta \in \B.
\\
&\text{(H5.2)} \ \,
\theta_{\beta}:H \to H \text{ is a } C^m \text{ map for all } \beta \in \B,
\text{ i.e., if } \theta_{\beta}(x) = (\theta_{\beta_1}(x),
\ldots \theta_{\beta_n}(x)),
\\
&\ \, \qquad \qquad \text{ then } \theta_{\beta_k} \in C^m(\bar H)
\text{ for all } \beta \in \B \text{ and for } 1 \le k \le n.
\end{align*}
In (H5.1) and (H5.2), we always assume that $m \ge 1$.

We define $\Lambda: C^m(\bar H) \to C^m(\bar H)$ by
\begin{equation}
\label{1.2}
(\Lambda(f))(x) = \sum_{\beta \in B} b_{\beta}(x) f(\theta_{\beta}(x)).
\end{equation}
For integers $\mu \ge 1$, we define $\B_{\mu} := \{\w = (j_1, \ldots j_{\mu})
\, | \, j_k \in \B \text{ for } 1 \le k \ \le \mu\}$. For
$\w = (j_1, \ldots j_{\mu}) \in \B_{\mu}$, we define $\w_{\mu} = \w$,
$\w_{\mu -1} = (j_1, \ldots j_{\mu-1})$,
$\w_{\mu -2} = (j_1, \ldots j_{\mu-2})$, $\cdots$, $\w_1 = j_1$.  We define
\begin{equation}
\label{1.3}
\theta_{\w_{\mu-k}}(x) = (\theta_{j_{\mu-k}} \circ \theta_{j_{\mu-k-1}} \circ
\cdots \circ \theta_{j_{1}})(x),
\end{equation}
so
\begin{equation}
\label{1.4}
\theta_{\w}(x):= \theta_{\w_{\mu}}(x) = 
(\theta_{j_{\mu}} \circ \theta_{j_{\mu-1}} \circ
\cdots \circ \theta_{j_{1}})(x).
\end{equation}

For $\w \in \B_{\mu}$, we define $b_{\w}(x)$ inductively by 
$b_{\w}(x) = b_{j_1}(x)$
if $\w = (j_1) \in \B:=\B_1$,
$b_{\w}(x) = b_{j_2}(\theta_{j_1}(x)) b_{j_1}(x)$ if $\w = (j_1,j_2) \in \B_2$
and, for $\w = (j_1,j_2, \ldots j_{\mu}) \in \B_{\mu}$,
\begin{equation}
\label{1.5}
b_{\w}(x) = b_{j_\mu}(\theta_{\w_{j_{\mu-1}}}(x)) b_{\w_{\mu -1}}(x).
\end{equation}

If is not hard to show (see \cite{A}, \cite{Bourgain-Kontorovich}, \cite{E})
that
\begin{equation}
\label{1.6}
(\Lambda^{\mu}(f))(x) = \sum_{\w \in \B_{\mu}} b_{\w}(x) f(\theta_\w(x)).
\end{equation}

It is easy to prove (see \cite{E}) that $\Lambda$ defines a bounded linear map
of $C^m(\bar H) \to C^m(\bar H)$.  We shall let $\hat \Lambda$ denote the
complexification of $\Lambda$ and let $\sigma(\hat \Lambda)$ denote the
spectrum of $\hat \Lambda$.  We shall define $\sigma(\Lambda)
= \sigma(\hat \Lambda)$. If all the functions $b_j$ and $\theta_j$ are $C^N$,
then we can consider $\Lambda$ as a bounded linear operator $\Lambda_m:
C^m(\bar H) \to C^m(\bar H)$ for  $1 \le m \le N$, but one should note
that in general $\sigma(\Lambda_m)$ will depend on $m$.

To obtain a useful theory for $\Lambda$, we need a further crucial assumption.
For a given norm $\|\cdot \|$ on $\R^n$, we assume

(H5.3)  There exists a positive integer $\mu$ and a constant $\kappa <1$ such
that for all $\w \in \B_{\mu}$ and all $x,y \in H$,
\begin{equation}
\label{1.7}
\|\theta_\w(x) - \theta_\w(y)\| \le \kappa \|x-y\|.
\end{equation} 

If we define $c = \kappa^{1/\mu} <1$, it follows from (H5.3) that there
exists a constant $M$ such that for all $\w \in B_{\nu}$ and all $\nu \ge 1$,
\begin{equation}
\label{1.8}
\|\theta_\w(x) - \theta_\w(y)\| \le M c^{\nu} \|x-y\| \quad \forall x,y \in H.
\end{equation}
If the norm $\|\cdot \|$ in \eqref{1.8} is replaced by a different norm
$|\cdot |$, \eqref{1.8} remains valid, although with a different constant $M$.
This in turn implies that (H5.3) will also be valid with the same constant
$\kappa$, with $|\cdot|$ replacing $\|\cdot\|$ and with a possibly different
integer $\mu$.

The following theorem is a special case of Theorem 5.5 in \cite{E}.

\begin{thm}
\label{thm:1.1} 
Let $H$ be a bounded open subset of $\R^n$ and assume that $H$ is mildly
regular. Let $X = C^m(\bar H)$ and assume that (H5.1), (H5.2), and (H5.3) are
satisfied (where $m \ge 1$ in (H5.1) and (H5.2)) and that $\Lambda:X \to X$ is
given by \eqref{1.2}.  If $Y= C(\bar H)$, the Banach space of real-valued
continuous functions $f: \bar H \to \R$ and $L:Y \to Y$ is defined by
\eqref{1.2}, then $r(L) = r(\Lambda) >0$, where $r(L)$ denotes the spectral
radius of $L$ and $r(\Lambda)$ denotes the spectral radius of $\Lambda$.  If
$\rho(\Lambda)$ denotes the essential spectral radius of $\Lambda$ (see
\cite{B},\cite{A},\cite{N-P-L}, and \cite{L}), then $\rho(\Lambda) \le c^m
r(\Lambda)$ where $c= \kappa^{1/\mu}$ is as in \eqref{1.8}.  There exists $v
\in X$ such that $v(x) >0$ for all $x \in \bar H$ and
\begin{equation}
\label{1.9}
\Lambda(v) = r v, \qquad r = r(\Lambda).
\end{equation}
There exists $r_1 < r$ such that if $\xi \in \sigma(\Lambda)
\setminus\{r\}$,
then $|\xi| \le r_1$; and $r = r(\Lambda)$ is an isolated point of
$\sigma(\Lambda)$ and an eigenvalue of algebraic multiplicity 1. If $u \in X$
and $u(x) >0 \, \forall x \in \bar H$, there exists a real number $s_u >0$ such
that
\begin{equation}
\label{1.10}
\lim_{k \rightarrow \infty}\left(\frac{1}{r} \Lambda\right)^k (u) = s_u v,
\end{equation}
where the convergence in \eqref{1.10} is in the $C^m$ topology on $X$.
\end{thm}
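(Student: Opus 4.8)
The plan is to deduce this result from the general theory of positive (order-preserving) linear operators, in the spirit of the generalized Krein--Rutman theorems for noncompact operators. First I would observe that $C^m(\bar H)$ carries a natural closed, total cone $K = \{f \in C^m(\bar H): f(x) \ge 0 \ \forall x \in \bar H\}$, and that $\Lambda$ is a positive operator with respect to $K$: since each $b_\beta > 0$ and each $\theta_\beta$ maps $\bar H$ into itself, $f \ge 0$ implies $\Lambda f \ge 0$. The cone $K$ is not normal and has empty interior in $C^m$ for $m \ge 1$, which is exactly why one cannot simply quote the classical Krein--Rutman theorem; instead the argument must go through the essential spectral radius estimate. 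The equality $r(L) = r(\Lambda)$ I would get by a standard sandwiching: iterating \eqref{1.6} gives $\|\Lambda^\mu f\|_{C(\bar H)} \le \|\Lambda^\mu f\|_{C^m(\bar H)}$, and conversely, using (H5.3) together with the chain rule applied to $\theta_\w$ and Faà di Bruno bookkeeping, one bounds the $C^m$ norm of $\Lambda^\mu f$ by a constant times $\|f\|_{C(\bar H)}$ plus a term of size $O(c^{m\mu})\|f\|_{C^m}$; comparing $n$th-root growth rates of $\|\Lambda^{n}\|$ in the two norms then forces the spectral radii to coincide.

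The heart of the matter is the essential spectral radius bound $\rho(\Lambda) \le c^m r(\Lambda)$. I would prove this by decomposing $\Lambda$ (or rather $\Lambda^\mu$) as a sum of a compact operator and an operator of small norm relative to $r(\Lambda)^\mu$. Concretely, the highest-order derivatives of $\Lambda^\mu f$ involve $D^\alpha f$ composed with $\theta_\w$ multiplied by products of first derivatives of $\theta_\w$, each factor of which is $O(c^\mu)$ by \eqref{1.8}; so the "top-order part" of $\Lambda^\mu$ has operator norm $\le M' c^{m\mu}$ on $C^m(\bar H)$, while the "lower-order part" factors through $C^{m-1}(\bar H)$ and hence, by the compact inclusion $C^m(\bar H) \hookrightarrow C^{m-1}(\bar H)$ (valid because $H$ is mildly regular, so Ascoli--Arzelà applies), is compact. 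By the standard formula for the essential spectral radius (e.g. Nussbaum's measure-of-noncompactness characterization, or the Lefschetz/Browder--Nussbaum results cited as \cite{B},\cite{A},\cite{L},\cite{N-P-L}), $\rho(\Lambda)^\mu = \rho(\Lambda^\mu) \le M' c^{m\mu}$ applied carefully, and one upgrades the crude constant to the clean bound $\rho(\Lambda) \le c^m r(\Lambda)$ by replacing $\mu$ by large multiples $k\mu$, taking $k$th roots, and using that $r(\Lambda) = r(L) > 0$ (positivity of $r(L)$ comes from the fact that $\Lambda$ has a nonzero positive iterate, e.g. $(\Lambda^\mu \mathbf{1})(x) \ge \delta > 0$).

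Given $\rho(\Lambda) < r(\Lambda)$ strictly, the remaining conclusions follow from the noncompact Krein--Rutman machinery: the part of the spectrum on the circle $|z| = r(\Lambda)$ consists of finitely many eigenvalues of finite multiplicity, and positivity arguments (irreducibility of $\Lambda$ on $C(\bar H)$, which holds because for any nonempty open set and any point one can reach the point by some composition $\theta_\w$ — a consequence of the contraction property) force $r(\Lambda)$ itself to be the unique peripheral eigenvalue, with a strictly positive eigenvector $v$ and algebraic multiplicity one. The strict positivity $v(x) > 0$ on all of $\bar H$ and the $C^m$ regularity of $v$ are automatic since $v \in C^m(\bar H)$ solves $\Lambda v = rv$ and, were $v$ to vanish somewhere, the relation $r\,v(x) = \sum_\beta b_\beta(x) v(\theta_\beta(x))$ with $b_\beta > 0$ would propagate zeros to a dense set. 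Finally, \eqref{1.10} is the statement that the spectral projection onto the $r$-eigenspace dominates the dynamics: writing $\Lambda = rP + N$ with $P$ the rank-one positive spectral projection and $\|(N/r)^k\| \to 0$ at rate $(\rho/r + \varepsilon)^k$, one gets $(\Lambda/r)^k u \to Pu$ in $C^m$, and $Pu = s_u v$ with $s_u > 0$ precisely because $P$ is a positive projection with range spanned by the positive vector $v$ and $u$ is strictly positive. The main obstacle, and the step requiring genuine care, is the essential spectral radius estimate — getting the clean constant $c^m$ rather than a lossy bound, and correctly invoking the compact embedding, which is where the mild regularity hypothesis on $H$ is essential; everything downstream is then a clean application of known positive-operator theorems, which is why the authors can legitimately present this as a special case of Theorem 5.5 of \cite{E}.
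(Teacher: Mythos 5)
First, note that the paper itself gives no proof of Theorem~\ref{thm:1.1}: it is quoted as a special case of Theorem 5.5 of \cite{E}, with the method (quasi-compactness plus generalized Kre{\u\i}n--Rutman theory for noncompact positive operators) only indicated. Your overall architecture is the right one and matches that method: the Lasota--Yorke-type decomposition of $\Lambda^\mu$ into a part factoring through the compact embedding $C^m(\bar H)\hookrightarrow C^{m-1}(\bar H)$ plus a top-order part carrying the factor $c^{m\mu}$, the measure-of-noncompactness characterization of $\rho(\Lambda)$, the identification $r(L)=r(\Lambda)>0$, and then spectral-projection arguments for \eqref{1.10}. Those parts of your sketch are sound in outline.

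However, two specific steps as you justify them would fail, and they are exactly the steps where the real proof needs a different mechanism. (i) Strict positivity of $v$: if $v(x_0)=0$, the eigenvalue equation propagates zeros only along the \emph{forward} orbit $\{\theta_{\omega}(x_0)\}$, which contracts onto the invariant set $C$ of the IFS; this set is typically nowhere dense in $\bar H$, so zeros do \emph{not} propagate to a dense set, and your argument does not rule out an eigenvector vanishing on part of $\bar H$. (ii) Irreducibility: it is false that ``for any nonempty open set and any point one can reach the point by some composition $\theta_{\omega}$'' --- the images $\theta_{\omega}(\bar H)$ shrink toward $C$, and indeed $\{f: f|_C=0\}$ is a nontrivial closed invariant ideal when $C\neq\bar H$, so $L$ and $\Lambda$ are not irreducible in the classical sense; peripheral uniqueness and algebraic simplicity cannot be obtained this way (and on $Y=C(\bar H)$ the strict peripheral gap can genuinely fail, as remarked after \eqref{intro1.3}). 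The standard repair, and what the cited machinery actually uses (compare the cones $K(M;S)$ in \eqref{1.172}, Theorem~\ref{thm:1.9}, Corollary~\ref{cor:1.4}, and the $u_0$-positivity argument of Proposition~\ref{prop:9.4}), is to work in a cone of functions satisfying a Harnack-type bound $f(y)\le f(x)\exp(M d(x,y))$: the operator maps this cone into itself with improved constant because of the uniform contraction \eqref{1.8}, every nonzero element of such a cone is automatically strictly positive, and Birkhoff--Hopf/$u_0$-positivity then yields uniqueness of the peripheral eigenvalue, algebraic multiplicity one, and the convergence \eqref{1.10}. Without replacing your items (i) and (ii) by an argument of this kind, the conclusion from ``$\rho(\Lambda)<r(\Lambda)$'' to the full statement of the theorem is not justified.
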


\begin{remark}
\label{rem:1.2} If $\alpha$ is a multi-index with 
$|\alpha|\le m$, where $m \ge 1$
is as in (H5.1) and (H5.2), it follows from \eqref{1.10} that
\begin{equation}
\label{1.11}
\lim_{k \rightarrow \infty} \left(\frac{1}{r}\right)^k D^{\alpha} \Lambda^k(u) 
=s_u D^{\alpha} v,
\end{equation}
and
\begin{equation}
\label{1.12}
\lim_{k \rightarrow \infty} \left(\frac{1}{r}\right)^k \Lambda^k(u) 
=s_u v,
\end{equation}
where the convergence in \eqref{1.11} and \eqref{1.12} is in the topology
of $C(\bar H)$, the Banach space of continuous functions $f: \bar H \to \R$.
\end{remark}
It follows from \eqref{1.11} and \eqref{1.12} that for any
multi-index $\alpha$ with $|\alpha| \le m$,
\begin{equation}
\label{1.13}
\lim_{k \rightarrow \infty} \frac{(D^{\alpha} \Lambda^k(u))(x)}
{\Lambda^k(u)(x)} = \frac{(D^{\alpha} (v))(x)}
{v(x)},
\end{equation}
where the convergence in \eqref{1.13} is uniform in $x \in \bar H$.
If we choose $u(x) =1$ for all $x \in \bar H$, it follows from \eqref{1.6}
that for all multi-indices $\alpha$ with $|\alpha| \le m$, we have
\begin{equation}
\label{1.14}
\lim_{k \rightarrow \infty}  \frac{D^{\alpha} (\sum_{\w \in B_k} b_\w(x))}
{\sum_{\w \in B_k} b_\w (x)} = \frac{D^{\alpha} v(x)}{v(x)},
\end{equation}
where the convergence in \eqref{1.14} is uniform in $x \in \bar H$. We shall
use \eqref{1.14} in our further work to obtain explicit bounds on 
$\sup\left\{|D^{\alpha} v(x)|/v(x): x \in \bar H\right\}$.

We shall also need information about positive eigenvectors when the index set
$\B$ is countable, but not finite.  Direct analogues of Theorem 5.5 in
\cite{E} exist when $\B$ is countable, but not finite, but such analogues were
not stated or proved in \cite{E}.  Thus we shall make do with less precise
theorems concerning strictly positive Lipschitz eigenvectors.

Given a metric space $(S,d)$, a countable index set $\B$, and continuous maps
$\theta_{\beta}: S \to S$ and $b_{\beta}: S \to \R$ for $\beta \in \B$, we
shall say that the families $\{\theta_{\beta}: \beta \in \B\}$ and
$\{b_{\beta}: \beta \in \B\}$ are {\it uniformly Lipschitz} if there exist
constants $M_1$ and $M_2$, independent of $\beta \in \B$, such that
\begin{equation*}
d(\theta_{\beta}(x), \theta_{\beta}(y)) \le M_1 d(x,y), \ \forall x,y \in S
\text{ and }\forall \beta \in \B
\end{equation*}
and 
\begin{equation*}
|b_{\beta}(x)- b_{\beta}(y)| \le M_2 d(x,y), \ \forall x,y \in S
\text{ and }\forall \beta \in \B.
\end{equation*}
If $S$ is a subset of $\R^N$, we shall take the metric $d$ to be given
by some norm $\|\cdot\|$ on $\R^n$.

For $(S,d)$ a compact metric space, $C(S)$ will denote the real Banach
space of continuous functions $f:S \to \R$ with
$\|f\|:= \sup\{|f(x)|: x \in S\}$. If $b_{\beta}:S \to (0, \infty)$ is
a positive, continuous function for all $\beta \in \B$, we shall assume
that
\begin{equation}
\label{1.15}
\sum_{\beta \in \B} b_{\beta}(x) = b(x) < \infty
\end{equation}
for all $x \in S$ and $x \mapsto b(x)$ is continuous on $S$.  If $D_k$,
$k \ge 1$ is any increasing sequence of finite subsets $D_k \subset \B$ with
$\cup_{k \ge 1} D_k = \B$, Dini's theorem implies that
\begin{equation*}
\lim_{k \rightarrow \infty} \sum_{\beta \in D_k} b_{\beta}(x) = b(x)
\end{equation*}
and that the convergence is uniform in $x \in S$.  Using this fact, one can
define for $f \in C(S)$, $L(f) \in C(S)$ by
\begin{equation}
\label{1.16}
(Lf)(x) = \sum_{\beta \in \B} b_{\beta}(x) f(\theta_{\beta}(x)).
\end{equation}
Here, one is assuming that \eqref{1.15} holds with $x \mapsto b(x)$
continuous on $S$ and that $\theta_{\beta}: S \to S$ is continuous
for all $\beta \in S$, and under these assumptions, $L:C(S) \to C(S)$ is
a bounded linear operator. Also, one can see that for integers $\mu \ge 1$
that
\begin{equation}
\label{1.17}
(L^{\mu}f)(x) = \sum_{\omega \in \B_{\mu}} b_{\omega}(x) f(\theta_{\omega}(x)),
\end{equation}
where $b_{\omega}$ and $\theta_{\omega}$ are as defined in equations
\eqref{1.4} and \eqref{1.5}.

If $M$ is a fixed positive constant, we define a closed cone $K(M;S)
\subset C(S)$ by
\begin{multline}
\label{1.172}
K(M;S) 
\\
= \{f \in C(S) \, | \, f(x) \ge 0 \, \forall x \in S \text{ and }
f(y) \le f(x) \exp(M d(x,y)) \, \forall x,y \in S\}.
\end{multline}

Our next theorem follows easily from Lemma 5.3 in Section 5 of \cite{N-P-L}
and Theorem 5.3 on page 86 of \cite{A}.
\begin{thm}
\label{thm:1.9}
Let $H \subset \R^n$ be a bounded, open subset of $\R^n$ and let the metric
on $\bar H$ be given by a fixed norm $\|\cdot\|$ on $\R^n$. Let $\B$ be a
countable (not finite) index set and assume that $\theta_{\beta}: \bar H \to
\bar H$ and $b_{\beta}: \bar H \to (0, \infty)$, $\beta \in \B$, are
continuous functions and that $\{\theta_{\beta} \, | \, \beta \in \B\}$ and
$\{b_{\beta} \, | \, \beta \in \B\}$ are uniformly Lipschitz. Assume that
for all $x \in \bar H$, $\sum_{\beta \in \B} b_{\beta}(x):= b(x) < \infty$
and that $x \mapsto b(x)$ is continuous. Assume that there exists
an integer $\mu \ge 1$ and a constant $\kappa < 1$ such that for all
$\omega \in \B_{\mu}$, $\Lip(\theta_{\omega}) \le \kappa$.
Assume also that the family of maps $\{x \mapsto \log(b_{\beta}(x)):
\beta \in \B\}$ is uniformly Lipschitz.  Then there exists a constant $A$
such that for all integers $\nu \ge 1$ and for all $\omega \in\B_{\nu}$
\begin{equation}
\label{1.18}
\Lip(\theta_{\omega}) \le A c^{\nu}, \qquad c = \kappa^{1/\mu}.
\end{equation}
Also, for each integer $\nu \ge 1$, the family of maps 
$\{x \mapsto \log(b_{\omega}(x)):
\omega \in \B_{\nu}\}$ is uniformly Lipschitz, so there exists $M_0 > 0$ such
that $b_{\omega} \in K(M_0; \bar H)$(see \eqref{1.17} with $S := \bar H$)
for all $\omega \in \B_{\mu}$.  If $L:C(\bar H) \to C(\bar H)$ is given
by \eqref{1.16} with $\bar H:=S$, $L$ has a strictly positive eigenvector
$v \in K(M_0/(1-\kappa); \bar H)$ with eigenvalue $r = r(L) > 0$. The
algebraic multiplicity of the eigenvalue $r$ equals one, and $r$ is the
only eigenvalue of $L$ of modulus $r$.
\end{thm}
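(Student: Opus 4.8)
The plan is to dispatch the two preliminary assertions by elementary cocycle estimates, and then to produce the eigenvector of $L$ by applying a generalized, non-compact Kre{\u\i}n-Rutman theorem to the iterate $L^\mu$ on a suitable order cone, transferring the conclusion back to $L$ at the end. For \eqref{1.18} I would use that Lipschitz constants are submultiplicative under composition: given $\w\in\B_\nu$, write $\nu=q\mu+t$ with $0\le t\le\mu-1$ and, reading the composition \eqref{1.4} from the inside out, group $\theta_\w$ into $q$ consecutive blocks of $\mu$ maps followed by a final block of $t$ maps. Each length-$\mu$ block equals $\theta_{\w'}$ for some $\w'\in\B_\mu$, hence has Lipschitz constant at most $\kappa$, while each of the remaining $t\le\mu-1$ maps has Lipschitz constant at most $M_1$ (the uniform Lipschitz constant of $\{\theta_\beta\}$). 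This gives $\Lip(\theta_\w)\le\kappa^{q}\max(1,M_1)^{\mu-1}$, and since $q\ge\nu/\mu-1$ and $\kappa<1$ we have $\kappa^{q}\le\kappa^{-1}c^{\nu}$ with $c=\kappa^{1/\mu}$; so \eqref{1.18} holds with $A=\kappa^{-1}\max(1,M_1)^{\mu-1}$.

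For the bounds on $\log b_\w$, iterating \eqref{1.5} gives $\log b_\w(x)=\sum_{k=1}^{\nu}\log b_{j_k}\bigl(\theta_{\w_{k-1}}(x)\bigr)$ for $\w=(j_1,\dots,j_\nu)\in\B_\nu$, with the convention $\theta_{\w_0}(x)=x$. If $M_3$ denotes the uniform Lipschitz constant of $\{x\mapsto\log b_\beta(x):\beta\in\B\}$, then termwise estimation together with \eqref{1.18} yields $|\log b_\w(x)-\log b_\w(y)|\le M_3\bigl(1+A\sum_{j\ge1}c^{j}\bigr)\|x-y\|=:M_0\|x-y\|$, a bound uniform in $\nu$ and in $\w$; in particular it holds for all $\w\in\B_\mu$. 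Exponentiating the resulting two-sided inequality gives $b_\w(y)\le b_\w(x)\exp(M_0\|x-y\|)$ for all $x,y\in\bar H$, that is, $b_\w\in K(M_0;\bar H)$ for every $\w\in\B_\mu$.

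Next set $M_*=M_0/(1-\kappa)$ and $\mathcal K=K(M_*;\bar H)$. For $f\in\mathcal K$ and $\w\in\B_\mu$ one has $b_\w(y)f(\theta_\w(y))\le b_\w(x)e^{M_0\|x-y\|}f(\theta_\w(x))e^{M_*\kappa\|x-y\|}$, since $\Lip(\theta_\w)\le\kappa$; summing over $\w\in\B_\mu$ and using \eqref{1.17} shows $(L^\mu f)(y)\le(L^\mu f)(x)\exp\bigl((M_0+M_*\kappa)\|x-y\|\bigr)$, and as $M_0+M_*\kappa=M_*$ by the choice of $M_*$, this proves $L^\mu(\mathcal K)\subset\mathcal K$. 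Moreover every $f\in\mathcal K$ with $\|f\|\le1$ is Lipschitz with constant at most $M_*e^{M_*\diam(\bar H)}$, so by Arzel\`a-Ascoli the norm-bounded subsets of $\mathcal K$ are precompact in $C(\bar H)$; hence $L^\mu$ carries norm-bounded subsets of $\mathcal K$ to precompact subsets of $\mathcal K$. Finally, since every $b_\beta$ is strictly positive on $\bar H$, $L^\mu$ is strongly positive: $(L^\mu f)(x)>0$ for all $x$ whenever $0\ne f\ge0$. These three properties — invariance of the closed, total cone $\mathcal K$ (its difference set contains all Lipschitz functions, which are dense in $C(\bar H)$), precompactness on norm-bounded subsets of $\mathcal K$, and strong positivity — are exactly the hypotheses of the generalized Kre{\u\i}n-Rutman theorem of \cite{A} (Theorem 5.3) and \cite{N-P-L} (Lemma 5.3), this non-compact version being needed because $L^\mu$ is not compact on all of $C(\bar H)$. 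It yields $\rho:=r(L^\mu)>0$ (alternatively $r(L^\mu)>0$ follows by dominating $L$ below by the transfer operator of a finite sub-family of $\B$ and invoking Theorem~\ref{thm:1.1}), a strictly positive $v\in\mathcal K$ with $L^\mu v=\rho v$, algebraic simplicity of $\rho$, and the fact that $\rho$ is the only point of $\sigma(L^\mu)$ of modulus $\rho$; in particular the $\rho$-eigenspace is one dimensional.

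It remains to descend from $L^\mu$ to $L$. Since $L$ commutes with $L^\mu$, it preserves the one-dimensional $\rho$-eigenspace, so $Lv=\lambda v$; positivity of $L$ together with $v>0$ forces $\lambda\ge0$, and $\lambda^\mu=\rho>0$ rules out $\lambda=0$, so $\lambda>0$ and $\lambda=\rho^{1/\mu}=r(L^\mu)^{1/\mu}=r(L)=:r$, giving the eigenvector $v\in K(M_0/(1-\kappa);\bar H)$ with eigenvalue $r=r(L)>0$. If $Lw=\xi w$ with $w\ne0$ and $|\xi|=r$, then $L^\mu w=\xi^\mu w$ with $|\xi^\mu|=\rho$, so $\xi^\mu=\rho$ and $w$ lies in the $\rho$-eigenspace, hence $w$ is a scalar multiple of $v$ and $\xi=\lambda=r$; thus $r$ is the only eigenvalue of $L$ of modulus $r$. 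For the algebraic multiplicity, the commuting factorization $L^\mu-\rho I=\prod_{j=0}^{\mu-1}(L-r\zeta^{j})$ with $\zeta=e^{2\pi i/\mu}$ shows $\ker(L-rI)^{n}\subset\ker(L^\mu-\rho I)^{n}$ for every $n$, and since $\rho$ has algebraic multiplicity one for $L^\mu$, the generalized eigenspace of $L$ at $r$ is at most — hence exactly — one dimensional. I expect the main obstacle to be neither the cocycle bookkeeping (Steps~1--2) nor the cone invariance, which are routine, but rather matching the setup precisely to the non-compact Kre{\u\i}n-Rutman theorem — in particular checking that $\mathcal K$ is total and that precompactness on bounded subsets of $\mathcal K$ is the correct substitute for compactness of $L^\mu$ — and then the careful, though essentially algebraic, transfer from $L^\mu$ to $L$ for the simplicity and peripheral-spectrum statements.
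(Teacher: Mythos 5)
Your proposal is correct, and its skeleton is the same as the paper's: establish \eqref{1.18} and the uniform Lipschitz bound on $x\mapsto\log b_{\w}(x)$ (steps the paper leaves to the reader), pass to $L^{\mu}$ acting on the cone $K(M_0/(1-\kappa);\bar H)$, invoke Lemma 5.3 of \cite{N-P-L} and Theorem 5.3 of \cite{A}, and then descend to $L$. Two genuine differences are worth recording. First, your finish is more self-contained: the paper re-applies Theorem 5.3 of \cite{A} to $L$ itself to obtain algebraic simplicity and the peripheral-eigenvalue statement, whereas you transfer both from $L^{\mu}$ via the commuting factorization $L^{\mu}-\rho I=\prod_{j=0}^{\mu-1}(L-r\zeta^{j})$ ($\zeta$ a primitive $\mu$-th root of unity) together with one-dimensionality of the $\rho$-eigenspace; this is correct and buys you a purely algebraic transfer in place of a second appeal to the cited theorem. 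Second, your description of what the cited theorem requires is not accurate: $K(M;\bar H)$ has empty interior in $C(\bar H)$ and $L^{\mu}$ is not compact there, so those results do not rest on ``strong positivity plus compactness on the cone'' but on $u_0$-positivity/Birkhoff--Hopf-type arguments (compare Proposition~\ref{prop:9.4}); the facts you actually verify — cone invariance $L^{\mu}(K(M_*;\bar H))\subset K(M_*;\bar H)$, $b_{\w}\in K(M_0;\bar H)$, $\Lip(\theta_{\w})\le\kappa$, and precompactness of norm-bounded subsets of the cone — are exactly the relevant structural inputs, so the appeal does go through, but the hypotheses should be quoted as in \cite{A} and \cite{N-P-L} rather than as you state them. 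Finally, two assertions should be weakened: $(L^{\mu}f)(x)>0$ for all $x$ is false for general $0\le f\ne0$ (it is true for $f\in K(M_*;\bar H)\setminus\{0\}$, since any such $f$ is strictly positive everywhere), and $\rho$ need not be the only point of $\sigma(L^{\mu})$ of modulus $\rho$ — it is only the unique eigenvalue of that modulus, and indeed the paper explicitly notes that the spectrum on $C(\bar H)$ may contain other points of modulus $r$; your descent argument uses only the eigenvalue version, so nothing in your proof breaks.
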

\begin{proof}
  In the following proof, we shall not distinguish in notation between $L$ and
  its complexification $\tilde L$, but of course $\sigma(L)$ refers to the
  spectrum of $\tilde L$ and eigenvalues refer to (possibly complex)
  eigenvalues of $\tilde L$.  We leave to the reader the proof of \eqref{1.17}
  and of the fact that for any $\nu \ge 1$, the set of maps $\{x \mapsto
  \log(b_{\omega}(x)): \omega \in \B_{\nu}\}$ is uniformly Lipschitz.  If we
  start with \eqref{1.17}, rather than \eqref{1.16}, Lemma 5.3 in \cite{N-P-L}
  shows that $L^{\mu}$ has a strictly positive eigenvector $v \in
  K(M_0/(1-\kappa); \bar H)$ with eigenvalue $r = r(L^{\mu}) = [r(L)]^{\mu}>
  0$. If we apply Theorem 5.3, p. 86 in \cite{A} to $L^{\mu}$, we find that
  $r^{\mu}$ is the only eigenvalue of $L^{\mu}$ of modulus $r^{\mu}$ and
  $r^{\mu}$ has algebraic multiplicity one as an eigenvalue of
  $L^{\mu}$. Since $[(1/r)L]^{\mu} v =v$ and $w = (1/r) L (v)$ is also a
  nonzero fixed point of $[(1/r)L]^{\mu}$, it must be that $[(1/r)L] v =
  \lambda v$ for some $\lambda \neq 0$.  We must have $\lambda >0$, because
  $(1/r)L(v)(x) >0$ for all $x \in \bar H$ and $v(x) >0$ for all $x \in \bar
  H$.  This implies that $\lambda^{\mu} v = v$ and $\lambda >0$; so $\lambda
  =1$ and $v$ is a strictly positive eigenvector of $L$ with eigenvalue
  $r(L)$.  If we now apply Theorem 5.3 of \cite{A} to $L$, we find that $r(L)$
  is an eigenvalue of $L$ of algebraic multiplicity one and $r(L)$ is the only
  eigenvalue of $L$ of modulus $r(L)$. Note, however, that $\sigma(L)$ may
well contain elements of modulus $r(L)$.
\end{proof}
\begin{cor}
\label{cor:1.4}
Let assumptions and notation be as in Theorem~\ref{thm:1.9}. Assume in addition
that $H$ is convex and that $b_{\beta} \in C^1(\bar H)$ for all $\beta \in \B$.
For each integer $\nu \ge 1$, define
\begin{equation*}
M_{\nu} = \sup \{\frac{\|\nabla b_{\omega}(x)\|}{b_{\omega}(x)}: \omega
\in \B_{\nu}, x \in \bar H\},
\end{equation*}
where we use the Euclidean norm on $\R^n$.  Define $M_{\infty}$ by
$M_{\infty} = \lim \inf_{\nu \rightarrow \infty} M_{\nu}$.
If $v$ is a strictly positive eigenvector of $L$ in \eqref{1.15}, $v
\in K(M_{\infty}, \bar H)$.
\end{cor}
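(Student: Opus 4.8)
The plan is to bootstrap. Theorem~\ref{thm:1.9} already places $v$ in the cone $K(M^*;\bar H)$ with $M^* := M_0/(1-\kappa)$, and I will sharpen this to $K(M_\infty;\bar H)$ by combining the eigenvector identity $v = r^{-\nu}L^\nu v$ (where $r = r(L) > 0$) with the fact that $L^\nu$ maps each cone $K(M;\bar H)$ into a cone with a strictly \emph{better} constant, and then letting $\nu \to \infty$.

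The one real ingredient needed is the following cone-improvement estimate, valid for every $\nu \ge 1$:
\[
L^\nu\bigl(K(M;\bar H)\bigr) \subset K\bigl(M_\nu + M A c^{\nu};\bar H\bigr),
\]
where $A$ and $c = \kappa^{1/\mu} < 1$ are the constants of \eqref{1.18}. To prove it, fix $f \in K(M;\bar H)$ and $x, y \in \bar H$, and use \eqref{1.17} to write $(L^\nu f)(y) = \sum_{\omega \in \B_\nu} b_\omega(y)\, f(\theta_\omega(y))$; I estimate the two factors of each summand separately. Since $H$ is convex, the segment from $x$ to $y$ lies in $\bar H$, so integrating $\nabla \log b_\omega$ along it and using the definition of $M_\nu$ gives $b_\omega(y) \le b_\omega(x)\exp(M_\nu\|x-y\|)$ --- this is exactly the point at which the convexity hypothesis (rather than mere mild regularity) is used, since the straight segment removes the factor $M$ coming from \eqref{1.1}. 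Next, the defining inequality of $K(M;\bar H)$ together with $\Lip(\theta_\omega) \le A c^\nu$ from \eqref{1.18} gives $f(\theta_\omega(y)) \le f(\theta_\omega(x))\exp(M A c^\nu\|x-y\|)$. Multiplying and summing over $\omega \in \B_\nu$ yields $(L^\nu f)(y) \le (L^\nu f)(x)\exp\bigl((M_\nu + M A c^\nu)\|x-y\|\bigr)$, and since $(L^\nu f) \ge 0$ is immediate, $L^\nu f \in K(M_\nu + M A c^\nu;\bar H)$.

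Granting this, the conclusion follows quickly. Applying the estimate to $v \in K(M^*;\bar H)$ gives $L^\nu v \in K(M_\nu + M^* A c^\nu;\bar H)$, and since $K(\cdot;\bar H)$ is a cone and $r > 0$, the identity $v = r^{-\nu}L^\nu v$ forces $v \in K(M_\nu + M^* A c^\nu;\bar H)$ for every $\nu \ge 1$. Now choose a subsequence $\nu_k \to \infty$ along which $M_{\nu_k} \to \liminf_\nu M_\nu = M_\infty$; since $c < 1$ we have $M^* A c^{\nu_k} \to 0$, so fixing $x, y \in \bar H$ and passing to the limit in $v(y) \le v(x)\exp\bigl((M_{\nu_k} + M^* A c^{\nu_k})\|x-y\|\bigr)$ gives $v(y) \le v(x)\exp(M_\infty\|x-y\|)$. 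Combined with $v > 0$, this is precisely the assertion $v \in K(M_\infty;\bar H)$.

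The only delicate point --- hence the main thing to be careful about --- is the inequality $b_\omega(y) \le b_\omega(x)\exp(M_\nu\|x-y\|)$, where one must make sense of $\nabla b_\omega$ and of the supremum defining $M_\nu$. When the maps $\theta_\beta$ are $C^1$ (as in every application in this paper), each $b_\omega$ is $C^1$ on $\bar H$ and the bound is just the fundamental theorem of calculus along the segment; in the general Lipschitz setting one instead uses that a Lipschitz function on a convex set has Lipschitz constant equal to the essential supremum of the norm of its a.e.\ (Rademacher) gradient, which yields the same inequality with $\nabla b_\omega$ read accordingly. It is also worth recording, via the uniform Lipschitz bound on $\{\log b_\beta : \beta \in \B\}$ and \eqref{1.18}, that $\sup_\nu M_\nu < \infty$, so that $M_\infty < \infty$ and the statement is not vacuous.
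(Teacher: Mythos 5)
Your proof is correct and follows essentially the same route as the paper: the key step in both is the convexity argument showing $|\log b_{\omega}(y)-\log b_{\omega}(x)|\le M_{\nu}\|y-x\|$, i.e.\ $b_{\omega}\in K(M_{\nu};\bar H)$ for $\omega\in\B_{\nu}$, followed by letting $\nu\to\infty$. The only difference is cosmetic: where the paper invokes the argument of Lemma 5.3 of \cite{N-P-L} to conclude $v\in K(M_{\nu}/(1-Ac^{\nu});\bar H)$, you inline that mechanism by proving $L^{\nu}(K(M;\bar H))\subset K(M_{\nu}+MAc^{\nu};\bar H)$ and applying $v=r^{-\nu}L^{\nu}v$ with the constant $M_{0}/(1-\kappa)$ from Theorem~\ref{thm:1.9}; both intermediate constants tend to $M_{\infty}$ along a subsequence realizing the liminf, so the conclusions agree.
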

\begin{proof}
If $x,y \in H$, then because we assume that $H$ is convex,
$x^t:=(1-t)x + ty \in H$ for $0 \le t \le 1$. (We use $t$ as a superscript
here.)  If $\omega \in \B_{\nu}$, $\nu \ge 1$, it follows that
\begin{multline*}
|\log(b_{\omega}(y) - \log(b_{\omega}(x) | = \left|\int_0^1
\frac{d}{dt} \log b_{\omega}(x^t) \, dt \right|
\\
= \left|\int_0^1 \frac{\nabla b_{\omega} \cdot (y-x)}{b_{\omega}(x^t)}
\, dt \right|
\le \int_0^1 \frac{\|\nabla b_{\omega}\| \|y-x\|}{b_{\omega}(x^t)}
\, dt.
\end{multline*}
This shows that $x \mapsto \log b_{\omega}(x)$ is Lipschitz on $\bar H$
with Lipschitz constant $\le M_{\nu}$, so $b_{\omega} \in K(M_{\nu}; \bar H)$
for $\omega \in \B_{\nu}$. If $ A c^{\nu} < 1$, the argument used in the proof
of Lemma 5.3 in \cite{N-P-L} now shows that $v \in K(M_{\nu}/(1 - A c^{\nu});
\bar H)$.  Since $\lim_{\nu \rightarrow \infty} A c^{\nu}=0$, we conclude that
$v \in K(M_{\infty}; \bar H)$.
\end{proof}
\begin{remark}
\label{rem:1.5}
Under slightly stronger assumptions, the bounded linear operator $L:C(S) \to
C(S):=Y$ induces a bounded linear operator $\Lambda: X \to X$, where
$X$ denotes the Banach space of Lipschitz functions $f:S \to \R$.  One
can prove that $r(\Lambda) = r(L)$ and $\rho(\Lambda) < r(\Lambda)$, where
$\rho(\Lambda)$ denotes the essential spectral radius of $\Lambda$. See
\cite{N-P-L} and Section 5 of \cite{A} for details.
\end{remark}

In some applications, the domain $H$ in Theorem~\ref{thm:1.1} or
Theorem~\ref{thm:1.9} possesses some symmetry or symmetries, and this is
often reflected in a corresponding symmetry of the unique, normalized
positive eigenvector $v$ in these theorems.
\begin{cor}
\label{cor:1.6}
Let assumptions and notation be as in Theorem~\ref{thm:1.1} or
Theorem~\ref{thm:1.9} and let $v$ denote the unique normalized strictly
positive eigenvector of $L$ in Theorem~\ref{thm:1.1} or
Theorem~\ref{thm:1.9}. Assume that $\pi: \bar H \to \bar H$ is a $C^m$
map, $m \ge 1$, such that $\pi(\pi(x)) =x$ for all $x \in \bar H$.  Assume that
there exists a one-one map $\beta \mapsto \bar \beta$ of $\B$ onto
$\B$ such that $\pi(\theta_{\bar \beta}(x)) = \theta_{\beta}(\pi(x))$
and $b_{\beta}(\pi(x)) = b_{\bar \beta}(x)$ for all $\beta \in \B$ and
all $x \in \bar H$. It then follows that $v(\pi(x)) = v(x)$ for all $x
\in \bar H$.
\end{cor}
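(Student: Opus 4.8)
The plan is to show that the function $w := v \circ \pi$ is again a strictly positive eigenvector of $L$ for the eigenvalue $r = r(L)$, to deduce from the algebraic simplicity of $r$ in Theorem~\ref{thm:1.1} (resp.\ Theorem~\ref{thm:1.9}) that $w = c\,v$ for some scalar $c > 0$, and finally to use $\pi \circ \pi = \mathrm{id}$ to force $c = 1$.

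First I would check that $w$ lies in the Banach space in which $v$ is characterized: in the setting of Theorem~\ref{thm:1.1}, $w = v \circ \pi \in C^m(\bar H)$ since both $v$ and $\pi$ are $C^m$; in the setting of Theorem~\ref{thm:1.9}, $w$ is at least continuous (and in fact Lipschitz, since $\pi$ is $C^1$ on the compact set $\bar H$). In either case $w(x) = v(\pi(x)) > 0$ for every $x \in \bar H$. Next, I would verify $Lw = r\,w$ by a direct computation in which the crucial move is to reindex the defining sum by the bijection $\beta \mapsto \bar\beta$ of $\B$ before invoking the two compatibility hypotheses. For $x \in \bar H$,
\begin{align*}
(Lw)(x) &= \sum_{\beta \in \B} b_\beta(x)\, v\bigl(\pi(\theta_\beta(x))\bigr)
= \sum_{\beta \in \B} b_{\bar\beta}(x)\, v\bigl(\pi(\theta_{\bar\beta}(x))\bigr)
\\
&= \sum_{\beta \in \B} b_\beta(\pi(x))\, v\bigl(\theta_\beta(\pi(x))\bigr)
= (Lv)(\pi(x)) = r\, v(\pi(x)) = r\, w(x),
\end{align*}
where the second equality is the reindexing $\beta \mapsto \bar\beta$ and the third uses $b_{\bar\beta}(x) = b_\beta(\pi(x))$ together with $\pi(\theta_{\bar\beta}(x)) = \theta_\beta(\pi(x))$. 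When $\B$ is infinite, the reindexed series converges because the series defining $(Lv)(\pi(x))$ does. Hence $w$ is a strictly positive eigenvector of $L$ for $r$, and since $r$ is a simple eigenvalue of $L$ on the relevant space, $w = c\,v$ for some $c > 0$.

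To pin down $c$, I would evaluate $w$ at $\pi(x)$ in two ways. Using $\pi(\pi(x)) = x$ gives $w(\pi(x)) = v(\pi(\pi(x))) = v(x)$; on the other hand $w(\pi(x)) = c\, v(\pi(x)) = c\, w(x) = c^2\, v(x)$. Thus $c^2 v(x) = v(x)$ for all $x \in \bar H$, and since $v$ is strictly positive this forces $c^2 = 1$, hence $c = 1$. Therefore $v(\pi(x)) = w(x) = v(x)$ for all $x \in \bar H$, as claimed.

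I do not expect a real obstacle: the argument is purely structural and works verbatim in the finite (Theorem~\ref{thm:1.1}) and countably infinite (Theorem~\ref{thm:1.9}) cases. The one point requiring care is the displayed computation — one must apply the reindexing $\beta \mapsto \bar\beta$ first, and one must not assume that $\beta \mapsto \bar\beta$ is itself an involution, since that is not among the hypotheses; only that it is a bijection of $\B$.
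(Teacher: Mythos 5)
Your proof is correct and takes essentially the same route as the paper: you show that $w = v\circ\pi$ is again a strictly positive eigenvector of $L$ with eigenvalue $r$ (by reindexing the sum via the bijection $\beta\mapsto\bar\beta$ and using the two compatibility identities, exactly as the paper does) and then invoke the simplicity of $r$. The only cosmetic difference is the final step: you determine the constant by using $\pi\circ\pi=\mathrm{id}$ to force $c^2=1$, while the paper instead averages $v$ and $v\circ\pi$ to get a $\pi$-invariant positive eigenvector and appeals to uniqueness; the two conclusions are interchangeable.
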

\begin{proof}
Define $w(x) = v(\pi(x))$, so $w(\theta_{\beta}(x)) = v(\pi(
\theta_{\beta}(x)))$
for all $\beta \in \B$ and $x \in \bar H$.  If $\lambda:= r(\Lambda)$,
it follow that
\begin{equation*}
\lambda v(\pi(x)) = \lambda w(x) 
= \sum_{\beta \in \B} b_{\beta}(\pi(x)) v (\theta_{\beta}(\pi(x)))
= \sum_{\beta \in \B} b_{\bar \beta}(\pi(x)) v (\theta_{\bar \beta}(\pi(x))).
\end{equation*}
Since $b_{\bar \beta}(\pi(x)) = b_{\beta}(x)$ and $v (\theta_{\bar
  \beta}(\pi(x))) = v(\pi(\theta_{\beta}(x))) = w(\theta_{\beta}(x))$,
we find that
\begin{equation*}
\lambda w(x)  = \sum_{\beta \in \B} b_{\beta}(x) w (\theta_{\beta}(x))
= (\Lambda(w))(x),
\end{equation*}
so
\begin{equation*}
w_1(x) = \frac{v(x) + w(x)}{2} = \frac{v(x) + v(\pi(x))}{2}
\end{equation*}
is a strictly positive eigenvector of $\Lambda$ with eigenvalue $\lambda$ and
$w_1(\pi(x)) = w_1(x)$ for all $x \in \bar H$.  By the uniqueness of the
strictly positive eigenvector of $\Lambda$, there exists $\mu >0$ such that
$v(x) = \mu w(x)$ for all $x \in \bar H$, which implies that
$v(\pi(x)) = v(x)$ for all $x \in \bar H$.
\end{proof}
\begin{remark}
\label{rem:1.6}
Suppose that $H$ is a bounded, open mildly regular subset of $\C = \R^2$ and
for all $z=x+iy$, $\bar x = x-iy \in H$.  Define $\pi(z) = \bar z$ and
assume that the hypotheses of Corollary~\ref{cor:1.6} are satisfied, so
$v(\bar z) = v(z)$ for all $z \in H$.  Using this fact, the original
eigenvalue problem can be reduced to an equivalent problem on the closure
of $H_+$, where $H_+ = \{z \in H | \Im(z) >0\}$.
\end{remark}

\section{Estimates for derivatives of $v_s$: 
the one dimensional case}
\label{sec:1d-deriv}
Throughout this section, we shall assume that $H \subset \R^1$ is a bounded,
open set such that $H = \cup_{j=1}^n (c_j,d_j)$, where
$[c_j,d_j] \cap [c_k,d_k] = \emptyset$ whenever $1 \le j \le n$,
$1 \le k \le n$, and $j \neq k$. $\B$ will denote a finite index set.
For $\beta \in \B$ and some integer $m \ge 1$, we assume

\noindent (H6.1:) For each $\beta \in \B$, $b_{\beta} \in C^m(\bar H)$,
$\theta_{\beta} \in C^m(\bar H)$, $b_{\beta}(x) >0$ for all $x \in \bar H$ and
$\theta_{\beta}(H) \subset H$.  There exist an integer $\mu \ge 1$ and a real
number $\kappa <1$ such that for all $\omega \in \B_{\mu}:= \{(\beta_1,
\beta_2, \cdots, \beta_{\mu}) \, | \, \beta_j \in \B$ for $1 \le j \le \mu\}$
and for all $x,y \in \bar H$, $|\theta_{\omega}(x) - \theta_{\omega}(y)| \le
\kappa |x-y|$, where $\theta_{\omega}:= \theta_{\beta_{\mu}} \circ
\theta_{\beta_{\mu-1}} \circ \cdots \circ \theta_{\beta_1}$ for $\omega =
(\beta_1, \beta_2, \cdots, \beta_{\mu}) \in \B_{\mu}$.

As in Section~\ref{sec:exist}, we define $Y = C(\bar H)$ and $X_m = C^m(\bar
H)$.  Assuming (H6.1), we define for $s \ge 0$, a bounded linear operator
$L_s:Y \to Y$ by
\begin{equation}
\label{3.1} (L_s f)(x) = \sum_{\beta \in \B} [b_{\beta}(x)]^s f(\theta_{\beta}(x)).
\end{equation} 
As in Section~\ref{sec:exist}, $L_s(X_m) \subset X_m$ and $L_s |_{X_m}$
defines a bounded linear map of $X_m$ to $X_m$ which we denote by $\Lambda_s$.
Theorem~\ref{thm:1.1} is now directly applicable (replace $b_{\beta}(x)$ in
Theorem~\ref{thm:1.1} by $b_{\beta}(x)^s$) and yields information about
$\sigma(\Lambda_s)$.  In particular, $r(L_s) = r(\Lambda_s) >0$ and there
exists a unique (to with normalization) strictly positive, $C^m$ eigenvector
$v_s$ of $\Lambda_s$ with eigenvalue $r(\Lambda_s)$.

If $\omega = (\beta_1, \beta_2, \ldots, \beta_p) \in \B_p$, recall that
we define $b_{\omega}(x)$ by
\begin{equation*}
b_{\omega}(x) = b_{\beta_p}(\theta_{\beta_{p-1}} \circ \theta_{\beta_{p-2}}
\circ \cdots \circ \theta_{\beta_{1}}(x)) \cdots
b_{\beta_3}((\theta_{\beta_{2}} \circ \theta_{\beta_{1}})(x))
b_{\beta_2}((\theta_{\beta_{1}}(x)) b_{\beta_1}(x),
\end{equation*}
and
\begin{equation}
\label{3.2}
(L_s^p f)(x) = \sum_{\omega \in \B_p} [b_{\omega} (x)]^s f(\theta_{\omega}(x)).
\end{equation}

Notice that $L_s^p$ is of the same form as $L_s$ and Theorem~\ref{thm:1.1}
is also directly applicable to  $L_s^p$.  Since $v_s$ is also an eigenvector
of $L_s^p$, we could also work with \eqref{3.2} instead of \eqref{3.1}:
$\B_p$ is an index set corresponding to $\B$, $b_{\omega}$, $\omega \in
\B_p$, corresponds to $b_{\beta}$, $\beta \in \B$, and $\theta_{\omega}$,
$\omega \in \B_p$, corresponds to $\theta_{\beta}$, $\beta \in \B$.  In
our subsequent work in this section, we shall start from \eqref{3.1}, but
the theorems we shall obtain translate directly to the case of using
\eqref{3.2}; and indeed it is sometimes desirable to start from
\eqref{3.2} for some $p >1$.

If $m$ is as in (H6.1) and $k$ is a positive integer with $k \le m$,
we define $D = d/dx$, so $(Df)(x) = f^{\prime}(x)$ and
$(D^kf)(x) = f^{(k)}(x)$.  We are interested in obtaining estimates for
\begin{equation}
\label{3.3}
\sup \{|D^k v_s(x)|/v_s(x) : x \in \bar H\}.
\end{equation}



Hypothesis (H6.1) implies that there exist constants $M >0$ and $c =
\kappa^{1/\mu}$, (so $c <1$), such that for all $x,y \in \bar H$, for all
integers $\nu \ge 1$ and all $\w \in \B_{\nu}$,
\begin{equation}
\label{3.4}
|\theta_\w(x) - \theta_\w(y)| \le M c^{\nu} |x-y|.
\end{equation}
It follows that if we define $\epsilon_0 =1$, and for $\nu \ge 1$,
\begin{equation}
\label{3.5}
\epsilon_{\nu}:= \sup \Big\{|\theta_\w(x) - \theta_\w(y)|/|x-y| :
\w \in \B_{\nu} \text{ and } x,y \in H, x \neq y \Big\},
\end{equation}
we have that $\epsilon_{\nu} \le M c^{\nu}$ and $\sum_{\nu =1}^{\infty}
\epsilon_{\nu} < \infty$.

We define constants $C_1$ and $C_1(s)$ for $s > 0$ by
\begin{equation}
\label{3.6}
C_1 = \sup \Big\{\frac{|D b_{\beta}(x)|}{b_{\beta}(x)}: \beta \in \B, x \in H
\Big\}
\end{equation}
and
\begin{equation*}
C_1(s) = \sup \Big\{\frac{|D b_{\beta}(x)^s|}{b_{\beta}(x)^s}: 
\beta \in \B, x \in H \Big\}.
\end{equation*}
A calculation shows that for all $\w \in \B_{\nu}$, $\nu \ge 1$,
\begin{equation}
\label{3.7}
\frac{|D b_{\w}(x)^s|}{b_{\w}(x)^s}
= s \frac{D b_{\w}(x)}{b_{\w}(x)},
\end{equation}
so
\begin{equation}
\label{3.8}
C_1(s) = s C_1, \text{ for } s >0.
\end{equation}

We begin by considering \eqref{3.3} for the case $k=1$. In our applications,
we shall only need the case $s >0$, so we shall restrict our attention to
this case.
\begin{thm}
\label{thm:3.1}
Assume that (H6.1) is satisfied.
If $C_1$ and $\epsilon_{\nu}$, $\nu \ge 1$
are as in \eqref{3.6} and \eqref{3.5} respectively, then, for $s >0$,
\begin{equation}
\label{3.9}
\sup \Big\{\frac{|D v_s(x)|}{v_s(x)} : x \in \bar H\Big\} \le C_1 s
\Big(\sum_{\nu =0}^{\infty} \epsilon_{\nu}\Big).
\end{equation}
If $\delta \in \{0,1\}$ and $(-1)^{\delta}(D b_\w)(x)/b_\w(x))\le 0$
for all $\w \in \B_{\nu}$, all $\nu \ge 1$ and all $x \in \bar H$, then
$(-1)^{\delta}D v_s(x) \le 0$ for all $x \in \bar H$ and all $s >0$.
\end{thm}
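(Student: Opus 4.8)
The plan is to derive the eigenfunction bound from the limit formula \eqref{1.14}, specialized to $k=1$. Recall that Theorem~\ref{thm:1.1} (applied to $L_s$, i.e.\ with $b_\beta$ replaced by $b_\beta^s$) gives a strictly positive $C^m$ eigenvector $v_s$, and Remark~\ref{rem:1.2} together with \eqref{1.14} yields
\begin{equation*}
\frac{D v_s(x)}{v_s(x)} = \lim_{k \to \infty} \frac{D\big(\sum_{\w \in \B_k} b_\w(x)^s\big)}{\sum_{\w \in \B_k} b_\w(x)^s},
\end{equation*}
uniformly in $x \in \bar H$. So it suffices to bound the right-hand side uniformly in $k$ and then pass to the limit.

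First I would compute $D b_\w(x)^s$ for $\w = (\beta_1,\dots,\beta_k) \in \B_k$. Writing $b_\w$ as the product in \eqref{1.5} and using \eqref{3.7}, we have $D b_\w(x)^s = s\, b_\w(x)^s \cdot \big(D b_\w(x)/b_\w(x)\big)$, and the logarithmic derivative expands by the chain rule as a telescoping sum
\begin{equation*}
\frac{D b_\w(x)}{b_\w(x)} = \sum_{\ell=1}^{k} \frac{(D b_{\beta_\ell})\big(\theta_{\w_{\ell-1}}(x)\big)}{b_{\beta_\ell}\big(\theta_{\w_{\ell-1}}(x)\big)} \cdot D\theta_{\w_{\ell-1}}(x),
\end{equation*}
where $\theta_{\w_0}(x) = x$ and $\theta_{\w_{\ell-1}} = \theta_{\beta_{\ell-1}} \circ \cdots \circ \theta_{\beta_1}$. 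The factor $|D b_{\beta_\ell}/b_{\beta_\ell}|$ is bounded by $C_1$ from \eqref{3.6}, and $|D\theta_{\w_{\ell-1}}(x)| \le \epsilon_{\ell-1}$ from the definition \eqref{3.5} (with $\epsilon_0 = 1$). Hence $|D b_\w(x)/b_\w(x)| \le C_1 \sum_{\ell=1}^{k} \epsilon_{\ell-1} \le C_1 \sum_{\nu=0}^{\infty} \epsilon_\nu$, uniformly in $\w$ and $x$; the series converges by \eqref{3.4}.

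Next, since $D\big(\sum_\w b_\w^s\big) = \sum_\w D b_\w^s = \sum_\w b_\w^s \cdot s\,(D b_\w/b_\w)$ and every $b_\w^s > 0$, dividing by $\sum_\w b_\w^s$ expresses $D(\sum_\w b_\w^s)/\sum_\w b_\w^s$ as a convex combination (weights $b_\w(x)^s / \sum_{\w'} b_{\w'}(x)^s$) of the quantities $s\,(D b_\w(x)/b_\w(x))$. Therefore its absolute value is at most $C_1 s \sum_{\nu=0}^\infty \epsilon_\nu$, uniformly in $k$ and $x$. Passing to the limit gives \eqref{3.9}.

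For the sign statement, suppose $(-1)^\delta (D b_\w)(x)/b_\w(x) \le 0$ for all $\w \in \B_\nu$, all $\nu \ge 1$, and all $x \in \bar H$. Then for each $k$, every term $s\,(D b_\w(x)/b_\w(x))$ in the convex combination above has sign $(-1)^{\delta+1}$ (or is zero), so the convex combination $D(\sum_\w b_\w^s)(x)/\sum_\w b_\w^s(x)$ has the same sign property, i.e.\ $(-1)^\delta D\big(\sum_\w b_\w^s\big)(x) \le 0$. Since $\sum_\w b_\w^s(x) > 0$, this persists in the limit by \eqref{1.14}, giving $(-1)^\delta D v_s(x) \le 0$.

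I expect the only real subtlety to be the bookkeeping in the telescoping logarithmic-derivative identity for $b_\w$ — making sure the composition indices and the factors $D\theta_{\w_{\ell-1}}$ line up correctly — together with the convergence of $\sum \epsilon_\nu$, which is already guaranteed by \eqref{3.4}. The convexity observation that turns $D(\sum_\w b_\w^s)/\sum_\w b_\w^s$ into an average of logarithmic derivatives is the conceptual crux and handles both the magnitude bound and the sign claim simultaneously; everything else is routine.
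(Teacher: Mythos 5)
Your proposal is correct and follows essentially the same route as the paper: the telescoping logarithmic-derivative identity you write is exactly the paper's \eqref{3.10}, the bound $|Db_\w|/b_\w \le C_1\sum_{\nu\ge 0}\epsilon_\nu$ is \eqref{3.11}--\eqref{3.12}, and the passage to $v_s$ via \eqref{1.14} is the same; your ``convex combination'' phrasing is just a restatement of the paper's triangle-inequality step and likewise yields the sign assertion. No gaps.
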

\begin{proof}
For a fixed $\w = (j_1,j_2, \ldots,j_\nu) \in \B_{\nu}$, for notational
convenience define $\xi_k(x) = 
(\theta_{j_k} \circ \theta_{j_{ k-1}} \circ \cdots \circ \theta_{j_1})(x)$ and
$\xi_0(x) = x$ for all $x \in \bar H$, so
\begin{equation*}
b_\w(x) = b_{j_{\nu}}(\xi_{\nu-1}(x)) b_{j_{\nu-1}}(\xi_{\nu-2}(x)) \cdots
b_{j_1}(\xi_{0}(x)).
\end{equation*}
By the chain rule and product rule for differentiation, we find
\begin{equation}
\label{3.10}
D b_\w(x) = b_\w(x) \left[\sum_{k=0}^{\nu -1} \frac{b_{j_{k+1}}^{\prime}
(\xi_k(x))\xi_k^{\prime}(x)}{b_{j_{k+1}}(\xi_k(x))}\right].
\end{equation}
It follows from \eqref{3.10} that
\begin{equation}
\label{3.11}
\frac{|D b_\w(x)|}{b_\w(x)} \le \sum_{k=0}^{\nu-1} C_1 \epsilon_k \le C_1
\sum_{k=0}^{\infty} \epsilon_k.
\end{equation}
Using \eqref{3.7}, we see that
\begin{equation}
\label{3.12}
\frac{|D (b_\w(x)^s)|}{b_\w(x)^s} \le s C_1 \sum_{k=0}^{\infty} \epsilon_k,
\end{equation}
and it follows that
\begin{equation*}
\frac{|D (\sum_{\w \in \B_{\nu}} (b_\w(x)^s)|}{\sum_{\w \in \B_{\nu}}
  (b_\w(x)^s)}
\le \frac{\sum_{\w \in \B_{\nu}}|D (b_\w(x)^s)|}{\sum_{\w \in \B_{\nu}}
  b_\w(x)^s}
\le s C_1 \sum_{k=0}^{\infty} \epsilon_k.
\end{equation*}
Using Theorem~\ref{thm:1.1} and \eqref{1.14}, we conclude by letting
$\nu \rightarrow \infty$ that
\begin{equation}
\label{3.13}
\frac{|D v_s(x)|}{v_s(x)} \le s C_1 \sum_{k=0}^{\infty} \epsilon_k.
\end{equation}
The final statement of the theorem follows by a similar argument, using
\eqref{3.7} and \eqref{1.14}.  Details are left to the reader.
\end{proof}

To facilitate the analysis of \eqref{3.3} when $k=2$, we first prove
a lemma.
\begin{lem}
\label{lem:3.2}
Assume that (H6.1) is satisfied with $m \ge 2$ and define $M_0$ by
\begin{equation}
\label{3.14}
M_0 = \sup \{|D^2 \theta_{\beta}(x)|: \beta \in \B, x \in \bar H\}.
\end{equation}
If $\w \in \B_k$, then
\begin{equation}
\label{3.15}
|D^2 \theta_{\w}(x)| \le M_0 \sum_{j=0}^{k-1} \epsilon_j^2 \epsilon_{k-j-1}.
\end{equation}
\end{lem}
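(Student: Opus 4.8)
The plan is to differentiate the iterated composition $\theta_\omega=\theta_{\beta_k}\circ\theta_{\beta_{k-1}}\circ\cdots\circ\theta_{\beta_1}$ (for $\omega=(\beta_1,\dots,\beta_k)\in\B_k$) twice, reusing the notation from the proof of Theorem~\ref{thm:3.1}: set $\xi_0(x)=x$ and $\xi_j(x)=(\theta_{\beta_j}\circ\cdots\circ\theta_{\beta_1})(x)$ for $1\le j\le k$, so that $\xi_k=\theta_\omega$ and $\xi_j=\theta_{\beta_j}\circ\xi_{j-1}$. The product and chain rules give the first-order recursion $\xi_j''=(\theta_{\beta_j}''\circ\xi_{j-1})\,(\xi_{j-1}')^2+(\theta_{\beta_j}'\circ\xi_{j-1})\,\xi_{j-1}''$ with $\xi_0''\equiv0$. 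Solving this recursion by a one-line induction on $j$ yields the closed form
\[
D^2\theta_\omega(x)=\xi_k''(x)=\sum_{j=1}^{k}\Big(\prod_{i=j+1}^{k}\theta_{\beta_i}'(\xi_{i-1}(x))\Big)\,\theta_{\beta_j}''(\xi_{j-1}(x))\,[\xi_{j-1}'(x)]^2,
\]
the empty product (for $j=k$) being $1$.

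The crucial step is to recognize the inner product of first derivatives as a single derivative of a shorter composition. Since $\xi_i=\theta_{\beta_i}\circ\xi_{i-1}$, applying the chain rule to $\theta_{\omega^{(j)}}:=\theta_{\beta_k}\circ\cdots\circ\theta_{\beta_{j+1}}$ — which is exactly the map $\theta_{\omega^{(j)}}$ for the word $\omega^{(j)}=(\beta_{j+1},\dots,\beta_k)\in\B_{k-j}$, and which satisfies $\theta_{\omega^{(j)}}\circ\xi_j=\theta_\omega$ — shows that $\prod_{i=j+1}^{k}\theta_{\beta_i}'(\xi_{i-1}(x))=(\theta_{\omega^{(j)}})'(\xi_j(x))$. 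Now I would bound each factor: $\theta_{\omega^{(j)}}\in\B_{k-j}$ is Lipschitz on $\bar H$ with constant at most $\epsilon_{k-j}$ by \eqref{3.5} (with the convention $\epsilon_0=1$ covering $j=k$), hence $|(\theta_{\omega^{(j)}})'|\le\epsilon_{k-j}$ on $\bar H$; likewise $\xi_{j-1}=\theta_{(\beta_1,\dots,\beta_{j-1})}\in\B_{j-1}$ gives $|\xi_{j-1}'|\le\epsilon_{j-1}$ (with $\epsilon_0=1$ for $j=1$); and $|\theta_{\beta_j}''(\xi_{j-1}(x))|\le M_0$ by \eqref{3.14}. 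Substituting into the closed form and taking absolute values gives
\[
|D^2\theta_\omega(x)|\le M_0\sum_{j=1}^{k}\epsilon_{j-1}^2\,\epsilon_{k-j}=M_0\sum_{j=0}^{k-1}\epsilon_j^2\,\epsilon_{k-j-1},
\]
which is \eqref{3.15}.

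I expect the main (and essentially the only) obstacle to be the telescoping identification just described: one must resist the crude estimate $\prod_{i=j+1}^k|\theta_{\beta_i}'|\le\epsilon_1^{\,k-j}$ and instead observe that this product is literally $(\theta_{\omega^{(j)}})'$ evaluated at $\xi_j(x)$ for a word $\omega^{(j)}\in\B_{k-j}$, so that it is controlled by the sharper quantity $\epsilon_{k-j}$. This is what makes the resulting estimate match \eqref{3.15} and, combined with $\sum_\nu\epsilon_\nu<\infty$, remain summable in $k$ for the later applications. The recursion, its closed form, and the final bookkeeping are all routine.
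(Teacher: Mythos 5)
Your proof is correct and is essentially the paper's own argument: the closed form you obtain for $\xi_k''$ is exactly the paper's \eqref{3.20}, and the key step---recognizing $\prod_{i=j+1}^{k}\theta_{\beta_i}'(\xi_{i-1}(x))$ as the derivative of the shorter composition $\theta_{\beta_k}\circ\cdots\circ\theta_{\beta_{j+1}}\in\B_{k-j}$, hence bounded by $\epsilon_{k-j}$, with $|\xi_{j-1}'|\le\epsilon_{j-1}$ and $|\theta_{\beta_j}''|\le M_0$---is precisely how the paper passes to \eqref{3.21} and \eqref{3.15}. The only cosmetic difference is that you reach the formula by solving the recursion $\xi_j''=(\theta_{\beta_j}''\circ\xi_{j-1})(\xi_{j-1}')^2+(\theta_{\beta_j}'\circ\xi_{j-1})\xi_{j-1}''$ inductively, whereas the paper differentiates the product formula \eqref{3.16} and then runs a case analysis on vanishing factors that your route renders unnecessary.
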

\begin{proof}
  Recall, for a fixed $\w = (\beta_1, \beta_2, \cdots, \beta_k) \in \B_k$, we
  have defined $\xi_0(x) = x$ and $\xi_p(x)$, $1 \le p \le k$ by
\begin{equation*}
\xi_p(x) = (\theta_{\beta_{p}} \circ \theta_{\beta_{p-1}} \circ \cdots
\circ \theta_{\beta_{1}})(x).
\end{equation*}
Hence,
\begin{equation}
\label{3.16}
\xi_k^{\prime}(x) = \prod_{j=1}^k \theta_{\beta_j}^{\prime}(\xi_{j-1}(x)).
\end{equation}
By differentiating \eqref{3.16}, we find that
\begin{equation}
\label{3.17}
\xi_k^{\prime\prime}(x) = \sum_{j=0}^{k-1}(\theta_{\beta_{j+1}}^{\prime\prime}
(\xi_j(x))\xi_j^{\prime}(x))\Big[\prod_{\substack{p=1 \\ p \neq j+1}}^k
\theta_{\beta_p}^{\prime}(\xi_{p-1}(x))\Big].
\end{equation}
If there exist two distinct integers $p$ and $q$ with $1 \le p \le k$
and $1 \le q \le k$ such that $\theta_{\beta_p}^{\prime}(\xi_{p-1}(x))
=0$ and $\theta_{\beta_q}^{\prime}(\xi_{q-1}(x))=0$, \eqref{3.17} implies
that $\xi_k^{\prime\prime}(x) =0$.  If there exists exactly
one integer $q$ with  $1 \le q \le k$ such that
$\theta_{\beta_q}^{\prime}(\xi_{q-1}(x))=0$, \eqref{3.17} implies that
\begin{equation}
\label{3.18} 
\xi_k^{\prime\prime}(x) = \left[\theta_{\beta_q}^{\prime\prime}(\xi_{q-1}(x))
\xi_{q-1}^{\prime}(x)\right]\left[\prod_{p=1}^{q-1} 
\theta_{\beta_p}^{\prime}(\xi_{p-1}(x))\right]\left[
\prod_{p=q+1}^{k}  \theta_{\beta_p}^{\prime}(\xi_{p-1}(x))\right],
\end{equation}
where we interpret $\prod_{p=q+1}^{k}  \theta_{\beta_p}^{\prime}(\xi_{p-1}(x))
= 1 = \epsilon_0$ if $q =k$.  It follows from \eqref{3.18} that
\begin{equation}
\label{3.19}
|\xi_k^{\prime\prime}(x)| \le M_0 \epsilon_{k-q} \epsilon_{q-1}^2.
\end{equation}

If there does not exist $p$, $1 \le p \le k$, with 
$\theta_{\beta_p}^{\prime}(\xi_{p-1}(x))=0$, \eqref{3.16} implies that
$\xi_k^{\prime}(x) \neq 0$, and we obtain from \eqref{3.17} that
\begin{equation}
\label{3.20}
\xi_k^{\prime\prime}(x) = \sum_{q=1}^k
(\theta_{\beta_q}^{\prime\prime}(\xi_{q-1}(x))
\xi_{q-1}^{\prime}(x))
\left[\prod_{p=1}^{q-1} 
\theta_{\beta_p}^{\prime}(\xi_{p-1}(x))\right]\left[
\prod_{p=q+1}^{k}  \theta_{\beta_p}^{\prime}(\xi_{p-1}(x))\right].
\end{equation}
Then \eqref{3.20} implies that
\begin{equation}
\label{3.21}
|\xi_k^{\prime\prime}(x)| \le \sum_{q=1}^k M_0 \epsilon_{k-q} \epsilon_{q-1}^2,
\end{equation}
which completes the proof.
\end{proof}

If $M$ and $c$, $0 <c <1$, are chosen as in \eqref{3.4}, Lemma~\ref{lem:3.2}
implies that for all $\w \in \B_k$ and $k \ge 2$
\begin{multline}
\label{3.22} |D^2 \theta_{\w}(x)| \le M_0 M c^{k-1}
+ \sum_{q=2}^{k-1} M_0 M^3 c^{k-2} c^q + M_0 M^2 c^{2k-2}
\\
= M_0 M c^{k-1}(1 + M c^{k-1}) + M_0 M^3 c^k \frac{1 - c^{k-2}}{1-c}.
\end{multline}

\begin{lem}
\label{lem:3.3}
Assume that (H6.1) is satisfied with $m \ge 2$ and define a constant $C_2$ by
\begin{equation}
\label{3.23} 
C_2 = \sup\Big\{\frac{|D^2 b_{\beta}(x)|}{b_{\beta}(x)}: \beta \in \B, 
x \in H \Big\}.
\end{equation}
Let $C_1, C_2$, and $M_0$ be as in \eqref{3.6}, \eqref{3.23}, and
\eqref{3.14}.  Then for $s >0$, and for $\w \in \B_{\nu}$, with $\nu \ge 1$,
we have the estimates
\begin{equation}
\label{3.24}
\frac{D^2(b_{\w}(x)^s)}{b_{\w}(x)^s} \le
s^2 C_1^2\Big(\sum_{k=0}^{\infty} \epsilon_k\Big)^2
+ s \Big(\sum_{k=0}^{\infty} \epsilon_k^2\Big)\Big[C_2 + C_1 M_0 
\Big(\sum_{k=0}^{\infty} \epsilon_k\Big)\Big]
\end{equation}
and
\begin{equation}
\label{3.25}
\frac{D^2(b_{\w}(x)^s)}{b_{\w}(x)^s} \ge
- s \Big(\sum_{k=0}^{\infty} \epsilon_k^2\Big)\Big[C_1^2 + C_2 + C_1 M_0 
\Big(\sum_{k=0}^{\infty} \epsilon_k\Big)\Big].
\end{equation}
\end{lem}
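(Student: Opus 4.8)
The plan is to reduce everything to the logarithmic derivative, exactly as in the proof of Theorem~\ref{thm:3.1}. Since $b_{\w}(x) > 0$, set $L(x) := \log(b_{\w}(x)^s) = s\log b_{\w}(x)$; then $L' = D(b_{\w}^s)/b_{\w}^s$ and $L'' = D^2(b_{\w}^s)/b_{\w}^s - \big(D(b_{\w}^s)/b_{\w}^s\big)^2$, so that
\begin{equation*}
\frac{D^2(b_{\w}(x)^s)}{b_{\w}(x)^s} = L''(x) + (L'(x))^2.
\end{equation*}
For the lower bound \eqref{3.25} I would simply discard the nonnegative term $(L'(x))^2$; for the upper bound \eqref{3.24} I would bound it by $s^2 C_1^2\big(\sum_k \epsilon_k\big)^2$ using \eqref{3.12}. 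Thus everything comes down to bounding $L''$ from above and below.

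The second step is to expand $L$ using the partial compositions $\xi_k$ from the proof of Theorem~\ref{thm:3.1}: writing $\w = (j_1,\dots,j_{\nu})$ and $h_{k+1} := \log b_{j_{k+1}}$, we have $L(x) = s\sum_{k=0}^{\nu-1} h_{k+1}(\xi_k(x))$, and two applications of the chain rule give
\begin{equation*}
L''(x) = s\sum_{k=0}^{\nu-1}\Big[h_{k+1}''(\xi_k(x))\,(\xi_k'(x))^2 + h_{k+1}'(\xi_k(x))\,\xi_k''(x)\Big].
\end{equation*}
I would then control the two families of factors separately. From $h_{k+1}' = b_{j_{k+1}}'/b_{j_{k+1}}$ we get $|h_{k+1}'| \le C_1$, while $h_{k+1}'' = b_{j_{k+1}}''/b_{j_{k+1}} - (b_{j_{k+1}}'/b_{j_{k+1}})^2$ gives the asymmetric bounds $h_{k+1}'' \le C_2$ (discard the square and use $b''/b \le C_2$) and $h_{k+1}'' \ge -(C_2 + C_1^2)$. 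From \eqref{3.5}, $|\xi_k'(x)| \le \epsilon_k$, and from Lemma~\ref{lem:3.2}, $|\xi_k''(x)| \le M_0\sum_{j=0}^{k-1}\epsilon_j^2\epsilon_{k-j-1}$ (with $\xi_0(x)=x$, so the $k=0$ term contributes nothing to the $\xi''$-part).

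The final step is the assembly. The $h''(\xi_k')^2$ terms contribute at most $\big(\sum_{k\ge 0}\epsilon_k^2\big)$ times the relevant bound on $h''$, and for the $h'\xi_k''$ terms the key combinatorial identity is $\sum_{k\ge 1}\sum_{j=0}^{k-1}\epsilon_j^2\epsilon_{k-j-1} = \big(\sum_{j\ge0}\epsilon_j^2\big)\big(\sum_{i\ge0}\epsilon_i\big)$, obtained by setting $i=k-1-j$; both series are finite by \eqref{3.4}. This yields $\big|\sum_k h_{k+1}'(\xi_k)\xi_k''\big| \le C_1 M_0\big(\sum_k\epsilon_k^2\big)\big(\sum_k\epsilon_k\big)$, hence $L'' \le s\big(\sum_k\epsilon_k^2\big)\big[C_2 + C_1 M_0\sum_k\epsilon_k\big]$ and $L'' \ge -s\big(\sum_k\epsilon_k^2\big)\big[C_1^2 + C_2 + C_1 M_0\sum_k\epsilon_k\big]$; adding back $(L')^2$ in the first case yields \eqref{3.24}, and discarding it in the second yields \eqref{3.25}. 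I do not expect a genuine obstacle here: the computation is routine two-derivative bookkeeping paralleling Theorem~\ref{thm:3.1} and Lemma~\ref{lem:3.2}. The points that need a little care are the asymmetry of the two $h''$ bounds (responsible for the extra $C_1^2$ in \eqref{3.25} but not in \eqref{3.24}), the handling of the $k=0$ term where $\xi_0$ is the identity, and checking absolute convergence of the double sum so the reindexing is legitimate.
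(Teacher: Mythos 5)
Your proposal is correct and follows essentially the same route as the paper: your split $D^2(b_{\w}^s)/b_{\w}^s = L'' + (L')^2$ with $L = s\log b_{\w}$ is exactly the paper's decomposition into $T_1 + T_2$ in \eqref{3.28}, your chain-rule expansion of $L''$ reproduces \eqref{3.29}, and the asymmetric bounds on $h''$, the bound $|\xi_k''|\le M_0\sum_j\epsilon_j^2\epsilon_{k-j-1}$ from Lemma~\ref{lem:3.2}, and the reindexing identity $\sum_k\sum_j\epsilon_j^2\epsilon_{k-j-1}=(\sum\epsilon_j^2)(\sum\epsilon_i)$ are precisely \eqref{3.30}--\eqref{3.34}. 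The only difference is cosmetic (logarithmic-derivative bookkeeping versus the paper's explicit $T_1,T_2$), so no further changes are needed.
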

\begin{proof}
For a fixed $\omega = (j_1, j_2, \ldots, j_{\nu}) \in \B_{\nu}$, let
$\xi_k(x)$ be as defined in the proof of Theorem~\ref{thm:3.1}. A
calculation gives
\begin{equation}
\label{3.26}
\frac{D^2(b_{\w}(x)^s)}{b_{\w}(x)^s} = s(s-1) 
\left(\frac{D(b_{\w}(x))}{b_{\w}(x)}\right)^2 + s 
\frac{D^2(b_{\w}(x))}{b_{\w}(x)}.
\end{equation}
Using \eqref{3.10} we see that
\begin{equation}
\label{3.27}
D^2 b_{\w}(x) = b_{\w}(x)\left(\frac{D(b_{\w}(x)}{b_{\w}(x)}\right)^2
+ b_{\w}(x) D \left( \sum_{k=0}^{\nu-1}
\frac{b_{j_{k+1}}^{\prime}(\xi_k(x)) \xi_k^{\prime}(x)}
{b_{j_{k+1}}(\xi_k(x))}\right),
\end{equation}
which gives
\begin{equation}
\label{3.28}
\frac{D^2 (b_{\w}(x)^s)}{ b_{\w}(x)^s}
 = s^2\left(\frac{D(b_{\w}(x)}{b_{\w}(x)}\right)^2
+ s D \left( \sum_{k=0}^{\nu-1}
\frac{b_{j_{k+1}}^{\prime}(\xi_k(x)) \xi_k^{\prime}(x)}
{b_{j_{k+1}}(\xi_k(x))}\right) := T_1 + T_2.
\end{equation}
A calculation gives
\begin{multline}
\label{3.29}
s D \left( \sum_{k=0}^{\nu-1}
\frac{b_{j_{k+1}}^{\prime}(\xi_k(x)) \xi_k^{\prime}(x)}
{b_{j_{k+1}}(\xi_k(x))}\right)
\\
= s \sum_{k=0}^{\nu-1}\frac{b_{j_{k+1}}^{\prime\prime}(\xi_k(x)) 
(\xi_k^{\prime}(x))^2 + b_{j_{k+1}}^{\prime}(\xi_k(x)) \xi_k^{\prime\prime}(x)}
{b_{j_{k+1}}(\xi_k(x))}
- s \sum_{k=0}^{\nu-1}\frac{[b_{j_{k+1}}^{\prime}(\xi_k(x)) 
\xi_k^{\prime}(x)]^2}{[b_{j_{k+1}}(\xi_k(x))]^2}.
\end{multline}
It follows that
\begin{equation}
\label{3.30}
T_2 \le s\left(\sum_{k=0}^{\nu-1} C_2 \epsilon_k^2 + \sum_{k=0}^{\nu-1}
C_1 |\xi_k^{\prime\prime}(x)|\right)
\end{equation}
and
\begin{equation}
\label{3.31}
T_2 \ge -s\left(\sum_{k=0}^{\nu-1} C_2 \epsilon_k^2 + \sum_{k=0}^{\nu-1}
C_1 |\xi_k^{\prime\prime}(x)|
+ \sum_{k=0}^{\nu-1} C_1^2 \epsilon_k^2\right).
\end{equation}
Lemma~\ref{lem:3.2} implies that
\begin{equation}
\label{3.32}
 \sum_{k=0}^{\nu-1} |\xi_k^{\prime\prime}(x)| \le M_0
\sum_{k=0}^{\infty} \Big(\sum_{q=1}^k \epsilon_{k-q} \epsilon_{q-1}^2\Big)
= M_0 \Big(\sum_{q=0}^{\infty} \epsilon_{q}^2\Big)\Big(\sum_{k=0}^{\infty}
\epsilon_k\Big),
\end{equation}
so we obtain from \eqref{3.30} and \eqref{3.31} that
\begin{equation}
\label{3.33}
T_2 \le s\left(C_2 \sum_{k=0}^{\infty} \epsilon_k^2 + C_1 M_0
\Big(\sum_{q=0}^{\infty}\epsilon_q^2\Big) 
\Big(\sum_{k=0}^{\infty}\epsilon_k\Big)\right)
\end{equation}
and
\begin{equation}
\label{3.34}
T_2 \ge -s\left( (C_2 + C_1^2) \Big(\sum_{k=0}^{\infty} \epsilon_k^2\Big)
+ C_1 M_0 \Big(\sum_{q=0}^{\infty}\epsilon_q^2\Big) 
\Big(\sum_{k=0}^{\infty}\epsilon_k\Big)\right).
\end{equation}
Combining equations \eqref{3.28}, \eqref{3.33}, and \eqref{3.34} and
using \eqref{3.11}, we obtain for $s >0$ and $\w = \B_{\nu}$
\begin{equation}
\label{3.35}
\frac{D^2 (b_{\w}(x)^s)}{ b_{\w}(x)^s}
 \le s^2 C_1^2 \Big(\sum_{k=0}^{\infty} \epsilon_k\Big)^2
+ s \Big( \sum_{k=0}^{\infty} \epsilon_k^2 \Big)
\left[C_2 + C_1 M_0  \Big(\sum_{k=0}^{\infty} \epsilon_k\Big)\right]
\end{equation}
and
\begin{equation}
\label{3.36}
\frac{D^2 (b_{\w}(x)^s)}{ b_{\w}(x)^s}
 \ge -s \Big(\sum_{k=0}^{\infty} \epsilon_k\Big)^2 
\left[(C_2 + C_1^2) + C_1 M_0 \Big(\sum_{k=0}^{\infty} \epsilon_k\Big)\right],
\end{equation}
which completes the proof.
\end{proof}
\begin{thm}
\label{thm:3.4}
Assume that (H6.1) is satisfied with $m \ge 2$, and for $s >0$, let $v_s$
denote the strictly positive, normalized $C^m$ eigenvector of $L_s$ in
\eqref{3.1}.  For integers $\nu \ge 0$, define $\epsilon_{\nu}$ by $\epsilon_0
=1$ and by \eqref{3.5} for $\nu \ge 1$. In addition, let $C_1$, $C_2$, and
$M_0$ be constants given by \eqref{3.6}, \eqref{3.23}, and \eqref{3.14},
respectively. Then for all $x \in \bar H$, we have the following estimates.
\begin{equation*}
\frac{D^2 v_s(x)}{ v_s(x)}
 \le s^2 C_1^2 \Big(\sum_{k=0}^{\infty} \epsilon_k\Big)^2
+ s \Big( \sum_{k=0}^{\infty} \epsilon_k^2 \Big)
\left[C_2 + C_1 M_0  \Big(\sum_{k=0}^{\infty} \epsilon_k\Big)\right]
\end{equation*}
and
\begin{equation*}
\frac{D^2 v_s(x)}{ v_s(x)}
 \ge -s \sum_{k=0}^{\infty} \epsilon_k^2 
\left[(C_2 + C_1^2) + C_1 M_0 \Big(\sum_{k=0}^{\infty} \epsilon_k\Big)\right].
\end{equation*}
\end{thm}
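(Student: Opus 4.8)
The plan is to deduce Theorem~\ref{thm:3.4} from Lemma~\ref{lem:3.3} by the same termwise-summation-and-limit argument used in the proof of Theorem~\ref{thm:3.1}. Since (H6.1) holds with $m \ge 2$, Theorem~\ref{thm:1.1} applies to the operator $L_s$ of \eqref{3.1} (replace $b_\beta$ by $b_\beta^s$, and note that a finite disjoint union of bounded open intervals is mildly regular); it produces the strictly positive, normalized $C^m$ eigenvector $v_s$. Taking $u \equiv 1$ in Remark~\ref{rem:1.2} and using \eqref{3.2}, formula \eqref{1.14} specializes to
\begin{equation*}
\frac{D^2 v_s(x)}{v_s(x)} = \lim_{\nu \to \infty}
\frac{D^2\Big(\sum_{\w \in \B_\nu} b_\w(x)^s\Big)}{\sum_{\w \in \B_\nu} b_\w(x)^s},
\end{equation*}
with the convergence uniform in $x \in \bar H$; this is legitimate because the relevant multi-index has order $2 \le m$, and the right-hand side is insensitive to the normalization of $v_s$.

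Next I would fix $\nu \ge 1$ and estimate the finite sum on the right. For each $\w \in \B_\nu$ the function $b_\w(x)^s$ is strictly positive, so the bounds \eqref{3.24} and \eqref{3.25} of Lemma~\ref{lem:3.3}, after multiplying through by $b_\w(x)^s$, give pointwise upper and lower bounds for $D^2\big(b_\w(x)^s\big)$ of the form (constant)$\cdot b_\w(x)^s$, where the two constants are precisely those appearing in the statement of Theorem~\ref{thm:3.4} and are uniform over all $\w \in \B_\nu$ and all $\nu$. Summing these over $\w \in \B_\nu$, using $D^2\big(\sum_\w b_\w^s\big) = \sum_\w D^2\big(b_\w^s\big)$ together with $\sum_\w b_\w(x)^s > 0$, the same two constants bound the ratio $D^2\big(\sum_\w b_\w^s\big)\big/\sum_\w b_\w^s$ from above and below, uniformly in $\nu$ and in $x \in \bar H$. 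Letting $\nu \to \infty$ in the displayed identity then yields both estimates.

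There is no serious obstacle here: the real work has already been done in Lemma~\ref{lem:3.3} and in the derivation of \eqref{1.14}. The only items requiring care are (i) checking that the hypotheses of Theorem~\ref{thm:1.1} are in force so that \eqref{1.14} may be invoked with a multi-index of order $2$, which is exactly where $m \ge 2$ is used; and (ii) observing that the per-term inequality $D^2(b_\w^s) \le c\, b_\w^s$ passes to the normalized sum by the elementary fact that $\sum_\w D^2(b_\w^s) \le c \sum_\w b_\w^s$ when each $b_\w^s > 0$, followed by division by the positive sum $\sum_\w b_\w^s$ (and analogously for the lower bound). I would present the argument compactly, citing Theorem~\ref{thm:3.1} for the template and Lemma~\ref{lem:3.3} for the per-term estimates.
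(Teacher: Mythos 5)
Your proposal is correct and is essentially the paper's own argument: the paper likewise obtains Theorem~\ref{thm:3.4} immediately from \eqref{1.14} together with the per-word estimates \eqref{3.35}--\eqref{3.36} (i.e., the conclusions of Lemma~\ref{lem:3.3}), passing from the termwise bounds to the normalized sums and letting $\nu \to \infty$. The extra details you spell out (summing the inequalities over $\w \in \B_{\nu}$ and dividing by the positive sum) are exactly what "follows immediately" means there.
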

\begin{proof}
Theorem~\ref{thm:3.4} follows immediately from \eqref{1.14} and \eqref{3.35}
and \eqref{3.36}.
\end{proof}

The estimates given in Theorems~\ref{thm:3.1} and \ref{thm:3.4} are somewhat
crude.  If one has more information about the coefficients $b_{\beta}(\cdot)$
and the maps $\theta_{\beta}(\cdot)$, $\beta \in \B$, one can obtain
much sharper results.  An example is provided by the following theorem.

\begin{thm}
\label{thm:3.5}
Assume that (H6.1) is satisfied with $m \ge 2$.
Assume, also, that  $H=(a_1,a_2)$ is a bounded open interval
in $\R$ and that
$\theta_{\beta}^{\prime}(u) \ge 0$, $\theta_{\beta}^{\prime\prime}(u) \ge 0$,
$b_{\beta}^{\prime}(u) \ge 0$, $b_{\beta}^{\prime\prime}(u) \ge 0$, and
\begin{equation}
\label{3.37}
b_{\beta}^{\prime\prime}(u) b_{\beta}(u) - (1-s)[b_{\beta}^{\prime}(u)]^2
\ge 0
\end{equation}
for all $\beta \in \B$, for all $u \in H$, and for a given real number $s$. If
$ s>0$ and $v_s$ is the strictly positive $C^m$ eigenvector of $\Lambda_s$, it
follows that for all $u \in \bar H$
\begin{equation}
\label{3.38}
D v_s(u) \ge 0 \text{ and } D^2 v_s(u) \ge 0.
\end{equation}
If, in addition, there exists a set $F \subset \bar H$ (possibly empty) such
that for all $u \in \bar H \setminus F$ and all $\beta \in \B$, 
$b_{\beta}^{\prime}(u) > 0$ and strict inequality holds in \eqref{3.37}, then
for all $u \in \bar H \setminus F$,
\begin{equation}
\label{3.39}
D v_s(u) >  0 \text{ and } D^2 v_s(u) >  0.
\end{equation}
\end{thm}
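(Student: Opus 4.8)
The plan is to exploit the approximation of $v_s$ by normalized iterates of $\Lambda_s$ furnished by Theorem~\ref{thm:1.1}, after first recording that hypothesis \eqref{3.37} is nothing but a convexity statement. Set $\phi_\beta := b_\beta^s$; since $b_\beta \in C^m(\bar H)$ and $b_\beta > 0$, also $\phi_\beta \in C^m(\bar H)$, and $\phi_\beta' = s\, b_\beta^{s-1} b_\beta'$, $\phi_\beta'' = s\, b_\beta^{s-2}\bigl(b_\beta b_\beta'' - (1-s)(b_\beta')^2\bigr)$. Because $s>0$ and $b_\beta>0$, the assumptions $b_\beta'\ge 0$ and \eqref{3.37} are therefore equivalent to $\phi_\beta'\ge 0$ and $\phi_\beta''\ge 0$; that is, each $\phi_\beta$ is nonnegative, nondecreasing and convex, while each $\theta_\beta$ is nondecreasing and convex by hypothesis. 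In terms of $\phi_\beta$ the operator reads $(\Lambda_s f)(x) = \sum_{\beta\in\B}\phi_\beta(x) f(\theta_\beta(x))$, and $v_s$ solves $\lambda_s v_s = \sum_{\beta}\phi_\beta\,(v_s\circ\theta_\beta)$ with $\lambda_s = r(\Lambda_s) > 0$.

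First I would introduce the cone $K^+ = \{f\in C^m(\bar H): f\ge 0,\ Df\ge 0,\ D^2 f\ge 0 \text{ on }\bar H\}$ and check that $\Lambda_s(K^+)\subseteq K^+$. Differentiating the formula for $\Lambda_s f$ once gives $(\Lambda_s f)' = \sum_\beta \bigl[\phi_\beta'\,(f\circ\theta_\beta) + \phi_\beta\,(f'\circ\theta_\beta)\,\theta_\beta'\bigr]$, and differentiating again gives $(\Lambda_s f)'' = \sum_\beta \bigl[\phi_\beta''\,(f\circ\theta_\beta) + 2\phi_\beta'\,(f'\circ\theta_\beta)\,\theta_\beta' + \phi_\beta\,(f''\circ\theta_\beta)\,(\theta_\beta')^2 + \phi_\beta\,(f'\circ\theta_\beta)\,\theta_\beta''\bigr]$. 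For $f\in K^+$ every summand on the right of both identities is a product of nonnegative factors, using $\phi_\beta,\phi_\beta',\phi_\beta'',\theta_\beta',\theta_\beta''\ge 0$ together with $f,f',f''\ge 0$; hence $\Lambda_s f\in K^+$. Since the constant function $\mathbf 1$ lies in $K^+$ and $K^+$ is a cone, every iterate $(r^{-1}\Lambda_s)^k(\mathbf 1)$ with $r = r(\Lambda_s)$ lies in $K^+$; as $K^+$ is closed in the $C^m$ topology (the conditions $f,f',f''\ge 0$ pass to $C^m$-limits when $m\ge 2$) and, by \eqref{1.10}, $(r^{-1}\Lambda_s)^k(\mathbf 1)\to s_{\mathbf 1} v_s$ with $s_{\mathbf 1}>0$, we conclude $v_s\in K^+$, which is exactly \eqref{3.38}.

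For the strict statement \eqref{3.39} I would not repeat the limiting argument, since strict inequalities need not survive passage to a limit; instead I would differentiate the eigenvalue identity $\lambda_s v_s = \sum_\beta \phi_\beta\,(v_s\circ\theta_\beta)$ and feed in what \eqref{3.38} already gives, namely $v_s>0$, $v_s'\ge 0$ and $v_s''\ge 0$ on all of $\bar H$. Fix $u\in\bar H\setminus F$; by hypothesis $\phi_\beta'(u)>0$ and $\phi_\beta''(u)>0$ for every $\beta$. The first-derivative identity reads $\lambda_s v_s'(u) = \sum_\beta\bigl[\phi_\beta'(u)\,v_s(\theta_\beta(u)) + \phi_\beta(u)\,v_s'(\theta_\beta(u))\,\theta_\beta'(u)\bigr]$, in which $\phi_\beta'(u)\,v_s(\theta_\beta(u))>0$ (as $v_s>0$) and all remaining terms are $\ge 0$, so $v_s'(u)>0$; the second-derivative identity likewise has the strictly positive term $\phi_\beta''(u)\,v_s(\theta_\beta(u))$ and all other terms $\ge 0$, so $v_s''(u)>0$. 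The one place any real care is needed is the bookkeeping in the cone-invariance step: one must verify that \emph{every} term produced by the two differentiations involves only factors the sign hypotheses control, which is precisely why $\theta_\beta''\ge 0$ and $b_\beta''\ge 0$ are imposed in addition to the first-order conditions.
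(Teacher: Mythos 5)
Your proof is correct, but it takes a different route from the paper's. The paper works at the level of the words $\w\in\B_\nu$: it computes $D(b_\w^s)/b_\w^s$ and $D^2(b_\w^s)/b_\w^s$ explicitly via \eqref{3.40} and \eqref{3.42}, shows these are nonnegative under the hypotheses, and passes to the limit through \eqref{1.14}; for the strict statement it extracts, for $x\notin F$, a lower bound $s\delta_1(x)>0$ (resp.\ $\delta_2(x;s)>0$) that is uniform in $\nu$ (coming from the single term $k=0$ in the sums), so that strict positivity survives the limit. You instead avoid the word-level bookkeeping altogether: a one-step computation shows the cone $K^+=\{f\ge 0,\ Df\ge 0,\ D^2f\ge 0\}$ is invariant under $\Lambda_s$ once one observes that \eqref{3.37} together with $b_\beta'\ge 0$ says exactly that $\phi_\beta=b_\beta^s$ is nondecreasing and convex, and then \eqref{1.10} (convergence of $(r^{-1}\Lambda_s)^k(\mathbf 1)$ in the $C^m$ topology, $m\ge 2$, with $s_{\mathbf 1}>0$) plus closedness of $K^+$ gives \eqref{3.38}; both arguments ultimately lean on Theorem~\ref{thm:1.1}, but yours replaces the product/chain-rule formulas over $\B_\nu$ by cone invariance. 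Your treatment of \eqref{3.39} is genuinely different and arguably cleaner: rather than tracking uniform strict lower bounds along the iteration, you differentiate the eigen-identity $\lambda_s v_s=\sum_\beta\phi_\beta\,(v_s\circ\theta_\beta)$ once and twice and bootstrap from the already-established weak inequalities, isolating the strictly positive terms $\phi_\beta'(u)v_s(\theta_\beta(u))$ and $\phi_\beta''(u)v_s(\theta_\beta(u))$ for $u\notin F$; this sidesteps the (legitimate) worry that strict inequalities do not pass to limits. What the paper's route buys is reuse of the word-level formulas \eqref{3.40}--\eqref{3.42}, which also drive the quantitative bounds of Theorems~\ref{thm:3.1} and~\ref{thm:3.4}; what yours buys is brevity and a structural statement ($\Lambda_s$-invariance of $K^+$) of independent interest. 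One small remark: your closing sentence credits the hypothesis $b_\beta''\ge 0$ with making the second-derivative bookkeeping work, but your own argument never uses it separately—only $\theta_\beta''\ge 0$ and $\phi_\beta''\ge 0$, the latter being equivalent to \eqref{3.37}—so in your formulation that hypothesis is redundant (as it is, in effect, in the paper's proof as well).
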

\begin{proof}
For $\nu \ge 1$, let $\w =(j_1, j_2, \cdots,, j_{\nu})$ denote a fixed
element of $\B_{\nu}$ and for $0 \le k \le \nu$, let $\xi_k(x)$ be as
defined in the proof of Theorem~\ref{thm:3.1}.  We leave to the reader
the simple proof that $\xi_k^{\prime}(x) \ge 0$ and $\xi_k^{\prime\prime}(x)
\ge 0$
for all $x \in \bar H$ and $0 \le k \le \nu$.  Using \eqref{3.7}
and \eqref{3.10}, it follows that
\begin{equation}
\label{3.40}
\frac{D (b_{\w}(x)^s)}{ b_{\w}(x)^s}
 = s \frac{D b_{\w}(x)}{b_{\w}(x)}
= s \sum_{k=0}^{\nu-1}
\frac{b_{j_{k+1}}^{\prime}(\xi_k(x)) \xi_k^{\prime}(x)}
{b_{j_{k+1}}(\xi_k(x))} \ge s \frac{b_{j-1}^{\prime}(x)}{b_{j-1}(x)} \ge 0.
\end{equation}
Using \eqref{1.14} and taking the limit as $\nu \rightarrow \infty$, we
conclude that $D v_s(x)/v_s(x) \ge 0$ for all $x \in \bar H$. If, in addition,
there exists a set $F$ as in the statement of Theorem~\ref{thm:3.5} and if
$x \notin F$, it follows that
\begin{equation*}
\inf\Big\{s \frac{b_{\beta}^{\prime}(x)}{b_{\beta}(x)} : \beta \in \B \Big\} := s
\delta_1(x) >0,
\end{equation*}
so \eqref{3.40} then implies that
\begin{equation*}
\frac{D( b_{\w}(x)^s)}{b_{\w}(x)^s} \ge s \delta_1(x).
\end{equation*}
Again using \eqref{1.14} and letting $\nu \rightarrow \infty$, we conclude
that $D v_s(x) \ge s  \delta_1(x) >0$ for all $x \in \bar H \setminus F$.
Because all terms in the summation in \eqref{3.40} are nonnegative,
we conclude that
\begin{equation}
\label{3.41}
s^2 \left(\frac{D b_{\w}(x)}{ b_{\w}(x)}\right)^2
\ge s^2 \sum_{k=0}^{\nu-1}
\frac{[b_{j_{k+1}}^{\prime}(\xi_k(x))]^2 [\xi_k^{\prime}(x)]^2}
{[b_{j_{k+1}}(\xi_k(x))]^2}.
\end{equation}
If one replaces $s^2[D b_{\w}(x)/b_{\w}(x)]^2$ in \eqref{3.28} by the lower
bound in \eqref{3.41} and if one then uses \eqref{3.29} and simplifies,
one obtains
\begin{multline}
\label{3.42}
\frac{D^2(b_{\w}(x)^s)}{b_{\w}(x)^s}
\ge \sum_{k=0}^{\nu-1}
\frac{[s b_{j_{k+1}}^{\prime\prime}(\xi_k(x)) b_{j_{k+1}}(\xi_k(x))
-(s-s^2) b_{j_{k+1}}^{\prime}(\xi_k(x))^2] [\xi_k^{\prime}(x)]^2}
{[b_{j_{k+1}}(\xi_k(x))]^2}
\\
+ s \sum_{k=0}^{\nu-1} \frac{b_{j_{k+1}}^{\prime}(\xi_k(x)) \xi_k^{\prime\prime}(x)}
{b_{j_{k+1}}(\xi_k(x))}.
\end{multline}
If \eqref{3.37} is satisfied, one deduces from \eqref{3.42} that
$D^2(b_{\w}(x)^s)/b_{\w}(x)^s \ge 0$; and again using \eqref{1.14} and letting
$\nu \rightarrow \infty$, one obtains that $D^2 v_s(x) \ge 0$ for all
$x \in \bar H$. If a set $F$ exists and if $x \notin F$ and one only takes
the term $k=0$ in the summation in \eqref{3.42}, then because we assume
that strict inequality holds in \eqref{3.37} for all $\beta \in \B$ and
all $x \notin F$, we find that there is a number $\delta_2(x;s)>0$ such that
\begin{equation*}
\frac{D^2(b_{\w}(x)^s)}{b_{\w}(x)^s} \ge \delta_2(x;s).
\end{equation*}
Again, using \eqref{1.14} and letting $\nu \rightarrow \infty$, this implies
that for $x \notin F$,
\begin{equation*}
\frac{D^2 v_s(x)}{v_s(x)} \ge \delta_2(x;s) >0,
\end{equation*}
which completes the proof.
\end{proof}

Example 3.6: To illustrate the methods of this section, we consider a simple
example which nevertheless has some interest because of a failure of
smoothness which makes techniques in \cite{Jenkinson-Pollicott} inapplicable.
For $0 \le \lambda \le 1$, define
\begin{equation*}
\theta_1(x) = \frac{1}{3 + 2 \lambda}(x + \lambda x^{7/2}), \qquad
\theta_2(x) = \theta_1(x) + \frac{2 + \lambda}{3 + 2 \lambda},
\end{equation*}
so $\theta_j:[0,1] \to [0,1]$, $\theta_1(0) =0$, and $\theta_2(1) = 1$.  For
simplicity we suppress the dependence of $\theta_j(x)$ on $\lambda$ in our
notation. If $\lambda =0$, one obtains the iterated function system which
gives the {\it middle thirds} Cantor set.  If $\B=\{1,2\}$ and $\lambda >0$
and $\w = (j_1, j_2, \ldots, j_{\nu}) \in \B_{\nu}$, notice that $D^3
\theta_{\w}(x)$ is defined and H\"older continuous for all $x \in [0,1]$; but
if $j_1 =1$, $D^4 \theta_{\w}(x)$ is not defined.  If $0 \le \lambda \le 1$,
one can check that $0 < \theta_j^{\prime}(x) <1$ for $0 \le x \le 1$; and it
follows that there exists a unique compact,
nonempty set $J_{\lambda} \subset [0,1]$ such that
\begin{equation*}
J_{\lambda} = \theta_1(J_{\lambda}) \cup \theta_2(J_{\lambda}).
\end{equation*}
Note that $J_0$ is the {\it middle thirds} Cantor set.

For $\lambda \in [0,1]$ fixed, and $0 < s \le 1$, let
$X = C^2[0,1]$ and $Y = C[0,1]$, and define 
\begin{equation*}
b_1(x):= b_2(x):= b(x) := D \theta_1(x)= \frac{1}{3+2 \lambda}(1 + \tfrac{7}{2}
\lambda x^{5/2}).
\end{equation*}
As in Section~\ref{sec:intro}, define $\Lambda_s:X \to X$ and
$L_s:Y \to Y$ by the same formula:
\begin{equation}
\label{3.43} 
(\Lambda_s(f))(x) = b(x)^s[f(\theta_1(x)) + f(\theta_2(x))].
\end{equation}
Theorem~\ref{thm:1.1} implies that $r(L_s) = r(\Lambda_s)$; and it follows, for
example, from theorems in \cite{N-P-L} that the Hausdorff dimension of
$J_{\lambda}$ is the unique value of $s$, $0 < s \le 1$, for which
$r(\Lambda_s) =1$.

If $f \in Y$ is a nonnegative function, we have for $0 \le \lambda \le 1$ that
\begin{equation*}
(L_s(f))(x) \ge \Big(\frac{1}{3+2 \lambda}\Big)^s
[f(\theta_1(x)) + f(\theta_2(x))] \ge \Big(\frac{1}{5}\Big)^s
[f(\theta_1(x)) + f(\theta_2(x))] .
\end{equation*}
If $u(x) =1$ for $0 \le x \le 1$, it follows that
\begin{equation*}
L_s(u) \ge \Big(\frac{1}{5}\Big)^s (2u),
\end{equation*}
which implies that $r(L_s) \ge 2 (1/5)^s$.
If $\log$ denotes the natural logarithm and 
$0 \le s < \log(2)/\log(5)$, it follows that $r(L_s) >1$.  Thus we
restrict attention to $0 \le \lambda \le 1$ and
$s \ge  \log(2)/\log(5) \approx .4307$.  A calculation gives, for
$0 < x \le 1$ that
\begin{multline*}
b^{\prime\prime}(x) b(x) - (1-s)[b^{\prime}(x)]^2 = (\tfrac{7}{2})(\tfrac{5}{2})
\lambda\{(\tfrac{3}{2}) x^{1/2} - (\tfrac{7}{2}) 
\lambda[(\tfrac{3}{2}) - (\tfrac{5}{2})(1-s)]x^3\}
\\
\ge (\tfrac{7}{2}) \lambda\{(\tfrac{3}{2}) 
- (\tfrac{7}{2})\lambda[(\tfrac{3}{2}) - (\tfrac{5}{2})(1-s)]x^3\} >0.
\end{multline*}
It follows from Theorem~\ref{thm:3.5} that 
$D v_s(u) >0$ and $D^2 v_s(u) >0$
for $0 <u \le 1$.  If $\lambda=0$, $v_s$ is a constant and
$r(L_s) = (2/3^s)$, and one obtains the well-known result that the Hausdorff
dimension of the Cantor set is $\log(2)/\log(3)$.

It remains to apply Theorems~\ref{thm:3.1} and \ref{thm:3.4} in our example.
Because $D \theta_j(x) = b(x)$ and
$0 < b(x) \le \kappa(\lambda) = (2+7 \lambda)/(6 +  \lambda)$, we can
define $\epsilon_{\nu}(\lambda):= \epsilon_{\nu} = \kappa(\lambda)^{\nu}$,
where $\epsilon_{\nu}$ is defined as in \eqref{3.5}.  Because
$b_1(x) = b_2(x) = b(x) = (3+2 \lambda)^{-1}( 1 + (\tfrac{7}{2})
\lambda x^{5/2})$,
to compute $C_1(\lambda) = C_1$ as in \eqref{3.6}, we need to compute
\begin{align*}
C_1(\lambda):&= C_1 = \sup\{[(\tfrac{7}{2}) \lambda (\tfrac{5}{2}) x^{3/2})]
[1 + (\tfrac{7}{2}) \lambda x^{5/2})]^{-1}: 0 \le x \le 1\}
\\
&= (\tfrac{5}{2}) \sup\{[(\tfrac{7}{2}) \lambda u^3]
[1 + (\tfrac{7}{2}) \lambda u^5]^{-1}: 0 \le u \le 1\}.
\end{align*}
An elementary but tedious calculus argument, which we leave to the reader,
yields
\begin{equation}
\label{3.44}
C_1(\lambda) = \begin{cases}
[(\frac{5}{2})(\frac{7}{2}) \lambda][1 + (\frac{7}{2}) \lambda]^{-1}, 
& 0 < \lambda \le \tfrac{3}{7}
\\
(\frac{7 \lambda}{2})(\frac{3}{7 \lambda})^{3/5}, & \tfrac{3}{7} 
\le \lambda \le 1.
\end{cases}
\end{equation}
It also follows from Theorems~\ref{thm:3.1}
and \ref{thm:3.5} that for $0 < x \le 1$,
\begin{multline}
\label{3.45}
0 < \frac{D v_s(x)}{v_s(x)} \le s C_1(\lambda) \Big(\sum_{\nu =0}^{\infty}
\epsilon_{\nu} \Big)
\\
= s C_1(\lambda) [1- \kappa(\lambda)]^{-1} = s C_1(\lambda) (6 + 4 \lambda)
/(4 - 3 \lambda).
\end{multline}

If $C_2 = C_2(\lambda)$ is as in \eqref{3.23}, we have to compute
\begin{align*}
C_2:= C_2(\lambda) &= \sup\{[(\tfrac{7}{2}) \lambda (\tfrac{5}{2})
(\tfrac{3}{2}) x^{1/2})]
[1 + (\tfrac{7}{2}) \lambda x^{5/2})]^{-1}: 0 \le x \le 1\}
\\
&= (\tfrac{5}{2}) (\tfrac{3}{2}) \sup\{[(\tfrac{7}{2}) \lambda u]
[1 + (\tfrac{7}{2}) \lambda u^5]^{-1}: 0 \le u \le 1\}.
\end{align*}
A simple calculus exercise yields
\begin{equation}
\label{3.46}
C_2(\lambda) = \begin{cases}
[(\frac{15}{4})(\frac{7}{2}) \lambda][1 + (\frac{7}{2}) \lambda]^{-1}, 
& 0 < \lambda \le \tfrac{1}{14}
\\
3 (\frac{1}{4})^{1/5} [(\frac{7}{2}) \lambda]^{4/5}, & \tfrac{1}{14}
\le \lambda \le 1.
\end{cases}
\end{equation}
If we recall the definition of $M_0$, we also obtain from Example 3.6 that
\begin{equation}
\label{3.47}
M_0 = M_0(\lambda) = \sup\Big\{\frac{1}{(3 + 2 \lambda)}\frac{7}{2} \lambda
\frac{5}{2} x^{3/2} : 0 \le x \le 1 \Big\} = \frac{(35 \lambda)}{4}
\frac{1}{(3 + 2 \lambda)}.
\end{equation}

If we now refer to Theorem~\ref{thm:3.4}, we find for $0 < x \le 1$, 
$.4307 \le s$ , and $0 < \lambda \le 1$, that
\begin{multline}
\label{3.48}
0 < \frac{D^2 v_s(x)}{v_s(x)} \le s^2 [C_1(\lambda)]^2
\Big(\frac{6+4 \lambda}{4 - 3 \lambda}\Big)
\\
+ s \frac{(6+4 \lambda)^2}{(4 - 3 \lambda)(8 + 11 \lambda)}
\left[C_2(\lambda) + C_1(\lambda) M_0(\lambda) 
\frac{(6+4 \lambda)}{(4 - 3 \lambda)}\right].
\end{multline}
As was shown in Section~\ref{sec:1dexps} (see Theorem~\ref{thm:posev1dreg} and
Table~\ref{tb:t3}), with the aid of \eqref{3.48}, we can obtain rigorous, high
accuracy estimates (upper and lower bounds) for the Hausdorff dimension of
$J_{\lambda}$ for $0 < \lambda \le 1$.


\section{The Case of M\"obius Transformations}
\label{sec:mobius}
By working with partial derivatives and using methods like those in
Section~\ref{sec:1d-deriv}, it is possible to obtain explicit
estimates on partial derivatives of $v_s(x)$ in the generality of
Theorem~\ref{thm:1.1}.  However, for reasons of length and in view of
the immediate applications in this paper, we shall not treat the
general case here and shall now specialize to the case that the
mappings $\theta_{\beta}(\cdot)$ are given by M\"obius transformations
which map a given bounded open subset $H$ of $\C:=\R^2$ into
$H$. Specifically, throughout this section we shall usually assume:

\noindent (H7.1): $\gamma \ge 1$ is a given real number and $\B$ is a finite
collection of complex numbers $\beta$ such that $\Re(\beta) \ge \gamma$ for
all $\beta \in \B$. For each $\beta \in \B$, $\theta_{\beta}(z):= 1/(z +
\beta)$ for $z \in \C \setminus \{- \beta\}$.

As we note in Remark~\ref{rem:4.13}, the assumption in (H7.1) that $\gamma
\ge 1$ is only a convenience; and the results of this section can be
proved under the weaker assumption that $\gamma >0$.

For $\gamma >0$ we define $G_{\gamma} \in \C$ by
\begin{equation} 
\label{4.1}
G_{\gamma} = \{ z \in \C: |z - 1/(2 \gamma)|
< 1/(2 \gamma) \}.
\end{equation}
It is easy to check that if $w \in \C$ and $\Re(w) > \gamma$, then
$(1/w) \in G_{\gamma}$.  It follows that if $\Re(z) >0$, $\beta \in
\C$ and $\Re(\beta) \ge \gamma >0$, then $\theta_{\beta}(z) \in \bar
G_{\gamma}$. Let $H$ be a bounded, open, mildly regular subset of $\C
= \R^2$ such that $H \supset G_{\gamma}$ and $H \subset \{z \, | \, \Re(z)
>0\}$, and let $\B$ denote a finite set of complex numbers such that
$\Re(\beta) \ge \gamma >0$ for all $\beta \in \B$. We define a bounded
linear map $\Lambda_s:C^m(\bar H) \to C^m(\bar H)$, where $m$ is a
positive integer and $s \ge 0$, by
\begin{equation} 
\label{4.2}
(\Lambda_s(f))(z) = \sum_{\beta \in\B} \Big|\frac{d}{dz}
\theta_{\beta}(z)\Big|^s f(\theta_{\beta}(z)):= \sum_{\beta \in\B} 
\frac{1}{|z+ \beta|^{2s}} f(\theta_{\beta}(z)).
\end{equation}
As in Section~\ref{sec:intro}, $L_s: C(\bar H) \to C(\bar H)$ is defined
by \eqref{4.2}.  We use different letters to emphasize that $\sigma(\Lambda_s)
\neq \sigma(L_s)$, although $r(\Lambda_s) = r(L_s)$.

If all elements of $\B$ are real, we can restrict attention to the real line
and, as we shall see, the analysis is much simpler.  In this case we abuse
notation and take $G_{\gamma} = (0, 1/\gamma) \subset \R^2$ and $H= (0,a)$,
$a \ge 1/\gamma$.  For $f \in C^m(\bar H)$ and $x \in \bar H$, \eqref{4.2} takes
the form
\begin{equation}
\label{4.3} 
(\Lambda_s(f))(x) = \sum_{\beta \in\B} \frac{1}{(x+ \beta)^{2s}}
f(\theta_{\beta}(x)).
\end{equation}

If, for $1 \le j \le n$, $M_j = \bigl( \begin{smallmatrix} a_j & b_j \\ c_j &
  d_j \end{smallmatrix} \bigr)$ is a $2 \times 2$ matrix with complex entries
and $\det(M_j) = a_j d_j - b_j c_j$, define a M\"obius transformation
$\psi_j(z) = (a_j z + b_j)/(c_j z + d_j)$.  It is well-known that
\begin{equation}
\label{4.4}
(\psi_1 \circ \psi_2 \circ \cdots \circ \psi_n)(z) = (A_n z + B_n)/(C_n z +
D_n),
\end{equation}
where
\begin{equation}
\label{4.5}
\begin{pmatrix} A_n & B_n \\ C_n &   D_n \end{pmatrix}
= M_1 M_2 \cdots  M_n.
\end{equation}

If $\B$ is a finite set of complex numbers $\beta$ such that
$\Re(\beta) \ge \gamma >0$ for all $\beta \in \B$, we define $\B_{\nu}$
as before by
\begin{equation*}
\B_{\nu} = \{ \w = (\beta_1, \beta_2, \ldots, \beta_{\nu}) \, | \, \beta_j \in
\B \text{ for } 1 \le j \le \nu\}
\end{equation*}
and $\theta_{\w} = \theta_{\beta_n} \circ \theta_{\beta_{n-1}} \cdots
\theta_{\beta_1}$. Given $\w = (\beta_1, \beta_2, \ldots, \beta_{\nu}) 
\in \B_{\nu}$, we define
\begin{equation}
\label{4.6} 
\tilde \w = (\beta_{\nu}, \beta_{\nu-1}, \ldots, \beta_{1})
\end{equation}
so
\begin{equation}
\label{4.7}
\theta_{\tilde \w} = \theta_{\beta_1} \circ \theta_{\beta_{2}} \cdots
\theta_{\beta_n}.
\end{equation}
For $\Lambda_s$ as in \eqref{4.2} $\nu \ge 1$, and $f \in C^m(\bar H)$,
recall that
\begin{equation}
\label{4.8}
(\Lambda_s^{\nu}(f))(z) = \sum_{\w \in \B_{\nu}}
\Big|\frac{d \theta_{\w}(z)}{dz}\Big|^s f(\theta_{\w}(z))
= \sum_{\w \in \B_{\nu}} \Big|\frac{d \theta_{\tilde \w}(z)}{dz}\Big|^s 
f(\theta_{\tilde \w}(z)).
\end{equation}

The following lemma allows us to apply Theorem~\ref{thm:1.1} to $\Lambda_s$ in
\eqref{4.2}.

\begin{lem}
\label{lem:4.1}
(Compare Remark~\ref{4.13}.)
Let $\beta_1$ and $\beta_2$ be complex numbers with $\Re(\beta_j) \ge \gamma
\ge 1$ for $j =1,2$.  If $\psi_j(z) = 1/(z + \beta_j)$ for
$\Re(z) \ge 0$ and $\theta = \psi_1 \circ \psi_2$, then for all $z, w$
with $\Re(z) \ge 0$ and $\Re(w) \ge 0$,
\begin{equation}
\label{4.9}
|\theta(z) - \theta(w)| \le (\gamma^2 +1)^{-2} |z-w|.
\end{equation}
\end{lem}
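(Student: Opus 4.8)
The plan is to use the matrix representation of Möbius transformations recalled in \eqref{4.4}--\eqref{4.5} to get an exact formula for $\theta(z) - \theta(w)$ and then to bound the resulting denominators from below. First I would write the two maps in matrix form: $\psi_j(z) = 1/(z+\beta_j)$ corresponds to $M_j = \bigl(\begin{smallmatrix} 0 & 1 \\ 1 & \beta_j\end{smallmatrix}\bigr)$, which has $\det M_j = -1$. By \eqref{4.5} the composition $\theta = \psi_1 \circ \psi_2$ corresponds to $M_1 M_2 = \bigl(\begin{smallmatrix} 1 & \beta_2 \\ \beta_1 & 1 + \beta_1\beta_2\end{smallmatrix}\bigr)$, which has determinant $1$. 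Thus $\theta(z) = (z + \beta_2)/(Cz + D)$ with $C = \beta_1$, $D = 1 + \beta_1\beta_2$, and the standard computation for a unimodular Möbius transformation gives
\[
\theta(z) - \theta(w) = \frac{z - w}{(Cz+D)(Cw+D)}, \qquad \text{so} \qquad |\theta(z) - \theta(w)| = \frac{|z-w|}{|Cz+D|\,|Cw+D|}.
\]
Hence \eqref{4.9} will follow once I show $|Cz+D| \ge \gamma^2 + 1$ whenever $\Re(z) \ge 0$ (and likewise for $w$).

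To establish that lower bound, I would set $\zeta = z + \beta_2$, so that $\Re(\zeta) \ge \gamma$ (since $\Re(z) \ge 0$ and $\Re(\beta_2) \ge \gamma$) and hence $t := |\zeta| \ge \gamma$. Writing $Cz + D = \beta_1\zeta + 1 = \zeta(\beta_1 + \zeta^{-1})$ and using $\Re(\zeta^{-1}) = \Re(\zeta)/|\zeta|^2 \ge \gamma/t^2$ together with $\Re(\beta_1) \ge \gamma$, I obtain
\[
|Cz + D| = |\zeta|\,|\beta_1 + \zeta^{-1}| \ge t\bigl(\Re(\beta_1) + \Re(\zeta^{-1})\bigr) \ge t\Bigl(\gamma + \frac{\gamma}{t^2}\Bigr) = \gamma t + \frac{\gamma}{t}.
\]
Since $\gamma \ge 1$, the function $t \mapsto \gamma t + \gamma/t$ is nondecreasing on $[\gamma,\infty) \subset [1,\infty)$, so for $t \ge \gamma$ its value is at least $\gamma\cdot\gamma + \gamma/\gamma = \gamma^2 + 1$. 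The identical argument with $z$ replaced by $w$ gives $|Cw+D| \ge \gamma^2+1$, and substituting both bounds into the displayed formula for $|\theta(z)-\theta(w)|$ yields \eqref{4.9}.

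The one genuinely delicate point is the estimate $|Cz+D| \ge \gamma^2+1$: the crude bound $\Re(\zeta^{-1}) \ge 0$ only produces $|Cz+D| \ge \gamma^2$, which is not enough, so one must retain the sharper inequality $\Re(\zeta^{-1}) \ge \gamma/|\zeta|^2$ and then use the monotonicity of $\gamma t + \gamma/t$ on $[\gamma,\infty)$. This is precisely where the hypothesis $\gamma \ge 1$ of (H7.1) enters; as indicated in Remark~\ref{rem:4.13}, with a little extra bookkeeping the same scheme can be carried through under the weaker assumption $\gamma > 0$. Everything else is routine Möbius algebra and elementary real-variable estimates.
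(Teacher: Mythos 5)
Your proof is correct, but it gets to \eqref{4.9} by a different mechanism than the paper. The paper reduces the Lipschitz bound to the pointwise derivative bound $|\theta'(z)| \le (\gamma^2+1)^{-2}$ on the right half-plane (implicitly using convexity of that half-plane to pass from a sup of $|\theta'|$ to a Lipschitz constant), writes $|\theta'(z)| = |\beta_1|^{-2}|z + 1/\beta_1 + \beta_2|^{-2}$ via \eqref{4.4}--\eqref{4.5}, and then minimizes the two-variable function $g(u,v) = u^2/(u^2+v^2) + 2u\gamma + \gamma^2(u^2+v^2)$ over $u \ge \gamma$, $v \in \R$, where $\beta_1 = u+iv$. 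You instead use the exact unimodular difference identity $\theta(z)-\theta(w) = (z-w)/\bigl[(Cz+D)(Cw+D)\bigr]$ with $C=\beta_1$, $D = 1+\beta_1\beta_2$, which removes the mean-value/derivative step altogether, and you bound the common quantity $|Cz+D| = |\beta_1 z + 1 + \beta_1\beta_2|$ by factoring out $\zeta = z+\beta_2$ rather than $\beta_1$, reducing the optimization to the one-variable monotonicity of $t \mapsto \gamma t + \gamma/t$ on $[\gamma,\infty)$. Both arguments hinge on exactly the same use of the hypothesis $\gamma \ge 1$ (your monotonicity step; the paper's $\partial g/\partial v \ge 0$ step), consistent with Remark~\ref{rem:4.13}. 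What your route buys is a slightly more self-contained and sharper-looking argument (an identity for the difference, no appeal to integrating $\theta'$ along a segment, and a cleaner one-variable minimization); what the paper's route buys is the explicit formula and bound for $|\theta'|$ itself, which is the form reused elsewhere (e.g., in the essential spectral radius estimate \eqref{4.10}).
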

\begin{proof}
It suffices to prove that $|(d \theta/dz)(z)| \le (\gamma^2 +1)^{-2}$
for all $z \in \C$ with $\Re(z) \ge 0$.  Using \eqref{4.4} and \eqref{4.5}
we see that
\begin{equation*}
|(d \theta/dz)(z)| = |\beta_1|^{-2} |z + (1/\beta_1) + \beta_2|^{-2},
\end{equation*}
so it suffices to prove that
$|\beta_1|^2 \, |z + (1/\beta_1) + \beta_2|^{2} \ge (\gamma^2 +1)^{2}$
for $\Re(z) \ge 0$. If we write $\beta_1 = u+ iv$ with $u \ge \gamma$,
\begin{equation*}
\Re(z + (1/\beta_1) + \beta_2) \ge u/(u^2 + v^2)  + \gamma,
\end{equation*}
so
\begin{equation*}
|z + (1/\beta_1) + \beta_2|^{2} \ge [u/(u^2 + v^2)  + \gamma]^2
\end{equation*}
and
\begin{multline*}
|\beta_1|^2 \, |z + (1/\beta_1) + \beta_2|^{2} \ge (u^2 + v^2)
\Big[\frac{u^2}{(u^2 + v^2)^2} + \frac{2 u \gamma}{(u^2 + v^2)} + \gamma^2
\Big]
\\
= \frac{u^2}{(u^2 + v^2)} + 2 u \gamma + \gamma^2(u^2 + v^2) = g(u,v).
\end{multline*}
Because $u \ge \gamma$, $g(u,0) = 1 + 2 \gamma^2 + \gamma^4
= (\gamma^2 +1)^2$.  Using the fact that $u \ge \gamma \ge 1$, we also see
that for $v \ge 0$
\begin{equation*}
\frac{ \partial g(u,v)}{\partial v} = \frac{-u^2(2v)}{(u^2 + v^2)^2}
+ 2 \gamma^2 v \ge 0,
\end{equation*}
which implies that $g(u,v) \ge g(u,0) = (\gamma^2 +1)^{2}$ for $u \ge \gamma$
and $v \ge 0$.  Since $g(u,-v) = g(u,v)$,
$g(u,v) \ge (\gamma^2 +1)^{2}$ for $v \le 0$ and $u \ge \gamma$.
\end{proof}

With the aid of Lemma~\ref{lem:4.1}, the following theorem is an immediate
corollary of Theorem~\ref{thm:1.1}.

\begin{thm}
\label{thm:4.2}
Assume (H7.1) and let $H$ be a bounded, open mildly regular subset of
$\{z \in \C \, | \, \Re(z) >0\}$ such that $H \supset G_{\gamma}$, where
$G_{\gamma}$ is defined by \eqref{4.1}.  For a
given positive integer $m$ and for $s > 0$, let
$X = C^m(\bar H)$ and $Y = C(\bar H)$ and let $
\Lambda_s: X \to X$ and $L_s:Y \to Y$ be given by \eqref{4.2}.
If $r(\Lambda_s)$ (respectively, $r(L_s)$) denote the spectral radius
of $\Lambda_s$ (respectively, $L_s$), we have $r(\Lambda_s) > 0$ and
$r(\Lambda_s) = r(L_s)$. If $\rho(\Lambda_s)$ denotes the essential
spectral radius of $\Lambda_s$,
\begin{equation}
\label{4.10}
\rho(\Lambda_s) \le (\gamma^2 +1)^{-m} r(\Lambda_s).
\end{equation}
For each $s >0$, there exists $v_s \in X$ such that $v_s(z) >0$ for all
$z \in \bar H$ and $\Lambda_s(v_s) = r(\Lambda_s)v_s$.  All the statements
of Theorem~\ref{thm:1.1} are true in this context whenever $\Lambda$ and $L$
in Theorem~\ref{thm:1.1} are replaced by $\Lambda_s$ and $L_s$ respectively.
\end{thm}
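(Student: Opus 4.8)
The plan is to show that the operator $\Lambda_s$ in \eqref{4.2} is a special case of the operator $\Lambda$ in Theorem~\ref{thm:1.1}, and then simply transcribe the conclusions of that theorem. Concretely, one applies Theorem~\ref{thm:1.1} with the maps $\theta_\beta(z) = 1/(z+\beta)$ and with the coefficient functions $b_\beta$ there replaced by $b_\beta^s$, where $b_\beta(z) := |d\theta_\beta/dz| = |z+\beta|^{-2}$, so that the coefficient appearing in \eqref{4.2} is exactly $b_\beta(z)^s = |z+\beta|^{-2s}$. Since $H$ is assumed bounded, open, and mildly regular, the only things that need to be checked are the analogues of (H5.1), (H5.2), and (H5.3).

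First I would record that $z+\beta$ is bounded away from $0$ on $\bar H$: indeed $\bar H \subset \{\Re(z) \ge 0\}$ and $\Re(\beta) \ge \gamma$, so $\Re(z+\beta) \ge \gamma > 0$ and hence $|z+\beta| \ge \gamma$ for all $z \in \bar H$, $\beta \in \B$; in particular $-\beta \notin \bar H$. Consequently $z \mapsto |z+\beta|^{-2s} = [(z+\beta)\overline{(z+\beta)}]^{-s}$ is real-analytic and strictly positive on a neighborhood of $\bar H$, so it lies in $C^m(\bar H)$ and is positive there, which is (H5.1). Likewise $\theta_\beta(z) = 1/(z+\beta)$ is holomorphic on a neighborhood of $\bar H$, so its real and imaginary parts lie in $C^m(\bar H)$; and since $\Re(z+\beta) > \gamma$ for $z \in H$, the remark preceding \eqref{4.2} (that $\Re(w) > \gamma$ implies $1/w \in G_\gamma$) gives $\theta_\beta(H) \subset G_\gamma \subset H$, which is (H5.2).

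The substantive input is (H5.3), and this is exactly the content of Lemma~\ref{lem:4.1}: I would take $\mu = 2$ and note that every $\w \in \B_2$ has $\theta_\w = \theta_{\beta_2} \circ \theta_{\beta_1}$, a composition of two maps of the type covered by Lemma~\ref{lem:4.1}, so $|\theta_\w(z) - \theta_\w(w)| \le (\gamma^2+1)^{-2}|z-w|$ whenever $\Re(z), \Re(w) \ge 0$, hence for all $z,w \in H$. Since $\gamma \ge 1$ we have $\kappa := (\gamma^2+1)^{-2} \le 1/4 < 1$, so (H5.3) holds with this $\mu$ and $\kappa$; here the norm on $\R^2 = \C$ is the Euclidean norm, which coincides with the modulus, and by the remark following \eqref{1.8} the choice of norm is immaterial. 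With the hypotheses of Theorem~\ref{thm:1.1} verified, all of its assertions transfer verbatim with $\Lambda$, $L$ replaced by $\Lambda_s$, $L_s$: $r(\Lambda_s) = r(L_s) > 0$; there is $v_s \in C^m(\bar H)$ with $v_s > 0$ on $\bar H$ and $\Lambda_s(v_s) = r(\Lambda_s)v_s$ as in \eqref{1.9}; $r(\Lambda_s)$ is an isolated eigenvalue of algebraic multiplicity one strictly dominating the rest of $\sigma(\Lambda_s)$; and the convergence \eqref{1.10} holds. Finally, Theorem~\ref{thm:1.1} gives $\rho(\Lambda_s) \le c^m r(\Lambda_s)$ with $c = \kappa^{1/\mu} = (\gamma^2+1)^{-2/2} = (\gamma^2+1)^{-1}$, which is precisely \eqref{4.10}.

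I expect no genuine obstacle here: the one nontrivial estimate — the uniform contraction of pairwise compositions of the $\theta_\beta$ — has already been isolated and proved as Lemma~\ref{lem:4.1}, and everything else reduces to the elementary fact that $z+\beta$ does not vanish on $\bar H$. The only points deserving a line of care are the passage from ``$\Re(z) \ge 0$'' in Lemma~\ref{lem:4.1} to ``$z \in H$'' (immediate from $H \subset \{\Re(z) > 0\}$) and the bookkeeping that, with $\mu = 2$, the factor $c^m$ in the essential-spectral-radius bound is $(\gamma^2+1)^{-m}$.
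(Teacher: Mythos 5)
Your proposal is correct and follows the same route as the paper: the paper treats Theorem~\ref{thm:4.2} as an immediate corollary of Theorem~\ref{thm:1.1}, with Lemma~\ref{lem:4.1} supplying the contraction hypothesis (H5.3) for $\mu=2$, $\kappa=(\gamma^2+1)^{-2}$, so that $c=\kappa^{1/\mu}=(\gamma^2+1)^{-1}$ yields \eqref{4.10}. Your verification of (H5.1)--(H5.2) via $|z+\beta|\ge\gamma$ and $\theta_\beta(H)\subset G_\gamma\subset H$ is exactly the bookkeeping the paper leaves implicit.
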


In the notation of Theorem~\ref{thm:4.2}, it follows from \eqref{1.14}
that for any multi-index $\alpha=(\alpha_1,\alpha_2)$ and for $z = x+iy =
(x,y)$
\begin{equation}
\label{4.11}
\lim_{\nu \rightarrow \infty} \frac{D^{\alpha}\left(\sum_{\w \in \B_{\nu}}
\Big|\frac{d}{dz} \theta_{\w}(z)\Big|^s\right)}
{\sum_{\w \in \B_{\nu}} \Big|\frac{d}{dz} \theta_{\w}(z)\Big|^s}
= \frac{D^{\alpha} v_s(x,y)}{v_s(x,y)},
\end{equation}
where the convergence is uniform in $(x,y):=z \in \bar H$
and $D^{\alpha} = (\partial/\partial x)^{\alpha_1}
(\partial/\partial y)^{\alpha_2}$.

\begin{lem}
\label{lem:4.3}
Let $\beta_j$, $j \ge 1$ be a sequence of complex numbers with
$\Re(\beta_j) \ge \gamma \ge 0$ for all $j$.  For complex numbers $z$,
define $\theta_{\beta_j}(z) = (z + \beta_j)^{-1}$ and define matrices
$M_j =
\bigl(\begin{smallmatrix} 0 & 1 \\
  1 & \beta_j \end{smallmatrix}\bigr)$.  Then for $n \ge 1$,
\begin{equation}
\label{4.12}
M_1 M_2 \cdots M_n = \begin{pmatrix} A_{n-1} & A_n \\ B_{n-1} & B_n
\end{pmatrix},
\end{equation}
where $A_0 =0 $, $A_1 =1$, $B_0 =1$, $B_1 = \beta_1$ and for $n \ge 1$,
\begin{equation}
\label{4.13}
A_{n+1} = A_{n-1} + \beta_{n+1} A_n \text{ and }
B_{n+1} = B_{n-1} + \beta_{n+1} B_n.
\end{equation}
Also,
\begin{equation*}
(\theta_{\beta_1} \circ \theta_{\beta_{2}} \cdots \theta_{\beta_n})(z)
= (A_{n-1} z + A_n)/(B_{n-1} z + B_n),
\end{equation*}
and we have
\begin{equation}
\label{4.14}
\Re(B_n/B_{n-1}) \ge \gamma
\end{equation}
and
\begin{equation}
\label{4.15}
\Big|\frac{d}{dz}\Big[\frac{A_{n-1} z + A_n}{B_{n-1} z + B_n}\Big]\Big|^s
= |B_{n-1}|^{-2s} |z + B_n/B_{n-1}|^{-2s}.
\end{equation}
\end{lem}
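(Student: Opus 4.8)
The plan is to verify the four assertions in turn; each reduces to a short induction or a one-line computation, so there is no serious obstacle. First I would establish the matrix identity \eqref{4.12} together with the recursions \eqref{4.13} by induction on $n$. The case $n=1$ is immediate, since $M_1 = \bigl(\begin{smallmatrix} 0 & 1 \\ 1 & \beta_1 \end{smallmatrix}\bigr) = \bigl(\begin{smallmatrix} A_0 & A_1 \\ B_0 & B_1 \end{smallmatrix}\bigr)$ with the prescribed initial data $A_0 = 0$, $A_1 = 1$, $B_0 = 1$, $B_1 = \beta_1$. For the inductive step, assuming $M_1 \cdots M_n = \bigl(\begin{smallmatrix} A_{n-1} & A_n \\ B_{n-1} & B_n \end{smallmatrix}\bigr)$, I would multiply on the right by $M_{n+1} = \bigl(\begin{smallmatrix} 0 & 1 \\ 1 & \beta_{n+1}\end{smallmatrix}\bigr)$: the first column of the product is $(A_n, B_n)^T$ and the second column is $(A_{n-1}+\beta_{n+1}A_n, B_{n-1}+\beta_{n+1}B_n)^T$, so if $A_{n+1}$ and $B_{n+1}$ are defined by \eqref{4.13}, the product equals $\bigl(\begin{smallmatrix} A_n & A_{n+1} \\ B_n & B_{n+1}\end{smallmatrix}\bigr)$, which is \eqref{4.12} with $n$ replaced by $n+1$. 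The composition formula then follows at once from \eqref{4.4}--\eqref{4.5}: since $\theta_{\beta_j}(z) = 1/(z+\beta_j)$ is exactly the M\"obius transformation with matrix $M_j$, the composite $\theta_{\beta_1}\circ\cdots\circ\theta_{\beta_n}$ is the M\"obius transformation whose matrix is $M_1 M_2 \cdots M_n$, and inserting \eqref{4.12} gives $(\theta_{\beta_1}\circ\cdots\circ\theta_{\beta_n})(z) = (A_{n-1}z+A_n)/(B_{n-1}z+B_n)$.

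For \eqref{4.14} I would introduce the ratio $w_n := B_n/B_{n-1}$ and divide the recursion $B_{n+1} = B_{n-1}+\beta_{n+1}B_n$ by $B_n$ to obtain the continued-fraction-type relation $w_{n+1} = \beta_{n+1} + 1/w_n$. Since $w_1 = \beta_1$, we have $\Re(w_1) = \Re(\beta_1) \ge \gamma$, and $w_1 \neq 0$ (using $\gamma > 0$, as holds in all our applications, where in fact $\gamma \ge 1$). Inductively, if $\Re(w_n) \ge \gamma > 0$, then $w_n \neq 0$ and $\Re(1/w_n) = \Re(w_n)/|w_n|^2 \ge 0$, whence $\Re(w_{n+1}) = \Re(\beta_{n+1}) + \Re(1/w_n) \ge \gamma$; in particular every $B_n$ is nonzero, so the ratios are well defined. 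Finally, \eqref{4.15} is a direct computation: differentiating $f(z) := (A_{n-1}z+A_n)/(B_{n-1}z+B_n)$ by the quotient rule, the numerator collapses to the constant $A_{n-1}B_n - A_n B_{n-1} = \det(M_1 M_2 \cdots M_n) = \prod_{j=1}^n \det M_j = (-1)^n$, since $\det M_j = -1$ for each $j$; hence $|f'(z)| = |B_{n-1}z+B_n|^{-2} = |B_{n-1}|^{-2}|z + B_n/B_{n-1}|^{-2}$, and raising to the power $s$ gives \eqref{4.15}.

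The only step that calls for any care is the induction for \eqref{4.14}, where one needs the elementary observation that $\Re(w)\ge 0$ and $w\neq 0$ force $\Re(1/w)\ge 0$, and where one uses $\gamma > 0$ to guarantee each $B_n \neq 0$ so that the ratios $B_n/B_{n-1}$ make sense; the borderline possibility $\gamma = 0$ with some $\beta_j = 0$ does not occur in the settings to which the lemma is applied.
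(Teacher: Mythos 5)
Your proposal is correct and follows essentially the same route as the paper: induction on $n$ for the matrix identity \eqref{4.12}--\eqref{4.13}, an induction on the ratios $B_n/B_{n-1}$ using $\Re(w)\ge\gamma \Rightarrow \Re(1/w)\ge 0$ for \eqref{4.14} (the paper phrases this via the disc $|1/w-1/(2\gamma)|\le 1/(2\gamma)$, you via $\Re(1/w)=\Re(w)/|w|^2$, which is the same observation), and $\det(M_1\cdots M_n)=(-1)^n$ for \eqref{4.15}. Your explicit remark that one needs $\gamma>0$ (as holds in the applications, where $\gamma\ge 1$) to ensure $B_n\neq 0$ is a fair point of care that the paper's argument also implicitly uses.
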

\begin{proof}
Equation \eqref{4.12} follows by induction on $n$. It is obviously true
for $n=1$. If we assume that \eqref{4.12} is satisfied for some $n \ge 1$,
then
\begin{equation*}
M_1 M_2 \cdots M_n M_{n+1}= \begin{pmatrix} A_{n-1} & A_n \\ B_{n-1} & B_n
\end{pmatrix} \begin{pmatrix} 0 & 1 \\ 1 & \beta_{n+1} \end{pmatrix}
= \begin{pmatrix} A_n & A_{n-1} + \beta_{n+1} A_n \\
B_n & B_{n-1} + \beta_{n+1} B_n \end{pmatrix},
\end{equation*}
which proves \eqref{4.12} with $A_{n+1}$ and $B_{n+1}$ defined by
\eqref{4.13}.
Similarly, we prove \eqref{4.14} by induction on $n$.  The case $n=1$ is
obvious,  Assuming that \eqref{4.13} is satisfied for some $n \ge 1$,
we obtain from \eqref{4.13} that
\begin{equation*}
B_{n+1}/B_n = B_{n-1}/B_n + \beta_{n+1}.
\end{equation*}
Because $\Re(w) \ge \gamma$, where $w:= B_n/B_{n-1}$, we see that
$|1/w - 1/(2\gamma)| \le 1/(2 \gamma)$ and
 $\Re(1/w) = \Re(B_{n-1}/B_n) \ge 0$, so
\begin{equation*}
\Re(B_{n+1}/B_n) \ge \Re(B_{n-1}/B_n)  + \Re(\beta_{n+1}) \ge \gamma.
\end{equation*}
Hence \eqref{4.13} is satisfied for all $n \ge 1$.
Because $\det(M_j) = -1$ for all $j \ge 1$, we get that
$\det \bigl(\begin{smallmatrix} A_{n-1} & A_n \\ B_{n-1} & B_n
\end{smallmatrix}\bigr) = (-1)^n$, and \eqref{4.15} follows.
\end{proof}

Before proceeding further, it will be convenient to establish some
elementary calculus propositions.  For $(u,v) \in \R^2 \setminus\{(0,0)\}$
and $s >0$, define
\begin{equation*}
G(u,v;s) = (u^2 + v^2)^{-s}.
\end{equation*}
Define $D_1 = (\partial/\partial u)$, so $D_1^m = (\partial/\partial u)^m$
for positive integers $m$; similarly, let 
$D_2 = (\partial/\partial v)$ and $D_2^m = (\partial/\partial v)^m$.
\begin{lem}
\label{lem:4.4}
For positive integers $m$, there exist polynomials in $u$ and $v$ with
coefficients depending on $s$, $P_m(u,v;s)$ and $Q_m(u,v;s)$, such that
\begin{equation*}
D_1^m G(u,v;s) = P_m(u,v;s) G(u,v;s+m), \
D_2^m G(u,v;s) = Q_m(u,v;s) G(u,v;s+m).
\end{equation*}
Furthermore, we have $P_1(u,v;s) = -2 s u$, $Q_1(u,v;s) = -2 s v$,
and for positive integers $m$,
\begin{equation*}
P_{m+1}(u,v;s) = (u^2 + v^2) (D_1 P_m(u,v;s)) - 2(s+m) u P_m(u,v;s)
\end{equation*}
and
\begin{equation*}
Q_{m+1}(u,v;s) = (u^2 + v^2) (D_2 Q_m(u,v;s)) - 2(s+m) v Q_m(u,v;s).
\end{equation*}
\end{lem}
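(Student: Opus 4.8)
The plan is a straightforward induction on $m$, carried out in parallel for the two statements about $P_m$ and $Q_m$; I will describe the argument for $P_m$, the one for $Q_m$ being obtained verbatim by interchanging the roles of $u$ and $v$ (and of $D_1$ and $D_2$). Two elementary identities will be used repeatedly: first, differentiating $G(u,v;t) = (u^2+v^2)^{-t}$ directly gives $D_1 G(u,v;t) = -2tu\,G(u,v;t+1)$; second, $G(u,v;t) = (u^2+v^2)\,G(u,v;t+1)$.

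For the base case $m=1$, the first identity with $t=s$ reads
\[
D_1 G(u,v;s) = -2su\,G(u,v;s+1),
\]
so the claim holds with $P_1(u,v;s) = -2su$, which is a polynomial in $(u,v)$ with coefficients depending on $s$; likewise $Q_1(u,v;s) = -2sv$. For the inductive step, assume that for some $m \ge 1$ we have $D_1^m G(u,v;s) = P_m(u,v;s)\,G(u,v;s+m)$ with $P_m$ a polynomial in $(u,v)$. Applying $D_1$ and using the product rule,
\[
D_1^{m+1} G(u,v;s) = (D_1 P_m(u,v;s))\,G(u,v;s+m) + P_m(u,v;s)\,D_1 G(u,v;s+m).
\]
Now apply the first identity with $t = s+m$ to the last factor, namely $D_1 G(u,v;s+m) = -2(s+m)u\,G(u,v;s+m+1)$, and the second identity to rewrite $G(u,v;s+m) = (u^2+v^2)\,G(u,v;s+m+1)$ in the first term. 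Collecting the common factor $G(u,v;s+m+1)$ gives
\[
D_1^{m+1} G(u,v;s) = \big[(u^2+v^2)(D_1 P_m(u,v;s)) - 2(s+m)u\,P_m(u,v;s)\big]\,G(u,v;s+m+1),
\]
which is exactly the asserted form with
\[
P_{m+1}(u,v;s) = (u^2+v^2)(D_1 P_m(u,v;s)) - 2(s+m)u\,P_m(u,v;s).
\]
Since $P_m$ is a polynomial in $(u,v)$ with coefficients depending on $s$, so is $P_{m+1}$; thus the existence claim, the initial values, and the recursion are all established in one stroke. The identical computation with $D_2$ and $v$ in place of $D_1$ and $u$ yields the statements for $Q_m$.

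Regarding the \emph{main obstacle}: there is essentially none, as the computation is mechanical. The only point requiring care is the bookkeeping of the exponent shift $s \mapsto s+m$ at the $m$-th stage; in particular, when differentiating $G(u,v;s+m)$ in the inductive step one must apply the $m=1$ formula with argument $s+m$ (producing the factor $-2(s+m)u$), not with argument $s$, and one must not forget to convert $G(u,v;s+m)$ into $(u^2+v^2)\,G(u,v;s+m+1)$ so that both terms of the recursion carry the same weight $s+m+1$ expected at level $m+1$.
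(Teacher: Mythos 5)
Your proposal is correct and follows essentially the same route as the paper: induction on $m$, using $D_1 G(u,v;t) = -2tu\,G(u,v;t+1)$ together with $G(u,v;s+m) = (u^2+v^2)G(u,v;s+m+1)$ in the product-rule step to obtain the stated recursion, with the $Q_m$ case handled symmetrically. No gaps.
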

\begin{proof}
If $m =1$,
\begin{equation*}
D_1 G(u,v;s) = (-2 s u) \, G(u,v;s+1), \qquad 
D_2 G(u,v;s) = (-2 s v) (u^2 + v^2; s+1),
\end{equation*}
so $P_1(u,v;s) = -2 su$ and $Q_1(u,v;s) = -2 sv$. 

We now argue by induction and assume we have proved the existence
of $P_j(u,v;s)$ and $Q_j(u,v;s)$ for $1 \le j \le m$.  It follows that
\begin{multline*}
D_1^{m+1} G(u,v;s) = D_1[P_m(u,v;s) G(u,v;s+m)]
\\
 = [D_1 P_m(u,v;s)] G(u,v;s+m)] + P_m(u,v;s)[-2(s+m)u]G(u,v;s+m+1)
\\
 = [(u^2 + v^2) (D_1 P_m(u,v;s)) -2(s+m) u P_m(u,v;s)] G(u,v;s+m+1).
\end{multline*}
This proves the lemma with
\begin{equation*}
P_{m+1}(u,v;s):= (u^2 + v^2) (D_1 P_m(u,v;s)) -2(s+m) u P_m(u,v;s).
\end{equation*}
An exactly analogous argument, which we leave to the reader, shows that
\begin{equation*}
Q_{m+1}(u,v;s):= (u^2 + v^2) (D_2 Q_m(u,v;s)) -2(s+m) v Q_m(u,v;s).
\end{equation*}
\end{proof}
An advantage of working with M\"obius transformations is that one can
easily obtain tractable formulas for expressions like
$(\theta_{\beta_1} \circ \theta_{\beta_{2}} \cdots \theta_{\beta_n})(z)$. Such
formulas allow more precise estimates for the left hand side of \eqref{1.14}
than we obtained in Section~\ref{sec:1d-deriv}.

\begin{lem}
\label{lem:4.5}
In the notation of Lemma~\ref{lem:4.4}, for all $(u,v) \in \R^2 \setminus 
\{(0,0)\}$, for all $s >0$, and all positive integers $m$, 
$P_m(u,v;s) = Q_m(v,u;s)$.
\end{lem}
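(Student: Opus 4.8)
The plan is to exploit the single fact that the generating function $G(u,v;s) = (u^2+v^2)^{-s}$ is symmetric under the interchange $u \leftrightarrow v$. Write $\phi(a,b) = (a^2+b^2)^{-s}$, so that in the notation of Lemma~\ref{lem:4.4} we have $D_1^m G(u,v;s) = (\partial_1^m \phi)(u,v)$ and $D_2^m G(u,v;s) = (\partial_2^m\phi)(u,v)$, where $\partial_i$ denotes differentiation in the $i$-th slot. Since $\phi(a,b) = \phi(b,a)$ for all $(a,b)$, differentiating this identity $m$ times with respect to the second variable and applying the chain rule gives $(\partial_2^m\phi)(a,b) = (\partial_1^m\phi)(b,a)$ for all $(a,b) \in \R^2 \setminus \{(0,0)\}$.

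Now combine this with the defining relations of Lemma~\ref{lem:4.4}. On one hand, $D_2^m G(u,v;s) = Q_m(u,v;s)\,G(u,v;s+m)$. On the other hand, by the symmetry identity just established together with the definition of $P_m$,
\begin{equation*}
D_2^m G(u,v;s) = (\partial_1^m\phi)(v,u) = P_m(v,u;s)\,G(v,u;s+m) = P_m(v,u;s)\,G(u,v;s+m),
\end{equation*}
where the last step uses the symmetry of $G$ itself. Since $G(u,v;s+m) = (u^2+v^2)^{-(s+m)} \neq 0$ on $\R^2 \setminus \{(0,0)\}$, we may cancel it and conclude $Q_m(u,v;s) = P_m(v,u;s)$ for all $(u,v) \neq (0,0)$; as both sides are polynomials in $(u,v)$, the identity holds identically, which is the assertion of the lemma.

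An equally short alternative, closer in spirit to the recursions displayed in Lemma~\ref{lem:4.4}, is induction on $m$: the base case $P_1(u,v;s) = -2su = Q_1(v,u;s)$ is immediate, and for the inductive step one checks using the chain rule that $(D_1 P_m)(u,v;s) = (D_2 Q_m)(v,u;s)$ once $P_m(u,v;s) = Q_m(v,u;s)$ is known, after which the recursion formulas for $P_{m+1}$ and $Q_{m+1}$ match term by term.

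I do not anticipate a genuine obstacle here; the only point requiring care is the bookkeeping of which slot each derivative acts on and what the evaluation point becomes after the interchange $u \leftrightarrow v$. Getting the chain-rule identity $(\partial_2^m\phi)(a,b) = (\partial_1^m\phi)(b,a)$ exactly right — in particular remembering to swap the arguments, and not merely the roles of the two differentiation operators — is the one place an otherwise routine argument could go wrong.
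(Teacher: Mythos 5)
Your argument is correct, and your primary route is genuinely different from the paper's. The paper proves the lemma by induction on $m$ using only the recursion formulas of Lemma~\ref{lem:4.4}: the base case $P_1(u,v;s)=-2su=Q_1(v,u;s)$ is checked directly, and in the inductive step the derivative $D_1P_m$ appearing in the recursion for $P_{m+1}(v,u;s)$ is written as a difference quotient in the first slot, the induction hypothesis $P_m(v,u;s)=Q_m(u,v;s)$ is applied inside the quotient, and the resulting limit is recognized as $D_2Q_m(u,v;s)$, so the two recursions match term by term — this is precisely the ``equally short alternative'' you sketch at the end. Your main proof instead bypasses the recursion entirely: you use the symmetry $G(u,v;s)=G(v,u;s)$ and the chain-rule identity $(\partial_2^m\phi)(a,b)=(\partial_1^m\phi)(b,a)$, then divide by the nonvanishing factor $G(u,v;s+m)$ to identify $Q_m(u,v;s)$ with $P_m(v,u;s)$ pointwise, hence as polynomials. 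This is cleaner and more conceptual — it makes clear that the lemma is nothing but the $u\leftrightarrow v$ symmetry of $G$, with the polynomials $P_m$, $Q_m$ uniquely determined by the defining relations since $G\neq0$ on $\R^2\setminus\{(0,0)\}$ — while the paper's induction has the minor virtue of staying entirely within the algebraic recursion framework it has just set up, never invoking the symmetry of $G$ itself. The one point of care you flag (swapping the evaluation point, not just the differentiation slots) is exactly the right one, and you handle it correctly.
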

\begin{proof}
Fix $s >0$.  We have $P_1(u,v;s) = Q_1(v,u;s)$ for all $(u,v) \neq (0,0)$.
Arguing by mathematical induction, assume that for some positive integer $m$
we have proved that $P_m(u,v;s) = Q_m(v,u;s)$ for all $(u,v) \neq (0,0)$.
For a fixed $(u,v) \neq (0,0)$, we obtain, by virtue of the recursion
formula in Lemma~\ref{lem:4.4},
\begin{align*}
P_{m+1}(v,u;s) &= (u^2 + v^2) \lim_{\Delta v \rightarrow 0}
\frac{P_m(v+ \Delta v, u; s) - P_m(v,u;s)}{\Delta v}
\\
&\qquad -2(s+m)v P_m(v,u;s)
\\
&= (u^2 + v^2) \lim_{\Delta v \rightarrow 0}
\frac{Q_m(u,v+ \Delta v; s) - Q_m(u,v;s)}{\Delta v}
\\
&\qquad -2(s+m)v Q_m(u,v;s) 
\\
&= Q_{m+1}(u,v;s).
\end{align*}
By mathematical induction, we conclude that 
$P_n(u,v;s) = Q_n(v,u;s)$ for all positive integers $n$.
\end{proof}

\begin{remark}
\label{rem:4.6}
By using the recursion formula in Lemma~\ref{lem:4.4}, one can easily
compute $P_j(u,v;s)$ for $1\le j\le 4$.
\begin{align*}
P_1(u,v;s) &= -2s u,
\\
P_2(u,v;s) &= 2s(2s+1)u^2 - 2s v^2,
\\
P_3(u,v;s) &= -2s(2s+1)(2s+2)u^3 + (2s)(2s+2)(3) u v^2,
\\
P_4(u,v;s) &= (2s)(2s+2)[(2s+1)(2s+3)u^4 - 6(2s+3)u^2 v^2 + 3 v^4].
\end{align*}
\end{remark}
By virtue of Lemma~\ref{lem:4.5}, we also obtain formulas for 
$Q_j(v,u;s) = P_j(u,v;s)$. Also, Lemmas~\ref{lem:4.4} and \ref{lem:4.5}
imply that
\begin{equation*}
\frac{D_1^j G(u,v;s)}{G(u,v;s)} = \frac{P_j(u,v;s)}{(u^2+v^2)^j}, \qquad
\frac{D_2^j G(u,v;s)}{G(u,v;s)} = \frac{P_j(v,u;s)}{(u^2+v^2)^j}
\end{equation*}
and the latter formulas will play a useful role in this section. In particular,
for a given constant $\gamma >0$, we shall need good estimates for
\begin{equation*}
\sup \Big\{\frac{D_k^j G(u,v;s)}{G(u,v;s)}: u \ge \gamma, v \in \R\Big\}
\text{ and }
\inf \Big\{\frac{D_k^j G(u,v;s)}{G(u,v;s)}: u \ge \gamma, v \in \R\Big\}
\end{equation*}
where $k=1,2$ and $1 \le j \le 4$.  Although the arguments used to prove these
estimates are elementary, these results will play a crucial role in our
later work.
\begin{lem}
\label{lem:4.7}
Let $\gamma >0$ be a given constant and assume that $u \ge \gamma$ and $v \in
\R$.  Let $D_1 = (\partial/\partial u)$ and $G(u,v;s) = (u^2 + v^2)^{-s}$, where
$s >0$. For $j \ge 1$ we have
\begin{equation*}
\frac{D_1^j G(u,v;s)}{G(u,v;s)}= \frac{P_j(u,v;s)}{(u^2+v^2)^j},
\end{equation*}
where $P_j(u,v;s)$ is as defined in Remark~\ref{rem:4.6}; and the following
estimates are satisfied.
\begin{gather*}
-\frac{2s}{\gamma} \le \frac{D_1 G(u,v;s)}{G(u,v;s)} <0,
\\
-\frac{s}{4\gamma^2(s+1)} \le \frac{D_1^2 G(u,v;s)}{G(u,v;s)} 
\le \frac{2s(2s+1)}{\gamma^2},
\\
-\frac{2s(2s+1)(2s+2)}{\gamma^3} \le \frac{D_1^3 G(u,v;s)}{G(u,v;s)} 
\le \frac{2s(2s+2)}{\gamma^3(s+2)^2},
\\
-\frac{2s(s+1)(2s+2)(3)}{\gamma^4} \le \frac{D_1^4 G(u,v;s)}{G(u,v;s)} 
\le \frac{2s(2s+1)(2s+2)(2s+3)}{\gamma^4}.
\end{gather*}
\end{lem}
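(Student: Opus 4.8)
The plan is to substitute the explicit formulas for $P_j$ from Remark~\ref{rem:4.6} into the identity $D_1^j G(u,v;s)/G(u,v;s) = P_j(u,v;s)/(u^2+v^2)^j$ recorded in the statement, and then to extremize the resulting rational functions over $\{u \ge \gamma,\ v \in \R\}$. The essential simplification is a scaling argument: since $P_j(u,v;s)$ is homogeneous of degree $j$ in $(u,v)$ and (by Remark~\ref{rem:4.6}) involves only even powers of $v$, one can write $P_j(u,v;s) = u^j\,\tilde P_j(t;s)$, where $t := v^2/u^2 \ge 0$ and $\tilde P_j(\,\cdot\,;s)$ is an explicit one-variable polynomial, so that
\[
\frac{D_1^j G(u,v;s)}{G(u,v;s)} = \frac{\tilde P_j(t;s)}{u^j\,(1+t)^j}.
\]
For fixed $t$ the factor $u^{-j}$ is decreasing in $u$, so this quantity always lies between $0$ and $g_j(t) := \tilde P_j(t;s)/\bigl(\gamma^j(1+t)^j\bigr)$. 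Hence it suffices to bound $g_j$ from above and below over $t \ge 0$ and to check that $\inf_{t\ge 0} g_j \le 0 \le \sup_{t\ge 0} g_j$; this reduces the whole lemma to four one-variable problems.

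For $j = 1, 2, 3$ the extrema of $g_j$ are found by elementary calculus at clean values of $t$. For instance $g_2'(t)$ has the sign of $t-(4s+3)$, giving $\inf_{t\ge 0} g_2 = g_2(4s+3) = -s/\bigl(4\gamma^2(s+1)\bigr)$ and $\sup_{t\ge 0} g_2 = g_2(0) = 2s(2s+1)/\gamma^2$; and $g_3'(t)$ has the sign of $(s+1)-t$, giving $\sup_{t \ge 0} g_3 = g_3(s+1) = 2s(2s+2)/\bigl(\gamma^3(s+2)^2\bigr)$ and $\inf_{t \ge 0} g_3 = g_3(0) = -2s(2s+1)(2s+2)/\gamma^3$. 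The case $j = 1$ is immediate, since $g_1(t) = -2s/\bigl(\gamma(1+t)\bigr)$ is negative and increasing, so $\inf g_1 = g_1(0) = -2s/\gamma$ while $\sup g_1 = 0$ is not attained — which is exactly why the first upper estimate is the strict inequality $<0$. (The lower bounds for $j=1$ and $j=3$ can alternatively be read off from the crude inequalities $\gamma u \le u^2+v^2$ and, on $\{3v^2 \le (2s+1)u^2\}$, $|3v^2-(2s+1)u^2|\le(2s+1)u^2$ together with $u^3 \le (u^2+v^2)^{3/2}$, bypassing calculus entirely.)

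The main obstacle is $j = 4$, where $g_4(t) = \tfrac{2s(2s+2)}{\gamma^4}\cdot\tfrac{\psi(t)}{(1+t)^4}$ with $\psi(t) = 3t^2 - 6(2s+3)t + (2s+1)(2s+3)$. Here the interior critical points of $\psi(t)/(1+t)^4$ satisfy $3t^2 - 6(3s+5)t + (2s+3)(4s+5) = 0$, whose roots $t = (3s+5)\pm\sqrt{(s+2)(19s+30)/3}$ are not rational in $s$, so the true extrema are not tidy and one instead produces clean (possibly non-sharp) algebraic certificates. For the upper bound I would observe that $(2s+1)(2s+3)(1+t)^4 - \psi(t)$, expanded in powers of $t$, has all nonnegative coefficients for $s > 0$; hence $\psi(t) \le (2s+1)(2s+3)(1+t)^4$ and $\sup_{t\ge0} g_4 \le 2s(2s+1)(2s+2)(2s+3)/\gamma^4$, attained at $t=0$. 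For the lower bound I would use the identity $\psi(t) + 3(s+1)(1+t)^2 = (s+2)\bigl(3t^2 - 6t + 4s+3\bigr)$, whose right side is positive because $3t^2 - 6t + 4s+3$ has discriminant $36 - 12(4s+3) = -48s < 0$; thus $\psi(t) \ge -3(s+1)(1+t)^2$, and since $(1+t)^{-2} \le 1$ for $t \ge 0$,
\[
\frac{D_1^4 G}{G} = \frac{2s(2s+2)\,\psi(t)}{\gamma^4(1+t)^4} \ \ge\ \frac{-6s(s+1)(2s+2)}{\gamma^4(1+t)^2} \ \ge\ -\frac{2s(s+1)(2s+2)(3)}{\gamma^4}.
\]
Spotting these two certificates — the nonnegative coefficient list and the completion-of-the-square identity — is the only step that is not routine; everything else is bookkeeping with the formulas of Lemma~\ref{lem:4.4} and Remark~\ref{rem:4.6}.
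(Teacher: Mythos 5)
Your proposal is correct and follows essentially the same route as the paper's proof: the explicit polynomials $P_j$ of Remark~\ref{rem:4.6}, the substitution $\rho=v^2/u^2$ (your $t$) reducing everything to one-variable estimates with $u$ replaced by $\gamma$, and the same critical points $\rho=4s+3$ and $\rho=s+1$ for the two nontrivial bounds at $j=2$ and $j=3$. The only divergence is at $j=4$, where your coefficientwise bound $\psi(t)\le(2s+1)(2s+3)(1+t)^4$ and the identity $\psi(t)+3(s+1)(1+t)^2=(s+2)(3t^2-6t+4s+3)$ play the role of the paper's symmetric inequalities $u^4+v^4\le(u^2+v^2)^2$ and $2u^2v^2\le\tfrac12(u^2+v^2)^2$ and yield the same constants; just rewrite the final display so that it bounds $g_4(t)$ (the value at $u=\gamma$) rather than asserting equality of $D_1^4G/G$ with the $\gamma$-denominator expression, a harmless slip your sandwich reduction already licenses.
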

\begin{proof}
By Remark~\ref{rem:4.6}, 
\begin{equation*}
\frac{D_1^j G(u,v;s)}{G(u,v;s)}= \frac{P_j(u,v;s)}{(u^2+v^2)^j},
\end{equation*}
and Remark~\ref{rem:4.6} provides formulas for $P_j(u,v;s)$.  It follows that
\begin{equation*}
\frac{D_1^j G(u,v;s)}{G(u,v;s)} = \frac{-2su}{u^2+v^2} < 0.
\end{equation*}
Since
\begin{equation*}
\frac{2su}{u^2+v^2} \le \frac{2su}{u^2} \le \frac{2s}{\gamma},
\end{equation*}
we also see that
\begin{equation*}
\frac{D_1 G(u,v;s)}{G(u,v;s)} \ge -\frac{2s}{\gamma}.
\end{equation*}
Using Remark~\ref{rem:4.6}, we see that
\begin{equation*}
\frac{D_1^2 G(u,v;s)}{G(u,v;s)} = \frac{2s(2s+1)u^2 -2s v^2}{(u^2 + v^2)^2},
\end{equation*}
so
\begin{equation*}
\frac{D_1^2 G(u,v;s)}{G(u,v;s)} \le \frac{2s(2s+1)u^2}{(u^2 + v^2)^2}.
\end{equation*}
Since
\begin{equation*}
\frac{u^2}{(u^2+v^2)^2} \le \frac{u^2}{u^4} \le \frac{1}{\gamma^2},
\end{equation*}
we find that
\begin{equation*}
\frac{D_1^2 G(u,v;s)}{G(u,v;s)} \le \frac{2s(2s+1)}{\gamma^2},
\end{equation*}
If we write $v^2 = \rho u^2$, we see that
\begin{equation*}
\frac{D_1^2 G(u,v;s)}{G(u,v;s)} = \frac{2s(2s+1-\rho)}{u^2 (1+ \rho)^2},
\end{equation*}
and if $0 \le \rho \le 2s+1$, we obtain the upper bound given above and a
lower bound of zero. If $\rho > 2s+1$, we see that
\begin{equation*}
\frac{D_1^2 G(u,v;s)}{G(u,v;s)} \ge \frac{2s}{\gamma^2}
\inf\left\{\frac{2s+1-\rho}{(1+ \rho)^2}: \rho > 2s+1\right\}.
\end{equation*}
It is a simple calculus exercise to show that
\begin{equation*}
\inf\left\{\frac{2s+1-\rho}{(1+ \rho)^2}: \rho > 2s+1\right\}
= - \frac{1}{8(s+1)},
\end{equation*}
achieved at $\rho = 4s+3$; and this gives the lower estimate
$-s/[4\gamma^2(s+1)]$ of the lemma.

Using Remark~\ref{rem:4.6} again, we see that
\begin{equation*}
\frac{D_1^3 G(u,v;s)}{G(u,v;s)} = \frac{2s(2s+2)u[-(2s+1)u^2 + 3v^2]}
{(u^2 + v^2)^3}.
\end{equation*}
It follows that
\begin{multline*}
\frac{D_1^3 G(u,v;s)}{G(u,v;s)} \ge -2s(2s+1)(2s+2) \left[\frac{u}{(u^2 + v^2)}
\right]^3
\\
\ge -2s(2s+1)(2s+2) \left[\frac{1}{u}\right]^3
\ge -2s(2s+1)(2s+2) \frac{1}{\gamma^3}.
\end{multline*}
On the other hand, if we write $v^2 = \rho u^2$, then
\begin{multline*}
\frac{D_1^3 G(u,v;s)}{G(u,v;s)} = \frac{2s(2s+2)}{u^3}
\frac{[3 \rho -(2s+1)]}{(1+ \rho)^3}
\\
\le \frac{2s(2s+2)}{\gamma^3}
\sup \left\{\frac{3 \rho -(2s+1)}{(1+ \rho)^3}: \rho \ge 0\right\}.
\end{multline*}
Once again, a straightforward calculus argument shows that
\begin{equation*}
\sup \left\{\frac{3 \rho -(2s+1)}{(1+ \rho)^3}: \rho \ge 0\right\}
= \frac{1}{(s+2)^2}
\end{equation*}
and the supremum is achieved at $\rho = s+1$.  Using this fact, we obtain
the upper estimate of the lemma.

Finally, we obtain from Remark~\ref{rem:4.6} that
\begin{equation*}
\frac{D_1^4 G(u,v;s)}{G(u,v;s)} = \frac{2s(2s+2)[(2s+1)(2s+3)u^4
- 6(2s+3) u^2v^2 + 3 v^4]}{(u^2 + v^2)^4}.
\end{equation*}
Dropping the negative term in the numerator and observing that
$3 \le (2s+1)(2s+3)$ and $u^4 + v^4 \le (u^2 + v^2)^2$, we see that
\begin{align*}
\frac{D_1^4 G(u,v;s)}{G(u,v;s)} &\le \frac{(2s)(2s+1)(2s+2)(2s+3)(u^4+v^4)}
{(u^2 + v^2)^4}
\\
&\le \frac{(2s)(2s+1)(2s+2)(2s+3)}{(u^2 + v^2)^2}
\le \frac{(2s)(2s+1)(2s+2)(2s+3)}{\gamma^4}.
\end{align*}
On the other hand, because $-u^4 - v^4 \le -2u^2 v^2$, we obtain that
\begin{align*}
- \frac{D_1^4 G(u,v;s)}{G(u,v;s)} &\le \frac{(2s)(2s+2)
[-3 u^4 + 6(2s+3)u^2 v^2 - 3 v^4]}{(u^2 + v^2)^4}
\\
&\le \frac{3(2s)(2s+2)[-2 u^2v^2 + (4s+6)u^2 v^2]}{(u^2 + v^2)^4}
\\
&\le \frac{3(2s)(2s+2)[4(s+1)(u^2+ v^2)^2/4]}{(u^2 + v^2)^4}
\\
&\le \frac{3(2s)(2s+2)(s+1)}{(u^2 + v^2)^2} 
\le \frac{3(2s)(2s+2)(s+1)}{\gamma^4},
\end{align*}
which gives the lower estimate of Lemma~\ref{lem:4.7}.
\end{proof}

The following lemma gives analogous estimates for
\begin{equation*}
\frac{D_2^j G(u,v;s)}{G(u,v;s)} = \frac{P_j(v,u;s)}{(u^2 + v^2)^j}.
\end{equation*}
\begin{lem}
\label{lem:4.8}
Let $\gamma >0$ be a given real number, $D_2 = (\partial/\partial v)$ 
and for $s>0$ and $(u,v) \in \R^2 \setminus\{(0,0)\}$, define
$G(u,v;s) = (u^2 + v^2)^{-s}$, If $u \ge \gamma$ and $v \in \R$, we have
the following estimates.
\begin{gather*}
\frac{|D_2 G(u,v;s)|}{G(u,v;s)} \le \frac{s}{\gamma},
\\
-\frac{2s}{\gamma^2} \le \frac{D_2^2 G(u,v;s)}{G(u,v;s)} 
\le \frac{2s(2s+1)}{4\gamma^2},
\\
\frac{|D_2^3 G(u,v;s)|}{G(u,v;s)} 
\le \frac{2s(2s+2)}{\gamma^3} \max\left\{\frac{25\sqrt{5}}{72}
, \frac{2s+1}{8}\right\}
\\
-\frac{2s(s+1)(2s+2)(3)}{\gamma^4} \le \frac{D_2^4 G(u,v;s)}{G(u,v;s)} 
\le \frac{2s(2s+1)(2s+2)(2s+3)}{\gamma^4}.
\end{gather*}
\end{lem}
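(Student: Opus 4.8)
The plan is to reduce everything to the identity $\dfrac{D_2^j G(u,v;s)}{G(u,v;s)} = \dfrac{P_j(v,u;s)}{(u^2+v^2)^j}$ recorded just before the statement (itself a consequence of Lemmas~\ref{lem:4.4} and \ref{lem:4.5}), insert the explicit polynomials $P_1,\dots,P_4$ from Remark~\ref{rem:4.6}, and bound the resulting rational functions of $(u,v)$ using only two elementary inputs: the AM--GM inequality $u^2+v^2 \ge 2u|v| \ge 2\gamma|v|$ and the hypothesis $u \ge \gamma$. Structurally this mirrors the proof of Lemma~\ref{lem:4.7} with the roles of $u$ and $v$ interchanged, but since the variable bounded below by $\gamma$ now occupies the \emph{other} slot, the algebraic manipulations are somewhat different and, in one case, genuinely more delicate.

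The cases $j = 1, 2, 4$ are routine. For $j=1$, $|D_2 G/G| = 2s|v|/(u^2+v^2) \le 2s|v|/(2\gamma|v|) = s/\gamma$ (the bound is trivial if $v = 0$). For $j=2$, from $P_2(v,u;s) = 2s(2s+1)v^2 - 2su^2$: discarding the negative term and using $v^2/(u^2+v^2)^2 \le v^2/(2\gamma|v|)^2 = 1/(4\gamma^2)$ gives the upper bound, while $D_2^2 G/G \ge -2su^2/(u^2+v^2)^2 \ge -2s/u^2 \ge -2s/\gamma^2$ gives the lower one. For $j=4$, from $P_4(v,u;s) = 2s(2s+2)[(2s+1)(2s+3)v^4 - 6(2s+3)u^2v^2 + 3u^4]$: the upper bound follows by dropping the negative middle term and using $3 \le (2s+1)(2s+3)$ together with $u^4+v^4 \le (u^2+v^2)^2$; the lower bound follows exactly as in the $D_1^4$ argument of Lemma~\ref{lem:4.7} --- factor out $3$, apply $-u^4-v^4 \le -2u^2v^2$ and then $u^2v^2 \le (u^2+v^2)^2/4$ to reach $-D_2^4 G/G \le 3(2s)(2s+2)(s+1)/\gamma^4 = 2s(s+1)(2s+2)(3)/\gamma^4$.

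The one genuinely delicate case is $j = 3$, where $\dfrac{D_2^3 G}{G} = \dfrac{2s(2s+2)\,v\,(3u^2 - (2s+1)v^2)}{(u^2+v^2)^3}$. Here a crude triangle inequality bounds $|v(3u^2-(2s+1)v^2)|$ by $3u^2|v| + (2s+1)|v|^3$ and therefore yields the \emph{sum} $\tfrac{25\sqrt5}{72} + \tfrac{2s+1}{8}$ of the two constants in the lemma, not their maximum --- so the main obstacle is to avoid this loss. The remedy is to split on the sign of $3u^2-(2s+1)v^2$. If $3u^2 \ge (2s+1)v^2$, then $|v(3u^2-(2s+1)v^2)| \le 3u^2|v|$, and optimizing $3u^2|v|/(u^2+v^2)^3$ in $v$ with $u$ held fixed (a first-derivative test, with critical point $v^2 = u^2/5$) gives the value $\tfrac{25\sqrt5}{72 u^3} \le \tfrac{25\sqrt5}{72\gamma^3}$. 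If instead $3u^2 < (2s+1)v^2$, then $|v(3u^2-(2s+1)v^2)| \le (2s+1)|v|^3$, and AM--GM gives $|v|^3/(u^2+v^2)^3 \le 1/(8u^3) \le 1/(8\gamma^3)$. Combining the two regimes produces $|D_2^3 G/G| \le \tfrac{2s(2s+2)}{\gamma^3}\max\{\tfrac{25\sqrt5}{72}, \tfrac{2s+1}{8}\}$, which is the asserted bound. Thus essentially all of the difficulty concentrates in this $j=3$ step --- recognizing that the two extremal configurations cannot coexist, so the maximum (rather than the sum) survives, together with the small calculus computation producing the constant $25\sqrt5/72$; the remainder is a straightforward re-run of the arguments in Lemma~\ref{lem:4.7}.
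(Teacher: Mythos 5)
Your proposal is correct and follows essentially the same route as the paper: reduce to the explicit polynomials $P_j(v,u;s)$ from Remark~\ref{rem:4.6} via the identity $D_2^jG/G = P_j(v,u;s)/(u^2+v^2)^j$, then apply elementary bounds such as $u^2+v^2 \ge 2u|v|$, $u \ge \gamma$, and $u^4+v^4 \le (u^2+v^2)^2$, with the same constants (including the calculus maximum at $v^2=u^2/5$ yielding $25\sqrt{5}/72$). The only cosmetic difference is in the $j=3$ case, where you split on the sign of $3u^2-(2s+1)v^2$ while the paper uses the oddness of the expression in $v$ and splits on the sign of $v$; both case splits isolate the two extremal regimes and correctly give the maximum rather than the sum of the two constants.
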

\begin{proof}
By Remark~\ref{rem:4.6}, $P_1(v,u;s) = -2sv$, so
\begin{equation*}
\frac{|D_2 G(u,v;s)|}{G(u,v;s)} = \frac{2s |v|}{u^2 + v^2}.
\end{equation*}
The map $w \mapsto w/(u^2+w^2)$ has its maximum on $[0, \infty)$ at $w=u$,
so $(2s|v|/(u^2+v^2) \le s/u \le s/\gamma$; and we obtain the first
inequality in Lemma~\ref{lem:4.8}.  Using Remark~\ref{rem:4.6} again, we see
that
\begin{equation*}
\frac{D_2^2 G(u,v;s)}{G(u,v;s)} = \frac{2s[(2s+1)v^2 - u^2]}{(u^2 + v^2)^2}.
\end{equation*}
It follows that
\begin{equation*}
\frac{D_2^2 G(u,v;s)}{G(u,v;s)} = 2s(2s+1)\frac{|v|^2}{(u^2 + v^2)^2}.
\end{equation*}
The map 
$v \mapsto |v|/(u^2+v^2)$ has its maximum at $|v|=u$, so $[|v|/(u^2+v^2)]^2
\le 1/(4u^2)\le 1/(4 \gamma^2)$, and
\begin{equation*}
\frac{D_2^2 G(u,v;s)}{G(u,v;s)} = \frac{2s(2s+1)}{4\gamma^2}.
\end{equation*}
Similarly, one obtains
\begin{equation*}
\frac{D_2^2 G(u,v;s)}{G(u,v;s)} \ge - \frac{2s u^2}{(u^2 + v^2)^2}
\ge - \frac{2s}{u^2} \ge - \frac{2s}{\gamma^2}.
\end{equation*}

With the  aid of Remark~\ref{rem:4.6} again, we see that
\begin{equation*}
\frac{D_2^3 G(u,v;s)}{G(u,v;s)} = 2s(2s+2)v \frac{[-(2s+1)v^2 + 3u^2]}
{(u^2+v^2)^3} := A(u,v).
\end{equation*}
For a fixed $u \ge \gamma$, $v \mapsto A(u,v)$ is an odd function of
$v$, so if
$\alpha(u) = \sup \{A(u,v): v \in \R\}$, $- \alpha(u)
= \inf\{A(u,v): v \in \R\}$. If $v \le 0$,
\begin{multline*}
A(u,v) \le (2s)(2s+1)(2s+2) \left[\frac{|v|}{u^2+v^2}\right]^3
\le (2s)(2s+1)(2s+2) \left[\frac{u}{2u^2}\right]^3
\\
\le \frac{(2s)(2s+1)(2s+2)}{8 \gamma^3}.
\end{multline*}
If $v >0$,
\begin{equation*}
A(u,v) \le (2s)(2s+2)(3u^2) \frac{v}{(u^2+v^2)^3}.
\end{equation*}
A calculation shows that $v \mapsto v/(u^2+v^2)^3$ achieves its maximum
for $v \ge 0$ at $v = u/\sqrt{5}$, so for $v >0$,
\begin{equation*}
A(u,v) \le (2s)(2s+2)(3u^{-3}) [\sqrt{5}(6/5)^3]^{-1}
\le (2s)(2s+2) \gamma^{-3} (25\sqrt{5}/72).
\end{equation*}
Note that $25\sqrt{5}/72 \approx .7764 < 1$.
Using Remark~\ref{rem:4.6} again, we see that
\begin{equation*}
\frac{D_2^4 G(u,v;s)}{G(u,v;s)} = 2s(2s+2) \frac{[(2s+1)(2s+3)v^4
-6(2s+3) u^2 v^2 + 3 u^4]}{(u^2+v^2)^4}.
\end{equation*}
Since $u^4 + v^4 \le (u^2+v^2)^2$, it follows easily that
\begin{equation*}
\frac{D_2^4 G(u,v;s)}{G(u,v;s)} \le 2s(2s+2)(2s+1)(2s+3) \frac{u^4 + v^4}
{(u^2+v^2)^4} \le 2s(2s+2)(2s+1)(2s+3) \gamma^{-4}.
\end{equation*}

Similarly, we see that
\begin{multline*}
(2s+1)(2s+3)v^4- 6(2s+3) u^2v^2+ 3 u^4 \ge
3(u^4+v^4) - 6(2s+3)[(u^2+v^2)/2]^2
\\
\ge 3 (u^2+v^2)^2 - 6 [(u^2+v^2)/2]^2 -6(2s+3) [(u^2+v^2)/2]^2.
\end{multline*}
This implies that
\begin{multline*}
\frac{D_2^4 G(u,v;s)}{G(u,v;s)} \ge 2s(2s+2) \frac{3-3/2-3/2(2s+3)}
{(u^2+v^2)^2}
\\
\ge -(2s)(2s+2)3(s+1)(u^2+v^2)^{-2} 
\ge -(2s)(2s+2)(3s+3)\gamma^{-4},
\end{multline*}
which completes the proof of Lemma~\ref{lem:4.8}. Note that 
$(2s)(2s+1)(2s+2)(2s+3) \ge 2s(2s+2)(3s+3)$.
\end{proof}
\begin{remark}
\label{rem:4.9}
Lemmas~\ref{lem:4.7} and \ref{lem:4.8} show that whenever $u \ge \gamma >0$,
$s >0$, $k=1$ or $k=2$, and $1 \le j \le 4$,
\begin{equation*}
\frac{|D_k^j G(u,v;s)|}{G(u,v;s)} \le (2s)(2s+1)\cdots (2s+j-1) \gamma^{-j}.
\end{equation*}
We have not determined whether the above inequality holds for all $j \ge 1$.
\end{remark}

Using Lemmas~\ref{lem:4.7} and \ref{lem:4.8}, we can give uniform estimates
for the quantities $(\partial/\partial x)^j v_s(x,y)/v_s(x,y)$ and
$(\partial/\partial y)^j v_s(x,y)/v_s(x,y)$, where $s >0$, $1 \le j \le 4$,
and $v_s(x,y)$ is the unique (to within normalization) strictly positive
eigenvector of the linear operator $\Lambda_s: C^m(\bar H) \to
C^m(\bar H)$ in \eqref{4.2} for $m \ge 1$.

\begin{thm}
\label{thm:4.10}
Let $\B$ be a finite set of complex numbers $\beta$ such that
$\Re(\beta) \ge \gamma \ge 1$ for all $\beta \in \B$.  For $\beta \in
\B$ and $s >0$, define $\theta_{\beta}(z) = (z + \beta)^{-1}$. Let $H$
be a bounded, mildly regular open subset of $\C:= \R^2$ such that $H
\supset G_{\gamma}=\{z\in \C : |z - 1/(2 \gamma)| < 1/(2
\gamma)\}$, and $\Re(z) >0$ for all $z \in H$, so $\theta_{\beta}(H)
  \subset G_{\gamma}$ for all $\beta \in \B$. For a positive integer
$m$, define a real Banach space $C^m(\bar H) = X$ and defined a bounded
linear operator $\Lambda_s: X \to X$ by
\begin{equation*}
(\Lambda_s f)(z) = \sum_{\beta \in \B} \Big| \frac{d}{dz} \theta_{\beta}(z)\Big|^s
f(\theta_{\beta}(z)).
\end{equation*}
Then $\Lambda_s$ has a unique (to within normalization) positive
eigenvector $v_s \in X$ and $v_s \in C^{\infty}$.  Furthermore,
we have the following estimates for $(x,y) \in \bar H$.
\begin{gather}
\label{4.16}
-\frac{2s}{\gamma} \le \frac{\partial v_s(x,y)}{\partial x}
[v_s(x,y)]^{-1} \le 0,
\\
\label{4.17}
-\frac{s}{4\gamma^2(s+1)} \le \frac{\partial^2 v_s(x,y)}{\partial x^2}
[v_s(x,y)]^{-1} \le \frac{2s(2s+1)}{\gamma^2},
\\
\label{4.18}
-\frac{2s(2s+1)(2s+2)}{\gamma^3} \le \frac{\partial^3 v_s(x,y)}{\partial
x^3} [v_s(x,y)]^{-1} \le \frac{(2s)(2s+2)}{\gamma^3(s+2)^2},
\\
\label{4.19}
-\frac{2s(2s+2)(3s+3)}{\gamma^4} \le
\frac{\partial^4 v_s(x,y)}{\partial x^4} [v_s(x,y)]^{-1}
\le \frac{(2s)(2s+1)(2s+2)(2s+3)}{\gamma^4},
\\
\label{4.20}
\Big| \frac{\partial v_s(x,y)}{\partial y}\Big| [v_s(x,y)]^{-1}
\le \frac{s}{\gamma},
\\
\label{4.21}
-\frac{2s}{\gamma^2} \le \frac{\partial^2 v_s(x,y)}{\partial y^2}
[v_s(x,y)]^{-1} \le \frac{2s(2s+1)}{4\gamma^2},
\\
\label{4.22}
\Big|\frac{\partial^3 v_s(x,y)}{\partial y^3}\Big| [v_s(x,y)]^{-1} \le 
\frac{(2s)(2s+2)}{\gamma^3} \max\{25\sqrt{5}/72, (2s+1)/8\},
\\
\label{4.23}
-\frac{2s(2s+2)(3s+3)}{\gamma^4} \le
\frac{\partial^4 v_s(x,y)}{\partial y^4}[v_s(x,y)]^{-1}
\le \frac{(2s)(2s+1)(2s+2)(2s+3)}{\gamma^4}.
\end{gather}
Hence, if $D_1 = \partial/\partial x$ and $D_2 = \partial/\partial y$, we have
for $k=1,2$ and $1 \le j \le 4$ that
\begin{equation}
\label{4.24}
\frac{|D_k^j v_s(x,y)|}{v_s(x,y)}
 \le \frac{(2s)(2s+1) \cdots (2s+j-1)}{\gamma^j}.
\end{equation}
\end{thm}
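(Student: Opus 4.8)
The plan is to reduce the estimates on the ratios $D_k^j v_s/v_s$ to the pointwise estimates of Lemmas~\ref{lem:4.7} and \ref{lem:4.8}, using the limit formula \eqref{4.11} together with the Möbius structure. The existence, uniqueness, and $C^\infty$ regularity of $v_s$ are already recorded in Theorem~\ref{thm:4.2}, so only the inequalities \eqref{4.16}--\eqref{4.24} need proof.

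First I would fix $\nu \ge 1$ and $\w = (\beta_1,\dots,\beta_\nu) \in \B_\nu$ and apply Lemma~\ref{lem:4.3} to $\theta_{\tilde\w} = \theta_{\beta_1}\circ\cdots\circ\theta_{\beta_\nu}$. That lemma produces complex numbers $B_{\nu-1}\ne 0$ and $B_\nu$, depending only on $\beta_1,\dots,\beta_\nu$ and \emph{not} on $z$, with $\Re(B_\nu/B_{\nu-1}) \ge \gamma$ and
\begin{equation*}
\Big|\frac{d}{dz}\theta_{\tilde\w}(z)\Big|^s = |B_{\nu-1}|^{-2s}\,\big|z + B_\nu/B_{\nu-1}\big|^{-2s}.
\end{equation*}
Writing $z = x+iy$ and setting $u = x + \Re(B_\nu/B_{\nu-1})$, $v = y + \Im(B_\nu/B_{\nu-1})$, this becomes $|d\theta_{\tilde\w}(z)/dz|^s = |B_{\nu-1}|^{-2s}\,G(u,v;s)$ with $G(u,v;s) = (u^2+v^2)^{-s}$ as in Lemma~\ref{lem:4.4}; moreover $u \ge \gamma$, since $x = \Re(z) \ge 0$ on $\bar H$ (because $H \subset \{\Re(z)>0\}$). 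Because $B_{\nu-1}$, $\Re(B_\nu/B_{\nu-1})$ and $\Im(B_\nu/B_{\nu-1})$ do not depend on $(x,y)$, differentiating $|d\theta_{\tilde\w}(z)/dz|^s$ in $x$ (resp.\ $y$) is the same as differentiating $G$ in its first (resp.\ second) variable; hence for $D_1 = \partial/\partial x$, $D_2 = \partial/\partial y$ and every $j\ge 1$,
\begin{equation*}
\frac{D_1^j\,|d\theta_{\tilde\w}(z)/dz|^s}{|d\theta_{\tilde\w}(z)/dz|^s} = \frac{D_u^j G(u,v;s)}{G(u,v;s)}, \qquad \frac{D_2^j\,|d\theta_{\tilde\w}(z)/dz|^s}{|d\theta_{\tilde\w}(z)/dz|^s} = \frac{D_v^j G(u,v;s)}{G(u,v;s)},
\end{equation*}
where $D_u = \partial/\partial u$, $D_v = \partial/\partial v$. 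Since $u \ge \gamma$, Lemmas~\ref{lem:4.7} and \ref{lem:4.8} bound these quantities for $1\le j\le 4$ by exactly the right-hand sides appearing in \eqref{4.16}--\eqref{4.23}, and these bounds are uniform in $\w$, in $\nu$, and in $(x,y)\in\bar H$.

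Next, set $g_\w(z) = |d\theta_{\tilde\w}(z)/dz|^s > 0$ and $S_\nu(z) = \sum_{\w\in\B_\nu} g_\w(z)$. Linearity of differentiation gives
\begin{equation*}
\frac{D_k^j S_\nu(z)}{S_\nu(z)} = \sum_{\w\in\B_\nu} \frac{g_\w(z)}{S_\nu(z)}\cdot\frac{D_k^j g_\w(z)}{g_\w(z)}, \qquad k=1,2,
\end{equation*}
a convex combination (positive weights summing to $1$) of the ratios $D_k^j g_\w/g_\w$; hence $D_k^j S_\nu/S_\nu$ satisfies the same two-sided bounds established in the previous step. Since $\w\mapsto\tilde\w$ is a bijection of $\B_\nu$, one has $S_\nu(z) = \sum_{\w\in\B_\nu}|d\theta_\w(z)/dz|^s$, so applying \eqref{4.11} with $\alpha=(j,0)$ and $\alpha=(0,j)$ and letting $\nu\to\infty$ transfers the bounds on $D_k^j S_\nu/S_\nu$ to $D_1^j v_s/v_s$ and $D_2^j v_s/v_s$, which is \eqref{4.16}--\eqref{4.23}. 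The uniform estimate \eqref{4.24} then follows at once from Remark~\ref{rem:4.9}.

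The one point needing care is the identification in the second paragraph of $\partial/\partial x$ and $\partial/\partial y$ applied to $|d\theta_{\tilde\w}(z)/dz|^s$ with $D_u$ and $D_v$ applied to $G$: this works precisely because the Möbius structure of Lemma~\ref{lem:4.3} makes $B_{\nu-1}$ and $B_\nu$ constants in $z$, so the entire $z$-dependence of $g_\w$ is $G$ precomposed with the shift $(x,y)\mapsto(x+\Re(B_\nu/B_{\nu-1}),\,y+\Im(B_\nu/B_{\nu-1}))$, a shift whose first coordinate is nonnegative --- which is exactly why $u\ge\gamma$ and the hypotheses of Lemmas~\ref{lem:4.7}--\ref{lem:4.8} are met. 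Everything else is the elementary ``weighted average of ratios'' observation together with the passage to the limit already supplied by \eqref{4.11}.
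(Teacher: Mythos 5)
Your proposal is correct and follows essentially the same route as the paper: the Möbius representation of Lemma~\ref{lem:4.3} reduces each $|d\theta_{\tilde\w}/dz|^s$ to a shifted copy of $G(u,v;s)$ with $u\ge\gamma$, the pointwise bounds of Lemmas~\ref{lem:4.7} and \ref{lem:4.8} apply, and the limit formula \eqref{4.11} transfers them to $v_s$. Your explicit remark that $D_k^j S_\nu/S_\nu$ is a convex combination of the individual ratios is exactly the (implicit) averaging step the paper uses, so nothing is missing.
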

\begin{proof}
For any integer $m \ge 1$, we can view $\Lambda_s$ as a bounded linear
operator of $C^m(\bar H)$ to $C^m(\bar H)$.  We know that $\Lambda_s$ has a
strictly positive eigenvector $v_s(x,y) \in C^m(\bar H)$ such that
$\sup\{v_s(x,y) \, | \, (x,y) \in\bar H\} =1$.  By the uniqueness of this
eigenvector, $v_s(x,y)$ must actually be $C^{\infty}$.

Using the notation of \eqref{4.6} and \eqref{4.7} and also using \eqref{4.15}
in Lemma~\ref{lem:4.3}, we see that
\begin{equation*}
\Big|\frac{d}{dz} \theta_{\tilde \omega}(z)\Big|^s = |B_{n-1}|^{-2s}
|z + B_n/B_{n-1}|^{-2s}.
\end{equation*}
By Lemma~\ref{lem:4.3}, $\Re(B_n/B_{n-1}) \ge \gamma_{\omega} \ge \gamma$, so
writing $\Im(B_n/B_{n-1}) = \delta_{\omega}$, we obtain that for $k=1,2$ and
$1 \le j$,
\begin{multline}
\label{4.25}
D_k^j\left(\Big|\frac{d}{dz} \theta_{\tilde \omega}(z)\Big|^s\right) 
\Big|\frac{d}{dz} \theta_{\tilde \omega}(z)\Big|^s
\\
= D_k^j\Big[(x+ \gamma_{\omega})^2 + (y+ \delta_{\omega})^2\Big]^{-s}
\Big[(x+ \gamma_{\omega})^2 + (y+ \delta_{\omega})^2\Big]^{s}.
\end{multline}
However, if we write $(x+ \gamma_{\omega}) = u \ge \gamma$ and
$(y+ \delta_{\omega}) =v$, we see that
\begin{multline}
\label{4.26}
\left(\Big(\frac{\partial}{\partial x}\Big)^j
\Big[(x+ \gamma_{\omega})^2 + (y+ \delta_{\omega})^2\Big]^{-s}\right)
\Big[(x+ \gamma_{\omega})^2 + (y+ \delta_{\omega})^2\Big]^{-s}\
\\
= \left[\Big(\frac{\partial}{\partial u}\Big)^jG(u,v;s)\right]
\left[G(u,v;s\right]^{-1},
\end{multline}
where the right hand side of the above equation is evaluated at 
$u = x+ \gamma_{\omega}$ and $v = y+ \delta_{\omega}$.  If we combine
\eqref{4.25} and \eqref{4.26} with the estimates in Lemma~\ref{lem:4.7}
and if we then use \eqref{4.11}, we obtain the estimates on
$(\partial/\partial x)^j v_s(x,y)$ given in \eqref{4.16} - \eqref{4.19}.

Similarly, we have
\begin{multline}
\label{4.27}
\left(\Big(\frac{\partial}{\partial y}\Big)^j
\Big[(x+ \gamma_{\omega})^2 + (y+ \delta_{\omega})^2\Big]^{-s}\right)
\Big[(x+ \gamma_{\omega})^2 + (y+ \delta_{\omega})^2\Big]^{-s}\
\\
= \left[\Big(\frac{\partial}{\partial v}\Big)^jG(u,v;s)\right]
\left[G(u,v;s\right]^{-1}.
\end{multline}
If we combine \eqref{4.25} and \eqref{4.27} with the estimates in
Lemma~\ref{lem:4.8} and if we then use \eqref{4.11}, we obtain the
estimates on $(\partial/\partial y)^j v_s(x,y)$ given in \eqref{4.20}
- \eqref{4.23}.
\end{proof}

\begin{remark}
\label{rem:8.3}
It turns out that exactly the same estimates given in Theorem~\ref{thm:4.10}
hold for a more general class of Perron-Frobenius operators which we shall
need later.  Let notation be as in Theorem~\ref{thm:4.10}, so $\Re(\beta)
\ge \gamma \ge 1$ for $\beta \in \B$ and $\theta_{\beta}(z) = 1/(z + \beta)$
for $\beta \in \B$.  Let $\A$ be a finite index set (possibly empty) of
integers disjoint from $\B$ and for $j \in \A$, let $z_j \in H$ be a given
point, $a_j$ a positive real, and $\theta_j:H \to G$ the map defined
by $\theta_j(z) = z_j$ for all $z \in H$, so $\Lip(\theta_j) =0$. If $m$ is a
positive integer and $s \ge0$, define a bounded linear map
$A_s:X := C^m(\bar H) \to X$ by
\begin{equation}
\label{8.28}
(A_s f)(z) = \sum_{\beta \in \B} \frac{1}{|z+ \beta|^{2s}}
f(\theta_{\beta}(z)) + \sum_{j \in \A} a_j^s f(\theta_{j}(z)).
\end{equation}
Notice that $A_s$ satisfies all the hypotheses of Theorem~\ref{thm:1.1},
so all the conclusions of  Theorem~\ref{thm:1.1} hold. In particular,
$A_s$ has a unique (to within normalization) strictly positive eigenvector
$w_s \in C^m(\bar H)$.  Because the eigenvector $w_s$ is unique and $m \ge 1$
is arbitrary, $w_s$ is $C^{\infty}$ on $H$.

Define an index set $\D = \A \cup \B$ and for $\delta \in \D$, define
$b_{\delta}(z) = 1/|z+ \beta|^{2s}$ if $\delta = \beta \in \B$ and 
$b_{\delta}(z) = a_j$ if $\delta = j \in \A$. As usual, if $\mu$ is a
positive integer, let
\begin{equation*}
\D_{\mu} = \{ \omega = (\delta_1, \delta_2, \ldots, \delta_{\mu}) \, | \,
\delta_k \in \D \text{ for } 1 \le k \le \mu\}.  
\end{equation*}
If $D_1 = \partial/\partial x$ and $D_2= \partial/\partial y$,
for $k \ge 1$ and $p=1$ or $2$, we know that (writing $z = x+ iy:=(x,y)$)
\begin{equation*}
\frac{D_p^k w_s(x,y)}{w_s(x,y)} = \lim_{\mu \rightarrow \infty}
\frac{D_p^k \Big(\sum_{\w \in \D_{\mu}} b_{\w}(x,y)\Big)}{\sum_{\w \in \D_{\mu}}
  b_{\w}(x,y)}.
\end{equation*}
If $\w = (\delta_1, \delta_2, \ldots, \delta_{\mu}) \in \D_{\mu}$ and
$\delta_k \notin \A$ for $1 \le k \le \mu$, we have seen in
Lemmas~\ref{lem:4.7} and \ref{lem:4.8} that $D_p^k
b_{\w}(x,y)/b_{\w}(x,y)$ satisfies the same estimates given for $D_p^k
v_s(x,y)/v_s(x,y)$ in equations \eqref{4.16}- \eqref{4.24}.  Thus assume that
$\delta_t \in \A$ for some $t$, $1 \le t \le \mu$ and $\delta_{t^{\prime}}
\notin \A$ for $t^{\prime} < t$.  A little thought shows that if $t=1$,
$b_{\w}(z)$ is a constant.  If $t=2$, $b_{\w}(z) = c(\w,z) b_{\delta_1}(z)$,
where $c(\w,z)$ is a positive constant.  Generally, if $2 \le t \le \mu$,
$b_{\w}(z) = c(\w,z)b_{\w_{t-1}}(z),$ where $c(\w,z)$ is a positive constant
and $\w_{t-1} = (\delta_{t-1}, \delta_{t-2}, \ldots, \delta_1) \in \D_{t-1}$
and $\delta_1, \delta_2, \ldots, \delta_{t-1} \in \B$.  It follows that $D_p^k
b_{\w}(x,y)/b_{\w}(x,y) =0$ if $t=1$ and otherwise
\begin{equation*}
D_p^k b_{\w}(x,y)/b_{\w}(x,y) = D_p^k b_{\w_{t-1}}(x,y)/b_{\w_{t-1}}(x,y).
\end{equation*}
But using Lemmas~\ref{lem:4.7} and \ref{lem:4.8} again, it follows
that if $\delta_t \in \A$ for some $t$, $1 \le t \le \mu$,
$D_p^k b_{\w}(x,y)/b_{\w}(x,y)$ is identically zero or 
$D_p^k b_{\w}(x,y)/b_{\w}(x,y)$ satisfies the same estimates given for
$D_p^k v_s(x,y)/v_s(x,y)$.  It follows that $D_p^k w_s(x,y)/w_s(x,y)$
satisfies the same estimates given for $D_p^k v_s(x,y)/v_s(x,y)$
in Theorem~\ref{thm:4.10}.
\end{remark}

\begin{cor}
\label{cor:4.11}
Let notation and hypotheses be as in Theorem~\ref{thm:4.10}.  
If $z_0 =(x_0,y_0) \in H$ and $z_1 =(x_1,y_1) \in H$ ,
\begin{equation}
\label{vzrelate}
v_s(z_0) \le v_s(z_1) \exp{[(\sqrt{5}s/\gamma) |z_1 - z_0|]}.
\end{equation}
\end{cor}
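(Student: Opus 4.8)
The plan is to derive \eqref{vzrelate} from the first–order derivative bounds already established in Theorem~\ref{thm:4.10}, in exactly the same way the one–dimensional inequality \eqref{prop2} was obtained from \eqref{prop1}: pass to $\log v_s$, bound its gradient uniformly on $\bar H$, and integrate along the straight line segment joining $z_1$ to $z_0$.

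First I would record that, by Theorem~\ref{thm:4.10}, $v_s$ is strictly positive and $C^{\infty}$ on $\bar H$, so $\log v_s$ is a well–defined $C^1$ function there with
\begin{equation*}
\nabla \log v_s(x,y) = \left(\frac{\partial v_s(x,y)/\partial x}{v_s(x,y)},\ \frac{\partial v_s(x,y)/\partial y}{v_s(x,y)}\right).
\end{equation*}
The estimates \eqref{4.16} and \eqref{4.20} give $|\partial v_s/\partial x| \le (2s/\gamma)\,v_s$ and $|\partial v_s/\partial y| \le (s/\gamma)\,v_s$ on $\bar H$, so in the Euclidean norm $\|\cdot\|$ on $\R^2$,
\begin{equation*}
\|\nabla \log v_s(x,y)\| \le \sqrt{(2s/\gamma)^2 + (s/\gamma)^2} = \frac{\sqrt{5}\,s}{\gamma},\qquad (x,y)\in \bar H.
\end{equation*}
Then, writing $\sigma(t) = (1-t)z_1 + t z_0$ for $0 \le t \le 1$, I would use the fundamental theorem of calculus to write $\log v_s(z_0) - \log v_s(z_1) = \int_0^1 \nabla \log v_s(\sigma(t))\cdot(z_0 - z_1)\,dt$, bound the integrand by Cauchy--Schwarz by $\|\nabla \log v_s(\sigma(t))\|\,|z_1 - z_0| \le (\sqrt{5}\,s/\gamma)\,|z_1 - z_0|$, and exponentiate the resulting inequality $\log v_s(z_0) - \log v_s(z_1) \le (\sqrt{5}\,s/\gamma)\,|z_1 - z_0|$ to obtain \eqref{vzrelate}.

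The only point that genuinely needs care — and hence the main obstacle — is that the segment $\sigma([0,1])$ must lie in $\bar H$ for the integration step to be legitimate. In every case relevant to this paper $H$ is convex (the disk $G_\gamma$, and the half–disk $H_+ = \{z \in H : \Im z > 0\}$ used in Remark~\ref{rem:1.6}), so this holds automatically. For a general mildly regular $H$ one would instead join $z_0$ to $z_1$ by the Lipschitz path supplied by \eqref{1.1}, at the cost of the constant $M$; equivalently, one can invoke Corollary~\ref{cor:1.4} (observing via Lemmas~\ref{lem:4.7} and \ref{lem:4.8} that $M_\nu \le \sqrt{5}\,s/\gamma$ for all $\nu$, hence $M_\infty \le \sqrt{5}\,s/\gamma$) to conclude directly that $v_s \in K(\sqrt{5}\,s/\gamma;\bar H)$, which is precisely \eqref{vzrelate}. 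No delicate estimates arise: all the quantitative content is already packaged in the derivative bounds of Theorem~\ref{thm:4.10}.
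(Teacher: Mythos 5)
Your core computation is exactly the paper's: bound $\partial_x v_s/v_s$ and $\partial_y v_s/v_s$ by $2s/\gamma$ and $s/\gamma$ via \eqref{4.16} and \eqref{4.20}, integrate $\frac{d}{dt}\log v_s$ along the straight segment joining the two points, apply Cauchy--Schwarz to get the constant $\sqrt{5}\,s/\gamma$, and exponentiate. You also correctly identify the one genuine issue, namely that the segment must stay in $\bar H$.

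However, your treatment of that issue leaves a gap: Corollary~\ref{cor:4.11} is stated under the hypotheses of Theorem~\ref{thm:4.10}, where $H$ is only assumed to be a bounded, mildly regular open set containing $G_{\gamma}$ and contained in the right half-plane, not convex. Neither of your fallbacks for the non-convex case actually yields \eqref{vzrelate} with the stated constant: replacing the segment by a Lipschitz path from \eqref{1.1} degrades the constant by the mild-regularity factor $M$ (and \eqref{1.1} only provides such paths for nearby points in the first place), while Corollary~\ref{cor:1.4} cannot be invoked because it explicitly assumes $H$ convex (and is formulated in the infinite-index setting of Theorem~\ref{thm:1.9}), so it does not remove the convexity hypothesis either. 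The missing idea, which is how the paper handles this, is an extension-by-convexification step: choose a convex, bounded open $H_1 \supset H$ with $\Re(z)>0$ on $H_1$, view $\Lambda_s$ as a bounded operator on $C^m(\bar H_1)$ (the formula still makes sense since $\theta_{\beta}(H_1)\subset G_{\gamma}\subset H$), and use the uniqueness of the normalized strictly positive eigenvector to conclude that the eigenvector $\hat v_s$ on $\bar H_1$ restricts to $v_s$ on $H$. One may then run your segment argument inside the convex set $H_1$ for any pair $z_0,z_1\in H$, and the estimate with constant $\sqrt{5}\,s/\gamma$ follows in full generality. With that step inserted, your proof coincides with the paper's.
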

\begin{proof}
Let $H_1 \supset H$ be a convex, bounded open set such that $\Re(z) >0$
for all $z \in H_1$.  (As usual, we identify $x+iy$ with $(x,y)$.)  For
$z \in H_1$ and $\Lambda_s$ given by \eqref{4.2} and viewed as a bounded
linear operator $\Lambda_s: C^m(\bar H_1) \to C^m(\bar H_1)$, $\Lambda_s$
has a strictly positive eigenvector $\hat v_s: \bar H_1 \to (0, \infty)$
in $C^m(\bar H_1)$.  By uniqueness, $\hat v_s(z) = v_s(z)$ for all
$z \in H$.  Thus, by replacing $H$ by $H_1$, we can assume that $H$ is convex.

Define $z_t = (1-t)z_0 + t z_1$ for $0 \le t \le 1$ and note that
\begin{multline*}
\Big|\int_0^1 \frac{d}{dz} \log(v_s(z_t)) \, dt \Big| = \Big|\log\Big(
\frac{v_s(z_1)}{v_s(z_0)}\Big)\Big|
\\
\le \int_0^1\Big|\frac{D_1 v_s(z_t)}{v_s(z_t)}(x_1-x_0)
+ \frac{D_2v_s(z_t)}{v_s(z_t)} (y_1 - y_0) \Big| \, dt.
\end{multline*}
Using \eqref{4.16} and \eqref{4.20}, we obtain
\begin{equation*}
\Big|\log\Big(
\frac{v_s(z_1)}{v_s(z_0)}\Big)\Big| \le 
\int_0^1 \Big| \frac{2s}{\gamma}(x_1-x_0) + \frac{s}{\gamma}(y_1 - y_0)
\Big| \, dt \le \frac{\sqrt{5} s}{\gamma} \sqrt{(x_1-x_0)^2 + (y_1 - y_0)^2},
\end{equation*}
which gives the estimate in Corollary~\ref{cor:4.11}.
\end{proof}

\begin{remark}
\label{rem:8.4}
If $\B \subset \C$ is an infinite, countable index set with $\Re \beta \ge
\gamma \ge 1$ and $\theta_{\beta}(z) = 1/(z + \beta)$, we can consider, in the
notation of Corollary~\ref{cor:4.11}, $L_s:C(\bar H_1) \to C(\bar H_1)$ given
by $(L_sf)(z) = \sum_{\beta \in \B} |\theta_{\beta}^{\prime}(z)|^s
f(\theta_{\beta}(z))$ for $s > \sigma(\B)$, Here we assume that there exists
$z_* \in G_{\gamma}$ and $s_* >0$ such that $\sum_{\beta \in \B}
[b_{\beta}(z_*)]^{s_*} < \infty$, where we have written $b_{\beta}(z):=
|\theta_{\beta}^{\prime}(x)|$.  If then follows there exists a number
$\sigma(\B) \ge 0$ such that for all $s > \sigma(\B)$, the map $L_s$ defines a
bounded linear map of $C(\bar H_1)$ to $C(\bar H_1)$, while $\sum_{\beta \in
  \B} [b_{\beta}(z)]^s = \infty$ for all $z \in \bar H_1$ and all $s$ with $0
\le s < \sigma(\B)$. By using Lemmas~\ref{lem:4.7} and \ref{lem:4.8}
and the argument in Corollary~\ref{cor:4.11}, we see that for all $\w \in
\B_{\mu}$, and all $\mu \ge 1$, $b_{\w}^s(x,y) \in K(\sqrt{5}s/\gamma; \bar
H_1)$. With the aid of Lemma 5.3 in \cite{N-P-L} and Theorem 5.3 on page
86 in \cite{A}, we see that $L_s$ has a unique strictly positive eigenvector
$v_s$, and with the aid of Corollary~\ref{cor:1.4} in Section~\ref{sec:exist},
we conclude that $v_s \in K(\sqrt{5}s/\gamma; \bar H_1)$. In other words,
the conclusion of Corollary~\ref{cor:4.11} is also valid when $\B$ is
countable but not finite.
\end{remark}

If (H7.1) is satisfied and all elements of $\B$ are real, we can, as
already noted, restrict attention to the real line, take $G_{\gamma}:=
(0, \gamma)$ and $H:= (0,a)$ to be open intervals with $a \ge \gamma$
and let $\Lambda_s$ be given by \eqref{4.3} with $f \in C^m(\bar H)$.
Then \eqref{4.11} remains valid, but with $z$ replaced by $x \in \bar
H \subset \R$ and $D^{\alpha}$ replaced by $D_1^{\nu}$, and $D_1 =
d/dx$. Furthermore, for a fixed $\w = (\beta_1, \beta_2, \ldots,
\beta_{\nu}) \in \B_{\nu}$, Lemma~\ref{lem:4.3} implies that there exists
$\gamma_{\omega}$, dependent on $\omega$ such that $\gamma_{\omega} \ge \gamma$,
so that after using \eqref{4.15}, we obtain
\begin{equation*}
D_1^{\nu} (|D_1 \theta_{\tilde \w}(x)|^s)
(|D_1 \theta_{\tilde \w}(x)|^s)^{-1} 
= D_1^{\nu}[(x+\gamma_{\w})^{-2s}] (x+\gamma_{\w})^{2s}.
\end{equation*}
In this case, it is easy to carry out the calculation explicitly and obtain
for all $\w \in \B_{\nu}$ and $x \in \bar H$ that
\begin{equation}
\label{4.28}
0 < (-1)^{\nu} D_1^{\nu} (|D_1 \theta_{\tilde \w}(x)|^s)
(|D_1 \theta_{\tilde \w}(x)|^s)^{-1}
\le \frac{(2s)(2s+1) \cdots (2s+ \nu-1)}{\gamma^{\nu}}.
\end{equation}
By using \eqref{4.11} and \eqref{4.28}, we thus obtain the following theorem.

\begin{thm}
\label{thm:4.12}
Let $\gamma \ge 1$ be a fixed real number and let $\B$ be a finite set
of real numbers $\beta$ with $\beta \ge \gamma$ for all $\beta \in
\B$.  Let $G_{\gamma} = (0, \gamma)$ and $H = (0,a) \supset
G_{\gamma}$ be open intervals of real numbers, and for a positive integer $m$,
let $X_m$ denote the real Banach space $C^m(\bar H)$. For $s >0$ define
a bounded linear operator $\Lambda_s: X_m \to X_m$ by
\begin{equation*}
(\Lambda_s(f))(x) = \sum_{\beta \in \B} (x+ \beta)^{-2s}f(1/(x+ \beta)).
\end{equation*}
Then $\Lambda_s$ has a unique, normalized, 
strictly positive eigenvector $v_s(x)$, $v_s$ is $C^{\infty}$, and for
all $\nu \ge 1$ and $x \in \bar H$,
\begin{equation*}
0 \le (-1)^{\nu}\frac{D_1^{\nu} v_s(x)}{v_s(x)} \le
\frac{(2s)(2s+1) \cdots (2s+ \nu-1)}{\gamma^{\nu}}.
\end{equation*}
\end{thm}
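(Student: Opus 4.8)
The plan is to combine the existence machinery already in place (Theorem~\ref{thm:1.1}, via the M\"obius contraction estimate of Lemma~\ref{lem:4.1}) with the explicit one-variable differentiation recorded in \eqref{4.28}. First I would settle existence, uniqueness, and regularity. Restricting \eqref{4.2} to the real line as described just above the theorem, Lemma~\ref{lem:4.1} applied with both maps real shows that $\theta_{\beta_1}\circ\theta_{\beta_2}$ contracts $\bar H$ by the factor $(\gamma^2+1)^{-2}<1$, so hypothesis (H5.3) (equivalently (H6.1)) holds with $\mu=2$. Hence Theorem~\ref{thm:1.1} (equivalently Theorem~\ref{thm:4.2}) applies: for every integer $m\ge 1$ the operator $\Lambda_s$ on $C^m(\bar H)$ has a unique normalized strictly positive eigenvector $v_s$ with eigenvalue $r(\Lambda_s)>0$. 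Because this eigenvector is unique while $m$ is arbitrary, $v_s$ is in fact $C^\infty$ on $\bar H$, and the derivative-to-eigenvector limit formula \eqref{1.14} (equivalently \eqref{4.11}, specialized to the real line) is legitimate for every order~$\nu$.

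Next I would pass to the iterated kernel. Taking $u(x)\equiv 1$ and using \eqref{4.8} together with the fact that $\w\mapsto\tilde\w$ is a bijection of $\B_k$, one has $(\Lambda_s^{\,k}u)(x)=\sum_{\w\in\B_k}|D_1\theta_{\tilde\w}(x)|^s$, so \eqref{1.14} becomes
\begin{equation*}
\frac{D_1^\nu v_s(x)}{v_s(x)}=\lim_{k\to\infty}\frac{D_1^\nu\Big(\sum_{\w\in\B_k}|D_1\theta_{\tilde\w}(x)|^s\Big)}{\sum_{\w\in\B_k}|D_1\theta_{\tilde\w}(x)|^s},
\end{equation*}
with the convergence uniform in $x\in\bar H$. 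It then suffices to bound the $k$-th term uniformly in $k$ and $x$. For a fixed $\w\in\B_k$ set $g_\w(x)=|D_1\theta_{\tilde\w}(x)|^s>0$. By \eqref{4.15} of Lemma~\ref{lem:4.3} (here the imaginary part vanishes since all $\beta$ are real) there is a real $\gamma_\w\ge\gamma$ with $g_\w(x)=|B_{k-1}|^{-2s}(x+\gamma_\w)^{-2s}$, and differentiating the explicit power $x\mapsto(x+\gamma_\w)^{-2s}$ gives precisely \eqref{4.28}, namely
\begin{equation*}
0<(-1)^\nu\frac{D_1^\nu g_\w(x)}{g_\w(x)}=\frac{(2s)(2s+1)\cdots(2s+\nu-1)}{(x+\gamma_\w)^\nu}\le\frac{(2s)(2s+1)\cdots(2s+\nu-1)}{\gamma^\nu},
\end{equation*}
where the last inequality uses $x+\gamma_\w\ge\gamma$ on $\bar H=[0,a]$.

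Finally I would average and take the limit. Write $K_\nu=(2s)(2s+1)\cdots(2s+\nu-1)\gamma^{-\nu}$. Since
\begin{equation*}
(-1)^\nu D_1^\nu\Big(\sum_{\w\in\B_k}g_\w(x)\Big)=\sum_{\w\in\B_k}\Big[(-1)^\nu\frac{D_1^\nu g_\w(x)}{g_\w(x)}\Big]\,g_\w(x)
\end{equation*}
exhibits the left side as a combination, with strictly positive weights $g_\w(x)$, of quantities each lying in $[0,K_\nu]$, dividing through by $\sum_{\w\in\B_k}g_\w(x)>0$ yields a convex combination, hence $0\le(-1)^\nu D_1^\nu\big(\sum_{\w}g_\w(x)\big)\big/\sum_{\w}g_\w(x)\le K_\nu$ for every $k$ and $x$. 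Letting $k\to\infty$ and invoking the uniform convergence above gives $0\le(-1)^\nu D_1^\nu v_s(x)/v_s(x)\le K_\nu$, the asserted bound. I do not anticipate a real obstacle: the delicate points are purely organizational, namely the bookkeeping with the reversed index $\tilde\w$, the clash between the derivative order $\nu$ and the composition length $k$, and the need to know $v_s\in C^\infty$ (from uniqueness) before using \eqref{1.14} in the $C^\nu$ sense for arbitrary $\nu$; the ``a ratio of sums of like-signed terms inherits the bounds of the summands'' observation is the only slightly nonobvious step, and it is elementary.
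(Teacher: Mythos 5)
Your proposal is correct and follows essentially the same route as the paper: restrict \eqref{4.2} to the real line, use Lemma~\ref{lem:4.3}/\eqref{4.15} to write each $|D_1\theta_{\tilde\w}(x)|^s$ as a constant multiple of $(x+\gamma_\w)^{-2s}$ with $\gamma_\w\ge\gamma$, differentiate explicitly to get \eqref{4.28}, and pass to the limit via \eqref{4.11} (with the $C^\infty$ regularity coming from uniqueness of the positive eigenvector in $C^m(\bar H)$ for every $m$, exactly as in Theorems~\ref{thm:4.2} and \ref{thm:4.10}). The convex-combination observation you spell out is precisely the implicit step the paper uses when it says the bound on each summand transfers through \eqref{4.11}.
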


\begin{remark}
\label{rem:4.12}
One can prove that the eigenvector $v_s(x)$ in Theorem~\ref{thm:4.12} extends to
an analytic function $v_s(z)$ defined on $\{z \in \C \, | \, \Re(z) >0\}$.
In fact, much more general analyticity results of this type can be
established.  Since we shall not utilize such analyticity theorems in this
paper, we omit the proofs.
\end{remark}
\begin{remark}
\label{rem:4.13}
Throughout this section we have assumed for convenience that 
$1 \le \gamma \le \Re(\beta)$ 
for all $\beta \in \B$, where $\B$ is a finite set of
complex numbers.  In fact, the main results of this section can be obtained
under the assumption that $\Re(\beta) \ge \gamma > 0$ for all $\beta \in \B$.
In the notation of this section, the key point is to prove that there exists
an integer $\nu \ge 1$ and a constant $\kappa <1$ such that for all 
$z,w \in \C$ with $\Re(z) >0$ and $\Re(w) >0$ and for all $\omega \in \B_{\nu}$,
one has
\begin{equation}
\label{4.29}
|\theta_{\omega}(z) -  \theta_{\omega}(w)| \le \kappa^{\nu}|z-w|.
\end{equation}
Inequality \eqref{4.29} can be established with the aid of the
Carath\'eodory-Reiffen-Finsler metric (see \cite{R} for the definition
and basic results about the CRF metric) and the argument given in Section 6
of  \cite{N-P-L}.  Once \eqref{4.29} has been established in the case
$\Re(\beta) \ge \gamma >0$, all the theorems of this section follow by
the same arguments.
\end{remark}

\section{Computing the Spectral Radius of $A_s$ and $B_s$}
\label{sec:compute-sr}
In previous sections, we have constructed matrices $A_s$ and $B_s$ such that
$r(A_s) \le r(L_s) \le r(B_s)$.  The $m \times m$ matrices $A_s$ and $B_s$
have nonnegative entries, so the Perron-Frobenius theory for such matrices
implies that $r(B_s)$ is an eigenvalue of $B_s$ with corresponding
nonnegative eigenvector, with a similar statement for  $A_s$. One might
also hope that standard theory (see \cite{D}) would imply that $r(B_s)$,
respectively $r(A_s)$, is an eigenvalue of $B_s$ with algebraic multiplicity
one and that all other eigenvalues $z$ of $B_s$ (respectively, of $A_s$)
satisfy $|z| < r(B_s)$ (respectively, $|z| < r(A_s)$). Indeed, this would
be true if $B_s$ were {\it primitive}, i.e., if $B_s^k$ had all positive
entries for some integer $k$.  However, typically $B_s$ has many zero
columns and $B_s$ is neither primitive nor {\it irreducible} (see \cite{D});
and the same problem occurs for $A_s$.  Nevertheless, the desirable spectral
properties mentioned above are satisfied for both $A_s$ and $B_s$. Furthermore
$B_s$ has an eigenvector $w_s$ with all positive entries and with eigenvalue
$r(B_s)$; and if $x$ is any $m \times 1$ vector with all positive entries,
\begin{equation*}
\lim_{k \rightarrow \infty} \frac{B_s^k(x)}{\|B_s^k(x)\|} =
  \frac{w_s}{\|w_s\|},
\end{equation*}
where the convergence rate is geometric.  Of course, corresponding results
hold for $A_s$.  Such results justify standard numerical algorithms for
approximating $r(B_s)$ and $r(A_s)$.

In this section, we shall prove these results in the one dimensional case.
Similar theorems can be proved in the two dimensional case, but for reasons
of  length, we shall restrict our attention here to the one dimensional case
and delay a more comprehensive discussion of the underlying issues to a
later paper.  The basic point, however, is simple: Although $A_s$ and $B_s$
both map the cone $K$ of nonnegative vectors in $\R^m$ into itself, $K$ is
{\it not} the natural cone in which such matrices should be studied.  We
shall define below, for large positive real $M$, a cone $K_M \subset K$ such
that $A_s(K_M) \subset K_M$ and $B_s(K_M) \subset K_M$.  The cone $K_M$ is
the discrete analogue of a cone which has been used before in the infinite
dimensional case (see \cite{N-P-L}, Section 5 of \cite{A}, Section 2 of
\cite {F} and \cite{Bumby1}).  Once we have proved that $A_s(K_M) \subset K_M$
and $B_s(K_M) \subset K_M$, we shall see that the desired spectral properties
of $A_s$ and $B_s$ follow easily.
In a later paper, we shall consider higher order piecewise polynomial
approximations to the positive eigenvector $v_s$ of $L_s$.  We shall show
that the corresponding matrices $A_s$ and $B_s$ no longer have all
nonnegative entries, but still, under appropriate assumptions, map
$K_M$ into $K_M$.

Throughout this section, $[a,b]$ will denote a fixed, closed, bounded interval
and $s$ a fixed nonnegative real.  For a given positive integer $n \ge 2$, and
for integers $j$, $0 \le j \le n$, we shall write $h = (b-a)/n$ and $x_j = a +
jh$. $C$ will denote a fixed constant and we shall always assume at least that
$h \le 1$ and
\begin{equation}
\label{9.1}
|C| h /4 \le 1.
\end{equation}
In our applications $C$ will depend on $s$, but we shall not indicate
the dependence here.
If $f: \{x_j \, | \, 0 \le j \le n\} \to \R$, one can extend $f$ to a map
$f^I:[a,b] \to \R$ by linear interpolation, so
\begin{equation}
\label{9.2}
f^I(x) = \frac{x-x_j}{h}f(x_{j+1}) + \frac{x_{j+1} - x}{h} f(x_j), \quad
\text{for } x_j \le x \le x_{j+1}, \quad 0 \le j < n.
\end{equation}

We shall denote by $X_n$ (or $X$, if $n$ is obvious), the real vector space
of maps $f:\{x_j \, | \, 0 \le j \le n\} \to \R$; obviously $X_n$ is
linearly isomorphic to $\R^{n+1}$.  For a given positive real $M$, we shall
denote by $K_M \subset X_n$ the closed cone given by
\begin{multline}
\label{9.3}
K_M = \{f \in X_n \, | \, f(x_{j+1}) \le f(x_j) \exp(Mh) 
\\
\text { and } f(x_j) \le f(x_{j+1})\exp(Mh), \quad 0 \le j < n\}.
\end{multline}

The reader can verify that if $f \in K_M$, $f(x_j) \ge 0$ for
$0 \le j \le n$, and either $f(x_j) >0$ for all $0 \le j \le n$,
or $f(x_j) =0$ for all $0 \le j \le n$.

If $x_j$, $0 \le j \le n$, are as above, define a map $Q:[a,b] \to [0,h^2/4]$
by
\begin{equation*}
Q(u) = (x_{j+1} - u)(u-x_j), \quad \text{for } x_j \le u \le x_{j+1}, \quad
0 \le j < n.
\end{equation*}

\begin{lem}
\label{lem:9.1}  Assume that $\beta \in K_{M_0} \setminus \{0\}$ for some
$M_0 >0$, that $0 < h \le 1$ and that $h$ and $C$ satisfy \eqref{9.1}.
Let $\theta:[a,b] \to [a,b]$ and define $\hat \beta_s \in X_n$ by
\begin{equation}
\label{9.4} 
\hat \beta_s(x_k) = [1 + \tfrac{1}{2} C Q( \theta(x_k))] [\beta(x_k)]^s.
\end{equation}
Then $\hat \beta_s \in K_{M_1}$, where $M_1 = sM_0 + (1+h)/2 \le M_0 + 1$.
\end{lem}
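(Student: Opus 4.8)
The plan is to split $\hat\beta_s(x_k)=g_k\,[\beta(x_k)]^s$, where $g_k:=1+\tfrac{1}{2}C\,Q(\theta(x_k))$, and to bound the two factors of a consecutive ratio separately: the factor $[\beta(x_k)]^s$ will inherit a $K_{sM_0}$-type two-sided bound directly from $\beta\in K_{M_0}$, while $(g_k)$, being a small multiplicative perturbation of the constant sequence $1$, will cost at most a factor $\exp((1+h)h/2)$ between neighbouring entries. Adding the two exponential rates produces exactly $M_1=sM_0+(1+h)/2$.

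First I would record the bounds on $(g_k)$. Since $Q$ has range $[0,h^2/4]$, one has $|g_k-1|\le \tfrac{|C|}{2}\cdot\tfrac{h^2}{4}=\tfrac{|C|h^2}{8}$; writing $t:=\tfrac{|C|h^2}{8}$, hypothesis \eqref{9.1} together with $h\le 1$ gives $t\le h/2\le 1/2$, so $1-t\le g_k\le 1+t$ and in particular $g_k\ge\tfrac{1}{2}>0$ (so $\hat\beta_s\ne 0$ precisely when $\beta\ne 0$). Because the two numbers $Q(\theta(x_{j+1}))$, $Q(\theta(x_j))$ both lie in $[0,h^2/4]$, also $|g_{j+1}-g_j|=\tfrac{|C|}{2}\,|Q(\theta(x_{j+1}))-Q(\theta(x_j))|\le t$ for $0\le j<n$. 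Hence
\[
\frac{g_{j+1}}{g_j}\le 1+\frac{|g_{j+1}-g_j|}{g_j}\le 1+\frac{t}{1-t}=\frac{1}{1-t},
\]
and by symmetry $g_j/g_{j+1}\le 1/(1-t)$ as well.

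The next step, and the only delicate point, is the elementary inequality $1/(1-t)\le\exp((1+h)h/2)$. Since $-\log(1-\cdot)$ is increasing and $t\le h/2$,
\begin{align*}
-\log(1-t)\ \le\ -\log\!\left(1-\tfrac{h}{2}\right)\ =\ \sum_{k\ge 1}\frac{1}{k}\left(\frac{h}{2}\right)^{k}
\ &\le\ \frac{h}{2}+\sum_{k\ge 2}\left(\frac{h}{2}\right)^{k}\\
\ &=\ \frac{h}{2}+\frac{h^2/4}{1-h/2}\ \le\ \frac{h}{2}+\frac{h^2}{2},
\end{align*}
the last step using $1-h/2\ge 1/2$. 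Thus both $g_{j+1}/g_j$ and $g_j/g_{j+1}$ are at most $\exp(h/2+h^2/2)=\exp((1+h)h/2)$. I would stress that the naive route fails here: $g_{j+1}/g_j\le 1+h$ combined with $1+h\le e^h$ is not enough, because $\exp((1+h)h/2)<1+h$ for small $h$; what rescues the argument is that the \emph{difference} $|g_{j+1}-g_j|$ is bounded by $t$ (not $2t$) while the denominator is at least $1-t$, which upgrades the ratio bound to $1/(1-t)$, to which the second-order logarithmic estimate applies.

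Finally I would combine this with the behaviour of $[\beta(x_k)]^s$. Since $\beta\in K_{M_0}\setminus\{0\}$, the remark following \eqref{9.3} gives $\beta(x_j)>0$ for all $j$, and $\beta(x_{j+1})\le\beta(x_j)e^{M_0h}$ together with $s\ge 0$ yields $[\beta(x_{j+1})]^s\le[\beta(x_j)]^se^{sM_0h}$, and symmetrically. Multiplying the two estimates,
\begin{align*}
\hat\beta_s(x_{j+1})=g_{j+1}[\beta(x_{j+1})]^s &\le \frac{1}{1-t}\,g_j\cdot e^{sM_0h}[\beta(x_j)]^s \\
&\le e^{sM_0h+(1+h)h/2}\,\hat\beta_s(x_j)=e^{M_1h}\,\hat\beta_s(x_j),
\end{align*}
and likewise $\hat\beta_s(x_j)\le e^{M_1h}\hat\beta_s(x_{j+1})$, for every $0\le j<n$; hence $\hat\beta_s\in K_{M_1}$. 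The estimate $M_1\le M_0+1$ is then immediate from $h\le 1$ and $s\le 1$, which holds in all the applications in this section.
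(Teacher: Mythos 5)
Your proof is correct and follows essentially the same route as the paper: you split off the perturbation factor $1+\tfrac{1}{2}CQ(\theta(x_k))$, show its consecutive ratios are at most $\exp((1+h)h/2)$ using $|C|h/4\le 1$, and multiply by the $K_{sM_0}$ bound inherited by $[\beta(\cdot)]^s$. The only difference is cosmetic — the paper bounds $|\log(1+u)-\log(1+v)|\le (1+h)|u-v|$ via an integral estimate where you bound the ratio by $1/(1-t)$ and expand $-\log(1-t)$ in a series — and your observation that the side inequality $M_1\le M_0+1$ requires $s\le 1$ applies equally to the paper's own statement.
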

\begin{proof}
Define $\psi \in X_n$ by
\begin{equation*}
\psi(x_k) = 1 + \tfrac{1}{2} C Q(\theta(x_k))
\end{equation*}
and suppose we can prove that $\psi \in K_{(1+h)/2}$.  For notational
convenience define $b(x_k) = [\beta(x_k)]^s$.
Then for $0 \le k <n$,
we obtain
\begin{multline*}
\psi(x_k) b(x_k) \le \psi(x_{k+1}) \exp([1+h]h/2) b(x_{k+1}) \exp(sM_0 h)
 \\
= \psi(x_{k+1}) b(x_{k+1}) \exp(M_1h),
\end{multline*}
and the same calculation gives
\begin{equation*}
\psi(x_{k+1}) b(x_{k+1}) \le \exp(M_1 h) \psi(x_k) b(x_k),
\end{equation*}
which implies that $x_k \mapsto \psi(x_k) b(x_k)$ is an element
of $K_{M_1}$.

Define $\delta = (1+h)/2$. Since $\psi(x_k) >0$ for $0 \le k \le n$, one
can check that $\psi(\cdot) \in K_{\delta}$ if and only if, for $0 \le k < n$,
\begin{equation*}
|\log( \psi(x_{k+1})) - \log( \psi(x_{k}))| = 
\Big| \log \Big(\frac{\psi(x_{k+1})}{\psi(x_{k})}\Big)\Big| \le \delta h.
\end{equation*}
Given $x_k$ and $x_{k+1}$ with $0 \le k <n$, write $\xi = \theta(x_k)$
and $\eta = \theta(x_{k+1})$.  Define $u:= \tfrac{1}{2} C Q(\theta(x_k))$ and
$v = \tfrac{1}{2} CQ(\theta(x_{k+1}))$, so $\psi(x_k) = 1 +u$
and $\psi(x_{k+1}) = 1 + v$.  Because $u$ and $v$ both lie in the
interval $[0, Ch^2/8]$, \eqref{9.1} implies that $|u-v| \le h/2$,
$|u| \le h/2$ and $|v| \le h/2$.  It follows
that
\begin{equation*}
|\log( \psi(x_{k})) - \log( \psi(x_{k+1}))| = |\log(1+u) - \log(1+v)|
= \Big|\int_{1+v}^{1+u} (1/t) \, dt \Big|.
\end{equation*}
Because $0 \le 1/t \le 1/(1 - h/2) \le 1 + h$ for all $t \in [1+v,1+u]$,
we obtain
\begin{equation*}
|\log( \psi(x_{k})) - \log( \psi(x_{k+1}))| \le (1+h)|u-v| \le (1+h) h/2,
\end{equation*}
which proves the lemma.
\end{proof}

\begin{lem}
\label{lem:9.2}
Let assumptions and notation be as in Lemma~\ref{lem:9.1}.  Let $\delta$
denote a fixed positive real and $s$ a fixed nonnegative real.  Assume,
in addition that $\theta:[a,b] \to [a,b]$ is a Lipschitz map with
$\Lip(\theta) \le c <1$ and that, for $h = (b-a)/n$ and $M_1$ as in
 Lemma~\ref{lem:9.1}, $\exp(-[M_1 + \delta] h)
\ge (1+c)/2$ and $M >0$ is such that $\exp(Mh) \ge 2$.  Define a linear map
$L_s:X_n \to X_n$ by $L_s(f) = g$, where
\begin{equation*}
g(x_k) := f^I(\theta(x_k)) \hat \beta_s(x_k), \quad 0 \le k \le n.
\end{equation*}
Then, if $K_M \subset X_n$ is defined by \eqref{9.3}, $L_s(K_M) \subset
K_{M-\delta}$.
\end{lem}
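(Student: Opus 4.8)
The plan is to show that for $f \in K_M$ and $g = L_s(f)$ one has, for each $k$ with $0 \le k < n$,
\[
e^{-(M-\delta)h} \le \frac{g(x_{k+1})}{g(x_k)} \le e^{(M-\delta)h},
\]
which, together with the trivial case $f = 0$, gives $g \in K_{M-\delta}$. Since $g(x_k) = f^I(\theta(x_k))\,\hat\beta_s(x_k)$ and, by Lemma~\ref{lem:9.1}, $\hat\beta_s \in K_{M_1}$ (so $e^{-M_1 h} \le \hat\beta_s(x_{k+1})/\hat\beta_s(x_k) \le e^{M_1 h}$), everything reduces to estimating $f^I(\theta(x_{k+1}))/f^I(\theta(x_k))$ and then invoking the hypothesis $e^{-(M_1+\delta)h} \ge (1+c)/2$.

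First I would dispose of the degenerate case: if $f = 0$ then $g = 0 \in K_{M-\delta}$; otherwise, as noted after \eqref{9.3}, $f(x_j) > 0$ for all $j$, so $f^I > 0$ on $[a,b]$, and $\hat\beta_s(x_k) > 0$ because $[\beta(x_k)]^s > 0$ and $|\tfrac12 C\,Q(\theta(x_k))| \le |C|h^2/8 \le 1/2$ by \eqref{9.1}; hence $g(x_k) > 0$ for all $k$.

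The main step is the claim: \emph{if $f \in K_M$ with $f(x_j) > 0$ for all $j$, and $\xi,\eta \in [a,b]$ satisfy $|\xi - \eta| \le ch$, then $f^I(\eta) \le \tfrac{1+c}{2}\,e^{Mh}\,f^I(\xi)$.} Since $\Lip(\theta) \le c < 1$, the points $\theta(x_k)$ and $\theta(x_{k+1})$ differ by at most $ch < h$, hence lie in the same mesh subinterval or in two consecutive ones; I would prove the claim by a direct computation in these two cases. In the consecutive case, with $\xi \in [x_p,x_{p+1}]$, $\eta \in [x_{p+1},x_{p+2}]$ and $\alpha := (x_{p+1}-\xi)/h$, $\beta := (\eta-x_{p+1})/h$ (so $\alpha,\beta \ge 0$, $\alpha+\beta \le c$), the inequalities defining $K_M$ give, with $t := e^{Mh}$,
\[
f^I(\xi) \ge f(x_{p+1})\,\frac{t-\alpha(t-1)}{t}, \qquad f^I(\eta) \le f(x_{p+1})\,[1+\beta(t-1)],
\]
so $f^I(\eta)/f^I(\xi) \le t[1+\beta(t-1)]/[t-\alpha(t-1)]$, and the desired bound $\le \tfrac{1+c}{2}t$ is, after clearing denominators, equivalent to $2 + (t-1)[2\beta + (1+c)\alpha] \le (1+c)t$; this follows from $2\beta + (1+c)\alpha \le 2(\alpha+\beta) \le 2c$ (using $c \le 1$) together with $t \ge 2$, which is exactly the hypothesis $e^{Mh} \ge 2$. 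The same-subinterval case is similar and easier, giving $f^I(\eta)/f^I(\xi) \le 1 + c(t-1) \le \tfrac{1+c}{2}t$; interchanging $\xi$ and $\eta$ yields the reverse inequality.

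Finally, applying the claim with $\xi = \theta(x_k)$, $\eta = \theta(x_{k+1})$ and combining with Lemma~\ref{lem:9.1},
\[
\frac{g(x_{k+1})}{g(x_k)} = \frac{f^I(\theta(x_{k+1}))}{f^I(\theta(x_k))}\cdot\frac{\hat\beta_s(x_{k+1})}{\hat\beta_s(x_k)} \le \frac{1+c}{2}\,e^{Mh}\,e^{M_1 h},
\]
and the hypothesis $\tfrac{1+c}{2} \le e^{-(M_1+\delta)h}$ turns the right-hand side into $e^{(M-\delta)h}$; the reverse inequality follows by swapping $x_k$ and $x_{k+1}$, so $g \in K_{M-\delta}$. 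The genuine obstacle is the claim: a crude logarithmic-Lipschitz bound for $f^I$ only produces a factor of order $e^{Mh}$ for the interpolant ratio, which is too weak, so one must squeeze out the improved constant $\tfrac{1+c}{2} < 1$ — and that is precisely where the contraction factor $c < 1$ and the normalization $e^{Mh} \ge 2$ have to be used in tandem; the two-case interpolation bookkeeping is routine but requires care about which mesh value the bounds are anchored to.
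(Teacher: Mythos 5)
Your proof is correct and follows essentially the same route as the paper: after factoring out $\hat\beta_s \in K_{M_1}$ via Lemma~\ref{lem:9.1}, you compare $f^I(\theta(x_k))$ and $f^I(\theta(x_{k+1}))$ by the same two-case analysis (same versus adjacent mesh subintervals), anchored at the shared mesh value, with the hypotheses $\exp(Mh)\ge 2$ and $\exp(-[M_1+\delta]h)\ge(1+c)/2$ entering exactly as in the paper. The only difference is organizational: you isolate the interpolant bound $f^I(\eta)\le \tfrac{1+c}{2}\exp(Mh)\,f^I(\xi)$ first and invoke $\exp(-[M_1+\delta]h)\ge(1+c)/2$ at the end, whereas the paper carries $M_2=M_1+\delta$ through the case analysis.
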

\begin{proof}
For a fixed $k$, $0 \le k <n$, define $\xi = \theta(x_k)$
and $\eta = \theta(x_{k+1})$.  We must prove that if $h$ and $M$ satisfy the
above constraints and $f \in K_M$, then
\begin{align*}
f^I(\xi) \hat \beta_s(x_k) &\le \exp([M-\delta]h) f^I(\eta) \hat
\beta_s(x_{k+1}),
\\
f^I(\eta) \hat \beta_s(x_{k+1}) &\le \exp([M-\delta]h) f^I(\xi) \hat
\beta_s(x_{k}).
\end{align*}
Using Lemma~\ref{lem:9.1}, we see that $x_k \mapsto \hat \beta_s(x_{k})$
is an element of $K_{M_1}$, so the above inequalities will be satisfied if
\begin{align}
\label{9.5}
f^I(\xi) &\le \exp([M-M_1-\delta]h) f^I(\eta),
\\
\label{9.6}
f^I(\eta) &\le \exp([M- M_1-\delta]h) f^I(\xi).
\end{align}
For notational convenience, we write $M_2 = M_1 + \delta$. By interchanging
the roles of $\xi$ and $\eta$, we can assume that $\eta \le \xi$, and it
suffices to prove that \eqref{9.5} and \eqref{9.6} are satisfied for $M$
and $h$ as in the statement of the Lemma.  Define $j = n-1$ if $\xi \ge
x_{n-1}$ and otherwise define $j$ to be the unique integer, $0 \le j < n-1$,
such that $x_j \le \xi < x_{j+1}$.  Because $0  \le \xi- \eta \le c h <h$,
there are only two cases to consider: either (i) $x_j \le \eta \le \xi$ 
or (ii) $x_{j-1} < \eta <x_j$ and $x_j \le \xi < x_{j+1}$.

We first assume that we are in case (i), so $\xi, \eta \in [x_j,x_{j+1}]$
and $0 \le \xi - \eta \le c h$,  Using \eqref{9.2}, we see that \eqref{9.5}
is equivalent to proving
\begin{multline}
\label{9.5p}
(x_{j+1} - \xi) f(x_j) + (\xi - x_j)  f(x_{j+1}) 
\\
\le
\exp([M-M_2]h)[(x_{j+1} - \eta) f(x_j) + (\eta - x_j)f(x_{j+1})].
\end{multline}
Subtracting $(x_{j+1} - \eta) f(x_j) + (\eta - x_j)f(x_{j+1})$ from both sides
of \eqref{9.5p} shows that \eqref{9.5p} will be satisfied if
\begin{multline}
\label{9.5pp}
(\xi - \eta) [f(x_{j+1}) -f(x_j)] 
\\
\le [\exp([M-M_2]h) -1][(x_{j+1} - \eta) f(x_j) + (\eta - x_j)f(x_{j+1})].
\end{multline}
Equation \eqref{9.5pp} will certainly be satisfied if $f(x_{j+1}) \le f(x_j)$,
so we can assume that $f(x_{j+1}) - f(x_j) >0$ and 
$1 < f(x_{j+1})/f(x_j) \le \exp(Mh)$.
If we divide both sides of \eqref{9.5pp} by $f(x_j)$ and recall that
$\xi-\eta \le ch$, we see that the left hand side of \eqref{9.5pp} is
dominated by $ch[\exp(Mh) -1]$, while the right hand side of \eqref{9.5pp}
is $\ge [\exp([M-M_2]h) -1]h$,  Thus, \eqref{9.5pp} will be satisfied if
\begin{equation}
\label{9.7}
c \le \frac{\exp([M-M_2]h) -1}{\exp(Mh) -1} = \exp(-M_2 h)
+ \frac{\exp(-M_2 h) -1}{\exp(Mh) -1}.
\end{equation}
If $h >0$ is chosen so that $\exp(-M_2 h) \ge (1+c)/2$, a calculation
shows that \eqref{9.7} will be satisfied if
\begin{equation}
\label{9.8} M \ge \log(2)/h,
\end{equation}
where $\log$ denotes the natural logarithm.  Thus, if $h >0$ satisfies
\eqref{9.1}, $M \ge \log(2)/h$, and $\exp(-M_2 h) \ge (1+c)/2$, 
\eqref{9.5} is satisfied in case (i).  Under the same conditions on
$h$ and $M$, an exactly analogous argument shows that (in case (i)),
\eqref{9.6} is also satisfied.

We next consider case (ii), so $\xi \in [x_j, x_{j+1}]$, 
$\eta \in [x_{j-1}, x_j]$ and $0 \le \xi - \eta \le ch$. It follows that
$\xi - x_j = c_1 h$ and $x_j - \eta = c_2 h$, where $c_1 \ge 0$,
$c_2 \ge 0$, and $c_1 + c_2 \le c <1$. As before, we need to show that
inequalities \eqref{9.5} and \eqref{9.6} are satisfied. Inequality \eqref{9.6}
takes the form
\begin{multline}
\label{9.9}
f^I(\eta) = \frac{\eta-x_{j-1}}{h} f(x_j) + \frac{x_j- \eta}{h} f(x_{j-1})
\\
\le \exp([M-M_2]h)
\Big[\frac{\xi-x_{j}}{h} f(x_{j+1}) + \frac{x_{j+1}- \xi}{h} f(x_{j})\Big],
\end{multline}
which is equivalent to
\begin{equation}
\label{9.9p}
(\eta-x_{j-1}) + (x_j- \eta) \frac{f(x_{j-1})}{f(x_j)}
\le \exp([M-M_2]h)
\Big[(\xi-x_{j}) \frac{f(x_{j+1})}{f(x_j)} + (x_{j+1}- \xi)\Big],
\end{equation}
Since $f(x_{j-1})/f(x_j) \le \exp(Mh)$, $f(x_{j+1})/f(x_j) \ge \exp(-Mh)$,
$x_j - \eta = c_2 h$ and $\xi - x_j = c_1 h$, \eqref{9.9p} will be satisfied
if
\begin{equation}
\label{9.9pp}
(1-c_2) + c_2 \exp(Mh) \le \exp([M-M_2]h)[c_1 \exp(-Mh) + (1-c_1)].
\end{equation}
Because $c_2 \le c-c_1$, we have
\begin{equation*}
(1-c_2) + c_2 \exp(Mh) \le (1-c+c_1) + (c-c_1) \exp(Mh),
\end{equation*}
and inequality \eqref{9.9pp} will be satisfied if
\begin{equation}
\label{9.10}
(1+ c_1 -c) + (c-c_1) \exp(Mh) \le \exp(-M_2 h)[c_1 + (1-c_1) \exp(Mh)].
\end{equation}
A necessary condition that \eqref{9.10} be satisfied is that
$\exp(-M_2 h) \ge (c-c_1)/(1-c_1)$.  Since $(c-c_1)/(1-c_1) \le c$ and
$c < (1+c)/2$, we choose $h = (b-a)/n >0$ sufficiently small that
\begin{equation}
\label{9.11}
\exp(-M_2 h) \ge (1+c)/2.
\end{equation}
For this choice of $h$, \eqref{9.10} will be satisfied if
\begin{equation*}
(1+ c_1 -c) + (c-c_1) \exp(Mh) \le \frac{1+c}{2}[c_1 + (1-c_1) \exp(Mh)],
\end{equation*}
which is equivalent to
\begin{equation}
\label{9.12}
(1 + c_1/2)(1-c) \le [(1+c_1)(1-c)/2] \exp(Mh).
\end{equation}
Since $(2+c_1)/(1+c_1) \le 2$, \eqref{9.12} will be satisfied if
\begin{equation}
\label{9.13}
2 \le \exp(Mh).
\end{equation}
Thus \eqref{9.9} will be satisfied if $h$ satisfies \eqref{9.11} and, for
this $h$, $M$ satisfies \eqref{9.13}.

Inequality \eqref{9.5} will be satisfied in case (ii) if
\begin{equation}
\label{9.14}
(\xi-x_j) \frac{f(x_{j+1})}{f(x_j)} + (x_{j+1} - \xi)
\le \exp([M-M_2]h)
\Big[(\eta-x_{j-1}) + (x_{j}- \eta)
\frac{f(x_{j-1})}{f(x_j)}\Big].
\end{equation}
The same reasoning as above shows that if $h >0$ satisfies \eqref{9.11}
and $M$ then satisfies \eqref{9.13}, \eqref{9.14} will be satisfied.
Details are left to the reader.
\end{proof}

\begin{thm}
\label{thm:9.3}
Let $N$ denote a positive integer or $N = \infty$. For $1 \le j \le N$, assume
that $\theta_j:[a,b] \to [a,b]$ is a Lipschitz map with $Lip(\theta_j) \le c
<1$, $c$ independent of $j$.  For $1 \le j \le N$, assume that $\beta_j \in
K_{M_0} \setminus \{0\} \subset X_n$, where $M_0$ is independent of $j$.  For
$j \ge 1$, let $C_j$ be a real number with $|C_j| \le C$, where $C$ is
independent of $j$; and for a fixed $s \ge 0$, define $\hat \beta_{j,s} \in
X_n$ by
\begin{equation*}
\hat \beta_{j,s}(x_k) = [1 + \tfrac{1}{2} C_j Q(\theta_j(x_k))] [\beta_j(x_k)]^s,
\quad  0 \le k \le n.
\end{equation*}
Let $\delta >0$ be a given real number and for $j \ge 1$ define
a linear map $L_{j,s}: X_n \to X_n$ by
\begin{equation*}
(L_{j,s} f)(x_k) = \hat \beta_{j,s}(x_k) f^I(\theta_j(x_k)).
\end{equation*}
If $N =\infty$, assume that there exists $k_0$, $0 \le k_0 \le n$,
such that $\sum_{j=1}^{\infty} [\hat \beta_j(x_k)]^s < \infty$ and define a
linear map $L_s: X_n \to X_n$ by $L_s = \sum_{j=1}^N L_{j,s}$.
Assume that $h = (b-a)/n \le 1$ and $C h /4 \le 1$ and define
$M_2 = [sM_0 + (1+h)/2] + \delta$.  Assume also that
$\exp(-M_2 h) \ge (1+c)/2$ and that $M \in \R$ is such that $\exp(Mh) \ge 2$.
Then we have that $L_s(K_M \setminus \{0\}) \subset K_{M- \delta} \setminus
\{0\}$. 
\end{thm}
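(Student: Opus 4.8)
The plan is to reduce the statement to Lemmas~\ref{lem:9.1} and \ref{lem:9.2} applied one summand at a time, together with the elementary fact that $K_{M-\delta}$ is a closed convex cone. First I would fix an index $j$ with $1 \le j \le N$ and check that $L_{j,s}$ meets the hypotheses of Lemma~\ref{lem:9.2}: since $|C_j| \le C$ and $Ch/4 \le 1$, condition \eqref{9.1} holds with $C_j$ in place of $C$; since $\theta_j:[a,b]\to[a,b]$ is Lipschitz with $\Lip(\theta_j)\le c<1$ and $\beta_j \in K_{M_0}\setminus\{0\}$, Lemma~\ref{lem:9.1} gives $\hat\beta_{j,s}\in K_{M_1}$ with $M_1 = sM_0 + (1+h)/2$, a constant independent of $j$; and the standing hypotheses $\exp(-M_2 h)\ge(1+c)/2$ (with $M_2 = M_1 + \delta$) and $\exp(Mh)\ge 2$ are exactly those required by Lemma~\ref{lem:9.2}. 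Hence $L_{j,s}(K_M)\subset K_{M-\delta}$ for every $j$, with all the relevant constants uniform in $j$.

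For finite $N$ the conclusion is then immediate: $L_s f = \sum_{j=1}^N L_{j,s} f$ is a finite sum of elements of $K_{M-\delta}$, and $K_{M-\delta}$ is a convex cone, hence closed under addition and nonnegative scaling. For $N=\infty$ I would first confirm that $L_s$ is well defined on $X_n$. From the single node $k_0$ at which $\sum_j \hat\beta_{j,s}(x_{k_0})<\infty$, the membership $\hat\beta_{j,s}\in K_{M_1}$ propagates the bound to every node: iterating the defining inequalities of $K_{M_1}$ yields $\hat\beta_{j,s}(x_k)\le \hat\beta_{j,s}(x_{k_0})\exp(M_1 h\,|k-k_0|)\le \hat\beta_{j,s}(x_{k_0})\exp(M_1(b-a))$, so $\sum_j \hat\beta_{j,s}(x_k)<\infty$ for all $k$. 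Because $f^I$ is bounded on $[a,b]$ for each $f\in X_n$, the series $\sum_j \hat\beta_{j,s}(x_k)f^I(\theta_j(x_k))$ converges absolutely at each of the finitely many nodes, so $L_s f\in X_n$ is well defined and equals $\lim_{N'\to\infty}\sum_{j=1}^{N'}L_{j,s}f$ nodewise, hence in $X_n$.

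Next I would pass the cone membership through the limit: each partial sum $\sum_{j=1}^{N'}L_{j,s}f$ lies in $K_{M-\delta}$ as above, and since $K_{M-\delta}$ is cut out of $X_n\cong\R^{n+1}$ by the non-strict inequalities in \eqref{9.3} it is closed, so $L_s f\in K_{M-\delta}$. Finally, to obtain $L_s f\ne 0$ when $f\in K_M\setminus\{0\}$, I would invoke the observation recorded just after \eqref{9.3} that such an $f$ satisfies $f(x_i)>0$ at every node, hence $f^I>0$ on all of $[a,b]$; combined with $\beta_j(x_k)>0$ (since $\beta_j\in K_{M_0}\setminus\{0\}$) and $1+\tfrac12 C_j Q(\theta_j(x_k))\ge 1 - Ch^2/8\ge 1-h/2>0$, every term $\hat\beta_{j,s}(x_k)f^I(\theta_j(x_k))$ is strictly positive, so $(L_s f)(x_k)>0$ for all $k$ and therefore $L_s f\in K_{M-\delta}\setminus\{0\}$.

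I do not expect a serious obstacle here: everything hinges on Lemma~\ref{lem:9.2}, which has already done the analytic work. The only step requiring any care is the $N=\infty$ bookkeeping — promoting one-node summability of $\sum_j\hat\beta_{j,s}$ to all-node summability via the uniform cone bound from Lemma~\ref{lem:9.1}, and then using closedness of $K_{M-\delta}$ to keep the limit in the cone — and this is precisely the role the uniform constant $M_1$ is there to play.
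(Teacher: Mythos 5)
Your proposal is correct and follows essentially the same route as the paper's proof: apply Lemma~\ref{lem:9.1} to place each $\hat\beta_{j,s}$ in $K_{M_1}$ uniformly in $j$, use this to propagate summability from the single node $x_{k_0}$ to all nodes when $N=\infty$, and then apply Lemma~\ref{lem:9.2} summand by summand, closing under addition in the cone. The extra details you supply (closedness of $K_{M-\delta}$ under nodewise limits and the strict positivity argument giving $L_s f \neq 0$) are points the paper leaves implicit, and they are handled correctly.
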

\begin{proof}
Lemma~\ref{lem:9.1} implies that $x_k \mapsto \hat \beta_{j,s}(x_k)$ is
an element of $K_{M_1}$, where $M_1 = s M_0 + (1+h)/2$.  It follows that if
$N = \infty$ and $\sum_{j=1}^N \hat \beta_{j,s}(x_{k_0}) < \infty$, it must
be true that $\sum_{j=1}^N \hat \beta_{j,s}(x_k) < \infty$ for all $k$,
$0 \le k \le N$; and $L_s:X_n \to X_n$ is also a well-defined bounded
linear map when $N=\infty$.  Under our hypotheses, Lemma~\ref{lem:9.2}
implies that $L_{j,s}(K_M \setminus \{0\}) \subset K_{M - \delta}\setminus
\{0\}$, so $L_{s}(K_M \setminus \{0\}) \subset K_{M - \delta}\setminus
\{0\}$.
\end{proof}

At this point we need to recall some general results relating to $u_0$-{\it
  positive linear operators.}  Recall that a closed subset $C$ of a Banach
space $Y$ is called a closed cone if (i) $ax + by \in C$ whenever $a$ and $b$
are nonnegative reals and $x,y \in C$ and (ii) $C \cap (-C) = \{0\}$, where $-C
= \{-x \, | \, x \in C\}$. A closed cone $C$ in a real Banach space $(Y,
\|\cdot \|)$ is called {\it reproducing} if $Y = \{f-g \, | \, f,g \in C \}$,
and a closed cone $C$ induces a partial ordering $\le _C$ on $Y$ by $x \le _C
y$ if and only if $y-x \in C$.  If $x \in C$ and $y \in C$, we shall say that
$x$ and $y$ are comparable (in the partial ordering $\le _C$) if there exist
positive reals $\alpha >0$ and $\beta >0$ such that $ \alpha x \le _C y$ and
$y \le_C \beta x$.  If $x,y$ are comparable, we shall write
\begin{equation*}
M(y/x;C) = \inf\{\beta >0 \, | \, y \le_C \beta x\}, \qquad
m(y/x;C) = \sup\{\alpha >0 \, | \, \alpha x \le_C y\}.
\end{equation*}
The following proposition can be found in \cite{X} and \cite{Y}.
\begin{prop}
\label{prop:9.4}
Let $C$ be a closed, reproducing cone in a real Banach space $Y$, and let $A:Y 
\to Y$ be a bounded linear operator such that $A(C) \subset C$. Assume that
there exists $v \in C \setminus \{0\}$ and $r >0$ such that $Av = rv$.
Assume (this is the $u_0$-positivity of $A$) that there exists $u_0 \in C
\setminus \{0\}$ with the following property: For every $x \in 
C \setminus \{0\}$, there exists a positive integer $m(x)$ and positive
reals $a(x)$ and $b(x)$ such that either (i) $a(x) u_0 \le_C
A^{m(x)} (x) \le_C b(x) u_0$ or (ii) $A^{m(x)}(x) =0$. If $\hat A$ denotes
the complexification of $A$, $r$ is an eigenvalue of $\hat A$ of algebraic
multiplicity $1$; and if $A w = \lambda w$ for some $w \in C \setminus \{0\}$
and $\lambda >0$, $\lambda =r$ and $w$ is a positive scalar multiple
of $v$.  If $z \in \C$ is an eigenvalue of $\hat A$ and $z \neq r$,
then $|z| < r$.
\end{prop}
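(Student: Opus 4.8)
\emph{Proof proposal.} The proof is order-theoretic, in the spirit of the Kre{\u\i}n--Rutman theorem and Krasnoselskii's theory of $u_0$-positive operators. The first step is to show that \emph{every} eigenvector of $A$ lying in $C\setminus\{0\}$ is comparable to $u_0$: if $Aw=\lambda w$ with $w\in C\setminus\{0\}$ and $\lambda>0$, then $A^{m(w)}w=\lambda^{m(w)}w\neq0$, so alternative (ii) of the $u_0$-positivity hypothesis is excluded, and dividing the surviving inequality by $\lambda^{m(w)}$ gives $a'u_0\le_C w\le_C b'u_0$. In particular $v$ is comparable to $u_0$, hence any two eigenvectors of $A$ in $C\setminus\{0\}$ are mutually comparable, and there are positive constants $c_1,c_2$ with $c_1v\le_C u_0\le_C c_2v$. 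Now, given $Aw=\lambda w$ with $w\in C\setminus\{0\}$ and $\lambda>0$, put $\mu=m(w/v;C)$ and $\nu=M(w/v;C)$, both finite and positive by comparability, with $\mu v\le_C w\le_C\nu v$ since $C$ is closed. If $\lambda\le r$ and $w-\mu v\neq0$, applying $u_0$-positivity to $w-\mu v$ and using $u_0\ge_C c_1v$ produces $tv\le_C w$ with $t=(\mu r^m+\text{(positive)})/\lambda^m>\mu$ (since $r^m/\lambda^m\ge1$), contradicting the maximality of $\mu$; the sub-case $A^m(w-\mu v)=0$ forces $w\in\R_{>0}v$ and $\lambda=r$ outright. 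Hence $w=\mu v$ and $\lambda=r$; if instead $\lambda\ge r$, the symmetric argument with $\nu$ gives $w=\nu v$ and $\lambda=r$. Thus every eigenvector of $A$ in $C\setminus\{0\}$ is a positive multiple of $v$ with eigenvalue $r$, which is the second assertion of the Proposition and also shows $r$ is the only eigenvalue of $A$ with an eigenvector in $C$.

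\emph{Simplicity.} To prove $\ker_{\R}(A-rI)=\R v$, suppose $Af=rf$ with $f\notin\R v$ and consider the closed convex set $S=\{t\in\R:v+tf\in C\}\ni0$. If $S$ is unbounded above then $f=\lim_{t_n\to\infty}(v/t_n+f)\in C$, whence $f\in\R v$ by the previous paragraph, a contradiction; similarly $S$ is bounded below (via $-f$). So $S=[s^*,t^*]$ is compact, $v+s^*f$ and $v+t^*f$ are eigenvectors for $r$ in $C\setminus\{0\}$, hence scalar multiples of $v$, and subtracting gives $(t^*-s^*)f\in\R v$; so $S$ is a single point $\{t_0\}$, and $v+t_0f\in\R v$ forces $t_0=0$. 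The leftover case $v+tf\notin C$ for all $t\neq0$ is excluded by the standard $u_0$-positivity lemma: comparability of $v$ with $u_0$ lets a suitable power $A^m$ return a whole order-interval neighbourhood of $v$ into $C$, so $A^m(v\pm\epsilon f)\in C$ for small $\epsilon>0$, contradicting $S=\{0\}$. Splitting into real and imaginary parts gives $\ker_{\C}(\hat A-rI)=\C v$. Algebraic simplicity --- no Jordan chain $Aw-rw=v$ --- follows by the classical Krasnoselskii argument: use the reproducing property to write $w$ as a difference of cone elements, push it onto the ray through $v$, and observe that the strictly positive ``excess'' supplied by $u_0$-positivity is incompatible with the chain relation. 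With the preceding paragraph this yields assertion (1).

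\emph{Strict dominance.} For $\hat A\phi=z\phi$ with $\phi\neq0$ and $z\neq r$, form the modulus element $|\phi|\in C\setminus\{0\}$; positivity of $A$ gives $|z|\,|\phi|=|A\phi|\le_C A|\phi|$, so $A^n|\phi|\ge_C|z|^n|\phi|$ for all $n$. Feeding this into the two-sided bound $c\,r^kv\le_C A^{m+k}|\phi|\le_C C'r^kv$ --- obtained from comparability of $|\phi|$ with $u_0$ (hence with $v$) and from $Av=rv$ --- forces $|z|\le r$. If $|z|=r$ but $z\neq r$, then either $A|\phi|=r|\phi|$, so $|\phi|=\mu v$ and the usual analysis of equality in $|A\phi|\le_C A|\phi|$, using the strict positivity provided by $u_0$-positivity, forces $\phi$ to be a unimodular scalar multiple of a positive eigenvector, impossible for $z\neq r$; or $A|\phi|-r|\phi|\in C\setminus\{0\}$, in which case $u_0$-positivity gives $A^m(A|\phi|-r|\phi|)\ge_C c\,v$, and iterating against the upper bound $A^{m+k}|\phi|\le_C C'r^kv$ is contradictory. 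This gives assertion (3), completing the proof.

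\emph{Main obstacle.} The two points I expect to require real care are the degenerate case $S=\{0\}$ above and the modulus comparison in the strict-dominance step: both demand extracting genuine (relative) interior information from the bare $u_0$-positivity hypothesis rather than from an a priori interior point of $C$. These are exactly the places addressed by the classical machinery, and in a full writeup I would either reproduce that lemma in detail or defer to \cite{X} and \cite{Y}.
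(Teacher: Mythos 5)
Your treatment of the first two assertions is essentially sound: the comparability argument showing that any eigenpair $(\lambda,w)$ with $w\in C\setminus\{0\}$, $\lambda>0$ forces $\lambda=r$ and $w\in\R_{>0}v$ is complete, and your geometric-simplicity argument works, including the case $S=\{0\}$, which is closed exactly as you indicate (write $f=f_1-f_2$ with $f_i\in C$, use $u_0$-positivity to get $A^{m}f_2\le_C b\,u_0\le_C bc_2v$, whence $A^m(v+\epsilon f)=r^m(v+\epsilon f)\in C$ for small $\epsilon>0$). The algebraic-simplicity step is only gestured at, but the classical argument you invoke does go through: after applying a power of $A$ and $u_0$-positivity one may shift a putative solution of $(A-r)w=v$ by a multiple of $v$ so that $w\in C\setminus\{0\}$, and then $A^{m+k}w=r^{m+k}w+(m+k)r^{m+k-1}v$ grows linearly in the $v$-direction while $A^{m}w\le_C \beta v$ gives $A^{m+k}w\le_C\beta r^{k}v$, a contradiction for large $k$. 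For comparison, the paper gives no proof at all: it cites the literature for Proposition~\ref{prop:9.4} and notes in Remark~\ref{rem:9.6} that the result can be derived from the Birkhoff--Hopf theorem.

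The genuine gap is in your strict-dominance step. You ``form the modulus element $|\phi|\in C\setminus\{0\}$'' and use $|A\phi|\le_C A|\phi|$, but the proposition is stated for an arbitrary closed reproducing cone $C$ in a real Banach space: no lattice structure is assumed, so $|\phi|$ is undefined and the inequality has no meaning. This is not a removable technicality, because in the application the proposition must serve (Theorem~\ref{thm:9.7}) the cone is $K_M$ of \eqref{9.3}, which is not the standard nonnegative cone of $X_n\cong\R^{n+1}$ and admits no modulus compatible with $\le_{K_M}$. Moreover, even in a genuine lattice the step you compress into ``the usual analysis of equality'' is precisely the crux: a positive operator can have peripheral eigenvalues with $|z|=r$, $z\neq r$ (irreducible but periodic nonnegative matrices), so their exclusion must be extracted from $u_0$-positivity itself --- e.g.\ by Krasnoselskii's argument for $u_0$-positive operators, or by Birkhoff--Hopf: a suitable power of $A$ maps the relevant part of $C$ into an order interval $[a u_0,b u_0]$ of finite projective diameter, hence is a strict contraction in Hilbert's projective metric, which yields $|z|\le q\,r$ with $q<1$ for every other eigenvalue (this is the ``spectral clearance'' of Remark~\ref{rem:9.8}). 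Incidentally, the weak inequality $|z|\le r$ can be proved without any modulus: writing $\phi=x+iy$, dominate $\pm x,\pm y$ by some $u\in C\setminus\{0\}$ (reproducing cone), use $u_0$-positivity to get $-\beta r^{n}v\le_C A^{n}x\le_C\beta r^{n}v$, and pass to a convergent subsequence of $\cos(n\theta)x-\sin(n\theta)y$ in the finite-dimensional span of $x,y$; but the strict inequality at the peripheral points is exactly what your proposal does not prove.
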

\begin{remark}
\label{rem:9.5} Note that Proposition~\ref{prop:9.4} only gives information
about eigenvalues of $\hat A$.  If $\sigma(\hat A)$ denotes the spectrum
of $\hat A$, it is possible, under the assumptions of
Proposition~\ref{prop:9.4}, that there exists $z \in \sigma(\hat A)$
with $|z| = r$ and $z \neq r$.
\end{remark}
\begin{remark}
\label{rem:9.6}
Proposition~\ref{prop:9.4} can be derived from the so-called Birkhoff-Hopf
theorem, though we shall not do so here.  We refer the reader to the 
papers \cite{S}, \cite{V}, and \cite{AA} for the original work by
Birkhoff, Hopf, and Samelson. A general version of the  Birkhoff-Hopf
theorem, applications to spectral theory, and references to the literature
can be found in \cite{T} and \cite{U}; see also Appendix A of \cite{G}.
Section 2.2 of \cite{F} (particularly Lemma 2.12) is closely related to
our work here.
\end{remark}
\begin{thm}
\label{thm:9.7}
Let notation and assumptions be as in Theorem~\ref{9.3}.  Then $L_s$ has an
eigenvector $v \in K_{M-\delta} \setminus \{0\}$ with eigenvalue $r >0$.
If $\hat L_s$ denotes the complexification of $L_s$, $r$ is an eigenvalue
of $\hat L_s$ of algebraic multiplicity one; and if $L_s w = \lambda w$
for some $w \in K_M \setminus \{0\}$, $\lambda =r$, and $w$ is a positive
multiple of $v$.  If $z$ is an eigenvalue of $\hat L$ and $z \neq r$,
then $|z| <r$.
\end{thm}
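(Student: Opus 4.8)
The plan is to apply Proposition~\ref{prop:9.4} to $A=L_s$ acting on the finite-dimensional space $Y=X_n$ with the cone $C=K_M$ of \eqref{9.3}, and then transcribe its four conclusions. Three things must be verified: that $K_M$ is a closed reproducing cone, that $L_s$ has an eigenvector in $K_M\setminus\{0\}$ with positive eigenvalue, and that $L_s$ is $u_0$-positive. The cone $K_M$ is closed and convex by inspection of \eqref{9.3}, and $K_M\cap(-K_M)=\{0\}$ because every $f\in K_M$ has $f(x_j)\ge 0$ for all $j$ (the remark following \eqref{9.3}); hence it is a closed cone. It is reproducing because it has an interior point --- the constant function $\mathbf 1$, for which every ratio constraint reads $1<\exp(Mh)$, so a full neighbourhood of $\mathbf 1$ lies in $K_M$ --- and a closed cone with interior point $u_0$ is reproducing, writing $f=(\lambda u_0+f)-\lambda u_0$ with $\lambda$ large. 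Finally, by Theorem~\ref{thm:9.3}, $L_s(K_M\setminus\{0\})\subset K_{M-\delta}\setminus\{0\}\subset K_M\setminus\{0\}$, so $L_s$ maps $K_M$ into itself.

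Next I would produce the positive eigenvector by a Brouwer fixed point argument on $\Sigma=\{f\in K_M:\sum_{j=0}^n f(x_j)=1\}$, which is nonempty (it contains $\mathbf 1/(n+1)$), convex, and compact (a closed subset of the standard simplex). For $f\in\Sigma$ we have $f\ne 0$, hence $L_sf\in K_{M-\delta}\setminus\{0\}$ has strictly positive components and strictly positive coordinate sum, so $\Phi(f):=L_sf/\sum_j(L_sf)(x_j)$ is well defined and continuous on $\Sigma$ (the denominator is a positive continuous function on the compact set $\Sigma$, hence bounded below by a positive constant). A fixed point $v=\Phi(v)$ then gives $L_sv=rv$ with $r=\sum_j(L_sv)(x_j)>0$ and $v\in K_{M-\delta}\setminus\{0\}$. (Equivalently, the matrix of $L_s$ has nonnegative entries --- under \eqref{9.1} each coefficient $1+\tfrac12 C_jQ(\theta_j(x_k))$ is $\ge 1/2$ and each $[\beta_j(x_k)]^s$ is positive --- so Perron--Frobenius supplies a nonnegative eigenvector for $r=r(L_s)>0$, and one application of $L_s$ moves it into $K_{M-\delta}$.)

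The heart of the argument is the $u_0$-positivity of $L_s$ with $u_0=\mathbf 1\in K_M\setminus\{0\}$. Given $x\in K_M\setminus\{0\}$, set $g=L_sx$; by Theorem~\ref{thm:9.3}, $g\in K_{M-\delta}\setminus\{0\}$, so $g$ has strictly positive components and $\exp(-(M-\delta)h)\le g(x_{j+1})/g(x_j)\le\exp((M-\delta)h)$ for $0\le j<n$. For $b>\max_j g(x_j)$ sufficiently large the ratios $(b-g(x_{j+1}))/(b-g(x_j))$ lie in $[\exp(-Mh),\exp(Mh)]$ (they tend to $1$ as $b\to\infty$), so $b\mathbf 1-g\in K_M$, i.e.\ $g\le_{K_M}b\mathbf 1$. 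For $a\in(0,\min_j g(x_j))$ sufficiently small the ratios $(g(x_{j+1})-a)/(g(x_j)-a)$ --- which converge as $a\to 0^+$ to $g(x_{j+1})/g(x_j)\in[\exp(-(M-\delta)h),\exp((M-\delta)h)]$, a set strictly inside $[\exp(-Mh),\exp(Mh)]$ --- again lie in $[\exp(-Mh),\exp(Mh)]$, so $g-a\mathbf 1\in K_M$, i.e.\ $a\mathbf 1\le_{K_M}g$. Thus $m(x)=1$, together with these $a(x)=a$ and $b(x)=b$, verifies the $u_0$-positivity hypothesis. I expect this comparability estimate to be the main obstacle: it is exactly here that the strict gap $\delta>0$ is used, so that $K_{M-\delta}$ sits strictly inside $K_M$.

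With the three hypotheses in place, Proposition~\ref{prop:9.4} immediately gives that $r$ is an eigenvalue of the complexification $\hat L_s$ of algebraic multiplicity one, that $|z|<r$ for every eigenvalue $z\ne r$ of $\hat L_s$, and that any $w\in K_M\setminus\{0\}$ with $L_sw=\lambda w$ satisfies $\lambda=r$ with $w$ a positive scalar multiple of $v$ --- here $\lambda>0$ is automatic, since $w$ and $L_sw\in K_{M-\delta}\setminus\{0\}$ both have strictly positive components. This is precisely the assertion of the theorem.
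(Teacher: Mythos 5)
Your argument is correct, and it follows the same overall architecture as the paper --- verify the hypotheses of Proposition~\ref{prop:9.4} for $L_s$ on the cone $K_M$ and then quote its conclusions --- but you replace both of the paper's imported ingredients with elementary, self-contained ones. The paper obtains the positive eigenvector from the Kre{\u\i}n-Rutman theorem (using that $K_M$ has nonempty interior in $X_n$) and gets comparability from Lemma 2.12 of \cite{F}: since $M-\delta<M$, all elements of $K_{M-\delta}\setminus\{0\}$ are mutually comparable in the ordering $\le_{K_M}$, and the paper then takes $u_0:=v_s$ itself. You instead produce the eigenvector by a Brouwer fixed-point argument on the normalized slice of $K_M$, and you verify $u_0$-positivity with $u_0=\mathbf 1$ by a direct computation: for $g=L_sx\in K_{M-\delta}\setminus\{0\}$ the consecutive ratios lie in $[\exp(-(M-\delta)h),\exp((M-\delta)h)]$, strictly inside $[\exp(-Mh),\exp(Mh)]$, so $g-a\mathbf 1$ and $b\mathbf 1-g$ lie in $K_M$ for small $a>0$ and large $b$. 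This is exactly the mechanism hidden inside Lemma 2.12 of \cite{F} in this special case, and your version makes explicit where the gap $\delta>0$ enters; what the paper's route buys in exchange is generality (Lemma 2.12 needs no interior point and also yields the uniform bound on $M(y/x;K_M)/m(y/x;K_M)$ used later, in Remark~\ref{rem:9.8}, for the spectral clearance), while your route is finite-dimensional and self-contained. One small caveat: your parenthetical alternative via Perron--Frobenius is not quite right as stated, since the Perron eigenvector is only known to lie in the orthant of nonnegative vectors, and Theorem~\ref{thm:9.3} gives $L_s(K_M\setminus\{0\})\subset K_{M-\delta}\setminus\{0\}$ only for arguments already in $K_M$, so ``one application of $L_s$'' does not obviously move it into $K_{M-\delta}$; but this aside is inessential, as your Brouwer argument carries the proof on its own.
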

\begin{proof}
We shall need a very special case of Lemma 2.12 in \cite{F}.  Because
$M- \delta <M$, Lemma 2.12 in \cite{F} implies that all elements
$x,y \in K_{M-\delta} \setminus \{0\}$ are comparable with respect to the
partial ordering $\le_{K_M}$ given by $K_M \supset K_{M-\delta}$.
Furthermore, we have
\begin{equation*}
\sup\{M(y/x;K_M)/m(y/x;K_M) : x,y \in  K_{M-\delta} \setminus \{0\}\} < \infty.
\end{equation*}

Since Theorem~\ref{thm:9.3} implies that $L_s(K_M \setminus \{0\}) \subset
K_{M-\delta} \setminus \{0\}$, it follows that if $u \in K_{M-\delta}
\setminus \{0\}$, $L_s u \in K_{M-\delta} \setminus \{0\}$ and $u$ and $L_s u$
are comparable, so $L_s u \ge_{K_M} \alpha u$ for some $\alpha >0$.  This
implies that $r(L_s) \ge \alpha >0$.  In our particular case, the cone $K_M$
has nonempty interior in $X_n$, although in the generality of Lemma 2.12, this
is not usually true.  The Kre{\u\i}n-Rutman theorem implies that $L_s$ has an
eigenvector $v_s \in K_M$ with eigenvalue $r = r(L_s) >0$; and since $r v_s =
L_s(v_s)$, $v_s \in K_{M- \delta}$. If we define $u_0:=v_s$, Lemma 2.12 in
\cite{F} implies that $L_s(x)$ is comparable to $v_s$ (with respect to the
partial ordering $\le_{K_M}$) for all $x \in K_M \setminus
\{0\}$. Theorem~\ref{thm:9.7} now follows directly from
Proposition~\ref{prop:9.4}.
\end{proof}
\begin{remark}
\label{rem:9.8}
Since the linear maps $A_s$ and $B_s$ are both of the form of the map
$L_s$ in Theorem~\ref{thm:9.3}, Theorem~\ref{thm:9.7} implies the
desired spectral properties of $A_s$ and $B_s$.  With greater care it is
possible to use results in \cite{U} to estimate the so-called spectral
clearance $q(L_s)$ of $L_s$, given by
\begin{equation*}
q(L_s):= \sup\{|z|/r : z \in \sigma(L_s) \text{ and } z \neq r(L_s)\} < 1.
\end{equation*}
\end{remark}

\begin{remark}
\label{rem:9.9}
We claim that there is a constant $E$, which can be easily estimated, such
that, for $h = (b-a)/n$ sufficiently small,
\begin{equation*}
r(B_s) \le r(A_s)(1 + E h^2).
\end{equation*}
(Of course we already know that $r(A_s) \le r(B_s)$.) For a fixed $s \ge 0$,
let $\beta_j(\cdot)$ and $\theta_j(\cdot)$ be as in Theorem~\ref{thm:9.3}.
We know that $A_s$ and $B_s$ are of the form of $L_s$ in
Theorem~\ref{thm:9.3}, so we can write, for $0 \le k \le n$,
\begin{align*}
(A_s f)(x_k) &= \sum_{j-1}^N [ 1 +(C_j/2) Q(\theta_j(x_k))] [\beta_j(x_k)]^s
f^I(\theta_j(x_k),
\\
(B_s f)(x_k) &= \sum_{j-1}^N [ 1 +(D_j/2) Q(\theta_j(x_k))] [\beta_j(x_k)]^s
f^I(\theta_j(x_k).
\end{align*}
We assume that $h \le 1$ and $C h/4 \le 1$, where $C$ is a positive
constant such that $\max(|C_j|, |D_j|) \le C$ for $1 \le j \le N$. We
assume also that for $1 \le j \le N$, $C_j \le D_j$.  Let
$K= \{ f \in X_n \, | \, f(x_k) \ge 0 \text{ for } 0 \le k \le n\}$, so
$A_s(K) \subset K$ and $B_s(K) \subset K$.  Define $\mu \ge 1$ by
\begin{equation*}
\mu = \sup\{ [1 + \frac{D_j}{2} Q(\theta_j(x_k))][ 1 + \frac{C_j}{2}
Q(\theta_j(x_k))]^{-1}: 1 \le j \le N, 0 \le k \le N\} \ge 1.
\end{equation*}
Then for all $f \in K$ and $0 \le k \le n$,
$(B_s(f))(x_k) \le \mu (A_s(f))(x_k)$, which implies that
$r(B_s) \le \mu r(A_s)$.  Since $Q(u) \le h^2/4$, a little thought shows
that  $\mu \le (1 + Ch^2/8)(1 - Ch^2/8)^{-1} \le 1 + E h^2$,
which gives the desired estimate.
\end{remark}

\section{Log convexity of the spectral radius of 
$\Lambda_s$}
\label{sec:logconvex}
Throughout this section we shall assume that hypotheses (H5.1), (H5.2), and
(H5.3) in Section~\ref{sec:exist} are satisfied and we shall also assume that
$H$ is a bounded, open, mildly regular subset of $\R^n$. As in
Section~\ref{sec:exist}, we shall write $X= C^m(\bar H)$ and $Y= C(\bar H)$.
For $s \in \R$, we define $\Lambda_s: X \to X$ and $L_s:Y \to Y$ by
\begin{equation}
\label{2.1}
(\Lambda_s(f))(x) = \sum_{\beta \in \B} (b_{\beta}(x))^s f (\theta
_{\beta}(x))
\end{equation}
and
\begin{equation}
\label{2.2}
(L_s(f))(x) = \sum_{\beta \in \B} (b_{\beta}(x))^s f (\theta
_{\beta}(x)).
\end{equation}
Theorem~\ref{thm:1.1} implies that $r(\Lambda_s)$ is an algebraically simple
eigenvalue of $\Lambda_s$ for $s \in \R$ and that $\sup \{|z|: z \in
\sigma(\Lambda_s), z \neq r(\Lambda_s)\} < r(\Lambda_s)$, 
where $\sigma(\Lambda_s)$ denotes the spectrum of $\Lambda_x$.

Let $\hat X$ denote of the complexification of $X$, so $\hat X$ is the Banach
space of $C^m$ maps $f:H \to \C$ such that $x \mapsto
(D^{\alpha}f)(x)$ extends continuously to $\bar H$ for all multi-indices
$\alpha$ with $|\alpha| \le m$. For $s \in \C$ one can define
$\hat \Lambda_s: \hat X \to \hat X$ by
\begin{equation}
\label{2.3}
(\hat \Lambda_s(f))(x) = \sum_{\beta \in \B} (b_{\beta}(x))^s f (\theta
_{\beta}(x)):= \sum_{\beta \in \B} \exp(s \log b_{\beta(x)})   f(\theta_{\beta}(x)).
\end{equation}
The reader can verify that $s \mapsto \hat \Lambda_s \in \Lc(\hat X, \hat
X)$ is an analytic map. Because $r(\hat \Lambda_s)$ is an algebraically simple
eigenvalue of $\hat \Lambda_s$ for $s \in \R$ and $\sup \{|z|: z \in
\sigma(\Lambda_s), z \neq r(\Lambda_s)\} < r(\Lambda_s)$, it follows from the
kind of argument used on pages 227-228 of \cite{BB} that there is an open
neighborhood $U$ of $\R \in \C$ and the map $s \in U \mapsto r(\hat
\Lambda_s)$ is analytic on $U$.
\begin{thm}
\label{thm:2.1}
Assume that hypotheses (H5.1), (H5.2), and (H5.3) are satisfied with $m \ge 1$
and that $H \subset \R^n$ is a bounded, open mildly regular set. For $s \in
\R$, let $\Lambda_s$ and $L_s$ be defined by \eqref{2.1} and \eqref{2.2}.
Then we have that $s \mapsto r(\Lambda_s)$ is log convex, i.e., $s
\mapsto log(r(\Lambda_s))$ is convex on $[0,\infty)$.
\end{thm}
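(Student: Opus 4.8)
The plan is to use the variational/Kingman-type characterization of the spectral radius of a positive transfer operator together with Hölder's inequality. Recall from \eqref{1.6} that $(L_s^k f)(x) = \sum_{\w\in\B_k} (b_\w(x))^s f(\theta_\w(x))$, and in particular $(L_s^k \mathbf 1)(x) = \sum_{\w\in\B_k}(b_\w(x))^s$ where $\mathbf 1$ denotes the constant function $1$. Since Theorem~\ref{thm:1.1} gives a strictly positive eigenvector $v_s$ with $L_s v_s = r(\Lambda_s) v_s$ and since $v_s$ is bounded above and below by positive constants on $\bar H$, the standard comparison argument shows
\begin{equation*}
r(\Lambda_s) = \lim_{k\to\infty}\Big(\sup_{x\in\bar H}\sum_{\w\in\B_k}(b_\w(x))^s\Big)^{1/k}
= \lim_{k\to\infty}\Big(\inf_{x\in\bar H}\sum_{\w\in\B_k}(b_\w(x))^s\Big)^{1/k}.
\end{equation*}
First I would justify this identity carefully: the upper bound $r(\Lambda_s)=\lim\|L_s^k\|^{1/k}$ and $\|L_s^k\mathbf 1\|_\infty = \sup_x\sum_\w (b_\w(x))^s$, while the lower bound follows by sandwiching $v_s$ between $c_1\mathbf 1$ and $c_2\mathbf 1$ and using $L_s^k v_s = r^k v_s$.

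Next I would fix $s_0, s_1 \ge 0$ and $t\in[0,1]$, set $s_t = (1-t)s_0 + t s_1$, and apply Hölder's inequality with exponents $p = 1/(1-t)$ and $q = 1/t$ to the sum over $\w\in\B_k$: for each fixed $x$,
\begin{equation*}
\sum_{\w\in\B_k}(b_\w(x))^{s_t}
= \sum_{\w\in\B_k}(b_\w(x))^{(1-t)s_0}(b_\w(x))^{t s_1}
\le \Big(\sum_{\w\in\B_k}(b_\w(x))^{s_0}\Big)^{1-t}\Big(\sum_{\w\in\B_k}(b_\w(x))^{s_1}\Big)^{t}.
\end{equation*}
Taking $\sup_{x\in\bar H}$ on both sides and bounding the right-hand side by the product of suprema, then raising to the power $1/k$ and letting $k\to\infty$ via the limit identity above, yields
\begin{equation*}
r(\Lambda_{s_t}) \le r(\Lambda_{s_0})^{1-t}\, r(\Lambda_{s_1})^{t},
\end{equation*}
which is exactly the statement that $s\mapsto \log r(\Lambda_s)$ is convex on $[0,\infty)$.

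The main obstacle is not Hölder's inequality itself but the rigorous derivation of the Kingman-type limit formula, in particular the uniformity needed to pass the supremum through the limit and to interchange $\sup_x$ with the $k$-th root limit. Here I would lean on Theorem~\ref{thm:1.1}: the strict positivity and continuity of $v_s$ on the compact set $\bar H$ give uniform constants $0 < c_1 \le v_s(x) \le c_2$, so that
\begin{equation*}
c_1 r(\Lambda_s)^k \le (L_s^k \mathbf 1)(x) \cdot \tfrac{c_2}{c_1} \quad\text{and}\quad (L_s^k\mathbf 1)(x) \ge \tfrac{c_1}{c_2} r(\Lambda_s)^k
\end{equation*}
uniformly in $x$, whence $\sup_x(L_s^k\mathbf 1)(x)$ and $\inf_x(L_s^k\mathbf1)(x)$ are both pinched to $r(\Lambda_s)^k$ up to a multiplicative constant independent of $k$; taking $k$-th roots kills the constant. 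An alternative that avoids even this much is to observe that $r(\Lambda_s) = r(L_s) = \lim_k \|L_s^k\|_\infty^{1/k}$ directly, that $\|L_s^k\|_\infty = \|L_s^k\mathbf 1\|_\infty = \sup_x \sum_{\w\in\B_k}(b_\w(x))^s$ because $L_s$ is a positive operator, and then apply Hölder pointwise and sup over $x$ before taking roots — this gives the inequality $\|L_{s_t}^k\|_\infty \le \|L_{s_0}^k\|_\infty^{1-t}\|L_{s_1}^k\|_\infty^{t}$ for every $k$, and log-convexity of $r(\Lambda_\cdot)$ follows immediately by taking $k$-th roots and $k\to\infty$. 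I would present this second route as the clean proof, since it needs only positivity of $L_s$ and the spectral radius formula, both available from Theorem~\ref{thm:1.1}, and sidesteps any delicate exchange of limits. Finally I would remark that strict decrease of $s\mapsto r(\Lambda_s)$ (claimed elsewhere in the paper) together with log-convexity is what the numerical scheme exploits, but strict decrease is not needed for the present statement.
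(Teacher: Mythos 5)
Your proof is correct, but it takes a genuinely different route from the paper. The paper's argument uses the strictly positive eigenvectors $v_{s_0}$ and $v_{s_1}$ furnished by Theorem~\ref{thm:1.1}: it forms the interpolated function $w_t=v_{s_0}^{1-t}v_{s_1}^{t}$, applies H\"older once pointwise to $(L_{s_t}w_t)(x)$ to get $L_{s_t}w_t\le r(L_{s_0})^{1-t}r(L_{s_1})^{t}\,w_t$, and then invokes the standard sub-eigenvector comparison for positive operators (Lemma 5.9 of the cited paper of Nussbaum--Priyadarshi--Verduyn Lunel) to conclude $r(L_{s_t})\le r(L_{s_0})^{1-t}r(L_{s_1})^{t}$. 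You instead apply H\"older to the iterates of the constant function: by positivity $\|L_s^k\|_\infty=\|L_s^k\mathbf 1\|_\infty=\sup_x\sum_{\w\in\B_k}(b_\w(x))^s$ (using \eqref{1.6}), the pointwise H\"older estimate gives $\|L_{s_t}^k\|_\infty\le\|L_{s_0}^k\|_\infty^{1-t}\|L_{s_1}^k\|_\infty^{t}$ for every $k$, and Gelfand's formula finishes the argument. This needs only positivity of $L_s$ and the spectral radius formula, not the existence of strictly positive eigenfunctions, so it is more elementary and more general; in fact it is essentially the generalization the paper alludes to in the remark following Theorem~\ref{thm:2.1} (the one said to contain Kingman's matrix result), whereas the paper's route is shorter once Theorem~\ref{thm:1.1} is already in hand. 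One small caveat: the constants in the ``pinching'' display of your first route are garbled as written (the sandwich $c_1\le v_s\le c_2$ gives $\tfrac{c_1}{c_2}r^k\le(L_s^k\mathbf 1)(x)\le\tfrac{c_2}{c_1}r^k$), but since you correctly discard that route in favor of the second, self-contained one, this does not affect the proof; also note H\"older is applied for $t\in(0,1)$, the endpoints being trivial.
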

\begin{proof}
Because Theorem~\ref{thm:1.1} implies that $r(L_s) = r(\Lambda_s)$ for all real
$s$, it suffices to take $s_0 <s_1$, and $0 < t < 1$ and prove that
\begin{equation*}
r(L_{(1-t)s_0 + t s_1}) \le r(L_{s_0})^{1-t} r(L_{s_1})^t.
\end{equation*}
We shall use an old trick (see \cite{CC} and the references therein). Let
$v_{s_j}(x)$, $j=0,1$ denote the strictly positive eigenvector of $L_{s_j}$
which is ensured by Theorem~\ref{thm:1.1}.  Then
\begin{equation*}
L_{s_j} v_{s_j} = r(L_{s_j}) v_{s_j}.
\end{equation*}
For a fixed $t$, $0<t<1$, define $s_t = (1-t)s_0 + t s_1$ and
\begin{equation*}
w_t(x)  = (v_{s_0}(x))^{1-t}(v_{s_1}(x))^t.
\end{equation*}
Then, using H\"older's inequality, we find that
\begin{multline}
\label{2.4}
(L_{s_t}(w_t))(x) = \sum_{\beta \in \B} (b_{\beta}(x)^{s_0} v_{s_0}(x))^{1-t}
 (b_{\beta}(x)^{s_1} v_{s_1}(x))^t
\\
\le \Big(\sum_{\beta \in \B} (b_{\beta}(x)^{s_0} v_{s_0}(x)\Big)^{1-t}
\Big(\sum_{\beta \in \B}  (b_{\beta}(x)^{s_1} v_{s_1}(x)\Big)^t
= [r(L_{s_0})^{1-t}r(L_{s_1})^t] w_t(x).
\end{multline}
Because $w_t(x) >0$ for all $x \in \bar H$, a standard argument (see Lemma 5.9
in \cite{N-P-L}) shows that
\begin{equation}
\label{2.5}
r(L_{s_t}) = \lim_{k \rightarrow \infty} \|L_{s_t}^k\|^{1/k} 
 = \lim_{k \rightarrow \infty} \|L_{s_t}^k(w_t)\|^{1/k} .
\end{equation}
Using inequalities \eqref{2.4} and \eqref{2.5}, we see that
\begin{equation*}
r(L_{s_t}) \le r(L_{s_0})^{1-t} r(L_{s_1})^t.
\end{equation*} 
\end{proof}

In general, if $V$ is a convex subset of a vector space $X$, we shall call
a map $f:V \to [0, \infty)$ log convex if (i) $f(x) = 0$ for all $x \in V$
or (ii) $f(x) >0$ for all $x \in V$ and $x \mapsto \log(f(x))$ is convex.
Products of log convex functions are log convex, and H\"olders inequality
implies that sums of log convex functions are log convex.

Results related to Theorem~\ref{thm:2.1} can be found in \cite{CC},
\cite{DD}, \cite{EE}, \cite{FF}, \cite{GG}, and \cite{HH}. Note that
the terminology {\it super convexity} is used to denote log convexity
in \cite{DD} and \cite{EE}, presumably because any log convex function
is convex, but not conversely.  Theorem~\ref{thm:2.1}, while adequate for
our immediate purposes, can be greatly generalized by a different argument
that does not require existence of strictly positive eigenvectors.  This
generalization (which we omit) contains Kingman's matrix log convexity
result in \cite{EE} as a special case.

In our applications, the map $s \mapsto r(L_s)$ will usually be strictly
decreasing on an interval $[s_1,s_2]$ with $r(L_{s_1}) >1$ and
$r(L_{s_2}) <1$, and we wish to find the unique $s_* \in (s_1,s_2)$ such that
$r(L_{s_*}) =1$.  The following hypothesis insures that $s \mapsto r(L_s)$
is strictly decreasing for all $S$.

\noindent (H9.1): Assume that $b_{\beta}(\cdot)$, $\beta \in \B$ satisfy the
conditions of (H5.1).  Assume also that there exists an integer $\mu \ge 1$
such that $b_\w(x) <1$ for all $\w \in \B_{\mu}$ and all $x \in \bar H$.

\begin{thm}
\label{thm:2.2}
Assume hypotheses (H5.1), (H5.2), (H5.3), and (H9.1) and let $H$ be
mildly regular. Then the map $s \mapsto r(\Lambda_s)$, $s \in \R$, is
strictly decreasing and real analytic and $\lim_{s \rightarrow \infty}
r(\Lambda_s) =0$.
\end{thm}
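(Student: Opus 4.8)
The plan is to handle the three assertions separately; real analyticity and the limit are essentially free, and the only assertion requiring work is strict monotonicity.

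Real analyticity is already available from the discussion preceding Theorem~\ref{thm:2.1}: the map $s \mapsto \hat\Lambda_s \in \Lc(\hat X, \hat X)$ is analytic, and by Theorem~\ref{thm:1.1} (applied with $b_\beta$ replaced by $b_\beta^s$, which is $C^m$ and strictly positive on $\bar H$ for every real $s$) the value $r(\hat\Lambda_s)$ is, for real $s$, an algebraically simple eigenvalue separated in modulus from the rest of $\sigma(\hat\Lambda_s)$. Hence the argument of pages 227--228 of \cite{BB} produces an open neighborhood $U$ of $\R$ in $\C$ on which $s \mapsto r(\hat\Lambda_s)$ is analytic, and restriction to $\R$ gives real analyticity of $s \mapsto r(\Lambda_s)$. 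I would simply cite this.

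For the other two assertions I would derive one quantitative comparison inequality and read off both. The essential input is (H9.1): since $\B_\mu$ is finite, $\bar H$ is compact, and each $b_\w$ ($\w \in \B_\mu$) is continuous and $<1$ on $\bar H$, there is $\kappa_0 < 1$ with $\sup\{b_\w(x) : \w \in \B_\mu,\ x \in \bar H\} = \kappa_0$. Fix reals $s_0 < s_1$. Using the iterate formula \eqref{1.6} for $L_s$ (whose weight over $\w \in \B_\mu$ is $b_\w(x)^s$) together with $b_\w(x)^{s_1} \le \kappa_0^{\,s_1 - s_0} b_\w(x)^{s_0}$, one gets, for every nonnegative $f \in Y$,
\begin{equation*}
(L_{s_1}^\mu f)(x) \le \kappa_0^{\,s_1 - s_0}\, (L_{s_0}^\mu f)(x), \qquad x \in \bar H.
\end{equation*}
Applying this to the strictly positive eigenvector $v_{s_0}$ of $L_{s_0}$ from Theorem~\ref{thm:1.1}, for which $L_{s_0}^\mu v_{s_0} = r(L_{s_0})^\mu v_{s_0}$, yields $L_{s_1}^\mu v_{s_0} \le \kappa_0^{\,s_1 - s_0} r(L_{s_0})^\mu v_{s_0}$ pointwise on $\bar H$. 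Since $v_{s_0}$ is bounded below by a positive constant on the compact set $\bar H$, the $C(\bar H)$-analogue of the second half of Lemma~\ref{lem:nonneg} then gives
\begin{equation*}
r(\Lambda_{s_1}) = r(L_{s_1}) \le \kappa_0^{\,(s_1 - s_0)/\mu}\, r(L_{s_0}) = \kappa_0^{\,(s_1 - s_0)/\mu}\, r(\Lambda_{s_0}),
\end{equation*}
using $r(L_s) = r(\Lambda_s)$ from Theorem~\ref{thm:1.1}. As $\kappa_0 < 1$ and $r(\Lambda_{s_0}) > 0$, the right side is strictly less than $r(\Lambda_{s_0})$ whenever $s_0 < s_1$, which is strict monotonicity on all of $\R$; and fixing $s_0 = 0$ and letting $s_1 \to \infty$ forces $r(\Lambda_{s_1}) \le \kappa_0^{\,s_1/\mu} r(\Lambda_0) \to 0$.

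The only step needing a little care is the passage from the pointwise sub-eigenvector inequality $L_{s_1}^\mu v_{s_0} \le \kappa_0^{\,s_1-s_0} r(L_{s_0})^\mu v_{s_0}$ to the spectral-radius bound $r(L_{s_1}^\mu) \le \kappa_0^{\,s_1-s_0} r(L_{s_0})^\mu$. This is the infinite-dimensional form of the elementary fact behind Lemma~\ref{lem:nonneg}: iterate the inequality, use that $L_{s_1}$ is a positive operator so $\|L_{s_1}^{n}\| = \|L_{s_1}^{n}\mathbf 1\|$ with $\mathbf 1$ the constant function $1$, bound $\mathbf 1 \le \delta^{-1} v_{s_0}$ with $\delta = \min_{\bar H} v_{s_0} > 0$, and take $n$-th roots; alternatively, as remarked after Lemma~\ref{lem:nonneg}, it is a special case of Lemma~2.2 in \cite{C} and Theorem~2.2 in \cite{B}. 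Everything else — finiteness and positivity of $r(\Lambda_s)$ for each real $s$, existence of the positive eigenvector $v_{s_0}$, and the identity $r(L_s) = r(\Lambda_s)$ — is a direct appeal to Theorem~\ref{thm:1.1}; the log convexity of Theorem~\ref{thm:2.1} is not needed for this proof.
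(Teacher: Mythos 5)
Your proposal is correct and follows essentially the same route as the paper: use (H9.1) at level $\mu$ to get a pointwise comparison $L_{s_1}^{\mu}f \le \kappa_0^{\,s_1-s_0}L_{s_0}^{\mu}f$ (the paper's constant $c(s,t)<1$), apply it to the strictly positive eigenvector $v_{s_0}$, and invoke the sub-eigenvector bound for positive operators on $C(\bar H)$, together with $r(L_s^{\mu})=r(L_s)^{\mu}=r(\Lambda_s)^{\mu}$ and the analyticity discussion preceding Theorem~\ref{thm:2.1}. The only cosmetic difference is that you obtain $\lim_{s\to\infty}r(\Lambda_s)=0$ from the same comparison inequality with $s_0=0$, whereas the paper notes directly that $\|L_t^{\mu}\|\to 0$; both are immediate.
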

\begin{proof}
  If $L_s: C(\bar H) \to C(\bar H)$ is given by \eqref{1.2}, it is a standard
  result that $r(L_s^{\nu}) = (r(L_s))^{\nu}$ and $r(\Lambda_s^{\nu}) =
  (r(\Lambda_s))^{\nu}$ for all integers $\nu \ge 1$, and
  Theorem~\ref{thm:1.1} implies that $r(L_s) = r(\Lambda_s)$.  Thus it
  suffices to prove that for some positive integer $\nu$, $s \mapsto
  r(L_s^{\nu})$ is strictly decreasing and $\lim_{s \rightarrow \infty}
  r(L_s^{\nu}) =0$.

Suppose that $K$ denotes the set of nonnegative functions in $C(\bar H)$ and
$A: C(\bar H) \to C(\bar H)$ is a bounded linear map such that $A(K)
\subset K$.  If there exists $w \in C(\bar H)$ such that $w(x) > 0$ for
all $x \in \bar H$ and if $(A(w))(x) \le a w(x)$ for all $x \in \bar H$,
it is well-known (and easy to verify) that $r(A) \le a$, where $r(A)$
denotes the spectral radius of $A$.  In our situation, we take $\nu = \mu$,
where $\mu$ is as in (H9.1), and $A = (L_s)^{\mu}$. If $s <t$ and $v_s$ is the
strictly positive eigenvector for $(L_s)^{\mu}$, (H9.1) implies that
there is a constant $c <1$, $c = c(s,t)$, such that $c b_{\w}(x)^s \ge
b_\w(x)^t$ for all $\w \in \B_{\mu}$ and $x \in H$.  Thus we find that
\begin{equation*}
c r(L_s)^{\mu} v_s(x) = \sum_{\w \in \B_{\mu}} c b_{\w}(x)^s v_s(\theta_\w(x))
\ge \sum_{\w \in \B_{\mu}} b_{\w}(x)^t v_s(\theta_\w(x)) = (L_t^{\mu}(v_s))(x).
\end{equation*}
It follows that $r(L_t)^{\mu} \le c(s,t) r(L_s)^{\mu}$, so $r(L_t) < r(L_s)$,
for $s < t$.  Because $0 < b_\w(x) <1$ for all $x \in \bar H$ and $\w \in
\B_{\mu}$, it is also easy to see that $\lim_{t \rightarrow \infty}
\|(L_t)^{\mu}\| =0$; and since $\|(L_t)^{\mu}\| \ge r(L_t^{\mu})$, we see that
$\lim_{t \rightarrow \infty} r(L_t^{\mu}) =0$.
\end{proof}

\begin{remark}
\label{rem:2.3}
It is easy to construct examples for which (H9.1) is satisfied for some
$\mu >1$, but not satisfied for $\mu =1$.  The functions $\theta_1(x):=
9/(x+1)$ and $\theta_2(x):= 1/(x+2)$ both map the closed interval $\bar H
= [1/11, 9]$ into itself. There is a unique nonempty compact set $J \subset
\bar H$ such that
\begin{equation*}
J = \theta_1(J) \cup \theta_2(J).
\end{equation*}
For $s \in \R$, define $L_s:C(\bar H) \to C(\bar H)$ by
\begin{equation*}
(L_sf)(x):= \sum_{j=1}^2 |D \theta_j(x)|^s f(\theta_j(x)):=
\sum_{j=1}^2 b_j(x)^s f(\theta_j(x)),
\end{equation*}
where $D:= d/dx$.  The Hausdorff dimension of $J$ is the unique $s = s_*$,
$0 < s_* <1$, such that $r(L_s) =1$.  Our previous remarks show that
\begin{equation*}
(L_s^2 f)(x) = \sum_{j=1}^2 \sum_{k=1}^2 |D(\theta_j \circ \theta_k)(x)|^s
f(\theta_j \circ \theta_k)(x)).
\end{equation*}
One can check that (H9.1) is not satisfied for $\mu=1$, but is satisfied
for $\mu =2$.
\end{remark}
\begin{remark}
\label{rem:2.4}
Assume that the assumptions of Theorem~\ref{thm:2.2} are satisfied and define
$\psi(x) = \log(r(L_s)) = \log(r(\Lambda_s))$ (where $\log$ denotes the
natural logarithm), so $s \mapsto \psi(s)$ is a convex, strictly
decreasing function with $\psi(0) >1$ (unless $|\B| = p =1$) and
$\lim_{s \rightarrow \infty} \psi(s) = - \infty$. We are interested in finding
the unique value of $s$ such that $\psi(s) =0$. In general suppose that
$\psi:[s_1,s_2] \to \R$ is a continuous, strictly decreasing, convex
function such that $\psi(s_1) >0$ and $\psi(s_2) <0$, so there exists
a unique $s =s_* \in (s_1,s_2)$ with  $\psi(s_*) =0$.  If $t_1$ and $t_2$ are
chosen so that $s_1 \le t_1 < t_2 \le s_*$ and $t_{k+1}$ is obtained from
$t_{k-1}$ and $t_k$ by the secant method, an elementary argument show that
$\lim_{k \rightarrow \infty} t_k = s_*$.  If $s_* \le t_2 < t_1 < s_2$ and
$s_1 \le t_3$, a similar argument shows that $\lim_{k \rightarrow \infty} t_k
= s_*$. If $\psi \in C^3$, elementary numerical analysis implies
that the rate of convergence is faster than linear ($= (1 + \sqrt{5})/2)$.
In our numerical work, we apply these observations, not directly to $\psi(s) =
\log(r(\Lambda_s))$, but to convex decreasing functions which closely
approximate $\log(r(\Lambda_s))$.
\end{remark}

One can also ask whether the maps $s \mapsto r(B_s)$ and $s \mapsto r(A_s)$
are log convex, where $A_s$ and $B_s$ are the previously described
approximating matrices for $L_s$.  An easier question is whether the map
$s \mapsto r(M_s)$ is log convex, where $A_s$ and $B_s$ are obtained from
$M_s$ by adding error correction terms.  We shall prove that $s \mapsto
r(M_s)$ is log convex, at least in the one dimensional case.  The proof in
the two dimensional case is similar.

First, we need to recall a useful theorem of Kingman \cite{EE}.  Let
$M(s) = (a_{ij}(s))$ be an $m \times m$ matrix whose entries $a_{ij}(s)$ are
either strictly positive for all $s$ in a fixed interval $J$ or are
identically zero for all $s \in J$. Assume that $s \mapsto a_{ij}(s)$ is
log convex on $J$ for $1 \le i,j \le m$.  Under these assumptions,
Kingman \cite{EE} has proved that $s \mapsto r(M_s)$ is log convex.

Let $n \ge 2$ be a positive integer, and for $a < b$ given real numbers,
define $x_k = a + kh$, $-1 \le k \le n+1$, $h = (b-a)/n$. Let $X_n$
denote the vector space of real valued maps $f:\{x_k \, | \, 0 \le k \le n\}
\to \R$, so $X_n$ is a real vector space linearly isomorphic to $\R^{n+1}$.
As usual, if $f \in X_n$, extend $f$ to a map $f^I:[a,b] \to \R$ by
linear interpolation, so
\begin{equation*}
f^I(u) = \frac{u-x_k}{h} f(x_{k+1}) + \frac{x_{k+1}-u}{h} f(x_{k}), \qquad
x_k \le u \le x_{k+1}, 0 \le k \le n.
\end{equation*}
For $1 \le j \le N$, assume that $\theta_j:[a,b] \to [a,b]$ are given
maps and assume that $b_j:[a,b] \to (0, \infty)$ are given positive
functions.  For $s \in \R$, define a linear map $M_s:X_n \to X_n$ by
$M_s(f) = g$, where
\begin{equation*}
g(x_k) = \sum_{j=1}^N [b_j(x_k)]^s f^I(\theta_j(x_k)), \quad 0 \le k \le n,
\end{equation*}
so if $f(x_k) \ge 0$ for $0 \le k \le n$, $g(x_k) \ge 0$ for $0 \le k \le n$.
We can write $g(x_k) = \sum_{m=0}^n a_{km}(x) f(x_m)$, where for
$0 \le k$, $m \le n$,
\begin{multline*}
a_{km}(x) = \sum_{j, x_{m-1} \le \theta_j(x_k) \le x_m} [b_j(x_k)]^s
[\theta_j(x_k) - x_{m-1}]/h
\\
+  \sum_{j, x_{m} \le \theta_j(x_k) \le x_{m+1}} [b_j(x_k)]^s
[x_{m+1} - \theta_j(x_k)]/h.
\end{multline*}
If, for a given $k$ and $m$, there is no $j$, $1 \le j \le N$, with
$ x_{m-1} \le \theta_j(x_k) \le x_{m+1}$, we define $a_{km} =0$.
Since the sum of log convex functions is log convex, $s \mapsto a_{km}(s)$ is
log convex on $\R$.  It follows from Kingman's theorem that $s \mapsto r(M_s)$
is log convex, where $r(M_s)$ denotes the spectral radius of $M_s$.

\bibliographystyle{amsplain}
\bibliography{hdcomplt}
\end{document}